\documentclass{amsart}
\usepackage{amsmath,amssymb}
\usepackage{graphicx,subfigure}
\usepackage[latin1]{inputenc}
\newtheorem{theorem}{Theorem}[section]
\newtheorem{proposition}[theorem]{Proposition}
\newtheorem{corollary}[theorem]{Corollary}

\newtheorem{lemma}[theorem]{Lemma}

\theoremstyle{definition}
\newtheorem{definition}[theorem]{Definition}

\theoremstyle{remark}
\newtheorem{remark}[theorem]{Remark}

\numberwithin{equation}{section}

\newcommand{\al}{\alpha}
\newcommand{\be}{\beta}
\newcommand{\de}{\delta}
\newcommand{\ep}{\epsilon}

\newcommand{\ga}{\gamma}
\newcommand{\ka}{\kappa}
\newcommand{\la}{\lambda}
\newcommand{\om}{\omega}
\newcommand{\si}{\sigma}
\newcommand{\te}{\theta}
\newcommand{\vp}{\varphi}

%
\newcommand{\De}{\Delta}
\newcommand{\Ga}{\Gamma}
\newcommand{\La}{\Lambda}
\newcommand{\Si}{\Sigma}
\newcommand{\Om}{\Omega}
%

\newcommand{\bs}{\mathbf{s}}

%

%

%

%

\newcommand{\hu}{\hat{u}}

\newcommand{\hPi}{\hat{\Pi}}

%

\def\NN{\mathbb{N}}
\def\RR{\mathbb{R}}
\def\BB{\mathbb{B}}
\def\ZZ{\mathbb{Z}}

\renewcommand\SS{\mathbb{S}}
%

\newcommand{\cB}{{\mathcal B}}

\newcommand{\cD}{{\mathcal D}}

\newcommand{\cF}{{\mathcal F}}

\newcommand{\cH}{{\mathcal H}}

\newcommand{\cM}{{\mathcal M}}
\newcommand{\cN}{{\mathcal N}}
\newcommand{\cO}{{\mathcal O}}
\newcommand{\cP}{{\mathcal P}}
\newcommand{\cQ}{{\mathcal Q}}
\newcommand{\cU}{{\mathcal U}}
\newcommand{\cR}{{\mathcal R}}

\newcommand{\cT}{{\mathcal T}}

\newcommand{\cW}{{\mathcal W}}
%

%

%

\newcommand{\pd}{\partial}
\newcommand\minus\backslash
\newcommand{\id}{{\rm{id}}}

\newcommand{\ms}{\mspace{1mu}}
\newcommand\lan\langle
\newcommand\ran\rangle

%


\newcommand{\I}{{\mathrm i}}
\newcommand{\e}{{\mathrm e}}

\DeclareMathOperator\Div{div}

\DeclareMathOperator\dist{dist} 
 \DeclareMathOperator\curl{curl}

\renewcommand\leq\leqslant
\renewcommand\geq\geqslant
%
\newlength{\intwidth}

%
%
\addtolength{\parskip}{3pt}

\newcommand\loc{_{\mathrm{loc}}}



\def\hv{\hat v}

\def\es{\mathbb S^1_\ell}
\def\DD{\mathbb D^2}
\def\Bpsi{\bar\psi}
\def\Brho{\bar\rho}
\def\A{\mathbb A}
\def\tube{\cT_{\ep}}

\def\bs{\bar s}

\def\teo{\te^{(0)}}

\def\tG{G_{\tilde\La}}
\DeclareMathOperator\ddiv{Div}
\DeclareMathOperator\Curl{Curl_\ep}
\def\th{$^{\text{th}}$}
\begin{document}

\title[Existence of knotted vortex tubes in steady
  Euler flows]{Existence of 
  knotted vortex tubes\\ in steady
  Euler flows}

\author{Alberto Enciso}
\address{Instituto de Ciencias Matemáticas, Consejo Superior de
  Investigaciones Científicas, 28049 Madrid, Spain}
\email{aenciso@icmat.es, dperalta@icmat.es}

\author{Daniel Peralta-Salas}

%
%
\begin{abstract}

  We prove the existence of knotted and linked thin vortex tubes for
  steady solutions to the incompressible Euler equation in
  $\RR^3$. More precisely, given a finite collection of (possibly
  linked and knotted) disjoint thin tubes in~$\RR^3$, we show that
  they can be transformed with a $C^m$-small diffeomorphism into a set
  of vortex tubes of a Beltrami field that tends to zero at
  infinity. The structure of the vortex lines in the tubes is
  extremely rich, presenting a positive-measure set of invariant tori
  and infinitely many periodic vortex lines.  The problem of the
  existence of steady knotted thin vortex tubes can be traced back to Lord
  Kelvin.\\

  \noindent {\sc Keywords:} 
Euler equation, invariant tori, KAM theory, knots, Beltrami fields, Runge-type approximation.\\

  \noindent {\sc MSC 2010:} 35Q31, 37N10, 37J40, 35J25, 37C55, 57M25.
\end{abstract}
\maketitle

\tableofcontents

\section{Introduction}
\label{S.intro}

The motion of particles in an ideal fluid in $\RR^3$ is described
by its velocity field $u(x,t)$, which satisfies the Euler equation
\[
\pd_t u+(u\cdot\nabla)u=-\nabla P\,,\qquad \Div u=0
\]
for some pressure function $P(x,t)$. Equivalently, the field $u$ satisfies the equation
\[
-\pd_t u+u\times \om=\nabla B\,,\qquad \Div u=0\,,
\]
where the field $\om:=\curl u$ is the {\em vorticity}\/ and $B:=P+\frac12|u|^2$
is the Bernoulli function. The trajectories (or integral curves) of
the vorticity $\om(x,t)$ for fixed~$t$ are usually called {\em vortex
  lines}. A solution $u$ to the Euler equation is called {\em steady}
when it does not depend on time.

A domain in $\RR^3$ that is the union of vortex lines and whose
boundary is an embedded torus is a (closed) {\em vortex tube}\/. The
analysis of {\em thin}\/ vortex tubes, in a sense to be specified below,
for solutions to the Euler equation has attracted considerable
attention. A long-standing problem in this direction is Lord Kelvin's
conjecture~\cite{Kelvin} that knotted and linked thin vortex tubes can arise in
steady solutions to the Euler equation. This conjecture was motivated
by results due to Helmholtz on the time-dependent case, which hinge on
the mechanism of vorticity transport, and Maxwell's observations of
what he called ``water twists''.

Kelvin's conjecture is basically a question on the existence of
knotted invariant tori in steady solutions of the Euler
equation. There is a considerable body of literature devoted to the
analysis of topological and geometrical structures that appear in
fluid flows, which has led to significant results e.g.\ on
particle trajectories and vortex lines~\cite{Ar66,EG00,LS00,Annals,Nadirashvili},
on the relationship between the Euler equation and the group of
volume-preserving diffeomorphisms~\cite{Ar66,Ebin,KM03,Sverak},
and on the connection of the helicity with the energy functional and
the asymptotic linking number~\cite{Arnold,FH91,Ghys,Vogel}. However,
Kelvin's conjecture remains wide open, and indeed has been included as
a major open problem in topological fluid mechanics in the
surveys~\cite{Ri98,Mo00}.

There is strong numerical evidence of the existence of thin vortex
tubes, both in the case of steady and time-dependent fluid flows. As a
matter of fact, thin vortex tubes have long played a key role in the
construction and numerical exploration of possible blow-up scenarios
for the Euler equation, which in turn has led to rigorous results such
as~\cite{CF01,Deng}. A particularly influential scenario in this
direction is~\cite{Pelz}, which discusses how an initial condition
with a certain set of linked thin vortex tubes might lead to
singularity formation in finite time. As a side remark, let us point
out that thin vortex tubes of complicated knot topologies have been
recently constructed experimentally in the fluid mechanics laboratory at
the University of Chicago~\cite{Irvine}.

Our aim in this paper is to prove that there exist steady solutions to
the Euler equation in  $\RR^3$ having thin vortex
tubes of any link and knot type. The steady solutions we construct
are {\em Beltrami fields}\/, that is, they satisfy the equation
\[
\curl u=\la\ms u
\]
in $\RR^3$ for some nonzero real constant $\la$. As any Beltrami field
satisfies the equation $\De u=-\la^2 u$ in $\RR^3$, it is apparent
that the solutions we construct are real analytic but do not have finite energy (i.e., $u$
is not in $L^2(\RR^3)$). However, our construction yields solutions
with optimal decay at infinity in the class of Beltrami fields, which
fall off as $|u(x)|<C/|x|$. In particular, they are in $L^p(\RR^3)$
for all $p>3$.

The motivation to consider the class of Beltrami solutions to the
Euler equation to address the existence of linked vortex tubes comes
from Arnold's structure theorem~\cite[Theorem II.1.2]{AK99}.  Under
mild technical assumptions, this theorem ensures that the vortex lines
of a steady solution to the Euler equation whose velocity field is not
everywhere collinear with its vorticity are nicely stacked in a rigid
structure akin to those appearing in the study of integrable
Hamiltonian systems. Heuristically, this structure should somehow
restrict the way the vortex lines are arranged; partial results in
this direction have been shown in~\cite{EG99}, where it is proved that
under appropriate (strong) hypotheses the vortex lines of steady solutions with
non-collinear velocity and vorticity can only be of certain knot
types. In contrast, using Beltrami fields we have recently managed to produce
steady solutions of the Euler equation with a set of vortex lines
diffeomorphic to any locally finite link~\cite{Annals}.

Before stating the main result we need some definitions. We will say
that a bounded domain of $\RR^3$ is a (closed) {\em tube} if its
boundary is a smoothly embedded torus. Therefore, a vortex tube is a
tube whose boundary is the union of vortex lines (or, equivalently,
its boundary is an invariant torus of the
vorticity field). A convenient way of constructing {\em thin}\/ tubes is as metric neighborhoods
of curves. Indeed, if $\ga\subset\RR^3$ is a closed curve, we will
denote by
\begin{equation}\label{cTep}
\cT_\ep(\ga):=\big\{x\in\RR^3:\dist(x,\ga)<\ep\big\}
\end{equation}
the tube of core $\ga$ and thickness $\ep$. We are interested in the
case where $\ep$ is a small positive number, which corresponds to the
case of thin tubes. Obviously any finite collection of disjoint tubes
in $\RR^3$ can be isotoped to a collection of thin tubes of this form.

Let us consider a finite collection of (possibly knotted and linked)
disjoint thin tubes, constructed using metric neighborhoods of curves
as above. Our main result in this paper is that, if the thickness of the tubes
is small enough, this collection can be transformed by a $C^m$-small
diffeomorphism into a union of vortex tubes of a Beltrami field in
$\RR^3$. Furthermore, the structure of the vortex lines inside each vortex tube
is extremely rich: firstly, the boundary of each vortex tube is far
from being the only invariant torus of the flow, as the set of
invariant tori has positive Lebesgue measure. Secondly, between any
pair of these invariant tori there are infinitely many periodic vortex
lines. Thirdly, there is a periodic vortex line which
is close to the core of the initial tube and diffeomorphic to it. More
precisely, we have the following statement, where (as always
henceforth) all the diffeomorphisms are assumed smooth, and all curves
and surfaces are smoothly embedded in $\RR^3$. It should be emphasized that the
thinness of the tubes is crucially used in the proof of the theorem.

\begin{theorem}\label{T.main}
Let $\ga_1,\dots,\ga_N$ be $N$ pairwise disjoint (possibly knotted and
linked) closed curves in $\RR^3$. For small enough $\ep$, one can
transform the collection of pairwise disjoint thin
tubes $\cT_\ep(\ga_1),\dots, \cT_\ep(\ga_N)$ by a 
diffeomorphism $\Phi$ of $\RR^3$, arbitrarily close to the identity in
any $C^m$ norm, so that $\Phi[\cT_\ep(\ga_1)],\dots, \Phi[\cT_\ep(\ga_N)]$
are vortex tubes of a Beltrami field $u$, which satisfies the equation
$\curl u=\la u$ in $\RR^3$ for some nonzero constant $\la$.

Moreover, the field $u$ decays at infinity as $|D^ju(x)|<C_j/|x|$ for
all $j$ and has the following properties in each vortex tube
$\Phi[\cT_\ep(\ga_i)]$:
\begin{enumerate}
\item In the interior of $\Phi[\cT_\ep(\ga_i)]$ there are uncountably
  many nested tori invariant under the Beltrami field $u$. On each of these
  invariant tori, the field $u$ is ergodic.

\item The set of invariant tori has positive Lebesgue measure in
  a small neighborhood of the boundary $\pd\Phi[\cT_\ep(\ga_i)]$.  

\item In
  the region bounded by any pair of these invariant tori there are infinitely many closed
  vortex lines, not necessarily of the same knot type as the curve~$\ga_i$.

\item The image of the core curve $\ga_i$ under the
  diffeomorphism~$\Phi$ is a periodic vortex line  of $u$. 

\end{enumerate}
\end{theorem}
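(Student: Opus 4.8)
The plan is to combine a local construction of a Beltrami field with prescribed behaviour near a thin knotted tube with a global Runge-type approximation theorem that glues these local models into a single Beltrami field on all of $\RR^3$ with the correct decay. I would proceed in four main stages.

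\emph{Stage 1: A local model near a single curve.} Fix one curve $\ga=\ga_i$ and work in a tubular neighbourhood of $\ga$ equipped with Fermi-type coordinates $(s,r,\te)$, where $s$ parametrizes $\ga$ (mod its length $L$) and $(r,\te)$ are polar coordinates in the normal disk. In these coordinates I would write down an explicit divergence-free vector field $v$ on the solid torus which (a) has the concentric tori $\{r=\text{const}\}$ as invariant tori, (b) restricts to a rotation with smoothly varying, non-degenerate rotation number on each such torus, so that a positive-measure set of them carry ergodic (irrational, Diophantine) flow, and (c) makes $\ga$ itself a periodic orbit. A convenient choice is an analogue of the ABC-type Beltrami field on the solid torus, or more robustly a field of the form $v=\grad f\times\grad g$ for suitable functions $f,g$ depending on $(r,\te,s)$; one then checks that $\curl v=\la v$ can be arranged approximately, or that $v$ is close to a genuine Beltrami field. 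The key point is that this $v$ need only be a Beltrami field on a neighbourhood of the \emph{thin} tube, and the thinness is what lets us treat the tube as a small perturbation of a straight tube and control all error terms by powers of $\ep$.

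\emph{Stage 2: From many local models to a global harmonic-type equation.} Doing Stage 1 near each $\ga_i$ gives a field $v$ defined and Beltrami on a neighbourhood $U$ of $\bigcup_i\cT_\ep(\ga_i)$. Since $\curl v=\la v$ forces $\De v=-\la^2 v$, the field $v$ is a (vector-valued) solution of a second-order elliptic equation on $U$, and $U$ has the homotopy type of a disjoint union of solid tori. I would then invoke (or prove, via an analytic-continuation and Lax–Malgrange-type argument for the Helmholtz operator $\De+\la^2$) a global Runge approximation theorem: a Beltrami field on a neighbourhood of a set with trivial topology in the appropriate sense can be approximated in $C^m(U')$, on a slightly smaller neighbourhood $U'\supset\bigcup_i\cT_\ep(\ga_i)$, by a global Beltrami field $u$ on $\RR^3$ that moreover decays as $|D^j u(x)|\le C_j/|x|$. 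The decay at infinity is obtained by representing the global approximant through the appropriate expansion in spherical Bessel functions / the far-field of the Helmholtz operator, choosing the expansion so that the slowest-decaying ($1/|x|$) term is present but nothing worse. After this step, $u$ is globally Beltrami and $C^m$-close to the local model $v$ on $U'$.

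\emph{Stage 3: Structural stability and KAM.} Now I would transfer the qualitative picture from $v$ to $u$. The concentric invariant tori of $v$ near $\pd\cT_\ep(\ga_i)$ carry Diophantine rotation numbers and, crucially, the $(r,\te,s)$-flow of $v$ is a non-degenerate (twist) integrable system there; a KAM theorem for divergence-free fields / exact volume-preserving flows then shows that a positive-measure Cantor set of these invariant tori survives under the $C^m$-small perturbation $u-v$, giving (i) uncountably many nested invariant tori with $u$ ergodic on each (item~(1)), and (ii) the positive-measure statement near the boundary (item~(2)). Item~(4): the periodic orbit $\ga$ of $v$ is, by construction, non-degenerate (its linearized Poincaré map has no eigenvalue $1$), hence it persists as a periodic orbit $\hat\ga$ of $u$ close to $\ga$; setting $\Phi$ to be a diffeomorphism of $\RR^3$, $C^m$-close to the identity, supported near $U'$, that maps $\ga_i$ to $\hat\ga$ and $\pd\cT_\ep(\ga_i)$ to one of the surviving KAM tori of $u$, we get that $\Phi[\cT_\ep(\ga_i)]$ is a genuine vortex tube of $u$ and $\Phi(\ga_i)$ is a periodic vortex line. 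Item~(3): between any two surviving invariant tori, the region is a solid-torus-like invariant region for the Poincaré return map on a meridian disk; an application of the Poincaré–Birkhoff theorem (using the twist between the two boundary tori, which have different rotation numbers) yields infinitely many periodic points of the return map, i.e.\ infinitely many closed vortex lines in between, whose knot types need not coincide with that of $\ga_i$.

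\emph{Main obstacle.} I expect the principal difficulty to be Stage 2 made quantitative together with its interface with Stage 3: one needs the global Runge approximation for $\De+\la^2$ with the \emph{simultaneous} requirements of $C^m$-closeness on $U'$ \emph{and} the sharp $1/|x|$ decay at infinity, and the closeness must be good enough (in a high $C^m$ norm, with $m$ depending on the dimension through the KAM scheme) that the non-degenerate KAM tori and the non-degenerate periodic orbit of the explicit local model $v$ all persist. Making the local model $v$ genuinely Beltrami (not merely approximately) on the thin tube — or, alternatively, absorbing the $O(\ep)$ defect in $\curl v-\la v$ into the Runge step — while keeping the twist/non-degeneracy constants bounded below uniformly as $\ep\to0$ is the delicate bookkeeping at the heart of the proof, and is exactly where the thinness hypothesis is used in an essential way.
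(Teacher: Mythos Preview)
Your overall architecture---build a local Beltrami field on each tube, globalize by a Runge-type theorem with $1/|x|$ decay, then use KAM and persistence of a periodic orbit to pass the invariant structures to the global field---matches the paper. But Stage~1 as you describe it cannot be carried out, and the obstruction propagates into Stage~3.

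The idea of writing down an ``explicit'' divergence-free model in Fermi coordinates with \emph{all} concentric tori $\{r=\text{const}\}$ invariant and a built-in nondegenerate twist is not compatible with the Beltrami constraint: $\curl v=\la v$ is a rigid first-order system, and an ABC-type ansatz or $v=\nabla f\times\nabla g$ will not satisfy it on a curved thin tube except to leading order. You acknowledge this (``approximately Beltrami''), but then the Runge step fails: the approximation theorem requires that $v$ satisfy $\curl v=\la v$ (equivalently $\Delta v+\la^2 v=0$) \emph{exactly} on a neighbourhood of the closed tubes, not up to $O(\ep)$. The paper therefore abandons the explicit-model approach entirely and instead solves a \emph{boundary value problem} for $\curl$ on each solid tube, prescribing zero normal component and the harmonic part; this produces a genuine local Beltrami field whose only guaranteed invariant torus is the boundary $\partial\cT_\ep(\ga_i)$, not a full foliation.

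The second and more serious gap is your assumption that the twist can be kept ``bounded below uniformly as $\ep\to0$''. It cannot. The local Beltrami field (with $\la=\ep^3$) is, to leading order, the harmonic field on the tube, and the harmonic field is an $O(\ep)$ perturbation of $\partial_\al+\tau(\al)\,\partial_\te$, whose Poincar\'e map is a rotation of \emph{constant} frequency $\int_0^\ell\tau\,d\al$. This is the maximally degenerate case for KAM: the normal torsion (the twist quantity in the KAM theorem of de~la~Llave et al.\ that the paper invokes) vanishes at orders $\ep^0$ and $\ep^1$, and one must compute the harmonic field to order $\ep^3$ via sharp $\ep$-weighted Neumann estimates on the thin tube to extract the leading nonzero contribution
\[
\cN_\Pi=-\tfrac{5\pi\ep^2}{8}\int_0^\ell\ka^2\tau\,d\al+O(\ep^3).
\]
This computation, together with showing that the rotation number is $\int_0^\ell\tau\,d\al+O(\ep^2)$ (not merely $O(\ep)$, which requires a separate functional-equation argument to control arbitrarily many iterates), is the real content of the paper, and your proposal does not contain a mechanism to produce it. The three small parameters $\ep$, $\la$, and the Runge error must then be ordered so that the $O(\ep^2)$ twist dominates the perturbation; this is why $\la=\ep^3$ is forced. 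Your Poincar\'e--Birkhoff argument for item~(3) is replaced in the paper by Angenent's dichotomy, but that is a minor difference.
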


An important property of the structure of the
vortex lines inside each vortex tube, as described above, is that it
is {\em stable}\/ in the following sense: on the one hand, it is robust under
small perturbations of the field $u$, meaning that the trajectories of
any field which is close enough to $u$ in a sufficiently high $C^k$ norm have the same
structure. On the other hand, the boundary of each vortex tube
$\pd\Phi[\cT_\ep(\ga_i)]$ is Lyapunov-stable under the flow of the Beltrami
field~$u$. It should be noticed too that from Property~(iv) in Theorem~\ref{T.main} we
recover the main theorem of~\cite{Annals} for finite links and improve it
by ensuring that the solution falls off at infinity (while
in~\cite{Annals} we had no control at all on the growth of the solution
at infinity). Besides, the
proof of Theorem~\ref{T.main} shows that the vortex lines
$\Phi(\ga_i)$ are elliptic trajectories, and therefore linearly stable, 
while the vortex lines we constructed in~\cite{Annals} were
hyperbolic, and thus unstable.

After establishing his structure theorem, Arnold
conjectured~\cite{Ar66} that, contrary to what happens in the
non-collinear case, Beltrami fields could present vortex lines of the
same topological complexity as the trajectories of any divergence-free vector
field. By KAM theory, typically these trajectories give rise to a set of
invariant tori with positive measure and chaotic regions with
homoclinic tangles between these tori. Theorem~\ref{T.main}
is fully consistent with this picture, and proves the existence of
the aforementioned positive-measure set of invariant tori.


The paper is organized as follows. In Section~\ref{S.strategy} we will
discuss the strategy of the proof of Theorem~\ref{T.main}. This section, which
serves as a guide to the paper, also allows us to explain
the key difficulties that appear in the proof of this result but not
in that of~\cite{Annals}, which require the introduction of new
techniques and ideas, and make this paper
considerably more involved. In Section~\ref{S.tubes} we introduce some
objects associated with the geometry of a thin tube that will be used
throughout the paper. In Section~\ref{S.Laplacian} we prove some
estimates for the Laplacian in a thin tube with Neumann boundary
conditions. These estimates are used in Section~\ref{S.harmonic} to
study harmonic fields in thin tubes. In
Section~\ref{S.Beltrami} we construct Beltrami fields in thin tubes
with prescribed harmonic part. These fields are
analyzed further in Section~\ref{S.KAM}, where we prove a KAM
theorem for Beltrami fields in generic thin tubes. A Runge-type approximation theorem by global
Beltrami fields tending to zero at infinity is presented in
Section~\ref{S.approx}. With all these ingredients, in
Section~\ref{S.main} we prove Theorem~\ref{T.main}. The paper
concludes with an easy application to the Navier--Stokes equation,
which we present
in Section~\ref{S.remarks}.

\section{Strategy of the proof and guide to the paper}
\label{S.strategy}

To prove Theorem~\ref{T.main}, our basic goal is to establish the existence of a Beltrami field $u$,
satisfying the equation 
\[
\curl u=\la u
\]
in $\RR^3$ for some nonzero constant $\la$ and falling off at
infinity, such that the field $u$ has a set of $N$ invariant tori
diffeomorphic to the surfaces $\{\pd\cT_\ep(\ga_i)\}_{i=1}^N$, with
$\{\ga_i\}_{i=1}^N$ being a set of prescribed (possibly knotted and linked) closed curves. We recall
that $\cT_\ep(\ga_i)$ is a metric neighborhood of the curve
$\ga_i$ of small thickness~$\ep$, as defined in Eq.~\eqref{cTep}. By
deforming them a little if necessary, we can assume without loss
of generality that the curves $\ga_i$ are analytic.

The basic idea behind the proof of Theorem~\ref{T.main} is carried out
in three interrelated stages. Firstly, we construct a {\em local
Beltrami field $v$}, which satisfies the Beltrami equation in a
neighborhood of each closed tube $\overline{\cT_\ep(\ga_i)}$ and has a set of
invariant tori given by $\pd\cT_\ep(\ga_1),\dots,
\pd\cT_\ep(\ga_N)$. Secondly, we prove that these invariant tori are
{\em ``robust''}, meaning that suitably small perturbations of the
local Beltrami field $v$ still have a set of invariant tori
diffeomorphic to $\pd\cT_\ep(\ga_1),\dots, \pd\cT_\ep(\ga_N)$. To
conclude, we show that the local Beltrami field $v$ can be
approximated in any $C^k$ norm by a {\em global Beltrami field $u$}\/,
which satisfies the Beltrami equation in the whole space $\RR^3$ and
falls off at infinity in an optimal way. The robustness of the
invariant tori of the local Beltrami field ensure that $u$ has a set
of vortex tubes diffeomorphic to the initial configuration of thin
tubes, and that in fact this diffeomorphism can be taken close to the
identity in a $C^m$ norm.

However, the implementation of this basic idea turns out to be
extremely subtle. To understand why, one can start by noticing that
the robustness of the invariant tori of the local Beltrami field
relies on a KAM-type argument. To apply this KAM argument, we need a
small perturbation parameter and some control on the dynamics of the
local Beltrami field in a neighborhood of each invariant torus, which
is required in order to show that the local Beltrami field is equivalent to a Diophantine rotation on
the torus and satisfies a suitable nondegeneracy condition.

To construct local Beltrami fields in a neighborhood of the tori
$\pd\cT_\ep(\ga_i)$ with controlled behavior on these surfaces, it is
natural to use some variant of the Cauchy--Kowalewski theorem for the
curl operator with Cauchy data on the tori (e.g.~\cite[Theorem
3.1]{Annals}). This would lead to a local Beltrami field defined in a small
neighborhood~$\Om_i$ of each torus $\pd\cT_\ep(\ga_i)$. Unfortunately,
the Runge-type approximation theorem we prove in this paper does
not allow us to approximate a local Beltrami field defined in
$\Om_1\cup\cdots\cup \Om_N$ by a global Beltrami field, since the
complement $\RR^3\minus (\Om_1\cup\cdots\cup \Om_N)$ has compact connected
components. As is well known, this is not just a technical issue, but
a fundamental obstruction in any Runge-type theorem.

Therefore, to construct the local Beltrami field $v$ we will not
consider a Cauchy problem but a
boundary value problem for the curl operator in each tube. This leads
to a vector field that satisfies the Beltrami equation
\[
\curl v=\la v
\]
in a neighborhood of the tubes (because the boundaries are analytic),
which allows us to apply our Runge-type approximation theorem. In this
boundary value problem one can prescribe the normal component of $v$
at each boundary $\pd\cT_\ep(\ga_i)$, so that by
setting it to zero we can ensure that each boundary is an invariant
torus of the local Beltrami field. However, we have no control on the
tangential component of $v$ on each torus, so in principle the local
Beltrami field does not necessarily satisfy the dynamical conditions
our KAM-type theorem requires for the invariant tori to be preserved
under small perturbations.

To overcome this difficulty we resort to a careful analysis of harmonic
fields in thin tubes. Indeed, we show that in the above boundary value
problem for the local Beltrami field, one can also prescribe its
harmonic part (that is, the $L^2$ projection of $v$ into the space of
harmonic fields with tangency boundary conditions), and for small
$\la$ one expects the local Beltrami field to behave essentially as if
it were harmonic. Hence, we exploit the small parameter $\ep$ (that
is, the thickness of the tubes) to extract detailed analytic
information about harmonic fields through energy estimates, and then
utilize this knowledge to compute the dynamical properties of the
local Beltrami field that are required in the KAM-type theorem under
the assumption that the Beltrami parameter $\la$ is suitably small (in
fact, of order $\ep^3$). A key ingredient in the computation of these
dynamical properties is a set of energy estimates for the local
Beltrami fields and all its derivatives that are optimal with respect
to the geometry of the tubes, i.e., the parameter $\ep$.

The local Beltrami field can now be approximated in any $C^k$ norm by
a global Beltrami field $u$ that falls off at infinity as $C/|x|$
using a Runge-type theorem. When using the KAM argument to guarantee
that $u$ still has a set of invariant tori diffeomorphic to
$\{\pd\cT_\ep(\ga_i)\}$, one has to face the problem that the local
Beltrami field is in fact a small perturbation of a field that does
{\em not}\/ satisfy the nondegeneracy condition (equivalent to a
rotation of the disk with constant frequency), which requires a fine
analysis of the terms controlled by each small parameter:
the thickness~$\ep$, the Beltrami constant~$\la$ and the error in the
Runge-type approximation.

We shall next present a sketch of the proof of Theorem~\ref{T.main},
where we explain the different intermediate results that are needed
in the demonstration and their interrelations. This short sketch is
also intended to serve as a guide to the paper. For the sake of
clarity, we will divide the proof in three stages:

\vspace{2.5ex}

\noindent{\bf Stage 1: Construction of the local Beltrami field.}
The local Beltrami field $v$ is obtained as the unique solution to
certain boundary value problem for the Beltrami equation. Our goal is
to estimate various analytic properties of this field, and for this
it is natural to introduce coordinates adapted to a Frenet frame in
each tube $\cT_\ep\equiv \cT_\ep(\ga_i)$, which essentially
correspond to an arc-length parametrization of the curve $\ga_i$ and
to rectangular coordinates in a transverse section of the tube. Thus we
consider an angular coordinate $\al$, taking values in
$\es:=\RR/\ell\ZZ$ (with $\ell$ the length of the curve $\ga_i$), and
rectangular coordinates $y=(y_1,y_2)$ taking values in the unit
$2$-disk $\DD$. Details are given in {\em Section~\ref{S.tubes}}\/.

To extract information about the local Beltrami field, a useful tool
is the boundary value problem for the Laplacian on scalar functions
with zero mean and zero Neumann
boundary conditions in the thin tube $\cT_\ep$:
\[
\De\psi=\rho\quad \text{in }\cT_\ep\,,\qquad
\pd_\nu\psi|_{\pd\cT_\ep}=0\,, \qquad \int_{\cT_\ep}\psi\, dx=0\,.
\]
When written in the natural coordinates $(\al,y)$, we obtain a
boundary value problem in the domain $\es\times\DD$, the
coefficients of the Laplacian in these coordinates depending on the
geometry of the tube strongly through its thickness $\ep$ and the
curvature and torsion of the core curve $\ga_i$.

It is clear that we have the $H^k$ estimate
\[
\|\psi\|_{H^k(\cT_\ep)}\leq C_{\ep,k}\|\rho\|_{H^{k-2}(\cT_\ep)}\,.
\]
However this estimate, where the constant $C_{\ep,k}$ depends on $\ep$
in an undetermined way, is far from being enough to compute the
dynamical quantities for the local Beltrami field that are needed in
the KAM-type theorem. Therefore, in {\em Section~\ref{S.Laplacian}}\/ we prove
energy estimates for the function $\psi$ and its derivatives that are
optimal with respect to the parameter~$\ep$. In particular, in order
to be able to compute the desired dynamical quantities for the local
Beltrami field later on, it is crucial to distinguish between
estimates for derivatives of $\psi$ with respect to the ``slow''
variable $\al$ and the ``fast'' variable $y$. The estimates for
the function $\psi$ and its derivatives that we will need are stated
in Theorem~\ref{T.Ck}.

These estimates are immediately put to work in {\em
  Section~\ref{S.harmonic}}\/ to derive a
perturbative expression for the harmonic field in each thin tube
$\cT_\ep$ and small~$\ep$. (Of course, the problem degenerates for
$\ep=0$ and the asymptotic results we prove do not correspond to a
Taylor expansion.) We will use the notation~$h$ for the harmonic field in $\cT_\ep$, which
is unique up to a multiplicative constant. In fact, we need to compute $h$ up to corrections that are
suitably small for small $\ep$; as before, different powers of $\ep$ are
required for the slow and fast components of the field. To first
order, the harmonic field can be written in the coordinates $(\al,y)$
as
\begin{equation}\label{perturbh}
h=\pd_\al+\tau(\al)\,(y_1\,\pd_2-y_2\,\pd_1)+\cO(\ep)\,,
\end{equation}
where $\tau$ stands for the torsion of each curve $\ga_i$ and
$\cO(\ep)$ denotes some vector field whose components in these
coordinates are bounded by a multiple of $\ep$ in the
$C^k(\es\times\DD)$ norm. By Eq.~\eqref{perturbh}, the field $h$ is an $\cO(\ep)$~perturbation of the rotation of constant frequency given by the total
torsion $\int_0^\ell\tau(\al)\, d\al$. From the point of view of KAM
theory, this is a very degenerate case, so it is not hard to guess that
one needs to compute $h$ (at least) up to order $\cO(\ep^3)$, as we do in
Theorem~\ref{T.psi0}. As a matter of fact, we will see later on that
the nondegeneracy condition that appears in the KAM theorem is that a
certain quantity, called the normal torsion, must be nonzero, and that
it actually vanishes modulo terms of order $\cO(\ep^2)$, which justifies
why we need good estimates for $h$.

In {\em Section~\ref{S.Beltrami}}\/ we show that, for any nonzero
constant $\la$ that is small enough (say, smaller than some fixed,
$\ep$-independent constant), there is a unique vector field that is
tangent to the boundary of each tube $\cT_\ep$, satisfies the equation
\begin{equation}\label{curlvrollo}
\curl v=\la v
\end{equation}
in the tube and whose harmonic part (i.e., the $L^2$ projection on the
space of harmonic fields) is $h$. (This is, of course, different to
what happens in the case of compact manifolds without boundary, where
Beltrami fields are always orthogonal to harmonic fields.)
We also prove estimates that measure how the field $v$ becomes close
to $h$ in the $C^k$ norm for small $\ep$ and $\la$. The key result is
Theorem~\ref{T.estimBeltrami}, which will be crucial in verifying the
conditions in the KAM argument  for the preservation of invariant tori.

\vspace{2.5ex}

\noindent{\bf Stage 2: Preservation of the invariant tori.} 
In Stage~1, for small enough~$\ep$ and $\la$, we have constructed a local Beltrami field $v$, which
satisfies Eq.~\eqref{curlvrollo} in a neighborhood of
$\bigcup_{i=1}^N\overline{\cT_\ep(\ga_i)}$ and has a set of invariant
tori given by $\pd\cT_\ep(\ga_i)$. Furthermore, the estimates we have
proved provide a very convenient expression for the local Beltrami
field, up to terms that are of order $\ep^3$ (when $\la=\ep^3$) in a
suitable sense.

In {\em Section~\ref{S.KAM}} we analyze the robustness of the
invariant tori using the Poincar\'e map~$\Pi$ defined by the local Beltrami field at
a transverse section of each tube. In each tube, $\Pi$ is
then a diffeomorphism of the disk that preserves certain measure,
while the intersection of each invariant torus with the transverse
section is an invariant circle of the Poincar\'e map. The persistence
of the invariant tori will rely on a KAM theorem for the Poincar\'e
map (Theorem~\ref{T.Llave}) that applies to individual invariant
circles.

To apply the KAM theorem, one has to verify that the rotation number
of $\Pi$ on the invariant circle is Diophantine and that $\Pi$ satisfies
a nondegeneracy condition (namely, that its normal torsion is
nonzero). Computing these dynamical quantities using the estimates for the local
Beltrami field is nontrivial. On the one hand, the rotation number
depends on the behavior of the trajectories for arbitrarily large
times, so a delicate treatment is required in order to control uniformly the
effect of the $\cO(\ep^2)$ terms in the vector field. On the
other hand, the normal torsion, which is required to be nonzero, turns
out to be zero modulo $\cO(\ep^2)$, since the Poincar\'e map is a
small perturbation of a constant frequency rotation of the disk, which is a highly
degenerate case from the point of view of KAM theory.

The estimates proved at Stage~1 are tailored to permit us to overcome
these difficulties, yielding expressions for the rotation number
$\om_\Pi$ and the normal torsion~$\cN_\Pi$ that depend on the geometry
of each curve $\ga_i$ through its curvature~$\ka$ and torsion~$\tau$
(see Theorems~\ref{T.rotation} and~\ref{T.Herman}):
\begin{align*}
\om_\Pi&=\int_0^\ell \tau(\al)\,d\al+\cO(\ep^2)\,,\\
\cN_\Pi&=-\frac{5\pi\ep^2}8\int_0^\ell\ka(\al)^2\,\tau(\al)\, d\al
+\cO(\ep^3)\,.
\end{align*}

These expressions allow us to prove the main result in this Stage~2,
which is that for ``generic'' curves $\ga_i$ the hypotheses of the
aforementioned KAM theorem are satisfied, so that the invariant tori
of the local Beltrami field $v$ are robust: if $u$ is a
divergence-free vector field in a neighborhood of the tubes that is
close enough to~$v$ in a suitable sense (e.g., in a high enough $C^k$
norm), then $u$ also has an invariant tube diffeomorphic to each
$\cT_\ep(\ga_i)$, and moreover the corresponding diffeomorphisms can
be taken close to the identity. This is proved in Theorem~\ref{T.KAM}.

\vspace{2.5ex}

\noindent{\bf Stage 3: Approximation by a global Beltrami field.}
In {\em Section~\ref{S.approx}}\/ we prove a Runge-type approximation
theorem for Beltrami fields that decay at infinity (Theorem~\ref{T.approx}). More precisely, we
will show that the local Beltrami field $v$, considered in the
previous stages and defined in a neighborhood of the thin tubes $\cT_\ep(\ga_i)$, can be
approximated in any $C^k$ norm by a Beltrami field $u$ that falls off
at infinity as $|D^ju(x)|<C_j/|x|$. This decay is optimal for the
class of Beltrami fields.

The proof of this theorem consists of two steps. In the first step we
use functional-analytic methods to approximate the field field $v$ by
an auxiliary vector field $w$ that satisfies the elliptic equation
$\De w=-\la^2 w$ in a large ball of $\RR^3$ that contains all the
tubes. In the second step, we define the approximating global Beltrami
field $u$ in terms of a truncation of a suitable series representation
of the field $w$, ensuring that $u$ has the desired fall-off at
infinity.

\vspace{2.5ex}

\noindent{\bf Completion of the proof of the main theorem.} 
In {\em Section~\ref{S.main}}\/ we use all previous results to
complete the proof of Theorem~\ref{T.main}. Indeed, from the
robustness of the invariant tori of the local Beltrami field, it
immediately follows that the global Beltrami field $u$ has a set of
thin vortex tubes equivalent through a $C^m$-small diffeomorphism
$\Phi$ to $\{\cT_\ep(\ga_i)\}$, provided $\ep$ is small enough. More
precisely, what is proved is that there is some constant $\ep_0$,
which depends only (albeit in a rather nontrivial way) on the geometry
of the curves $\ga_i$ and on the allowed smallness for the
diffeomorphism~$\Phi$, so that the statement of the theorem holds for
any thickness~$\ep\leq\ep_0$. 

The remaining properties of the vortex lines stated in
Theorem~\ref{T.main} are established in this section too, using
results derived throughout the paper. In particular, we show that near
the curve $\ga_i$ there is an elliptic periodic trajectory of the
field $u$. As a side remark, notice that this elliptic periodic
trajectory is obviously linearly stable, but is {\em not}\/ granted a priori
to be Lyapunov stable. Therefore, as shown by the counterexample of
Anosov and Katok (cf.\ e.g.~\cite{FK09}), it is not guaranteed that
there are invariant tori of the field in a neighborhood of the
elliptic trajectory. A careful (and nontrivial) analysis of the
dynamics near the elliptic trajectory for small $\ep$ and $\la$ would
be required to prove the existence of these tori.


\vspace{2.5ex}

Before beginning with the technical part of the paper, it is
convenient to provide a short comparison between the proof of
Theorem~\ref{T.main} and that of the main theorem in
Ref.~\cite{Annals}, where we showed that there are steady solutions to
the Euler equation with a set of vortex lines diffeomorphic to any
given link. In this reference, the proof was also based on the
construction of a local Beltrami field with a ``robust'' set of
periodic trajectories, which was then approximated by a global
Beltrami field. However, the implementation of this basic principle is
totally different.

To begin with, the robustness of periodic trajectories
in~\cite{Annals} relies on the hyperbolic permanence theorem (which is
essentially an application of the implicit function theorem) instead
of a considerably more sophisticated KAM argument. To construct a
local Beltrami field with prescribed periodic trajectories $\ga_i$ that are
hyperbolic, as required by the permanence theorem, it is enough to
prove a suitable analog of the Cauchy--Kowalewski theorem for the
curl operator: indeed, since the field is divergence-free, it is
enough to assume that the field used as Cauchy datum is exponentially
contracting into the curve $\ga_i$ on the Cauchy surface to obtain a local Beltrami field
that has $\ga_i$ as a hyperbolic periodic trajectory. All this is in
strong contrast with the case of vortex tubes, where the construction
of the local solution with a robust set of prescribed invariant tori
requires the analysis of the boundary value problem and the KAM
argument described in Stages~1 and~2 (Sections~\ref{S.Laplacian}
to~\ref{S.KAM}). 

The approximation theorem we use in this paper is also different from
the one employed in~\cite{Annals}. The reason for this is that
in~\cite{Annals} we had no control on the growth of the global
Beltrami field at infinity, even in the case of connected links. On
the contrary, the approximation theorem we prove in this paper
(Theorem~\ref{T.approx}) yields Beltrami fields with optimal fall-off
at infinity. The proof of these approximation theorems is considerably
different: the old theorem is based on an iterative scheme that
uses a theorem by Lax and Malgrange and works for the $\curl$ operator
in any Riemannian $3$-manifold, while the new one is based on
different principles and takes advantage of the geometry of Euclidean
$3$-space to ensure that the global Beltrami field falls off at
infinity. 

Notice that the main theorem in~\cite{Annals} applies to
locally finite sets of curves, while in Theorem~\ref{T.main} in this
paper we can only take finite sets of curves $\ga_1,\dots, \ga_N$. Some
comments in this regard are made in Remark~\ref{R.locallyfinite}.

\section{Geometry of thin tubes}
\label{S.tubes}

In this section we introduce some notation, including a coordinate
system, that will be used throughout the paper to describe functions
and vector fields defined on thin tubes. These tubes will be
characterized in terms of the curve that sits on its core and its
thickness $\ep$, which is a parameter that will be everywhere assumed
to be suitably small.

Let us start with a closed analytic curve with an arc-length
parametrization $\ga:\es\to \RR^3$, with $\es:=\RR/\ell\ms
\ZZ$ (throughout the paper, when the period is $2\pi$ we will simply
write $\SS^1\equiv \SS^1_{2\pi}$). This amounts to saying that the tangent field~$\dot\ga$ has unit norm and $\ell$
is the length of the curve. We will abuse the
notation and denote also by $\ga$ the curve parametrized by the above
map (i.e., the image set $\ga(\es)$).

Let us denote
by $\cT_\ep\equiv \cT_\ep(\ga)$ a metric neighborhood with
thickness~$\ep$ of the curve~$\ga$, that is,
\[
\cT_\ep:=\big\{x\in\RR^3:\dist(x,\ga)<\ep\big\}\,.
\]
This is a thin tube having the curve $\ga$ as its core.  It is
standard that, for small $\ep$, the boundary $\pd \cT_\ep$ is
analytic. The normal bundle of the curve $\ga$ being
trivial~\cite{Ma59}, one can associate to each $\al\in \es$ two
orthogonal unit vectors $e_j(\al)$ in $\RR^3$ perpendicular to the
curve at the point $\ga(\al)$. For convenience, we will make the
assumption that the curvature of the curve $\ga$ does not vanish, which allows us to take $e_1(\al),e_2(\al)$ as the normal
and binormal vectors at the point $\ga(\al)$. It is well known that
the assumption that the curvature does not vanish is satisfied for
generic curves~\cite[p.~184]{Bruce} (roughly speaking, ``generic''
refers to an open and dense set, with respect to a reasonable $C^k$
topology, in the space of smooth curves in $\RR^3$).



Using the vector fields $e_j(\al)$ and denoting by $\DD$ the two-dimensional
unit disk, we can introduce analytic coordinates
$(\al,y)\in\es\times\DD$ in the tube $\cT_\ep$ via the diffeomorphism
\[
(\al,y)\mapsto \ga(\al)+\ep\ms y_1\ms e_1(\al)+\ep \ms y_2\ms e_2(\al)\,.
\]
In the coordinates $(\al,y)$, a short computation using the Frenet
formulas shows that the Euclidean metric in the tube reads as
\begin{equation}\label{ds}
ds^2=A\,d\al^2+2\ep^2\tau(y_2\,dy_1-y_1\,dy_2)\,d\al+\ep^2\big(dy_1^2+dy_2^2\big)\,,
\end{equation}
where $\ka\equiv\ka(\al)$ and $\tau\equiv\tau(\al)$ respectively
denote the curvature and torsion of the curve,
\begin{equation}\label{A1}
A:=(1-\ep\ka y_1)^2+(\ep\tau)^2|y|^2\,,
\end{equation}
and $|y|$ stands for the Euclidean norm of $y=(y_1,y_2)$. As is customary, we will denote by $g_{ij}$
and $g^{ij}$ the components of the metric tensor and its inverse,
respectively.

We will sometimes take polar coordinates $r\in(0,1)$, $\te\in\SS^1:=\RR/2\pi\ms\ZZ$  in the disk~$\DD$, which are
defined so that
\[
y_1=r\cos\te\,,\quad y_2=r\sin\te\,.
\]
The metric then reads
\begin{equation}\label{dspolar}
ds^2=A\,d\al^2-2\ep^2\tau
r^2d\te\,d\al+\ep^2dr^2+\ep^2r^2d\te^2\,,
\end{equation}
where we, with a slight abuse of notation, still call $A$ the
expression of~\eqref{A1} in these coordinates, i.e.,
\begin{equation}\label{A2}
A:=(1-\ep\ka r\cos\te)^2+(\ep\tau r)^2\,.
\end{equation}
Notice that the coordinate $r$ is simply the distance to the curve
$\ga$. For future reference, we record that the volume measure is written in these coordinates as
\begin{equation}\label{dV}
dV:=B\,d\al\,dy=B r\,d\al\,dr\,d\te
\end{equation}
up to a factor of $\ep^2$, where $dy:=dy_1\,dy_2$ and
\begin{equation}\label{B}
B:=1-\ep\ka y_1=1-\ep\ka r\cos\te\,.
\end{equation}

\section{Estimates for the Neumann Laplacian in thin tubes}
\label{S.Laplacian}

In this section we will derive some estimates for the Laplace equation $\De \psi=\rho$
in the thin tube $\cT_\ep$ with zero Neumann boundary
conditions. As we will see, to gain control on the function $\psi$ it is convenient to attack this
equation in the coordinates $(\al,y)\in\es\times\DD$ that we introduced in
Section~\ref{S.tubes}, in terms of which the Laplace equation can be
written as
\begin{subequations}\label{Depsi}
\begin{equation}
\De\psi=\rho \quad\text{in } \es\times\DD\,,\qquad \pd_\nu\psi=0 \quad\text{on } \es\times\pd\DD\,,
\end{equation}
where the Laplacian $\De$ is now interpreted as an $\ep$-dependent
differential operator in the variables $(\al,y)$ or $(\al,r,\te)$. In
order to ensure the existence of solutions to this equation, we
suppose that $\int\rho\,dV=0$, which allows us to uniquely determine
the solution $\psi$ by demanding that it also has zero mean:
\begin{equation}
\int \psi\,d\al\,dy=0\,.
\end{equation}
\end{subequations}
(Here and in what follows, we omit the domain of integration when it
is the whole domain $\es\times\DD$. We could have used the measure
$dV$ in the above formula too, but this choice is slightly more convenient.) For future reference, we record
here the expression of $\De$ in the variables $(\al,r,\te)$:
\begin{align}
\De\psi&=\frac1{\ep^2}\Big(\psi_{rr}+\frac1r\psi_r+\frac
A{r^2B^2}\psi_{\te\te}\Big)+\frac1{B^2}\psi_{\al\al} +
\frac{2\tau}{B^2}\psi_{\al\te} +\frac{\tau'-\ep
  r(\ka\tau'-\ka'\tau)\cos\te}{B^3}\psi_\te \notag
\\&\qquad\qquad+\frac1\ep\Big(
\frac{\ka\sin\te(B^2-(\ep\tau r)^2)}{r B^3}\psi_\te-\frac{\ka \cos\te}{B}\psi_r \Big)
+\frac{\ep r(\ka'\cos\te-\tau\ka\sin\te)}{B^3}\psi_\al \,.\label{De}
\end{align}
As usual, we denote partial derivatives by subscripts when
there is not risk of confusion.

Given a subset $\Om\subset \es\times\DD$, we will use the notation 
\[
\|\psi\|_\Om:=\bigg(\int_\Om \psi^2\,d\al\,dy\bigg)^{1/2}
\]
for the $L^2(\Om)$-norm of the function $\psi$, omitting the subscript
when $\Om$ is the whole domain $\es\times\DD$. In this section we will
use the notation
\begin{equation}\label{dH1ep}
\|\psi\|_{\dot H^1_\ep}^2:=\|\pd_\al\psi\|^2+ \frac{\|D_y\psi\|^2}{\ep^2}
\end{equation}
for a homogeneous Sobolev norm in which the derivatives associated
with the ``small directions'' of the thin tube are weighted with an
appropriate $\ep$-dependent factor. The usual $H^k$ norm of the function $\psi(\al,y)$ will be denoted by $\|\psi\|_{H^k}$.

In what follows, we will assume that $\psi$ is a solution of the Laplace
equation~\eqref{Depsi} with zero mean and zero Neumann boundary
conditions, which ensures that
\begin{equation}\label{weak}
\int g^{ij}\,\pd_i\psi\,\pd_j\vp\, dV=-\int \rho\vp\, dV
\end{equation}
for any $\vp\in H^1(\es\times\DD)$. As usual, the scripts $i,j$ range over the set of coordinates
$\{\al,y_1,y_2\}$, and summation over repeated indices is understood. It is important to notice that, for any function
$\vp$,
\begin{equation}\label{H1epgij}
\int g^{ij}\,\pd_i\vp\,\pd_j\vp\,
dV=\big(1+\cO(\ep)\big)\,\|\vp\|_{\dot H^1_\ep}^2\,,
\end{equation}
so that the $\dot H^1_\ep$~norm is essentially a more convenient way of dealing with
the natural $\dot H^1$~norm associated with the metric. We will use this identity many times in this section without further comment.

The structure of this section is the following. We will start by
estimating the $L^2$ norm of the derivatives of the function $\psi$
with respect to the ``slow'' variable $\al$
(Subsection~\ref{SS.basic}). The proof of these estimates is
standard. We recall that the reason why $\al$ is called the slow
variable is that it parametrizes the ``large'' direction of the thin
tube $\cT_\ep$, as opposed to the ``fast'' variable $y$, which is
obtained by rescaling the small section of the tube. Estimates for the
derivatives of $\psi$ with respect to the ``fast'' variable $y$ with
optimal dependence in the small parameter $\ep$ are presented in
Subsection~\ref{SS.improved}. They are more complicated to obtain,
basically because one has to consider an auxiliary function in order
to get rid of the contributions to $\psi$ that only depend on the slow
variable. The resulting estimates for $\psi$, which we will often use
in forthcoming sections, will be stated in Subsection~\ref{SS.Ck}.

\subsection{Estimates for derivatives with respect to the ``slow'' variable}
\label{SS.basic}

In this subsection we will prove $H^k$ estimates for the derivatives of
the function $\psi$ with respect to the ``slow'' variable $\al$ (cf.\ Proposition~\ref{L.psial}).
We will begin with the following proposition, where we estimate the $L^2$
and $\dot H^1_\ep$ norms of the function $\psi$. As is customary, throughout this article we will use the 
letter $C$ to denote $\ep$-independent constants that may vary
from line to line.

\begin{proposition}\label{L.H1}
  The function $\psi$ satisfies the estimate
\[
\|\psi\|+\|\psi\|_{\dot H^1_\ep}\leq C\|\rho\|\,.
\]
\end{proposition}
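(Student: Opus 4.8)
The plan is to obtain both bounds from the weak formulation \eqref{weak} by a standard energy argument, the only subtlety being to track the $\ep$-weights correctly. First I would test \eqref{weak} with $\vp=\psi$, which is legitimate since $\psi\in H^1(\es\times\DD)$. On the left-hand side, the identity \eqref{H1epgij} gives $\int g^{ij}\,\pd_i\psi\,\pd_j\psi\,dV=\big(1+\cO(\ep)\big)\,\|\psi\|_{\dot H^1_\ep}^2$, so for $\ep$ small enough this is bounded below by $\tfrac12\|\psi\|_{\dot H^1_\ep}^2$. On the right-hand side we have $-\int\rho\psi\,dV$, which by Cauchy--Schwarz (and using that $dV=B\,d\al\,dy$ with $B=1+\cO(\ep)$ bounded above and below) is at most $C\|\rho\|\,\|\psi\|$. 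Combining, $\|\psi\|_{\dot H^1_\ep}^2\leq C\|\rho\|\,\|\psi\|$.

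To close the estimate I need to control $\|\psi\|$ in terms of $\|\psi\|_{\dot H^1_\ep}$, i.e. a Poincar\'e inequality on $\es\times\DD$ for functions of zero mean. Since $\psi$ has zero mean, the usual Poincar\'e inequality on the fixed domain $\es\times\DD$ gives $\|\psi\|\leq C\big(\|\pd_\al\psi\|+\|D_y\psi\|\big)$. The term $\|D_y\psi\|=\ep\,\big(\ep^{-2}\|D_y\psi\|^2\big)^{1/2}\leq\ep\,\|\psi\|_{\dot H^1_\ep}$, so in fact $\|\psi\|\leq C\|\psi\|_{\dot H^1_\ep}$ (with room to spare for small $\ep$). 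Feeding this into the inequality from the previous paragraph, $\|\psi\|_{\dot H^1_\ep}^2\leq C\|\rho\|\,\|\psi\|_{\dot H^1_\ep}$, hence $\|\psi\|_{\dot H^1_\ep}\leq C\|\rho\|$, and then $\|\psi\|\leq C\|\psi\|_{\dot H^1_\ep}\leq C\|\rho\|$; adding the two yields the claimed bound.

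The one point requiring a little care — and the step I would expect to be the main obstacle, modest as it is — is justifying that the constant in the Poincar\'e inequality can be taken independent of $\ep$. This is genuinely true here because the domain $\es\times\DD$ and the flat measure $d\al\,dy$ in which the $\dot H^1_\ep$ norm and the zero-mean condition are phrased do \emph{not} depend on $\ep$; all the $\ep$-dependence has been pushed into the metric coefficients $g^{ij}$, which enter only through the harmless factor $1+\cO(\ep)$ in \eqref{H1epgij} and the factor $B$ in $dV$. So the Poincar\'e constant is simply that of the fixed cylinder $\es\times\DD$. One should also note that the zero-mean normalization in \eqref{Depsi} is with respect to $d\al\,dy$ rather than $dV$, which is exactly why this choice was flagged as ``slightly more convenient'' in the text — it lets us invoke the fixed-domain Poincar\'e inequality directly without first correcting the mean.
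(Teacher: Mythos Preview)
Your proof is correct and follows essentially the same approach as the paper's: test the weak formulation with $\vp=\psi$, use \eqref{H1epgij} to extract $\|\psi\|_{\dot H^1_\ep}^2$ on the left, Cauchy--Schwarz on the right, and close with a Poincar\'e inequality on the fixed domain $\es\times\DD$ for zero-mean functions. The paper phrases that last step as a lower bound by the first nonzero Neumann eigenvalue $\mu_1$ of the flat solid torus, which is exactly your Poincar\'e inequality; your observation that the zero-mean normalization is taken with respect to $d\al\,dy$ rather than $dV$ is precisely the point.
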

\begin{proof}
By the expression of the metric in the coordinates $(\al,y)$, it is
clear that for small
enough $\ep$ one has
\[
\int g^{ij}\pd_i\psi\,\pd_j\psi\, dV\geq \big(1+\cO(\ep)\big)\int
\Big(\psi_\al^2+\frac{|D_y\psi|^2}{\ep^2}\Big)\,d\al \,dy\geq \frac12\int
\big(\psi_\al^2+|D_y\psi|^2\big)\,d\al \,dy\,.
\]
The rightmost term in the
inequality is bounded from below by $\frac12\mu_1\|\psi\|^2$, where $\mu_1$ stands for
the first nonzero Neumann eigenvalue of the flat solid torus:
\[
\mu_1:=\inf\bigg\{\int
\big(\vp_\al^2+|D_y\vp|^2\big)\,d\al\,dy: \vp\in
C^\infty(\es\times\DD),\;\int\vp\,d\al\,dy=0,\; \|\vp\|=1\bigg\}>0\,.
\]
Hence we infer that
\begin{equation}\label{Neigenv}
\|\psi\|\leq C \|\psi\|_{\dot H^1_\ep}
\end{equation}
for some $C>0$.

To conclude, we can now use the weak formulation of
the equation~\eqref{weak} with $\vp=\psi$ and the Cauchy--Schwartz inequality to derive that
\[
\|\psi\|_{\dot H^1_\ep}^2\leq \big(1+\cO(\ep)\big)\int g^{ij} \pd_i\psi\,\pd_j\psi\, dV\leq
\big(1+\cO(\ep)\big)\|\rho\|\|\psi\|\leq C\|\rho\|\|\psi\|_{\dot H^1_\ep}\,.
\]
The proposition then follows from this inequality
and the estimate~\eqref{Neigenv}.
\end{proof}

In the following  lemma we record an elementary inequality that will
be of use several times in this section.

\begin{lemma}\label{L.npi}
Let $D^l_{\al,y}$ denote the tensor of $l$\th\ order derivatives with respect to
the variables $(\al,y)$. For any well-behaved (e.g., smooth) functions
$\vp,\tilde\vp,\chi,\tilde\chi$ on $\es\times\DD$, 
one has
\[
\int \big| D_{\al,y}^l(B\ms g^{ij})\,\pd_i\vp\,\pd_j\tilde\vp\,\chi\tilde\chi \big|\,d\al\,dy 
\leq \ep C_l\bigg(\!\int g^{ij}\,\pd_i\vp\,\pd_j\vp\ms\chi\,dV \!\bigg)^{\frac12}\bigg(\!\int  g^{ij}\pd_i\tilde\vp\,\pd_j\tilde\vp\,\tilde\chi\,dV \!\bigg)^{\frac12}.
\]
In particular,
\[
\int \big| D_{\al,y}^l(B\ms g^{ij})\,\pd_i\vp\,\pd_j\tilde\vp
\big|\,d\al\,dy\leq C_l\ep\|\vp\|_{\dot H^1_\ep}\|\tilde\vp\|_{\dot H^1_\ep}\,.
\]
\end{lemma}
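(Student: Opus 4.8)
The plan is to prove Lemma~\ref{L.npi} by a direct pointwise analysis of the coefficients $D_{\al,y}^l(B\ms g^{ij})$, exploiting the explicit structure of the metric~\eqref{ds}--\eqref{A1}. The crucial observation is that, by inspecting the inverse metric tensor $g^{ij}$ coming from~\eqref{ds} (equivalently, reading off the coefficients of the second-order part of~\eqref{De}), the contravariant components split into a ``leading'' part and an ``$\ep$-small'' part: one has $g^{\al\al}=B^{-2}$ and $g^{\al y_k}=\cO(\ep)$ and $\ep^2 g^{y_jy_k}=\de_{jk}+\cO(\ep)$, where the $\cO(\ep)$ terms are analytic functions of $(\al,y)$ (involving $\ka$, $\tau$, $\tau'$ and the position $y$) that are bounded, together with all their $(\al,y)$-derivatives, by $C_l\ep$ uniformly on $\es\times\DD$ for $\ep$ small. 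Hence, writing $B\ms g^{ij}=\bar G^{ij}+\ep\ms R^{ij}$, where $\bar G^{ij}$ is the ``model'' part with $\bar G^{\al\al}=B^{-1}$, $\bar G^{\al y_k}=0$ and $\ep^2\bar G^{y_jy_k}=B\de_{jk}$, the point is that the off-diagonal couplings between the slow and fast blocks already carry a factor of $\ep$, and the difference $\ep^2 g^{y_jy_k}-\de_{jk}$ likewise carries a factor of $\ep$ relative to the flat model.

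First I would fix notation: set $Q^{ij}:=D_{\al,y}^l(B\ms g^{ij})$ and note that, by the analyticity and uniform boundedness just described, one has the pointwise bound $|Q^{\al\al}|\le C_l$, $|Q^{\al y_k}|\le C_l\ep$, $|\ep^2 Q^{y_jy_k}-c_{jk}^{(l)}|\le C_l\ep$ on $\es\times\DD$, where $c_{jk}^{(l)}:=D^l_{\al,y}(B\de_{jk})$. Then I would split the integrand $Q^{ij}\pd_i\vp\,\pd_j\tilde\vp\,\chi\tilde\chi$ into three pieces according to whether the pair $(i,j)$ is $(\al,\al)$, mixed, or $(y,y)$. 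The key structural fact to exploit is that in the mixed and $(y,y)$ terms one may extract a factor of $\ep$ — either directly from $Q^{\al y_k}$, or from the difference $\ep^2 Q^{y_jy_k}-c^{(l)}_{jk}$ — and then apply the Cauchy--Schwarz inequality in the form $\int|\pd_i\vp\,\pd_j\tilde\vp\,\chi\tilde\chi|\le(\int|\pd_i\vp|^2|\chi|)^{1/2}(\int|\pd_j\tilde\vp|^2|\tilde\chi|)^{1/2}$. The subtle point is the $(\al,\al)$ term and the ``model'' part $c^{(l)}_{jk}$ of the $(y,y)$ term, which do not visibly carry an $\ep$: here one must instead use the weights inside the right-hand side, namely that $\int g^{\al\al}|\pd_\al\vp|^2\ms\chi\,dV=\int B^{-1}|\pd_\al\vp|^2\ms\chi\cdot B\,d\al\,dy=\int|\pd_\al\vp|^2\ms\chi\,d\al\,dy$ and $\int g^{y_jy_k}\pd_j\vp\,\pd_k\vp\,\chi\,dV=\ep^{-2}\int(\de_{jk}+\cO(\ep))\pd_j\vp\,\pd_k\vp\,\chi\,B\,d\al\,dy$, so that (for $\ep$ small) the right-hand side of the claimed inequality already absorbs a full unweighted $L^2$ norm of $D_y\vp$ \emph{without} an $\ep^{-1}$ penalty on that piece — but then one needs the \emph{leftover} factor $\ep$ to come from the coefficient. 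The way this works is that the left-hand side genuinely vanishes to order $\ep$: when $l\ge 1$ every $(\al,y)$-derivative hitting $B\ms g^{ij}$ produces a factor $\ep$ (since $B=1-\ep\ka y_1$ and the $\cO(\ep)$ remainders depend on $(\al,y)$ only through $\ep$-multiplied expressions), so $|D^l_{\al,y}(B\ms g^{\al\al})|=|D^l_{\al,y}B^{-1}|\le C_l\ep$ for $l\ge1$; and for $l=0$ one has $B\ms g^{\al\al}=B^{-1}=1+\cO(\ep)$, so after subtracting the constant $1$ — which, crucially, is cancelled against the corresponding model term — only the $\cO(\ep)$ remainder survives. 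I would therefore organise the proof around the identity $B\ms g^{ij}=\bar\de^{ij}+\ep\ms\Theta^{ij}_l$ after applying $D^l_{\al,y}$, where $\bar\de^{ij}$ is the (possibly zero) ``flat, $l$-th derivative'' model term and $\Theta^{ij}_l$ is uniformly $C^0$-bounded by $C_l$, being careful that the \emph{model} contributions pair up to give exactly $\int g^{ij}\pd_i\vp\,\pd_j\vp\ms\chi\,dV$-type quantities via the weight identities above, while the $\ep\ms\Theta^{ij}_l$ contribution is handled by Cauchy--Schwarz and delivers the stated $\ep C_l$.

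The main obstacle I anticipate is the bookkeeping in the $l=0$ case (and more generally the ``model'' terms for general $l$): one must verify that, after writing everything in terms of $dV=B\,d\al\,dy$, the seemingly $\cO(1)$ pieces of $D^l_{\al,y}(B\ms g^{ij})$ really do reassemble — up to $\cO(\ep)$ — into the quadratic form $g^{ij}\pd_i\vp\,\pd_j\tilde\vp$ integrated against $dV$, so that the bound is not lost. This is where the precise relation $\int g^{ij}\pd_i\vp\,\pd_j\vp\,dV=(1+\cO(\ep))\|\vp\|_{\dot H^1_\ep}^2$ from~\eqref{H1epgij} and the structure of the metric are used most delicately; the rest (the explicitly $\ep$-small off-diagonal and remainder terms) is a routine application of Cauchy--Schwarz. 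Finally, the ``In particular'' statement is immediate by taking $\chi=\tilde\chi\equiv1$ and invoking~\eqref{H1epgij} to replace the weighted integrals by $\|\vp\|_{\dot H^1_\ep}\|\tilde\vp\|_{\dot H^1_\ep}$.
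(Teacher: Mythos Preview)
Your overall approach---a block-by-block pointwise analysis of $D^l_{\al,y}(Bg^{ij})$ followed by Cauchy--Schwarz---is precisely the route the paper takes (its proof is a one-liner referring to the metric expression and Cauchy--Schwarz). However, there is a concrete error in your coefficient analysis that makes the argument, as written, break down on the mixed block.

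You assert that $g^{\al y_k}=\cO(\ep)$, but this is false: from~\eqref{ds} (or equivalently the gradient formula~\eqref{nabla}) one has $g^{\al y_1}=-\tau y_2/B^2$ and $g^{\al y_2}=\tau y_1/B^2$, both $\cO(1)$. Hence $Bg^{\al y_k}=\cO(1)$ and its $(\al,y)$-derivatives remain $\cO(1)$ for every $l\ge1$ (for instance $\pd_\al(Bg^{\al y_1})=-\tau' y_2/B+\cO(\ep)$), contradicting your claim that ``every derivative hitting $Bg^{ij}$ produces a factor~$\ep$''. The required $\ep$ for this block does \emph{not} come from the coefficient; it comes from the weighting on the right-hand side: since $g^{y_ky_k}\sim\ep^{-2}$, one has
\[
\big\|\pd_{y_k}\tilde\vp\cdot\tilde\chi^{1/2}\big\|\le C\ep\bigg(\int g^{ij}\pd_i\tilde\vp\,\pd_j\tilde\vp\,\tilde\chi\,dV\bigg)^{1/2},
\]
and pairing this against $\|\pd_\al\vp\cdot\chi^{1/2}\|\le C(\int g^{ij}\pd_i\vp\,\pd_j\vp\,\chi\,dV)^{1/2}$ delivers the factor~$\ep$. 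With this correction---together with the observations $D^l(Bg^{\al\al})=\cO(\ep)$ and $D^l(Bg^{y_jy_k})=\cO(\ep^{-1})$ for $l\ge1$, which you do have right---the proof goes through.

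Your discussion of the $l=0$ case and a purported ``cancellation against the model term'' is a red herring. The lemma is only used (and only true) for $l\ge1$: for $l=0$ the statement fails, as one sees by taking $\vp=\tilde\vp$ depending on $\al$ alone and $\chi=\tilde\chi=1$. There is no subtraction or cancellation mechanism in the lemma as stated; simply drop that part of the argument.
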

\begin{proof}
It is an immediate consequence of the expression for the metric and the
function $B$ in the coordinates $(\al,y)$ (see Eq.~\eqref{ds}) and the
Cauchy--Schwartz inequality.
\end{proof}

We are now ready to prove the estimates for the derivatives of $\psi$ with respect to the slow
variable $\al$ that we will need in this paper:

\begin{proposition}\label{L.psial}
The $k$\th\ partial derivative of the function $\psi$ with respect to
the angle $\al$ satisfies
\[
\|\pd_\al^{k+1}\psi\|_{\dot H^1_\ep}\leq C_k\|\rho\|_{H^k}\,, 
\]
where $k$ is any nonnegative integer. 
\end{proposition}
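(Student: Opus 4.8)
The plan is to prove the estimate by induction on $k$, differentiating the equation with respect to $\al$ and using the weak formulation together with the $\dot H^1_\ep$ bound of Proposition~\ref{L.H1} as the base case. The key observation is that differentiating $\De\psi=\rho$ in $\al$ does not introduce any derivatives in the ``fast'' directions that are not already controlled, because $\pd_\al$ commutes with the Neumann boundary condition on $\es\times\pd\DD$ (the boundary does not move with $\al$ in these coordinates). Concretely, since $\psi$ solves the weak equation \eqref{weak}, the function $\psi_k:=\pd_\al^k\psi$ formally satisfies
\[
\int g^{ij}\,\pd_i\psi_k\,\pd_j\vp\,dV=-\int \rho_k\,\vp\,dV-\sum_{m=1}^k\binom{k}{m}\int \big(\pd_\al^m(B\,g^{ij})\big)\,\pd_i\psi_{k-m}\,\pd_j\vp\,d\al\,dy\,,
\]
for all $\vp\in H^1(\es\times\DD)$, where $\rho_k:=\pd_\al^k\rho$ and we have written $dV=B\,d\al\,dy$ (up to the irrelevant overall $\ep^2$). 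Note that $\psi_k$ need not have zero mean, but its mean is an $\al$-independent constant, which will be harmless since we only ever estimate $\pd_\al^{k+1}\psi$, i.e.\ we apply the spectral inequality \eqref{Neigenv} to $\pd_\al\psi_k$, which does have zero mean in $\al$ after integrating by parts, or more simply we estimate $\|\psi_{k+1}\|_{\dot H^1_\ep}$ directly by testing the equation for $\psi_{k+1}$ against $\vp=\psi_{k+1}$.

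First I would set up the induction rigorously: assume $\|\pd_\al^{j+1}\psi\|_{\dot H^1_\ep}\leq C_j\|\rho\|_{H^j}$ for all $j\leq k-1$; this also gives control of $\|\pd_\al^j\psi\|_{\dot H^1_\ep}$ for $1\leq j\leq k$ by Poincaré in $\al$ (or by the base estimate of Proposition~\ref{L.H1} for $j=0$). Then test the weak equation for $\psi_{k+1}=\pd_\al^{k+1}\psi$ against $\vp=\psi_{k+1}$. Using \eqref{H1epgij} to bound the left-hand side below by $(1-\cO(\ep))\|\psi_{k+1}\|_{\dot H^1_\ep}^2$, the first term on the right by Cauchy--Schwarz by $\|\rho\|_{H^{k+1}}\|\psi_{k+1}\|$ — wait, this would cost one derivative too many on $\rho$, so instead I integrate by parts once in $\al$ in the term $\int \rho_{k+1}\psi_{k+1}\,dV$ to move one $\al$-derivative onto $\psi_{k+1}$, giving $\int \rho_k\,(\pd_\al\psi_{k+1})\,dV$ plus lower-order terms coming from $\pd_\al B$; this is then bounded by $C\|\rho\|_{H^k}\|\psi_{k+1}\|_{\dot H^1_\ep}$. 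The commutator terms $\int \pd_\al^m(B\,g^{ij})\,\pd_i\psi_{k+1-m}\,\pd_j\psi_{k+1}\,d\al\,dy$ are handled by Lemma~\ref{L.npi}, which yields a bound $\ep C_m\|\psi_{k+1-m}\|_{\dot H^1_\ep}\|\psi_{k+1}\|_{\dot H^1_\ep}$; for $m\geq 1$ the factor $\|\psi_{k+1-m}\|_{\dot H^1_\ep}$ is controlled by the induction hypothesis in terms of $\|\rho\|_{H^{k-m}}\leq\|\rho\|_{H^k}$, and the crucial gain of $\ep$ means these terms are absorbed into the left-hand side for $\ep$ small (the $m=0$ term, which would not carry an $\ep$, does not appear because $B\,g^{ij}$ undifferentiated contributes exactly the left-hand side, which we have already split off). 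Collecting everything and dividing by $\|\psi_{k+1}\|_{\dot H^1_\ep}$ gives the claimed estimate with a new constant $C_k$.

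The main obstacle is purely bookkeeping rather than conceptual: one must make sure that the repeated integrations by parts in $\al$ (to avoid losing a derivative on $\rho$) generate only terms involving $\pd_\al^m B$ with $m\geq 1$, so that each such boundary-of-$B$ term carries a genuine factor of $\ep$ (since $B=1-\ep\ka r\cos\te$, every $\al$-derivative of $B$ is $O(\ep)$) and can be absorbed or estimated inductively. A secondary point to check is the regularity needed to justify differentiating the equation $k+1$ times in $\al$ and doing the integrations by parts; since the coefficients are analytic and $\rho\in H^k$ suffices for the final bound, one argues by a standard difference-quotient / density approximation, first proving the estimate for smooth $\rho$ and then passing to the limit. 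I would relegate both of these to a remark that the computation is routine ``by the same arguments as in Proposition~\ref{L.H1} together with Lemma~\ref{L.npi},'' and present only the inductive step with $\vp=\psi_{k+1}$ in detail.
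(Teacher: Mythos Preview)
Your proposal is correct and essentially identical to the paper's proof: the paper phrases the induction step as taking $\vp=\pd_\al^{2k+2}\psi$ in the weak formulation \eqref{weak} and integrating by parts, which is exactly your ``differentiate the equation $k+1$ times in $\al$, then test with $\psi_{k+1}$'' unwound. The commutator terms are handled by Lemma~\ref{L.npi} and the base case by Proposition~\ref{L.H1}, just as you describe; your additional remarks about the zero mean of $\pd_\al^{k+1}\psi$ and the single integration by parts on the $\rho$-term are precisely the small bookkeeping points the paper compresses into its one-line bound for $\int\rho\,\pd_\al^{2k+2}\psi\,dV$.
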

\begin{proof}
Let us take $\vp=\psi_{\al\al}$ in Eq.~\eqref{weak} and integrate by
parts to get
\[
\int g^{ij}\,\pd_i\psi_\al\,\pd_j\psi_\al\, dV=  \int
\rho\psi_{\al\al}\,d V- \int \pd_\al(B g^{ij}) \,\pd_i
\psi\,\pd_j\psi_\al\, d\al\, dy\,.
\]
The LHS is bounded from below by $(1+\cO(\ep))\ms \|\psi_\al\|_{\dot
  H^1_\ep}^2$, while from the definition of the norm $\dot H^1_\ep$ and
Lemma~\ref{L.npi} it stems that
\begin{align*}
  \bigg|\int \rho\psi_{\al\al}\,d V\bigg|&\leq
  (1+\cO(\ep))\,\|\rho\|\,\|\psi_\al\|_{\dot H^1_\ep}\,,\\
\bigg|\int \pd_\al(B g^{ij}) \,\pd_i
\psi\,\pd_j\psi_\al\, d\al \, dy\bigg|&\leq C\ep\|\psi\|_{\dot H^1_\ep}
\|\psi_\al\|_{\dot H^1_\ep}\,.
\end{align*}
Using Proposition~\ref{L.H1} to estimate $\|\psi\|_{\dot H^1_\ep}$ in terms
of $\|\rho\|$, we infer that
\[
\|\psi_\al\|_{\dot H^1_\ep}^2\leq C \|\rho\|\|\psi_\al\|_{\dot
  H^1_\ep}\,,
\]
which readily implies the desired bound for $k=0$. When $k$ is a
positive integer, the proof is totally analogous and can be obtained
by induction on $k$ using $\vp=\pd_\al^{2k+2}\psi$, the only difference being that one needs to estimate the
term $\int \rho\vp$ as
\[
\bigg|\int \rho\, \pd_\al^{2k+2}\psi\, dV\bigg|\leq
C\|\rho\|_{H^k}\sum_{j=1}^{k+1}\|\pd_\al^j\psi\|_{\dot H^1_\ep}\leq
C\|\rho\|_{H^k}^2+C\|\rho\|_{H^k}\|\pd_\al^{k+1}\psi\|_{\dot H^1_\ep}
\]
by the induction hypothesis.
\end{proof}


\subsection{Estimates for the ``fast'' variables}
\label{SS.improved}

To estimate the derivates with respect to the ``fast'' variable $y$ in
an optimal way, it is crucial to ensure that the terms that only depend on
$\al$ are not considered when estimating the norms of the function. A
convenient way of doing this is by considering the auxiliary function
\begin{equation}\label{Bpsi}
\Bpsi(\al,y):=\psi(\al,y)-\frac1\pi\int_{\DD}\psi(\al,y')\,dy'\,,
\end{equation}
which is obtained from $\psi$ by subtracting its average in the fast
variable. It should be emphasized that, if we were not to subtract
this average, the estimates we would obtain would not be strong
enough for our needs in later sections. The key estimate in this
subsection is Theorem~\ref{T.Hk}.

An immediate observation is that, of course
\begin{equation}\label{psiBpsi}
D_y^j\pd_\al^k\psi=D_y^j\pd_\al^k\Bpsi
\end{equation}
whenever the number $j$ of derivatives we take with respect to $y$ is
greater than zero, and that $\Bpsi$ has zero mean: 
\[
\int\Bpsi\, d\al\, dy=0\,.
\]
Moreover, the function $\Bpsi$ satisfies the equation
\[
\De\Bpsi=\Brho\quad\text{in } \es\times\DD\,,\qquad \pd_\nu\Bpsi=0
\quad\text{on } \es\times\pd\DD\,,
\]
with
\[
\Brho(\al,y):=\rho(\al,y)-\frac1\pi\,\De\int_{\DD}\psi(\al,y')\, dy'\,.
\]

The estimates for $\psi$ we derive in the previous section guarantee
that the norm of $\Brho$ can be bounded by a multiple of the norm of
the initial source term $\rho$:

\begin{proposition}\label{L.Brho}
The $H^k$ norm of $\Brho$ is bounded by
\[
\|\Brho\|_{H^k}\leq C_k \|\rho\|_{H^k}\,.
\]
\end{proposition}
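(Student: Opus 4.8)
The plan is to bound $\Brho$ by splitting it into the two pieces that define it, namely $\rho$ itself and the correction term $\frac1\pi\De\int_\DD\psi(\al,y')\,dy'$, and to show that the $H^k$ norm of the correction term is controlled by $\|\rho\|_{H^k}$. The first piece is already $\rho$, so only the second requires work. Let me write $p(\al):=\frac1\pi\int_\DD\psi(\al,y')\,dy'$ for the average of $\psi$ in the fast variable, so that $\Brho=\rho-\De p$, where now $p$ depends only on $\al$. Since $p$ is a function of $\al$ alone, applying the expression~\eqref{De} for the Laplacian to $p$ kills all the terms involving $r$- or $\te$-derivatives, leaving only
\[
\De p=\frac1{B^2}p''+\frac{\ep r(\ka'\cos\te-\tau\ka\sin\te)}{B^3}p'\,,
\]
where primes denote $\al$-derivatives. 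The coefficients here are smooth functions of $(\al,r,\te)$ that are uniformly bounded (together with all their derivatives) in $\ep$, since $B=1-\ep\ka r\cos\te$ stays close to $1$ for small $\ep$. Hence $\|\De p\|_{H^k}\leq C_k\big(\|p''\|_{H^k}+\|p'\|_{H^k}\big)$, and by the Leibniz rule and the boundedness of the coefficients this reduces to controlling $\|p\|_{H^{k+2}}$, or more precisely $\|p'\|_{H^{k+1}}$ and $\|p''\|_{H^k}$ measured in the $\al$-variable (the $y$-dependence of $\De p$ being only through the bounded coefficients).

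The key point is then that derivatives of $p$ with respect to $\al$ are exactly derivatives of $\psi$ with respect to the \emph{slow} variable, averaged over the disk, so they are controlled by Proposition~\ref{L.psial}. Indeed, for any $m\geq1$,
\[
\|p^{(m)}\|_{L^2(\es)}=\Big\|\tfrac1\pi\int_\DD\pd_\al^m\psi(\cdot,y')\,dy'\Big\|_{L^2(\es)}\leq C\,\|\pd_\al^m\psi\|_{L^2(\es\times\DD)}\leq C\,\|\pd_\al^m\psi\|_{\dot H^1_\ep}\,,
\]
using Cauchy--Schwartz in $y'$ for the first inequality. Proposition~\ref{L.psial} gives $\|\pd_\al^{m}\psi\|_{\dot H^1_\ep}\leq C_{m-1}\|\rho\|_{H^{m-1}}$ for $m\geq1$, hence $\|p^{(m)}\|_{L^2(\es)}\leq C_m\|\rho\|_{H^{m-1}}$. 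Taking $m$ up to $k+2$, we obtain $\|p'\|_{H^{k+1}(\es)}+\|p''\|_{H^k(\es)}\leq C_k\|\rho\|_{H^{k+1}}$. One must be slightly careful here: naively this produces $\|\rho\|_{H^{k+1}}$ rather than $\|\rho\|_{H^k}$, but in fact the highest-order term needs only $\|\pd_\al^{k+1}\psi\|_{\dot H^1_\ep}$ in its worst contribution (the term $p''$ with $k$ further $\al$-derivatives needs $\pd_\al^{k+2}\psi$, which is $\|\rho\|_{H^{k+1}}$); a cleaner route is to note that $p''$ itself can be read off from the equation. Since $\De\psi=\rho$, averaging over the disk and using that the $y$-derivative terms integrate to boundary contributions which vanish by the Neumann condition (up to $\ep$-bounded corrections from $B$), one gets an ODE of the form $p''=(\text{bounded})\cdot(\rho\text{-average})+(\text{bounded})\cdot p'+\cdots$, so $\|p''\|_{H^k}$ is directly controlled by $\|\rho\|_{H^k}$ plus lower-order terms in $p$, already estimated. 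Combining, $\|\De p\|_{H^k}\leq C_k\|\rho\|_{H^k}$, and therefore $\|\Brho\|_{H^k}\leq\|\rho\|_{H^k}+\|\De p\|_{H^k}\leq C_k\|\rho\|_{H^k}$.

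The main obstacle I anticipate is precisely this bookkeeping of the order of derivatives: the crude estimate loses one derivative (producing $H^{k+1}$ instead of $H^k$), and recovering the sharp count requires using the equation $\De\psi=\rho$ to express the top-order $\al$-derivatives of the disk-average $p$ in terms of $\rho$ directly rather than via the elliptic estimate of Proposition~\ref{L.psial}. Once this is organized — essentially, deriving and estimating the second-order ODE satisfied by $p(\al)$ — the remaining steps are routine applications of the Leibniz rule, Cauchy--Schwartz, and the uniform $\ep$-boundedness of the metric coefficients.
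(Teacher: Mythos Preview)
Your approach is essentially the same as the paper's: split $\Brho$ into $\rho$ and the correction term $\De p$, then control the latter using the slow-variable estimates from Proposition~\ref{L.psial}. However, your perceived ``bookkeeping obstacle'' is not real, and the detour via an averaged ODE for $p$ is unnecessary.

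The slip is in the chain $\|p^{(m)}\|_{L^2}\leq C\|\pd_\al^m\psi\|\leq C\|\pd_\al^m\psi\|_{\dot H^1_\ep}$. The last inequality is not what you want (and is unjustified, since $\pd_\al^m\psi$ need not have zero mean). What you should write instead is
\[
\|\pd_\al^m\psi\|=\|\pd_\al(\pd_\al^{m-1}\psi)\|\leq \|\pd_\al^{m-1}\psi\|_{\dot H^1_\ep}\leq C\|\rho\|_{H^{m-2}}
\]
for $m\geq2$, directly from the definition~\eqref{dH1ep} and Proposition~\ref{L.psial}. Taking $m=k+2$ gives $\|p^{(k+2)}\|_{L^2}\leq C\|\rho\|_{H^k}$ with no loss of derivative. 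The paper's proof does exactly this: it bounds $\|\pd_\al^{k+2}\psi\|$ by $C\|\rho\|_{H^k}$ in one line. Once you make this correction, your ``naive'' route already yields the sharp $H^k$ count, and the alternative argument via averaging $\De\psi=\rho$ over the disk can be dropped.
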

\begin{proof}
Observe that the action of the
Laplacian (which we compute in the coordinates $(\al,y)$) on the function
$\psi(\al,y')$ is
\[
\De\psi(\al,y')=\big(1+\cO(\ep)\big)\,\pd_\al^2\psi(\al,y')+\cO(\ep)\,\pd_\al\psi(\al,y')\,,
\]
where $\cO(\ep^n)$ here stands for an $\ep$-dependent quantity $Q(\al,y)$ bounded as
\[
\|Q\|_{H^k}\leq C_k\ep^n
\]
for all $k$. Therefore, one finds that
\begin{align*}
\|D_y^j\pd_\al^k\Brho\|&=\bigg\|D_y^j\pd_\al^k\rho-\frac1\pi\,D_y^j\pd_\al^k\De\int_{\DD}\psi(\al,y')\,
dy'\bigg\|\\
&=\bigg\|D_y^j\pd_\al^k\rho-\frac1\pi\int_{\DD}\pd_\al^{k+2}\psi(\al,y')\,
dy' + \sum_{l=1}^{k+2}\int_{\DD} \cO(\ep)\,\pd_\al^l\psi(\al,y')\,
dy'\bigg\|\\
&\leq
\|D_y^j\pd_\al^k\rho\|+(1+C\ep)\|\pd_\al^{k+2}\psi\|+C\ep\sum_{l=1}^{k+1}\|\pd_\al^l\psi\|\\
&\leq C\|\rho\|_{H^k}\,,
\end{align*}
where in the last step we have used Propositions~\ref{L.H1} and~\ref{L.psial}. The
claim then follows.
\end{proof}

To derive energy estimates for $\psi$, we will use that one obviously has
\begin{equation}\label{weakBpsi}
\int g^{ij}\,\pd_i\Bpsi\,\pd_j\vp\, dV=-\int \Brho\ms \vp\, dV
\end{equation}
for all $\vp\in H^1(\es\times\DD)$. Our first result will be an
estimate for the $L^2$ norm of $\Bpsi$ and $D_y\Bpsi$. While we can
readily derive a bound for these quantities using Proposition~\ref{L.H1},
the estimates we prove here are much sharper for small $\ep$. This
will be crucial in the derivation of optimal estimates for $\psi$.

\begin{proposition}\label{L.H1psi}
The function $\Bpsi$ satisfies the $H^1$ estimates
\[
\|\Bpsi\|+\|D_y\Bpsi\|\leq C\ep^2\|\rho\|\,,\qquad\|\pd_\al\Bpsi\|\leq C\ep\|\rho\|\,.
\]
\end{proposition}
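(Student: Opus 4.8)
The plan is to obtain the sharp bounds for $\Bpsi$ by testing the weak formulation \eqref{weakBpsi} against $\vp=\Bpsi$, exactly as in the proof of Proposition~\ref{L.H1}, but now exploiting two extra features of $\Bpsi$ that $\psi$ does not enjoy: it has zero $y$-average for every fixed $\al$, and its source $\Brho$ is already $L^2$-controlled by $\|\rho\|$ via Proposition~\ref{L.Brho}. The zero-$y$-average property is what buys the powers of $\ep$: on each disk slice $\{\al\}\times\DD$ the function $\Bpsi(\al,\cdot)$ is orthogonal to constants, so a two-dimensional Poincar\'e inequality gives $\int_\DD \Bpsi(\al,y)^2\,dy\le C\int_\DD |D_y\Bpsi(\al,y)|^2\,dy$ with a constant depending only on the first nonzero Neumann eigenvalue of the flat disk. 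Integrating in $\al$ yields $\|\Bpsi\|\le C\|D_y\Bpsi\|$, with \emph{no} weight on the $y$-derivatives — this is the crucial gain over the torus Poincar\'e inequality used for $\psi$, which could only produce $\|\psi\|\le C\|\psi\|_{\dot H^1_\ep}$.

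With that Poincar\'e inequality in hand, I would run the energy identity. Plugging $\vp=\Bpsi$ into \eqref{weakBpsi} and using \eqref{H1epgij} gives $\|\Bpsi\|_{\dot H^1_\ep}^2 \le (1+\cO(\ep))\,\|\Brho\|\,\|\Bpsi\|$. Now write out $\|\Bpsi\|_{\dot H^1_\ep}^2=\|\pd_\al\Bpsi\|^2+\ep^{-2}\|D_y\Bpsi\|^2$. The term $\ep^{-2}\|D_y\Bpsi\|^2$ dominates the left side, and by the disk-Poincar\'e inequality $\|\Bpsi\|\le C\|D_y\Bpsi\|$, so
\[
\ep^{-2}\|D_y\Bpsi\|^2 \le C\|\Brho\|\,\|D_y\Bpsi\|\,,
\]
whence $\|D_y\Bpsi\|\le C\ep^2\|\Brho\|\le C\ep^2\|\rho\|$ by Proposition~\ref{L.Brho}. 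Feeding this back into the Poincar\'e inequality gives $\|\Bpsi\|\le C\ep^2\|\rho\|$ as well, which establishes the first estimate. For the $\pd_\al$ bound, the same energy identity shows $\|\pd_\al\Bpsi\|^2 \le \|\Bpsi\|_{\dot H^1_\ep}^2 \le C\|\Brho\|\,\|\Bpsi\| \le C\|\rho\|\cdot C\ep^2\|\rho\| = C\ep^2\|\rho\|^2$, i.e. $\|\pd_\al\Bpsi\|\le C\ep\|\rho\|$, as claimed.

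The only genuinely delicate point is making sure the coercivity constant in $\int g^{ij}\pd_i\Bpsi\,\pd_j\Bpsi\,dV \ge (1+\cO(\ep))\|\Bpsi\|_{\dot H^1_\ep}^2$ is not spoiled — but this is exactly \eqref{H1epgij}, valid for every function, so there is nothing new to check. One should also verify that the $\cO(\ep)$ corrections multiplying $\|\Bpsi\|$ on the right-hand side of the energy identity are harmless: they only affect the implied constant, never the power of $\ep$, because the gain in $\ep$ comes entirely from the structure of the left-hand side (the $\ep^{-2}$ weight on $D_y$) combined with the \emph{$\ep$-free} disk-Poincar\'e inequality. Thus the main obstacle is really just the bookkeeping to confirm that the disk-slice Poincar\'e inequality — rather than the solid-torus one — is the right tool; once that is in place, the estimates fall out of the same one-line testing argument used for Proposition~\ref{L.H1}.
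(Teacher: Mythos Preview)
Your proof is correct and follows essentially the same approach as the paper: test \eqref{weakBpsi} with $\vp=\Bpsi$, invoke the disk-slice Poincar\'e inequality (valid because $\Bpsi(\al,\cdot)$ has zero $y$-mean for each $\al$) to get $\|\Bpsi\|\le C\|D_y\Bpsi\|$, combine with the $\ep^{-2}$ weight in $\|\cdot\|_{\dot H^1_\ep}$ to extract the $\ep^2$ gain, and finish with Proposition~\ref{L.Brho}. The paper organizes the algebra slightly differently (deducing $\|\Bpsi\|_{\dot H^1_\ep}^2\ge C\ep^{-2}\|\Bpsi\|^2$ first and then closing), but the ingredients and logic are identical.
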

\begin{proof}
Choosing $\vp=\Bpsi$ in Eq.~\eqref{weakBpsi} and in view of the
expression of the coefficients of the metric~\eqref{ds}, one immediately obtains
that
\begin{equation}\label{eq2}
\|\Bpsi\|_{\dot H^1_\ep}^2\leq \big(1+\cO(\ep)\big)\|\Brho\|\|\Bpsi\|\,.
\end{equation}
Since $\Bpsi(\al,\cdot)$ has zero mean in the disk $\DD$ for any fixed
$\al$, by Poincar\'e's inequality there is a positive constant $C$,
independent of $\ep$ and $\al$ (namely, the first nonzero Neumann
eigenvalue of the disk), such that
\[
\int_{\DD} \big|D_y\Bpsi(\al,y)\big|^2\,dy \geq C\int_{\DD}\Bpsi(\al,y)^2\, dy\,.
\]
Integrating this inequality in $\al$, the $H^1$ norm of $\Bpsi$ can be estimated as
\[
\|\Bpsi\|_{\dot H^1_\ep}^2\geq \frac1{\ep^2}\int
\big|D_y\Bpsi(\al,y)\big|^2\,dy\, d\al\geq \frac C{\ep^2}\|\Bpsi\|^2\,.
\]
Together with Eq.~\eqref{eq2}, this yields $\|\Bpsi\|\leq
C\ep^2\|\Brho\|$ and $\|\Bpsi\|_{\dot H^1_\ep}\leq C\ep\|\Brho\|$, so
the claim follows directly from Proposition~\ref{L.Brho}.
\end{proof}

It is particularly easy to derive preliminary estimates (which will be
instrumental in the proof of Proposition~\ref{L.Bpsiy}) for the derivatives of $\Bpsi$ with respect to $\al$ thanks to Proposition~\ref{L.psial}. Notice these 
bounds will be substantially improved later on.

\begin{proposition}\label{L.Bpsial}
The derivative of $\Bpsi$ with respect to $\al$ satisfies
\[
\|D_y\pd_\al^{k+1}\Bpsi\|\leq C\ep\|\rho\|_{H^k}\,,\qquad \|\pd_\al^{k+2}\Bpsi\|\leq
C_k\|\rho\|_{H^k}
\]
for any nonnegative integer $k$.
\end{proposition}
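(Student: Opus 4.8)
The plan is to derive these bounds by differentiating the equation \eqref{weakBpsi} in $\al$ and testing against an appropriate high-order derivative of $\Bpsi$, exactly as in the proof of Proposition~\ref{L.psial}, but now exploiting the sharpened $\ep$-dependence coming from the Poincar\'e inequality on the disk (since $\Bpsi(\al,\cdot)$ has zero mean on $\DD$ for each $\al$). The point is that the estimates in Proposition~\ref{L.psial} are $\ep$-independent because they involve only the slow variable, whereas here every term involving $D_y$ gains a factor of $\ep$ because $\|D_y\vp\|$ controls $\ep\,\|\vp\|$ on functions with zero $y$-mean, and one extra factor of $\ep$ is hidden in the $\dot H^1_\ep$ norm itself.

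Concretely, for the first bound I would take $\vp=\pd_\al^{2k+2}\Bpsi$ in \eqref{weakBpsi} and integrate by parts $k+1$ times in $\al$, moving half the $\al$-derivatives onto each factor, so that the left-hand side becomes $\int g^{ij}\,\pd_i(\pd_\al^{k+1}\Bpsi)\,\pd_j(\pd_\al^{k+1}\Bpsi)\,dV$ up to commutator terms in which at least one $\al$-derivative falls on the metric coefficients $B\ms g^{ij}$. The leading term is bounded below by $(1+\cO(\ep))\,\|\pd_\al^{k+1}\Bpsi\|_{\dot H^1_\ep}^2$; the commutator terms are controlled by Lemma~\ref{L.npi} (each such term carries an explicit factor of $\ep$ and is a product of two $\dot H^1_\ep$ norms of $\al$-derivatives of $\Bpsi$ of order $\le k+1$, hence absorbed using Propositions~\ref{L.H1psi} and \ref{L.psial} together with a Young-type inequality); and the source term $\int \Brho\,\pd_\al^{2k+2}\Bpsi\,dV$ is handled by integrating by parts in $\al$ and applying Cauchy--Schwarz, bounding it by $C\|\Brho\|_{H^k}\sum_{j\le k+1}\|\pd_\al^j\Bpsi\|_{\dot H^1_\ep}$, which via Proposition~\ref{L.Brho} is $\le C\|\rho\|_{H^k}$ times the same sum. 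Running an induction on $k$, with the base case $k=0$ coming essentially from the argument already present in the proof of Proposition~\ref{L.H1psi} applied to the $\al$-differentiated equation, yields $\|\pd_\al^{k+1}\Bpsi\|_{\dot H^1_\ep}\le C_k\|\rho\|_{H^k}$, and then the definition \eqref{dH1ep} of the $\dot H^1_\ep$ norm immediately gives both $\|D_y\pd_\al^{k+1}\Bpsi\|\le C_k\ep\|\rho\|_{H^k}$ and $\|\pd_\al^{k+2}\Bpsi\|\le C_k\|\rho\|_{H^k}$.

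The main obstacle I anticipate is bookkeeping the commutator terms so that the claimed powers of $\ep$ are genuinely attained rather than lost: one has to check that every term produced by differentiating $B\ms g^{ij}$ really does come with the extra $\ep$ predicted by Lemma~\ref{L.npi}, and that the preliminary (weaker) bounds of Proposition~\ref{L.H1psi} are strong enough to close the induction without circularity. In particular, at the inductive step the term $\int \Brho\,\pd_\al^{2k+2}\Bpsi$ forces one to invoke the bound on $\pd_\al^{k+2}\Bpsi$ at the \emph{previous} level (which is why both inequalities in the proposition must be carried along the induction simultaneously), and one must be careful that the $\al$-derivatives landing on $\Bpsi$ never exceed the order already estimated. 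Everything else is a routine repetition of the integration-by-parts scheme of Proposition~\ref{L.psial}, with the gain in $\ep$ being automatic once the Poincar\'e inequality on $\DD$ is in place.
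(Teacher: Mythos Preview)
Your approach would work, but it is a substantial detour compared to the paper's argument. The paper observes that the proposition is an \emph{immediate} consequence of Proposition~\ref{L.psial} applied to $\psi$ itself, via the identity $D_y\Bpsi=D_y\psi$ (Eq.~\eqref{psiBpsi}). Indeed, since $\Bpsi$ differs from $\psi$ only by the $y$-average $\frac1\pi\int_\DD\psi(\al,y')\,dy'$, which depends on $\al$ alone, one has $\|D_y\pd_\al^{k+1}\Bpsi\|=\|D_y\pd_\al^{k+1}\psi\|\le\ep\,\|\pd_\al^{k+1}\psi\|_{\dot H^1_\ep}\le C_k\ep\|\rho\|_{H^k}$ directly from Proposition~\ref{L.psial} and the definition~\eqref{dH1ep}, and $\|\pd_\al^{k+2}\Bpsi\|\le\|\pd_\al^{k+2}\psi\|+C\|\pd_\al^{k+2}\psi\|\le C\|\pd_\al^{k+1}\psi\|_{\dot H^1_\ep}\le C_k\|\rho\|_{H^k}$ likewise. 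No new energy estimate, no induction, no Poincar\'e inequality, and no appeal to $\Brho$ are needed.

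By contrast, you propose to re-run the integration-by-parts scheme of Proposition~\ref{L.psial} on the equation $\De\Bpsi=\Brho$, carrying an induction on $k$ and invoking Proposition~\ref{L.Brho} to bound $\|\Brho\|_{H^k}$. This is correct and self-contained, and in fact a version of this idea (differentiating the $\Bpsi$-equation and estimating commutators) is what drives the sharper $\ep^2$ bounds later in Theorem~\ref{T.Hk}. But for the present proposition the powers of $\ep$ claimed are exactly those already encoded in $\|\pd_\al^{k+1}\psi\|_{\dot H^1_\ep}$, so your invocation of the disk Poincar\'e inequality is not actually doing any work here---the factor of $\ep$ on $D_y$ comes purely from the weighting in the $\dot H^1_\ep$ norm, not from any zero-mean property of $\Bpsi$. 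In short: your route is valid but redundant; the paper's route is a two-line corollary of what has already been proved for $\psi$.
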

\begin{proof}
The claim is an immediate consequence of the definition of $\Bpsi$ (cf.\ Eq.~\eqref{Bpsi}) and
Proposition~\ref{L.psial}, since obviously the $L^2(\es\times\DD)$ norm of
the function
\[
\pd_\al^j\int_{\DD}\psi(\al,y')\,dy'
\]
is bounded from above by $C\|\pd_\al^j\psi\|$, which was in turn
estimated in the aforementioned proposition.
\end{proof}

We are ready to show that the second derivatives of $\Bpsi$ with
respect to the fast variable $y$ are bounded by a factor of order
$\ep^2$. The proof of these estimates makes essential use of Propositions~\ref{L.H1psi}
and~\ref{L.Bpsial}.

\begin{proposition}\label{L.Bpsiy}
The second derivatives of the function $\Bpsi$ with respect to the fast
coordinates are bounded by
\[
\|D_y^2\Bpsi\|\leq C\ep^2\|\rho\|\,.
\]
\end{proposition}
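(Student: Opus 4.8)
The plan is to derive the bound on $\|D_y^2\Bpsi\|$ by testing the weak formulation~\eqref{weakBpsi} against second-order derivatives in the fast variable, exactly in the spirit of the proof of Proposition~\ref{L.psial}, but now exploiting the sharp $\ep^2$ gains already available in Propositions~\ref{L.H1psi} and~\ref{L.Bpsial}. The natural starting point is to write the equation $\De\Bpsi=\Brho$ in the coordinates $(\al,y)$ and to isolate the ``transverse Laplacian'' part. From the expression~\eqref{De} (or directly from~\eqref{ds}), the Laplacian has the form $\De = \frac1{\ep^2}\,\De_y + L$, where $\De_y=\pd_{y_1}^2+\pd_{y_2}^2$ is the flat Laplacian on the disk and $L$ is a differential operator all of whose terms either involve at least one $\al$-derivative or carry a favourable power of $\ep$ (the terms $\frac1{B^2}\psi_{\al\al}$, $\frac{2\tau}{B^2}\psi_{\al\te}$, the $\psi_\te$ and $\psi_\al$ terms, and the $\ep$-corrections to the transverse part coming from $A,B\neq 1$). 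Thus $\frac1{\ep^2}\De_y\Bpsi = \Brho - L\Bpsi$, and I would estimate the right-hand side in $L^2$: the $\Brho$ term contributes $C\|\rho\|$ by Proposition~\ref{L.Brho}, and $L\Bpsi$ contributes $C\|\rho\|$ as well, since every summand of $L\Bpsi$ is controlled by the already-established bounds — the pure $\al$-derivative terms by Proposition~\ref{L.Bpsial} (note $\|\pd_\al^2\Bpsi\|\le C\|\rho\|$, $\|D_y\pd_\al\Bpsi\|\le C\ep\|\rho\|$) and the remaining terms by Proposition~\ref{L.H1psi} together with an extra $\ep$ from the coefficients. Hence $\|\De_y\Bpsi\|\le C\ep^2\|\rho\|$.

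Next I would upgrade this to the full second-order tensor $D_y^2\Bpsi$ by elliptic regularity for the flat Laplacian on the disk with Neumann boundary conditions. Concretely, for each fixed $\al$ the function $\Bpsi(\al,\cdot)$ solves $\De_y\Bpsi(\al,\cdot)=\ep^2(\Brho-L\Bpsi)(\al,\cdot)$ on $\DD$ with $\pd_\nu\Bpsi(\al,\cdot)=0$ on $\pd\DD$, so the standard $H^2$ Neumann estimate on the disk gives $\|D_y^2\Bpsi(\al,\cdot)\|_{L^2(\DD)}\le C\big(\|\ep^2(\Brho-L\Bpsi)(\al,\cdot)\|_{L^2(\DD)}+\|\Bpsi(\al,\cdot)\|_{L^2(\DD)}\big)$; integrating in $\al$ and using $\|\Bpsi\|\le C\ep^2\|\rho\|$ from Proposition~\ref{L.H1psi} yields the claim. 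One must be a little careful that the Neumann boundary condition for $\De_y$ is genuinely $\pd_{y}$-normal on $\pd\DD$; since the outward normal to $\es\times\DD$ is, up to $\cO(\ep)$ and an $\al$-dependent rotation, the radial direction in $y$, the condition $\pd_\nu\Bpsi=0$ does give $\pd_r\Bpsi|_{r=1}=0$ after absorbing the $\cO(\ep)$ tangential piece into $L\Bpsi$, or alternatively one simply works with the metric Neumann problem throughout and invokes elliptic regularity for the $\al$-frozen operator, whose coefficients are $(1+\cO(\ep))$-close to the flat ones uniformly in $\al$.

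The step I expect to be the main obstacle is the bookkeeping in the bound $\|L\Bpsi\|\le C\ep^2\|\rho\|$: one has to check, term by term in~\eqref{De}, that no contribution is merely $\cO(\ep)$ rather than $\cO(\ep^2)$. The dangerous summands are $\frac1{B^2}\Bpsi_{\al\al}$ and $\frac{2\tau}{B^2}\Bpsi_{\al\te}$, which a priori are only $\cO(1)$; this is precisely why subtracting the $y$-average was essential, since for $\Bpsi$ we have the improved estimate $\|\pd_\al^2\Bpsi\|\le C\|\rho\|$ — but that is still only $\cO(1)$, and after multiplying by $\ep^2$ in $\frac1{\ep^2}\De_y\Bpsi=\Brho-L\Bpsi\Rightarrow\De_y\Bpsi=\ep^2\Brho-\ep^2 L\Bpsi$ these become $\cO(\ep^2)$, which is exactly what is needed. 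The mixed term $\Bpsi_{\al\te}$ is handled by $\|D_y\pd_\al\Bpsi\|\le C\ep\|\rho\|$, giving an $\cO(\ep)$ contribution to $L\Bpsi$ and hence $\cO(\ep^3)$ after the $\ep^2$ factor — more than enough. The terms already carrying an explicit $1/\ep$ in~\eqref{De}, namely $\frac1\ep\big(\tfrac{\ka\sin\te(\cdots)}{rB^3}\Bpsi_\te-\tfrac{\ka\cos\te}{B}\Bpsi_r\big)$, involve one $y$-derivative of $\Bpsi$, so by Proposition~\ref{L.H1psi} they are $\frac1\ep\cdot\cO(\ep^2\|\rho\|)=\cO(\ep\|\rho\|)$, again fine. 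Assembling these observations gives $\|L\Bpsi\|\le C\|\rho\|$ (so $\|\ep^2 L\Bpsi\|\le C\ep^2\|\rho\|$) and completes the proof.
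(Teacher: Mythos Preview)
Your approach is correct and genuinely different from the paper's. The paper proceeds by a direct energy argument in the weak formulation~\eqref{weakBpsi}: it first obtains interior estimates by testing against $\pd_a(\chi^2\pd_a\Bpsi)$ with a radial cutoff $\chi$, then obtains boundary estimates by testing against $\chi^2\Bpsi_{\te\te}$ in polar coordinates (which controls $\Bpsi_{\te\te}$ and $\Bpsi_{r\te}$ near $\pd\DD$), and finally isolates $\Bpsi_{rr}$ directly from the equation. Your route---splitting off the flat transverse Laplacian $\frac1{\ep^2}\De_y$ and invoking the standard $H^2$ Neumann estimate on the disk slice by slice---is more streamlined and avoids the interior/boundary dichotomy altogether. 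It works here because, as a computation with the metric~\eqref{dspolar} shows, $g^{r\al}=g^{r\te}=0$, so the metric Neumann condition $\pd_\nu\Bpsi=0$ is \emph{exactly} $\pd_r\Bpsi|_{r=1}=0$ (no $\cO(\ep)$ correction needed). The paper's approach, by contrast, stays entirely within the variable-coefficient framework and would carry over to settings where this fortunate orthogonality fails.

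One point in your bookkeeping needs tightening. The ``$\ep$-corrections to the transverse part coming from $A,B\neq1$'' are not actually small in $\ep$: since $A-B^2=(\ep\tau r)^2$, the difference $\frac1{\ep^2}\big(\frac{A}{r^2B^2}-\frac1{r^2}\big)\pd_\te^2 = \frac{\tau^2}{B^2}\pd_\te^2$ has an $\cO(1)$ coefficient. Thus $L$ contains a genuine second-order $y$-derivative term, and you cannot bound $\|L\Bpsi\|\le C\|\rho\|$ using only the previously established estimates---that would be circular. The fix is immediate within your scheme: one gets $\|L\Bpsi\|\le C\|\rho\|+C\|D_y^2\Bpsi\|$, hence $\|\De_y\Bpsi\|\le C\ep^2\|\rho\|+C\ep^2\|D_y^2\Bpsi\|$, and after applying the flat Neumann $H^2$ estimate the $C\ep^2\|D_y^2\Bpsi\|$ term is absorbed into the left-hand side for small $\ep$. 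With this absorption step made explicit, your proof is complete.
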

\begin{proof}
  We will denote by $\DD_R$ the two-dimensional disk of radius $R$,
  $R$~being a fixed real smaller than $1$. Let us start by proving
  interior estimates. For this, we will denote by $\pd_a$ the
  derivative along a $y$-direction (that is, $\pd_1$ or $\pd_2$), and
  consider a smooth function $\chi(|y|)$ equal to $1$ for $|y|<R$ and
  equal to zero in a neighborhood of $\pd\DD$. Taking
  $\vp=\pd_a(\chi^2\pd_a\Bpsi)$ in Eq.~\eqref{weakBpsi} (here and in
  what follows we will {\em not}\/ sum over the index $a$) and
  integrating by parts, one readily obtains
\begin{equation}\label{eq3}
I^2:=\int g^{ij} \pd_{i}\pd_a\Bpsi\,\pd_{j}\pd_a\Bpsi\, \chi^2\, dV= I_1-I_2-I_3\,,
\end{equation}
where
\begin{gather*}
I_1:=\int\Brho\, \pd_a(\chi^2\pd_a\Bpsi)\, dV\,, \qquad I_2:=\int \pd_a(B
g^{ij})\,\pd_i\Bpsi \,\pd_j(\chi^2\pd_a\Bpsi)\, d\al\, dy\\
I_3:=\int \pd_a\Bpsi\,g^{ij}\pd_j(\chi^2)\,\pd_i\pd_a\Bpsi\,d V\,.
\end{gather*}
Let us estimate these integrals. The first one can be easily
controlled using the Cauchy--Schwartz inequality and Propositions~\ref{L.Brho} and~\ref{L.H1psi}:
\begin{align*}
|I_1|&\leq \bigg|\int\Brho\chi^2\pd_{a}^2\Bpsi\, dV\bigg|+\bigg|\int
\Brho\,\pd_a(\chi^2)\pd_a\Bpsi\, dV\bigg|\leq
C\|\Brho\|\big(\|\chi\, \pd_{a}^2\Bpsi\|+\|\pd_a\Bpsi\|)\\
&\leq C\ep\|\rho\|I+ C\ep^2\|\rho\|^2\,.
\end{align*}
The integral $I_2$ can be controlled using an analogous argument,
Lemma~\ref{L.npi} and Jensen's inequality. This leads to the estimate
\begin{align*}
|I_2|&\leq \bigg|\int \pd_a(Bg^{ij})\chi^2\pd_i\Bpsi\,\pd_j\pd_a\Bpsi
\, d\al\,dy\bigg| + \bigg|\int \pd_a(Bg^{ij})\pd_i\Bpsi\,\pd_j(\chi^2)\,\pd_a\Bpsi
\, d\al\,dy\bigg| \\
&\leq C\ep \|\Bpsi\|_{\dot H^1_\ep} I+ C\|\pd_a\Bpsi\|\bigg(\!\int
\Big( \pd_a(Bg^{ij})\,\pd_i\Bpsi\,\pd_j\chi\Big)^2\,
d\al\,dy\bigg)^{\frac12}\\
&\leq C\ep^2\|\rho\|I + C\ep^2\|\rho\|\int
\Big|\pd_a(Bg^{ij})\,\pd_i\Bpsi\,\pd_j\chi\Big|\, d\al \, dy\\
&\leq C\ep^2\|\rho\|I + C\ep^3\|\rho\|\|\Bpsi\|_{\dot
  H^1_\ep}\|\chi\|_{\dot H^1_\ep} \leq C\ep^2\|\rho\|I + C\ep^3\|\rho\|^2\,,
\end{align*}
where in the fourth line we have used that $\|\chi\|_{\dot
  H^1_\ep}=\|D_y\chi\|/\ep=C/\ep$. A similar argument shows that
\[
|I_3|\leq C\|\pd_a\Bpsi\|\|\chi\|_{\dot H^1_\ep}I\leq C\ep\|\rho\| I\,.
\]
Feeding these bounds into Eq.~\eqref{eq3}, we obtain 
\[
I^2\leq C\ep\|\rho\| I+ C\ep^2\|\rho\|^2\,,
\]
so that $I\leq C\ep\|\rho\|$. From the definition of the function
$\chi$ and the identity~\eqref{H1epgij} it then follows that
\begin{equation}\label{interior}
\|D_y^2\Bpsi\|_{\es\times\DD_{R}} \leq C\ep^2\|\rho\|\,,
\end{equation}
which is the desired interior estimate.

To prove the estimates up to the boundary, we begin by showing that
the $\dot H^1_\ep$~norm of 
\[
\pd_\te \Bpsi\equiv
y_1\,\pd_2\Bpsi-y_2\,\pd_1\Bpsi
\]
is bounded in terms of
$\|\rho\|$. For this, we will find it convenient to take a smooth
function $\chi(|y|)$, equal to $1$ for $|y|>R$ and vanishing in a
neighborhood of the origin, and use polar coordinates throughout
without further notice. If we now take $\vp=\chi^2\Bpsi_{\te\te}$ in
Eq.~\eqref{weak} and integrate by parts, we readily find that
\begin{align*}
  \int g^{ij}\,\pd_i\Bpsi_\te\,\pd_j\Bpsi_\te\,\chi^2\, dV&=\int
  \Brho\ms \Bpsi_{\te\te}\,\chi^2\, dV
  -\int \pd_\te(B g^{ij})\,\pd_i\Bpsi\,\pd_j (\chi^2\Bpsi_\te)\,
  d\al\,dy\\
&\qquad\qquad\qquad\qquad\qquad\qquad\qquad -\int g^{ij}\,\pd_i\Bpsi_\te\,\Bpsi_\te\,\pd_j(\chi^2)\,dV\,,
\end{align*}
where the indices $i,j$ now range over the set
$\{r,\te,\al\}$. Notice that the reason we are now using polar
coordinates is that $\pd_\te$ does not commute with the derivatives
with respect to $(y_1,y_2)$. Arguing as
above, one finds that
\[
\int g^{ij}\,\pd_i\Bpsi_\te\,\pd_j\Bpsi_\te\, \chi^2\, dV\leq C\ep\|\rho\|\,,
\]
which ensures that
\begin{equation}\label{bd1}
\|\Bpsi_{\te\te}\|_{\es\times\A_{R}}+\|\Bpsi_{r\te}\|_{\es\times\A_{R}}\leq C\ep^2\|\rho\|\,,
\end{equation}
where $\A_R:=\DD\minus\overline{\DD_R}$ is the annulus of inner radius
$R$.

To estimate the derivative $\Bpsi_{rr}$, it now suffices to isolate
this quantity in the equation $\De \Bpsi=\Brho$. From the expression of
the Laplacian in these coordinates~\eqref{De} it is apparent that for
$r>R$ one can write $\Bpsi_{rr}$ as
\[
\ep^{-2}\Bpsi_{rr}-\Brho = \cO(\ep^{-2})\,
\big(|\Bpsi_{\te\te}|+|\Bpsi_r|\big)+\cO(\ep^{-1})\, \Bpsi_\te+\cO(1)\,\big(|\Bpsi_{\al
  \te}|+|\Bpsi_{\al \al}|+|\Bpsi_\al|\big)\,.
\]
From Propositions~\ref{L.H1psi}--\ref{L.Bpsiy}
and the estimates~\eqref{bd1}, it
then follows that
\[
\|\Bpsi_{rr}\|_{\es\times\A_R}\leq C\ep^2\|\rho\|\,.
\]
This yields the desired boundary estimates, thus completing the proof
of the proposition.
\end{proof}

The results we have established so far show that the derivatives of $\Bpsi$ can be
bounded in terms of the source $\rho$ as
\begin{subequations}\label{muchasders}
\begin{align}
\|\Bpsi\|+\|D_y\Bpsi\|+\|D_y^2\Bpsi\|&\leq C\ep^2\|\rho\|\,,\\
\|\pd_\al\Bpsi\|+\|D_y\pd_\al\Bpsi\|&\leq C\ep\|\rho\|\,,\\
\|\pd_\al^2\Bpsi\|&\leq C\|\rho\|\,.
\end{align}
\end{subequations}
However, having in mind applications in forthcoming sections,we would
rather have estimates where the RHS always has a factor of $\ep^2$. 

In the following theorem we show that this can be
achieved by replacing the $L^2$~norm of $\rho$ by its $H^1$ norm (thus
using estimates that are weaker in terms of the gain of derivatives), and
provide a generalization for higher derivatives. It
is worth emphasizing that both the estimates~\eqref{muchasders} and
those in the following theorem are optimal with respect to
its dependence on the small parameter $\ep$, as can be checked easily.

\begin{theorem}\label{T.Hk}
For any nonnegative integers $j$ and $k$ we have the bound
\begin{gather*}
\|D_y^{j}\pd_\al^k\Bpsi\|\leq C_{jk}\ep^2\|\rho\|_{H^{J+k}}\,,
\end{gather*}
where $J:=\max\{j-2,0\}$.
\end{theorem}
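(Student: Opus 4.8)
The plan is to prove the bound by a layered induction that, in every case, reduces to the energy and elliptic estimates already established for $\Bpsi$ itself. The starting observation is that for every fixed $\al$ the functions $\pd_\al^k\Bpsi$ still have zero mean over the disk $\DD$, so Poincar\'e's inequality in $\DD$ applies to them; this is what produces the decisive factor $\ep^2$ whenever a $y$-derivative is estimated.

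First I would settle the base layer $j\in\{0,1,2\}$ (for which $J=0$) by induction on $k$. For fixed $k$ put $v:=\pd_\al^k\Bpsi$; it satisfies $\De v=g$ weakly in $\es\times\DD$ with Neumann conditions, where $g:=\pd_\al^k\Brho+[\De,\pd_\al^k]\Bpsi$ and $[\De,\pd_\al^k]$ is a differential operator of order $\le k+2$ whose coefficients carry the same $\ep$-weights as those of $\De$ (only $\al$-derivatives of the metric coefficients enter). Using Proposition~\ref{L.Brho} together with the $\ep$-crude bounds of Propositions~\ref{L.psial} and~\ref{L.Bpsial}, \eqref{muchasders}, and the inductive hypothesis on smaller $k$, one checks $\|g\|\le C_k\|\rho\|_{H^k}$. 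Then running the argument of Proposition~\ref{L.H1psi} verbatim with $v$ in place of $\Bpsi$ gives $\|v\|+\|D_yv\|\le C\ep^2\|g\|$, and running the argument of Proposition~\ref{L.Bpsiy} with $v$ in place of $\Bpsi$ (the interior cutoff estimate, then the boundary estimate obtained by first controlling $v_{\te\te},v_{r\te}$ by energy and afterwards isolating $v_{rr}$ from $\De v=g$) gives $\|D_y^2v\|\le C\ep^2\|g\|$. This is exactly the claim for $j\le 2$.

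For $j\ge 3$ I would argue by induction on $j$, with all $k$ treated simultaneously. In the interior of the tube one works in rectangular $y$-coordinates and closes the estimate by interior elliptic regularity exactly as in the first part of Proposition~\ref{L.Bpsiy}; near $\es\times\pd\DD$ one passes to polar coordinates and writes each $j$-th $y$-derivative as a combination of $\pd_r^a\pd_\te^b$ with $a+b=j$ plus lower-order terms. If $a\le 1$ (the $\te$-heavy case) one estimates $\pd_r^a\pd_\te^{j-a}\pd_\al^k\Bpsi$ by the energy method, testing the weak equation against $\chi^2\pd_\te^{2(j-1)}\pd_\al^{2k}\Bpsi$; integrating by parts $j-2$ times in $\te$ and $k$ times in $\al$ on the source term, and controlling the resulting top-order derivative of $\Bpsi$ by $\ep$ times the energy (as in Proposition~\ref{L.Bpsiy} one does \emph{not} integrate by parts on $\Brho$), produces precisely the factor $\|\rho\|_{H^{J+k}}$. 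If $a\ge 2$ ($r$-heavy) one isolates $\Bpsi_{rr}$ from the explicit Laplacian~\eqref{De} and applies $\pd_r^{a-2}\pd_\te^b\pd_\al^k$ to the identity: this trades two $r$-derivatives for the term $\ep^2\pd_r^{a-2}\pd_\te^b\pd_\al^k\Brho$ (costing $a-2+b=j-2$ derivatives on $\rho$), for terms of total order $\le j$ with at most $a-1$ $r$-derivatives (handled by an inner induction on the number of $r$-derivatives and by the $j$-induction), and for terms of the form $\ep^2D_y^p\pd_\al^{k+2}\Bpsi$ with $p\le j-2$. These last terms are where the two spare powers of $\ep$ are spent: for $p\ge 2$ the $j$-induction gives $\|D_y^p\pd_\al^{k+2}\Bpsi\|\le C\ep^2\|\rho\|_{H^{p-2+k+2}}$, while for $p\le 1$ one uses the $\ep$-cheap bounds $\|D_y\pd_\al^{k+2}\Bpsi\|\le C\ep\|\rho\|_{H^{k+1}}$ from Proposition~\ref{L.Bpsial} and $\|\pd_\al^{k+2}\Bpsi\|\le C\|\rho\|_{H^k}$; in every case, after multiplying by the $\ep^2$ in front and using $\ep<1$ and $p\le j-2$, the contribution is $\le C\ep^2\|\rho\|_{H^{j-2+k}}$.

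The routine parts are the commutator estimates and the interior bounds, which are minor variants of computations already done. The genuine difficulty — the point to be set up with care — is the combinatorics: one must order the triple (number of $r$-derivatives, total $y$-order $j$, number of $\al$-derivatives $k$) so that every term produced by the reduction is strictly smaller, while \emph{simultaneously} tracking the $\ep$-powers so that the unavoidable loss of derivatives on $\rho$ never exceeds $J+k$ and the power of $\ep$ never falls below $2$. The compatibility of these two bookkeeping constraints is precisely what pins down the value $J=\max\{j-2,0\}$.
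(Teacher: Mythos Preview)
Your proposal is correct and takes essentially the same route as the paper: both arguments reduce the higher-order estimates to the base case (Propositions~\ref{L.H1psi} and~\ref{L.Bpsiy}) by showing that suitable derivatives of $\Bpsi$ satisfy a Neumann problem of the same type with a source controlled via commutator estimates, and both isolate the top $r$-derivative from the explicit form~\eqref{De} of the Laplacian near the boundary. The only organizational differences are that the paper inducts on the single quantity $J+k$ rather than layering $k$ first and $j$ afterwards, and that for the tangential boundary estimate the paper treats $\pd_\te\Bpsi$ exactly as it treats $\pd_\al\Bpsi$ (as the solution of its own BVP with commutator source $[\De,\pd_\te]\Bpsi$), whereas you propose a direct energy test with $\chi^2\pd_\te^{2(j-1)}\pd_\al^{2k}\Bpsi$; both variants work and lead to the same bookkeeping.
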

\begin{proof}
  The proof proceeds by induction on $J+k$, that is, on the number of
  derivatives in the RHS of the inequality. The case $J+k=0$ follows from
  Propositions~\ref{L.H1psi} and~\ref{L.Bpsiy}. To avoid cumbersome
  notations that might obscure the argument, we will sketch the procedure for the case $J+k=1$, where
  one has to estimate $\pd_\al\Bpsi$, $D_y\pd_\al\Bpsi$ (improving the bounds in
  Propositions~\ref{L.H1psi} and~\ref{L.Bpsial}), $D_y^2\pd_\al\Bpsi$ and
  $D_y^3\Bpsi$. Once this case has been worked out in detail, it is
  straightforward to prove the general result using an induction argument.

  Let us begin by estimating the quantities having derivatives with
  respect to $\al$, that is, $\pd_\al\Bpsi$, $D_y\pd_\al\Bpsi$ and
  $D_y^2\pd_\al\Bpsi$. An easy computation using
  the expression of the Laplacian~\eqref{De} shows that the commutator
\[
\varrho:=\De(\pd_\al\Bpsi)-\pd_\al(\De\Bpsi)
\]
can be written as
\begin{align*}
\varrho&=\cO(\ep^{-1})\,
D_y^2\Bpsi+\cO(\ep)\,\Bpsi_{\al\al} + \cO(1)\,D_y\pd_\al\Bpsi +\cO(\ep^{-1})D_y\Bpsi+\cO(\ep)\Bpsi_\al\,.
\end{align*}
Now from the $H^2$ estimates proved in
Propositions~\ref{L.H1psi}--\ref{L.Bpsiy}, one obtains that the $L^2$ norm of
the commutator is bounded by
\[
\|\varrho\|\leq C\ep\|\rho\|\,.
\]
The function $\pd_\al\Bpsi$ obviously satisfies the zero-mean
condition
\begin{subequations}\label{syst}
\begin{equation}
\int_{\DD} \pd_\al\Bpsi(\al,y')\, dy'=0\,,
\end{equation}
the boundary condition
\begin{equation}
\pd_\nu(\pd_\al\Bpsi)=0
\end{equation}
on $\es\times\pd\DD$ and the equation 
\begin{equation}
\De(\pd_\al\Bpsi)=\pd_\al\Brho+\varrho\,.
\end{equation}
\end{subequations}
As the $L^2$ norm of the RHS is bounded by
$C\|\rho\|_{H^1}$, an immediate application of Propositions~\ref{L.H1psi} and~\ref{L.Bpsiy}
(applied to the boundary problem~\eqref{syst} rather than
to~\eqref{Depsi}) shows that
\[
\|\Bpsi_\al\|+ \|D_y\Bpsi_\al\|+\|D_y^2\Bpsi_\al\|\leq C\ep^2\|\rho\|_{H^1}\,.
\]

To estimate $D_y^3\Bpsi$ we will argue essentially as in the proof of
Proposition~\ref{L.Bpsiy}. One starts by proving interior estimates, which
are obtained by feeding the test function
$\vp:=\pd_a^2(\chi^2\pd_a^2\Bpsi)$ in the identity~\eqref{weakBpsi}. As
before, $\chi(y)$ is a smooth function which vanishes in a
neighborhood of $\pd\DD$ and is identically equal to $1$ in a disk of
radius~$R$ and the script $a$ (which is not summed) denotes any $y$
direction. In order to get boundary estimates, one can start by
noticing that the same argument we have used above to control
derivatives with respect to $\al$ also works for
$\pd_\theta\Bpsi\equiv y_1\,\pd_2\Bpsi-y_2\,\pd_1\Bpsi$. Indeed, the $L^2$
norm of the commutator
\[
\tilde\varrho:=\De(\pd_\te\Bpsi)-\pd_\te(\De\Bpsi)
\]
is also bounded
by $C\ep\|\rho\|$ as a consequence of
Propositions~\ref{L.H1psi}--\ref{L.Bpsiy}, and besides $\pd_\te\Bpsi$ satisfies the
boundary value problem
\begin{align*}
&\De(\pd_\te\Bpsi)=\pd_\te\Brho+\tilde\varrho\quad\text{in }
\es\times\DD\,,\\
&\pd_\nu(\pd_\te\Bpsi)=0\quad \text{on }\es\times\pd\DD,\qquad \int_{\DD} \pd_\te\Bpsi(\al,y')\, dy'=0\,.
\end{align*}
It then follows that the $L^2$ norm of the derivatives
$D_y^2\pd_\te\Bpsi$ are bounded by $C\ep^2\|\rho\|_{H^1}$. Hence one
can now differentiate the equation $\De\Bpsi=\Brho$ with respect to
$r$ and isolate~$\pd_{r}^3\Bpsi$ to show that the norm
$\|\pd_r^3\Bpsi\|_{\es\times\A_R}$ in a neighborhood of the boundary
is bounded by $C\ep^2\|\rho\|_{H^1}$, as claimed.

This proves the induction hypotheses for $J+k=1$. For higher values of
$J+k$, the idea is exactly the same. Derivatives with respect to $\al$
(or $\te$) are dealt with by taking derivatives directly in the
equation and invoking the induction hypotheses. To estimates the
highest derivative with respect to $r$, one combines interior
estimates with test function 
\[
\vp=\pd_a^{J+k+1}(\chi^2
\pd_a^{J+k+1}\Bpsi)
\]
with the direct isolation of $\pd_r^{J+k+2}\Bpsi$ in the equation
$\pd_r^{J+k}(\De\Bpsi-\Brho)=0$.
\end{proof}

\subsection{Pointwise estimates}
\label{SS.Ck}

Taking into account that $D_y\Bpsi=D_y\psi$, we can combine
the results in the previous two subsections to obtain $H^k$
estimates for $\psi$ in which the derivatives with respect to the fast
and slow variables are controlled in terms of different (and optimal)
powers of $\ep$. To
begin with, we can put together Propositions~\ref{L.psial} and~\ref{L.Bpsial} and
Theorem~\ref{T.Hk} to arrive at the following bounds:

\begin{theorem}\label{C.Hk}
The functions $\psi$ and $\Bpsi$ satisfy the $H^k$ estimate
\[
\|\psi\|_{H^{k+2}}\leq C_k\|\rho\|_{H^k}\,,\qquad \|\Bpsi\|_{H^k}\leq C_k\ep^2\|\rho\|_{H^k}\,.
\]
for any nonnegative integer $k$.
\end{theorem}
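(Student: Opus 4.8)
The plan is to derive the two estimates of Theorem~\ref{C.Hk} simply by splicing together the estimates already obtained for the ``slow'' and ``fast'' parts of $\psi$. The key observation is the orthogonal-type decomposition $\psi(\al,y)=\bar\psi(\al,y)+\psi_0(\al)$, where $\psi_0(\al):=\frac1\pi\int_{\DD}\psi(\al,y')\,dy'$ is the fiberwise average and $\bar\psi$ is its complement as in~\eqref{Bpsi}. Since any $y$-derivative kills $\psi_0$, we have $D_y^j\pd_\al^k\psi=D_y^j\pd_\al^k\bar\psi$ whenever $j\geq1$ by~\eqref{psiBpsi}, so the only terms of $\psi$ not already controlled by $\bar\psi$ are the pure $\al$-derivatives $\pd_\al^k\psi$.

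First I would prove the bound for $\bar\psi$. This is essentially a restatement of Theorem~\ref{T.Hk}: taking the sum over $j+k'\leq k$ of the bounds $\|D_y^j\pd_\al^{k'}\bar\psi\|\leq C_{jk'}\ep^2\|\rho\|_{H^{J+k'}}$ with $J=\max\{j-2,0\}$, and noting that $J+k'\leq\max\{j+k'-2,k'\}\leq k$, one gets $\|\bar\psi\|_{H^k}\leq C_k\ep^2\|\rho\|_{H^k}$ directly. So the second inequality is immediate.

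Next I would prove the bound for $\psi$ itself. Write $\|\psi\|_{H^{k+2}}$ as a sum of terms $\|D_y^j\pd_\al^{k'}\psi\|$ with $j+k'\leq k+2$. If $j\geq1$ this equals $\|D_y^j\pd_\al^{k'}\bar\psi\|$, which by Theorem~\ref{T.Hk} is at most $C\ep^2\|\rho\|_{H^{\max\{j-2,0\}+k'}}\leq C\|\rho\|_{H^k}$ (using $\max\{j-2,0\}+k'\leq j+k'-2\leq k$ when $j\geq2$, and $\max\{j-2,0\}+k'=k'\leq k+1$ when $j=1$; the latter borderline case one absorbs by also invoking Proposition~\ref{L.psial} or by noting $j+k'\le k+2$ with $j=1$ forces $k'\le k+1$, and when $k'=k+1$ one uses instead the sharper sum-bound from Proposition~\ref{L.Bpsial} for $D_y\pd_\al^{k+1}\bar\psi$). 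If $j=0$, then $D_y^j\pd_\al^{k'}\psi=\pd_\al^{k'}\psi$ with $k'\leq k+2$, and Proposition~\ref{L.psial} gives $\|\pd_\al^{k'}\psi\|\leq\|\pd_\al^{k'}\psi\|_{\dot H^1_\ep}\le C_k\|\rho\|_{H^{k'-1}}\leq C_k\|\rho\|_{H^{k+1}}$; the base cases $k'=0,1$ are covered by Proposition~\ref{L.H1}. Summing finishes the proof.

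The only genuine subtlety — and the step I would be most careful about — is the bookkeeping of how many derivatives land on the right-hand side in the mixed terms with a single $y$-derivative, since there $J+k'$ can reach $k+1$ rather than $k$. This is resolved by treating $D_y\pd_\al^{k+1}\bar\psi$ via Proposition~\ref{L.Bpsial} (which gives $\|D_y\pd_\al^{k+1}\bar\psi\|\leq C\ep\|\rho\|_{H^k}$, not $H^{k+1}$) instead of the generic Theorem~\ref{T.Hk} bound; everything else is a direct triangle-inequality assembly with no further input required.
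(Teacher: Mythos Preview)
Your approach is the same as the paper's: the proof there is just the one-line observation that Theorem~\ref{C.Hk} follows from combining Propositions~\ref{L.psial} and~\ref{L.Bpsial} with Theorem~\ref{T.Hk}, using $D_y\psi=D_y\Bpsi$. Your write-up makes this explicit and the $\Bpsi$-estimate and the $j\geq 1$ cases are handled correctly.

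There is, however, a bookkeeping slip in your $j=0$ case that costs you one derivative. You write
\[
\|\pd_\al^{k'}\psi\|\leq\|\pd_\al^{k'}\psi\|_{\dot H^1_\ep}\leq C\|\rho\|_{H^{k'-1}}\leq C\|\rho\|_{H^{k+1}}\,,
\]
which for $k'=k+2$ gives $\|\rho\|_{H^{k+1}}$, not $\|\rho\|_{H^k}$. The fix is to read off $\|\pd_\al^{k'}\psi\|$ directly as a piece of $\|\pd_\al^{k'-1}\psi\|_{\dot H^1_\ep}$ (since $\|\pd_\al\vp\|\leq\|\vp\|_{\dot H^1_\ep}$ by definition), and then apply Proposition~\ref{L.psial} with index $k'-2$ to get $\|\pd_\al^{k'}\psi\|\leq C\|\rho\|_{H^{k'-2}}\leq C\|\rho\|_{H^k}$ for $k'\geq 2$; the cases $k'=0,1$ are covered by Proposition~\ref{L.H1} as you note. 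With this correction the argument goes through.
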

\begin{remark}
It is not hard to prove that these estimates are optimal with respect
to the dependence on $\ep$. To have a rough idea of why this is true,
the reader might want to consider the problem
\begin{equation}\label{model}
\psi_{\al\al}+\frac{\De_y \psi}{\ep^2}=\rho\quad \text{in
}\es\times\DD\,,\qquad \pd_\nu\psi=0 \quad \text{on
}\es\times\pd\DD\,,\qquad \int\psi\, d\al\, dy=0\,,
\end{equation}
which can be understood as a simplified version of the
problem~\eqref{Depsi}. Here and in what follows, $\De_y\psi:=\pd_1^2\psi+\pd_2^2\psi$ stands
for the standard Laplacian in the variables~$y$.
\end{remark}

For future reference, it is convenient to invoke the
Sobolev embedding theorem and record the following estimate for the $C^k$ norm of the function $\psi$:

\begin{theorem}\label{T.Ck}
The function $\psi$ satisfies the $C^k$ estimates
\[
\|\psi\|_{C^k}\leq C_k\|\rho\|_{H^{k}}\,,\qquad \|D_y\psi\|_{C^k}\leq C_k\ep^2\|\rho\|_{H^{k+3}}
\]
for any nonnegative integer $k$.
\end{theorem}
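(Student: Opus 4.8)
The plan is to obtain the pointwise estimates in Theorem~\ref{T.Ck} as a direct consequence of the Sobolev embedding $H^s(\es\times\DD)\hookrightarrow C^0(\es\times\DD)$ for $s>3/2$, combined with the $H^k$ bounds already proved in Theorem~\ref{C.Hk}. The only subtlety is bookkeeping the powers of $\ep$ correctly, since the embedding constant for $\es\times\DD$ is $\ep$-independent (we are working in the fixed reference domain $\es\times\DD$, not in the physical tube) and therefore does not spoil the optimal dependence obtained in the energy estimates.

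For the first estimate, I would write, for any multi-index with $|\cdot|\leq k$, that the $C^0$ norm of the corresponding derivative of $\psi$ is controlled by $\|\psi\|_{H^{k+2}}$ via the Sobolev embedding (taking, say, $s=k+2>k+3/2$), and then invoke the first inequality of Theorem~\ref{C.Hk} to bound this by $C_k\|\rho\|_{H^k}$. This gives $\|\psi\|_{C^k}\leq C_k\|\rho\|_{H^k}$ immediately. For the second estimate, the point is that $D_y\psi=D_y\Bpsi$ by Eq.~\eqref{psiBpsi}, so one may work with $\Bpsi$, whose $H^k$ norm carries the crucial extra factor $\ep^2$. Applying the Sobolev embedding to $D_y\Bpsi$ and using the second inequality of Theorem~\ref{C.Hk} (with $k$ replaced by $k+3$, since $\|D_y\Bpsi\|_{C^k}\lesssim \|D_y\Bpsi\|_{H^{k+2}}\leq\|\Bpsi\|_{H^{k+3}}\leq C_k\ep^2\|\rho\|_{H^{k+3}}$) yields $\|D_y\psi\|_{C^k}\leq C_k\ep^2\|\rho\|_{H^{k+3}}$, as claimed.

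There is essentially no hard part here: this is a routine corollary of the energy estimates. The one thing to be careful about is that the Sobolev embedding on the product domain $\es\times\DD$ must be stated for the fixed (rescaled, $\ep$-independent) domain, so that the embedding constant does not introduce hidden powers of $\ep$; all the $\ep$-dependence has already been extracted and packaged in Theorem~\ref{C.Hk}. Thus the proof is simply: quote Sobolev, quote Theorem~\ref{C.Hk}, and note $D_y\psi = D_y\Bpsi$.
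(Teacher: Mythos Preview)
Your proposal is correct and follows essentially the same approach as the paper: apply the Sobolev inequality $\|\vp\|_{C^k}\leq C\|\vp\|_{H^{k+2}}$ on the fixed domain $\es\times\DD$, invoke Theorem~\ref{C.Hk}, and use $D_y\psi=D_y\Bpsi$ for the second estimate. Your explicit remark that the embedding constant is $\ep$-independent because one works on the rescaled reference domain is a useful clarification that the paper leaves implicit.
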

\begin{proof}
It follows immediately from Theorem~\ref{C.Hk} upon noticing that $D_y\Bpsi=D_y\psi$
and using the Sobolev inequality $\|\vp\|_{C^k}\leq C\|\vp\|_{H^{k+2}}$.
\end{proof}

\begin{remark}\label{R.modelLaplacian}
It is clear that the same bounds we have proved for the function $\psi$ hold true if we assume that $\psi$ solves the model problem~\eqref{model}
instead of $\De\psi=\rho$. In particular,
for future reference we record here that this function also satisfies the
estimates 
\begin{equation}\label{remarkeqm}
\|\psi\|_{H^{k+2}}+ \frac{\|D_y^2\psi\|_{H^{k}}}{\ep^2} \leq
C_k\|\rho\|_{H^{k}}\,,\qquad \|D_y\psi\|_{H^k}\leq C_k\ep^2\|\rho\|_{H^k}\,.
\end{equation}
\end{remark}

\section{Harmonic fields in thin tubes}
\label{S.harmonic}

In this section we use the estimates proved in Section~\ref{S.Laplacian} to
compute the harmonic field in a thin tube up to terms that are
suitably bounded for small~$\ep$.

We recall that a vector field $h$ in the tube $\cT_\ep$ is {\em harmonic}\/ if
it is divergence-free, irrotational, and tangent to the boundary. The
vector space of all harmonic fields in the tube will be denoted by
\begin{equation}\label{cH}
\cH(\tube):=\big\{ h\in C^\infty(\Om,\RR^3): \Div h=0,\; \curl h=0,\; h\cdot \nu=0\big\}\,.
\end{equation}
It is
standard that the space $\cH(\cT_\ep)$ is
one-dimensional, as it is isomorphic to the first cohomology group of
the tube with real coefficients.

Let us consider the vector field in $\es\times\DD$ defined by
\begin{equation}\label{h0}
h_0:=B^{-2}\big(\pd_\al+\tau\,\pd_\te\big)\,.
\end{equation}
It can be readily checked that $h_0$ is irrotational (with respect to
the metric~\eqref{dspolar}) and tangent to the boundary.

By the Hodge decomposition~\cite{FT78}, there is a function $\psi$
such that
\begin{equation}\label{h}
h:=h_0+\nabla \psi
\end{equation}
is harmonic. Clearly this is the only harmonic vector field in
$\es\times\DD$ up to a multiplicative constant. By $\nabla\psi$ we are denoting the gradient of the
function $\psi$ with respect to the metric~\eqref{dspolar}, that is,
\begin{equation}\label{nabla}
\nabla\psi= \frac{\psi_\al+\tau \psi_\te}{B^2}\,\pd_\al
+\frac{\psi_r}{\ep^2}\,\pd_r+
\frac{A\psi_\te+\ep^2r^2\tau\psi_\al}{(\ep rB)^2}\,\pd_\te\,.
\end{equation}
The fact that the field $h$ is divergence-free implies that $\psi$
solves the Neumann boundary value problem
\[
\De \psi=\rho\quad \text{in
}\es\times\DD\,,\qquad \pd_\nu\psi=0 \quad \text{on
}\es\times\pd\DD\,,
\]
where
\begin{equation}\label{eqrho}
\rho:=\ep B^{-3} r(\tau\ka\sin\te-\ka'\cos\te)
\end{equation}
is minus the divergence of $h_0$ (and, as such, satisfies $\int \rho\,
dV=0$). We will also assume that $\int
\psi\,d\al\, dy=0$ in order to determine $\psi$ uniquely. 

In the following two sections we will need some estimates for the
harmonic field~$h$ (or, equivalently, for the function $\psi$) that
depend on the particular form of the source term $\rho$. These
estimates are obtained in the following theorem, where we
calculate, up to some controllable error, some derivatives of the
function $\psi$ that we will use later on. To simplify the notation,
we will write $\cO(\ep^n)$ for any function
$\chi$ satisfying the bound $\|\chi\|_{C^k}\leq C_k\ep^n$ for all $k$
(and also for numbers whose absolute value is smaller than $C\ep^n$,
but the meaning should be clear from the context).

\begin{theorem}\label{T.psi0}
Consider the functions
\begin{align}\label{vp0}
\vp_0&:=\frac{\ep^3(r^3-3r)}8\,(\tau\ka\sin\te-\ka'\cos\te)\,,\\
\vp_1&:=\frac{13\ep^4(r^4-2r^2)}{96}\,(\tau\ka^2\sin2\te-\ka\ka'\cos2\te)\,,\label{vp1}
\end{align}
which are obviously of order $\cO(\ep^3)$ and $\cO(\ep^4)$, respectively. Then
$\psi$ is related to these functions through the estimates
\begin{align*}
\psi&=\cO(\ep^2) \,,\\
D_y\psi&=D_y\vp_0+\cO(\ep^4)\,,\\
\pd_\te\psi&=\pd_\te\vp_0+\pd_\te\vp_1+ \cO(\ep^5)\,.
\end{align*}
\end{theorem}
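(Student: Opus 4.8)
The functions $\vp_0$ and $\vp_0+\vp_1$ are the first two terms of a formal expansion of $\psi$ in powers of~$\ep$, and the point is to control the remainders with the estimates of Section~\ref{S.Laplacian}. First I would expand the source: since $B^{-3}=1+3\ep\ka r\cos\te+\cO(\ep^2)$, Eq.~\eqref{eqrho} gives $\rho=\ep\rho^{[1]}+\ep^2\rho^{[2]}+\cO(\ep^3)$ with $\rho^{[1]}=r(\tau\ka\sin\te-\ka'\cos\te)$ and $\rho^{[2]}=3\ka r^2\cos\te(\tau\ka\sin\te-\ka'\cos\te)$, all $\cO(\ep^n)$'s being understood in every $C^k$ norm as in the statement. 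The relevant algebraic facts are: with $h(r)=\tfrac18(r^3-3r)$ one has $h''+\tfrac1r h'-\tfrac1{r^2}h=r$ and $h'(1)=0$, so $\vp_0=\ep^3 h(r)(\tau\ka\sin\te-\ka'\cos\te)$ satisfies $\De_y\vp_0=\ep^3\rho^{[1]}$ \emph{exactly} and $\pd_r\vp_0|_{r=1}=0$; and a similar separated ODE for the $\sin2\te,\cos2\te$ modes produces~$\vp_1$, which likewise obeys $\pd_r\vp_1|_{r=1}=0$. I would also record that, because the metric~\eqref{dspolar} has no $dr\,d\al$ or $dr\,d\te$ cross terms, one has $g^{r\al}=g^{r\te}=0$ and $g^{rr}=\ep^{-2}$, so the Neumann condition in~\eqref{De} is simply $\psi_r|_{r=1}=0$; in particular $\pd_\nu\vp_0=\pd_\nu\vp_1=0$, and $\int_{\es\times\DD}\vp_i\,d\al\,dy=0$ since the $\te$-averages vanish.

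\textbf{The bound $\psi=\cO(\ep^2)$.} Write $\psi=\psi^{(0)}(\al)+\Bpsi$ with $\psi^{(0)}:=\tfrac1\pi\int_\DD\psi\,dy$. Theorems~\ref{C.Hk} and~\ref{T.Ck} together with $\|\rho\|_{H^k}\le C_k\ep$ give $\Bpsi=\cO(\ep^3)$. To bound $\psi^{(0)}$ I would average the equation $\De\psi=\rho$ over the disk: using $\psi_r|_{r=1}=0$ the leading $\ep^{-2}$-order contribution reduces to a negligible boundary term, the $\Bpsi$-dependent terms are $\cO(\ep^2)$ by the above and Proposition~\ref{L.Bpsial}, and the explicit disk average is $\tfrac1\pi\int_\DD\rho\,dy=-\tfrac34\ep^2\ka\ka'+\cO(\ep^3)$; hence $\psi^{(0)}$ solves a perturbed ODE of the form $(1+\cO(\ep^2))(\psi^{(0)})''+\cO(\ep^2)(\psi^{(0)})'=\cO(\ep^2)$. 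Since $\pd_\al^2$ is boundedly invertible on periodic zero-mean functions in every $C^k$ norm, a Neumann-series argument for small $\ep$ yields $\psi^{(0)}=\cO(\ep^2)$, and so $\psi=\cO(\ep^2)$.

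\textbf{The bound $D_y\psi=D_y\vp_0+\cO(\ep^4)$.} Set $\eta:=\psi-\vp_0$. By $\De_y\vp_0=\ep^3\rho^{[1]}$ and the expansion of $\rho$, the function $\eta$ solves the Neumann Laplace problem with source $\rho-\De\vp_0=\cO(\ep^2)$ (in every $C^k$ norm), zero boundary data and zero mean. Applying the second estimate of Theorem~\ref{T.Ck} to $\eta$ gives $\|D_y\eta\|_{C^k}\le C_k\ep^2\|\rho-\De\vp_0\|_{H^{k+3}}\le C_k\ep^4$, i.e.\ $D_y\psi=D_y\vp_0+\cO(\ep^4)$. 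As a byproduct, since $\eta$ has zero disk-mean and $\|D_y\eta\|_{C^k}=\cO(\ep^4)$, Poincar\'e's inequality on $\DD$ shows $\Bpsi-\vp_0=\cO(\ep^4)$ in every $C^k$ norm.

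\textbf{The bound $\pd_\te\psi=\pd_\te\vp_0+\pd_\te\vp_1+\cO(\ep^5)$.} This is where the real work is. Put $\zeta:=\psi-\vp_0-\vp_1$ and compute $\De\vp_0$ and $\De\vp_1$ up to order $\ep^3$ using~\eqref{De}: the $\ep^{-1}$-order terms of $\De$ acting on $\vp_0=\cO(\ep^3)$ produce an explicit $\ep^2 S_2(r,\te;\al)$, and the $\ep^{-2}$-order bracket of $\De$ acting on $\vp_1=\cO(\ep^4)$ produces $\ep^2\cdot\tfrac{13}8 r^2(\tau\ka^2\sin2\te-\ka\ka'\cos2\te)$, all other contributions being $\cO(\ep^3)$. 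The crucial step, verified by a few trigonometric reductions, is that
\[
\rho^{[2]}-S_2-\tfrac{13}8 r^2(\tau\ka^2\sin2\te-\ka\ka'\cos2\te)\ \text{ is independent of }\te;
\]
hence $\De\zeta=\ep^2 q(r;\al)+\cO(\ep^3)$ with $q$ independent of $\te$ (and everything here explicit). Consequently $\Psi:=\pd_\te\zeta=\pd_\te\psi-\pd_\te\vp_0-\pd_\te\vp_1$ has zero disk-mean and zero Neumann data, and since $\pd_\te q=0$ it satisfies a Neumann Laplace problem whose source $\pd_\te\Brho_\zeta+[\De,\pd_\te]\bar\zeta$ is $\cO(\ep^3)$ in every $H^k$ norm: the first term because $\pd_\te$ kills the $\ep^2$ part of $\De\zeta$ — here one uses $\psi^{(0)}=\cO(\ep^2)$ from the first bound to control $\pd_\te\De\!\int_\DD\zeta\,dy=\pi\,\pd_\te\De\psi^{(0)}$ — and the commutator because $\|[\De,\pd_\te]\bar\zeta\|_{H^k}\le C\ep\|\De\zeta\|_{H^{k'}}$ exactly as in the proof of Theorem~\ref{T.Hk} (applied to $\zeta$). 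Since $\Psi$ coincides with its own disk-mean-subtracted version, the $\Bpsi$-estimate of Theorem~\ref{C.Hk} gives $\|\Psi\|_{H^k}\le C_k\ep^2\cdot\cO(\ep^3)=\cO(\ep^5)$, and Sobolev embedding yields $\|\Psi\|_{C^k}=\cO(\ep^5)$, which is the claim. The main obstacle is precisely the order-$\ep^2$ cancellation that pins down $\vp_1$: it forces one to carry the full operator~\eqref{De}, not just its principal part, through the computation of $\De\vp_0$ and $\De\vp_1$, and it is indispensable — without the $\te$-independence of the $\ep^2$ part of $\De\zeta$ the $\pd_\te$-differentiation trick only delivers $\cO(\ep^4)$.
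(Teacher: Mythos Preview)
Your proof is correct and rests on the same key algebraic cancellation as the paper's, but the bookkeeping is organized differently in two places.

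For the estimate $\psi=\cO(\ep^2)$, the paper does not separate out the disk average explicitly: it simply applies the first bound of Theorem~\ref{T.Ck} to $\psi_1:=\psi-\vp_0$, whose source is $\cO(\ep^2)$, and uses $\vp_0=\cO(\ep^3)$. Your disk-averaging argument for $\psi^{(0)}$ is correct but heavier; note that the ``byproduct'' $\Bpsi-\vp_0=\cO(\ep^4)$ really needs Theorem~\ref{C.Hk} applied to $\eta$ rather than the pointwise Poincar\'e inequality, since you want $C^k$ control.

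For the third estimate the two arguments diverge more substantially, though they exploit the same fact: the $\te$-dependent part of the order-$\ep^2$ source for $\psi-\vp_0$ is exactly cancelled by $\vp_1$, leaving the $\te$-independent remainder $\rho_2=\frac{\ep^2(3-14r^2)}{8}\ka\ka'$. The paper handles $\rho_2$ by introducing an auxiliary $\vp_2$ solving the model problem $\pd_\al^2\vp_2+\ep^{-2}\De_y\vp_2=\rho_2$ (which has zero $\es\times\DD$-mean but not zero disk-mean, so the estimates of Remark~\ref{R.modelLaplacian} are invoked); since $\pd_\te\vp_2=0$, one then applies Theorem~\ref{T.Ck} directly to $\psi_2:=\psi-\vp_0-\vp_1-\vp_2$, whose source is $\cO(\ep^3)$, and reads off $D_y\psi_2=\cO(\ep^5)$. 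Your route avoids $\vp_2$ by differentiating in $\te$ first, which kills $\rho_2$ outright, and then controlling the commutator $[\De,\pd_\te]\bar\zeta$ with the already-established bounds on $\bar\zeta$. Both work; the paper's version is slightly cleaner because everything reduces to a single application of Theorem~\ref{T.Ck}, whereas yours requires checking that each piece of the commutator (including the $\cO(\ep^{-1})D_y\bar\zeta$ term and the $\cO(\ep)\bar\zeta_{\al\al}$ term, the latter using $\|\pd_\al^2\bar\zeta\|\le C\ep^2\|\De\zeta\|_{H^2}$ from Theorem~\ref{T.Hk}) is indeed $\cO(\ep^3)$. Your shorthand ``$\|[\De,\pd_\te]\bar\zeta\|_{H^k}\le C\ep\|\De\zeta\|_{H^{k'}}$'' is correct as a conclusion but compresses this term-by-term verification; the commutator is not literally controlled by $\De\zeta$, rather by derivatives of $\bar\zeta$ which are in turn bounded via Theorem~\ref{T.Hk}.
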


\begin{proof}
Let $\rho_0:=\ep r(\tau\ka\sin\te-\ka'\cos\te)$. It is easy to see that the function $\vp_0(\al,y)$
is the only solution to the problem
\[
{\De_y\vp_0}={\ep^2}\rho_0\quad\text{in
}\es\times\DD\,,\qquad \pd_r\vp_0|_{r=1}=0\,,\qquad \int_{\DD}\vp_0\, dy=0
\]
for any value of the angle $\al$.

Consider now the function $\psi_1:=\psi-\vp_0$, which obviously
has zero normal derivative on $\es\times\pd\DD$ and has zero mean
because so do $\psi$ and $\vp_0$. The Laplacian of~$\psi_1$ is given by
\begin{align*}
\De\psi_1&=\rho-\De\vp_0=(\rho-\rho_0)+\bigg(\frac{\De_y\vp_0}{\ep^2}-\De\vp_0\bigg)\,.
\end{align*}
Using Eq.~\eqref{eqrho}, the first term in brackets can be easily shown to be
\[
\rho-\rho_0=3\ep^2\ka r^2\cos\te\big(\tau\ka\sin\te-\ka'\cos\te\big)+\cO(\ep^3)\,,
\]
while the second can be easily dealt with using the formula~\eqref{De}
for the Laplacian:
\[
\frac{\De_y\vp_0}{\ep^2}-\De\vp_0=\frac\ka\ep\bigg(\cos\te\,\pd_r\vp_0-\frac{\sin\te\,\pd_\te\vp_0}r\bigg)+\cO(\ep^3)\,.
\]
Using the definition of $\vp_0$, this yields
\[
\De\psi_1=\rho_1+\rho_2+\cO(\ep^3)\,,
\]
with
\begin{align*}
\rho_1&:=\frac{13\ep^2\ka
  r^2}8\big[\ka\tau\sin2\te-\ka'\cos2\te\big]\,,\\
\rho_2&:= \frac{\ep^2 (3-14r^2)\ka\ka'}8
\end{align*}
of order $\cO(\ep^2)$, so the estimates we proved in Theorem~\ref{T.Ck} then ensure that
\[
\psi_1=\cO(\ep^2)\quad\text{and}\quad D_y\psi_1=\cO(\ep^4)\,.
\]
This shows that $\psi=\cO(\ep^2)$ and $D_y\psi= D_y\vp_0+\cO(\ep^4)$.

To calculate $D_y\psi$ up to $\cO(\ep^5)$ we will consider two
auxiliary functions $\vp_1$ and~$\vp_2$. The function $\vp_1(\al,y)$ is the
solution to the problem
\begin{align*}
{\De_y\vp_1}=\ep^2\rho_1\quad\text{in
}\es\times\DD\,,\qquad \pd_r\vp_1|_{r=1}=0\,,\qquad \int_{\DD}\vp_1\, dy=0\,.
\end{align*}
The existence and uniqueness of the solution are standard given that
the RHS satisfies
\[
\int_{\DD}\rho_1\, dy=0
\] 
for all $\al$. In fact, the solution can be computed in closed form
using separation of variables, which readily yields the formula for
$\vp_1$ given in the statement (Eq.~\eqref{vp1}). 

The function $\vp_2$ is the solution to the problem
\begin{align*}
\pd_\al^2\vp_2+\frac{\De_y\vp_2}{\ep^2}=\rho_2\quad\text{in
}\es\times\DD\,,\qquad \pd_r\vp_2|_{r=1}=0\,,\qquad
\int\vp_2\, d\al\, dy=0\,.
\end{align*}
Again, the existence and uniqueness of solutions is standard because
the RHS satisfies the zero-mean condition
\[
\int \rho_2\, d\al\,d y=0\,.
\]
(The reason why we are including derivatives with respect to $\al$ in
the definition of~$\vp_2$ but not in that of $\vp_1$ is that the
source term $\rho_2$ has zero mean when averaged with respect to $\al$
and $y$ but not when averaged in $y$ only.) Obviously $\vp_2$ does not
depend on $\te$.

We can now estimate $\psi_2:=\psi_1-\vp_1-\vp_2$ using the same
argument we used with $\psi_1$. We start by noticing that,  by construction,
$\pd_r\psi_2=0$ when $r=1$ and the integral
$\int_{\es\times\DD}\psi_2\,d\al\, dy$ is zero. The Laplacian of
$\psi_2$ is
\begin{align*}
\De\psi_2&=\rho_1+\rho_2-\De\vp_1-\De\vp_2+\cO(\ep^3) \\
&=\bigg(\frac{\De_y\vp_1}{\ep^2}-\De\vp_1\bigg)+\bigg(\pd_\al^2\vp_2+\frac{\De_y\vp_2}{\ep^2}-\De\vp_2\bigg)+\cO(\ep^3)\\
&=\cO(\ep^3)\,.
\end{align*}
To pass to the third line we have used the expression of the Laplacian
in polar coordinates (Eq.~\eqref{De}) and of $\vp_1$, as well as the estimates for $\vp_2$
that stem from Theorem~\ref{T.Ck} and
Remark~\ref{R.modelLaplacian}. Another application of
Theorem~\ref{T.Ck} shows that $D_y\psi_2=\cO(\ep^5)$. Since
$\pd_\te\vp_2=0$ and 
\[
\psi=\vp_0+\vp_1+\vp_2+\psi_2\,,
\]
the theorem follows.
\end{proof}

\section{Beltrami fields with prescribed harmonic part}
\label{S.Beltrami}

Our goal in this section is to construct a Beltrami field $v$ in the thin
tube $\cT_\ep$, tangent to the boundary, whose harmonic part is a fixed harmonic
field $h$. We will be
particularly interested in the way the Beltrami field $v$ is related
to the harmonic field $h$ as the parameter $\la$ tends to zero, since
in the next section it will be crucial to have good estimates for 
this relation in order to compute some dynamical quantities of the
field $v$.

This section is divided in three parts. In the first subsection,
we prove an existence result for a boundary value problem for the
curl operator in tubes in which we can prescribe the harmonic part of
the solution (Corollary~\ref{C.existence}). In the second subsection
we will provide estimates for an auxiliary vector equation with
constant coefficients (Proposition~\ref{P.Hklocal}), which are used in
the third subsection to prove the desired estimates, optimal in~$\ep$, for the boundary
value problem under consideration (Theorem~\ref{T.estimBeltrami}).


\subsection{An existence result for the curl operator}
\label{SS.existence}

Let us begin by making precise what we understand by the harmonic part
of a vector field $w$ in the thin tube $\cT_\ep$ that is tangent to
the boundary. We will define its {\em harmonic part} to be the vector field 
\[
\cP w:= \frac{h}{\|h\|_{L^2(\cT_\ep)}^2}\int_{\cT_\ep} h\cdot w \, dx\,,
\]
that is, its projection to the space of harmonic vector fields
$\cH(\cT_\ep)$, as introduced in Eq.~\eqref{cH}. {In this subsection}, we will denote by
\[
\|v\|_{H^k(\cT_\ep)}^2:=\sum_{j=0}^k\int_{\cT_\ep} |D^jv|^2\, dx
\]
the usual~$H^k$ norm of a vector field in the tube
$\cT_\ep\subset\RR^3$ and write $L^2(\cT_\ep)$ for~$H^0(\cT_\ep)$. Throughout, we
will use the notation $C_\ep$ for positive constants, possibly not
uniformly bounded in the small parameter~$\ep$, that may vary from
line to line.

In the following proposition we present the basic existence result
that we will use to show the existence of Beltrami fields with
prescribed harmonic part. The result is probably known to some experts
but we have not found it in the literature. The proof relies on a duality argument for a suitable
energy functional and the Fredholm alternative theorem. 

\begin{proposition}\label{P.existence}
Let $f$ be an $L^2$ vector field in $\cT_\ep$ which is
divergence-free. There is a countable subset of the real line without
accumulation points such that, if the constant $\la$ does not belong
to it, then the equation 
\begin{align}\label{eqwf}
\curl w-\la w=f\,,\qquad \Div w=0
\end{align}
has a unique $H^1$ solution $w$ that is tangent to the boundary~$\pd\cT_\ep$ and
has zero harmonic part.
\end{proposition}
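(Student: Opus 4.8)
The plan is to set up the boundary value problem variationally, via a suitable non-standard energy functional whose critical points solve \eqref{eqwf}, and then apply the Fredholm alternative together with the Hodge theory of the solid torus to control the discrete set of bad $\la$. First I would fix the function space
\[
\cX:=\big\{w\in H^1(\cT_\ep,\RR^3): \Div w=0,\ w\cdot\nu|_{\pd\cT_\ep}=0,\ \cP w=0\big\}\,,
\]
which is a closed subspace of $H^1$. On $\cX$ the curl operator is essentially self-adjoint in the appropriate sense: integration by parts gives $\int_{\cT_\ep}(\curl w)\cdot w'\,dx=\int_{\cT_\ep}w\cdot(\curl w')\,dx$ for $w,w'\in\cX$, using that both fields are tangent to the boundary. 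The key analytic input is a Poincar\'e-type inequality on $\cX$: since a divergence-free field tangent to the boundary with vanishing harmonic part is $L^2$-orthogonal to all harmonic fields and all gradients, one has $\|w\|_{H^1(\cT_\ep)}\le C_\ep\|\curl w\|_{L^2(\cT_\ep)}$ for $w\in\cX$. (This is the classical div-curl estimate; the hypothesis $\cP w=0$ is exactly what removes the kernel.) Consequently $\langle w,w'\rangle_{\cX}:=\int_{\cT_\ep}\curl w\cdot\curl w'\,dx$ is an inner product on $\cX$ equivalent to the $H^1$ one when restricted to the subspace $\curl\cX$, and $\cX$ becomes a Hilbert space.

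Next I would reformulate \eqref{eqwf} weakly: $w\in\cX$ solves it iff
\[
\int_{\cT_\ep}\curl w\cdot\curl w'\,dx-\la\int_{\cT_\ep}\curl w\cdot w'\,dx=\int_{\cT_\ep}f\cdot w'\,dx
\qquad\text{for all }w'\in\cX\,,
\]
where one checks that $f$ being divergence-free and the test fields lying in $\cX$ make the weak formulation equivalent to the strong one (the pressure gradient and harmonic components are killed by the constraints, and $\Div w=0$ follows a posteriori by testing against gradients of Neumann data, using $\Div f=0$). The bilinear form $a(w,w'):=\int\curl w\cdot\curl w'$ is coercive on $\cX$ by the Poincar\'e inequality above, so by Lax--Milgram the map $w\mapsto a(w,\cdot)$ is an isomorphism $\cX\to\cX^*$. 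The zeroth-order term $b(w,w'):=\int\curl w\cdot w'$ defines, via this isomorphism, a bounded operator $T:\cX\to\cX$; moreover $T$ is \emph{compact}, because $b(w,\cdot)$ only involves $w'$ through its $L^2$ pairing and the embedding $\cX\hookrightarrow L^2$ is compact (Rellich). Thus \eqref{eqwf} becomes $(I-\la T)w=F$ for a fixed $F\in\cX$ depending on $f$, and the Fredholm alternative gives: either $\la^{-1}\notin\spec(T)\setminus\{0\}$ and there is a unique solution, or $\la^{-1}$ is one of the (at most countably many, accumulating only at $0$) eigenvalues of the compact operator $T$. Since $T$ compact has eigenvalues accumulating only at $0$, the reciprocals $\la$ form a countable set without finite accumulation points, which is the exceptional set in the statement.

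The main obstacle, and the step deserving the most care, is verifying that the weak formulation on the constrained space $\cX$ is genuinely equivalent to the strong PDE \eqref{eqwf} — in particular recovering $\Div w=0$ and showing that no spurious pressure or harmonic term appears. Concretely, given a weak solution $w\in\cX$, one extends the class of test fields: testing against $\nabla\phi$ for $\phi$ solving a Neumann problem recovers $\Div w=0$ (here $\Div f=0$ is used); testing against the harmonic field $h$ itself, together with the constraint $\cP w=0$, pins down the harmonic component of $\curl w-\la w-f$; and a standard de Rham/Hodge argument on the solid torus then yields $\curl w-\la w-f=0$ as an $L^2$ field. One must also confirm elliptic regularity: the solution produced is automatically in $H^1$ by construction, and the statement only claims an $H^1$ solution, so no bootstrapping beyond what $\cX$ provides is needed. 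A secondary point is the uniqueness assertion for non-exceptional $\la$, which is immediate from injectivity of $I-\la T$ on $\cX$. I would remark that the constant $C_\ep$ in the Poincar\'e inequality is allowed to blow up as $\ep\to0$, consistent with the paper's convention, since the optimal $\ep$-dependence is extracted only later in Section~\ref{S.Beltrami} via the constant-coefficient model.
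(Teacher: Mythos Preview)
Your overall plan---recast the problem via a coercive curl--curl form on the constrained space $\cX$, exhibit the $\la$-term as a compact perturbation, and invoke Fredholm---is essentially the route the paper takes. But several steps in your execution are incorrect.

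First, the integration-by-parts identity you invoke is false. For $w,w'\in\cX$ both tangent to $\pd\cT_\ep$, the boundary term in
\[
\int_{\cT_\ep}\curl w\cdot w'\,dx-\int_{\cT_\ep}w\cdot\curl w'\,dx=\int_{\pd\cT_\ep}(w\times w')\cdot\nu\,d\si
\]
does \emph{not} vanish: when $w,w'$ are tangent, $w\times w'$ is normal, so the integrand is pointwise nonzero in general. (The Yoshida--Giga self-adjointness result has a smaller domain, requiring $\curl w$ itself to be tangent.) Consequently your $b(w,w')=\int\curl w\cdot w'$ is not the term that arises when you pair $\curl w-\la w=f$ with $\curl w'$: the correct middle term is $\la\int w\cdot\curl w'$, and the correct right-hand side is $\int f\cdot\curl w'$, not $\int f\cdot w'$.

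Second, your verification of the weak--strong equivalence does not work as written. You propose to test against $\nabla\phi$ and against the harmonic field $h$, but neither lies in $\cX$ (a nontrivial gradient is not divergence-free, and $\cP h=h\neq0$), so the weak identity on $\cX$ says nothing about them; and ``recovering $\Div w=0$'' is moot since $w\in\cX$ already has it. What is actually needed is that $\int G\cdot\curl w'=0$ for all $w'\in\cX$ forces $G:=\curl w-\la w-f=0$, and your sketch does not supply this.

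The paper avoids both difficulties by enlarging the test space to $\cW=\{u\in H^1:u\cdot\nu|_{\pd\cT_\ep}=0\}$ and using the full inner product $E[w,u]=\int(\curl w\cdot\curl u+\Div w\,\Div u+\cP w\cdot\cP u)\,dx$, which is equivalent to the $H^1$ norm on $\cW$. The Riesz solution $w_F$ of $E[w_F,u]=\int F\cdot\curl u$ is then tested against $u=\cP w_F\in\cW$ and $u=\nabla\psi\in\cW$ (with $\psi$ solving a Neumann problem) to obtain $\cP w_F=0$ and $\Div w_F=0$; testing against the remaining $u\in\cW$ shows that $G=\curl w_F-F$ is curl-free with $G\times\nu=0$, hence a Dirichlet harmonic field, hence zero on the solid torus. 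The resulting solution operator $K:F\mapsto w_F$ is compact $L^2\to L^2$, and Fredholm applied to $Kw-\la^{-1}w=-\la^{-1}Kf$ finishes. Your route can be repaired by observing that $\curl(\cX)=\curl(\cW)$ (gradients and tangent harmonic fields have zero curl), which transfers the orthogonality condition from $\cX$ to $\cW$ and lets you run the same endgame---but that is precisely the step missing from your proposal.
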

\begin{proof}
Let us consider the Hilbert spaces of vector fields
\begin{align*}\label{cW}
\cF&:=\big\{ F\in L^2(\cT_\ep,\RR^3):\Div F=0\big\}\,,\\
\cW&:=\big\{ w\in H^1(\cT_\ep,\RR^3): w \text{ is tangent to }
\pd\cT_\ep\big\}\,,
\end{align*}
where the tangency condition is to be understood in terms of
traces. It is well known~\cite{FT78} that, because of the tangency
condition imposed on~$\cW$, the $H^1$ norm is equivalent
to the norm
\[
\|w\|_{\cW}^2:=\|\curl w\|_{L^2(\cT_\ep)}^2+\|\Div w\|_{L^2(\cT_\ep)}^2+\|\cP w\|_{L^2(\cT_\ep)}^2
\]
in the sense that, for all $w\in\cW$,
\[
\frac{\|w\|_{\cW}}{C_\ep}\leq \|w\|_{H^1(\cT_\ep)}\leq C_\ep\|w\|_{\cW}\,.
\]

Consider the scalar product $E:\cW\times \cW\to\RR$ associated to the
norm $\|\cdot\|_{\cW}$, given by
\[
E[w,u]:=\int_{\cT_\ep}\Big(\curl w\cdot \curl u + \Div w\,
\Div u+ \cP w\cdot\cP u\Big)\, dx\,.
\]
By the Riesz representation theorem, for any $L^2$ vector field $F$ there is a unique $w_F\in\cW$
such that
\begin{equation}\label{Ew}
E[w_F,u]=\int_{\cT_\ep} F\cdot \curl u\, dx
\end{equation}
for all $u\in\cW$.

We claim that, for any $F\in \cF$, the condition~\eqref{Ew} is equivalent to demanding that
$w_F$ be an $H^1$ solution to the equation 
\begin{align}\label{probwF}
\curl w_F=F\,,\qquad \Div w_F=0
\end{align}
in the tube $\cT_\ep$ that is tangent to the boundary and has zero
harmonic part. One side of the implication is obvious, so we only need
to prove that if $w_F$ satisfies the condition~\eqref{Ew} with
$F\in\cF$, then it solves Eq.~\eqref{probwF}.

For this, let us take suitable choices of the field $u$ in
Eq.~\eqref{Ew}. Letting $u=\cP w_F$ be the harmonic part of $w_F$, we
obtain that $\cP w_F$ is zero. To see that $\Div w_F$ is zero too, it
suffices to take $u=\nabla \psi$, with $\psi$ being the solution to the boundary
value problem
\[
\De\psi=\Div w_F\quad\text{in }\cT_\ep\,,\qquad
\pd_\nu\psi|_{\pd\cT_\ep}=0\,,\qquad \int_{\cT_\ep}\psi\, dx=0\,.
\]
Notice that, in order to show that the solution exists and $\nabla\psi$ belongs to the
space~$\cW$, we need to use that $w_F$ is in $H^1$ and tangent to the
boundary. 

Since $\cP w_F$ and $\Div w_F$ are zero, the condition~\eqref{Ew} can then
be written as
\begin{equation}\label{intcT}
\int_{\cT_\ep}\curl G\cdot u\, dx+\int_{\pd\cT_\ep} (G\times \nu)\cdot
u\, d\si=0
\end{equation}
for all $u\in\cW$, where 
\[
G:=\curl w_F-F
\]
and $d\si$ denotes the
induced surface measure on the boundary. Letting $u$ vary over the space
of smooth vector fields of compact support in $\cT_\ep$, we immediately
infer that $G$ must be irrotational. Now that we know the first term
in~\eqref{intcT} is always zero, we can take an arbitrary field $u$
tangent to the boundary to derive that $G\times\nu $ must be
zero, so that $G$ is divergence-free,
irrotational and orthogonal to the boundary. By the Hodge
decomposition theorem, this ensures the field $G$ is identically
zero. Hence we have proved that $w_F$ is the only solution to the
problem~\eqref{probwF}. Notice that, as an immediate consequence of
the equivalence of the $H^1$ norm to $\|\cdot\|_{\cW}$ on $\cW$, 
\begin{equation}\label{estimwF}
\|w_F\|_{H^1(\cT_\ep)}\leq C_\ep\|F\|_{L^2(\cT_\ep)}
\end{equation}
for some constant that depends on $\ep$ but not on $F$.

Let us consider the operator $K$ mapping a field $F\in\cF$ to its solution
$w_F$, which we regard as a linear map $K:\cF\to\cF$ whose image lies in $\cW$. It is standard in view
of the estimate~\eqref{estimwF} that the operator $K$ is compact, so
by the Fredholm alternative the equation
\begin{equation}\label{eqK}
Kw-\frac1\la w=\tilde f
\end{equation}
has a unique solution for each $\tilde f\in\cF$ and any constant
$1/\la$ that does not belong to the spectrum of the adjoint operator
$K^*$. This spectrum is a bounded countable set that only accumulates
at zero. 

Let $w$ be the only solution to Eq.~\eqref{eqK} with $\tilde
f=-Kf/\la$, for an arbitrary field $f$ in~$\cF$. Since $w= K(\la w+f)$ belongs
to the image of $K$, from the above discussion it stems that the field $w$ is tangent to the boundary,
divergence-free and has zero harmonic part, and by the definition of
$K$ satisfies $\curl w= \la w+f$. The proposition then follows.
\end{proof}
\begin{remark}\label{R.Fredholm}
  The proof of the proposition shows that the values of $\la$ for
  which there is not a unique solution to the Eq.~\eqref{eqwf} are
  given by the reciprocal of the eigenvalues of the compact operator
  $K^*$ that we defined in the proof.
\end{remark}

As a direct application of the previous proposition, we derive the
following corollary, which gives the existence result for Beltrami
fields with prescribed harmonic part that we will use later on. Of
course, it is apparent that this corollary is totally different from
the results that one has on compact manifolds without boundary, where
the Beltrami equation has no solutions but for $\la$ belonging to a
countable set and these solutions (i.e., the eigenfunctions of
$\curl$) are necessarily orthogonal to harmonic fields.

\begin{corollary}\label{C.existence}
For any constant $\la$ not belonging to certain $\ep$-dependent
countable set without accumulation points, there is a unique solution
to the equation
\begin{align*}
\curl v=\la v\,,\qquad \Div v=0
\end{align*} 
in the tube $\cT_\ep$ that is tangent to the boundary and whose
harmonic part is $\cP v=h$.
\end{corollary}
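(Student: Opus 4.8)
The idea is to obtain the desired Beltrami field $v$ by adding to a particular solution $w$ of an inhomogeneous curl equation the appropriate multiple of the harmonic field $h$, and then fixing the $\la w$ term by a fixed-point/Fredholm argument --- essentially by applying Proposition~\ref{P.existence} with a source term built out of~$h$. First I would decompose the sought field as $v = h + w$, so that the Beltrami equation $\curl v = \la v$ becomes
\[
\curl w - \la w = \la h
\]
(using $\curl h = 0$), together with $\Div w = 0$, $w$ tangent to $\pd\cT_\ep$, and $\cP w = 0$ (so that the harmonic part of $v$ is exactly $h$, since $\cP h = h$ and $w$ has zero harmonic part). Now $\la h$ is a divergence-free $L^2$ field in $\cT_\ep$, so Proposition~\ref{P.existence} applies directly with $f := \la h$: for every $\la$ outside the countable exceptional set of that proposition (which is $\ep$-dependent but has no accumulation points), there is a unique $H^1$ field $w$ tangent to the boundary with zero harmonic part solving the displayed equation. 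Setting $v := h + w$ then gives a field with $\curl v = \la v + \la h - \la h = \la v$... wait, more carefully: $\curl v = \curl h + \curl w = 0 + (\la w + \la h) = \la(w + h) = \la v$, as desired. It is automatically divergence-free, tangent to the boundary, and has harmonic part $h$.

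For uniqueness, suppose $v_1, v_2$ are two such solutions; then $v := v_1 - v_2$ satisfies $\curl v = \la v$, $\Div v = 0$, is tangent to the boundary, and has zero harmonic part. Writing $v = h' + w$ with $h' := \cP v = 0$, we get $w = v$ solves $\curl w - \la w = 0$ with zero harmonic part and tangency; by the uniqueness clause in Proposition~\ref{P.existence} (applied with $f = 0$), $w = 0$, hence $v_1 = v_2$. This completes the argument.

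**Main obstacle.** There is essentially no serious obstacle here, since all the analytic work is packaged into Proposition~\ref{P.existence}; the corollary is a one-line reduction. The only mild point of care is bookkeeping: one must verify that the source $\la h$ is indeed in the hypothesis class of the proposition (divergence-free $L^2$ --- immediate since $h$ is smooth and divergence-free on $\cT_\ep$), and that the exceptional set for $\la$ in the corollary is precisely the one furnished by the proposition (rescaled trivially, since replacing $f$ by $\la h$ does not change the set of $\la$ for which the homogeneous problem $\curl w - \la w = 0$ fails to have a unique solution --- this is exactly Remark~\ref{R.Fredholm}). I would also remark, as the paper does, on the conceptual novelty: unlike the closed-manifold case where eigenfields of $\curl$ are forced to be orthogonal to harmonic fields, here the prescribed harmonic part $h$ can be nonzero precisely because of the boundary, and the solution exists for $\la$ in an open dense set of frequencies rather than a discrete spectrum.
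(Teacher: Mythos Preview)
Your proposal is correct and follows exactly the same approach as the paper's proof: decompose $v = h + w$ and apply Proposition~\ref{P.existence} with $f = \la h$. The paper's version is simply terser (one sentence), omitting the explicit verification and the uniqueness argument that you spell out, but the content is identical.
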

\begin{proof}
It follows immediately from Proposition~\ref{P.existence} by taking
$v=h+w$, with $w$ being the only solution to Eq.~\eqref{eqwf}
with $f=\la h$.
\end{proof}

\subsection{Estimates for an equation with constant coefficients}
\label{SS.constant}

In this section we will prove estimates for a curl-type equation with
constant coefficients on the domain $\es\times\DD$. This equation with
constant coefficients is closely related to the Beltrami
equation on the thin tube $\cT_\ep$ and will be used subsequently to
estimate the difference between the Beltrami field $v$ and its
harmonic part $h$. As before, the natural coordinates on the domain
$\es\times\DD$ will be denoted by $(\al, y)$, and we will consider
polar coordinates $(\al,r,\te)$ where convenient.

Vector fields on $\es\times\DD$ are regarded as functions
$W:\es\times\DD\to\RR^3$, whose components are denoted by 
\[
W=(W_\al,W_1,W_2)\,.
\]
We will sometimes write $W_y:=(W_1,W_2)$. On these
vector fields we will consider the action of the differential operators
\begin{align*}
\ddiv W&:=\pd_\al W_\al+\pd_1 W_1+\pd_2 W_2\,,\\
\Curl W&:=\bigg(\pd_1W_2-\pd_2 W_1, \frac{\pd_2 W_\al}{\ep^2}-\pd_\al
W_2,\pd_\al W_1-\frac{\pd_1 W_\al}{\ep^2}\bigg)\,,
\end{align*}
and we will say that a vector field $W$ is tangent to the boundary if
$y\cdot W_y=0$ (in the sense of traces) on the torus $|y|=1$. Obviously, $\ddiv$ and $\Curl$
are related to the standard divergence and curl operators through a
rescaling, but for our purposes this form of the operators is more convenient.

We will also consider the
functional $\cQ$ that maps each field $W$ to the real number
\[
\cQ [W]:=\int W_\al\, d\al\, dy\,.
\]
This is obviously related with the projection onto the space of
``harmonic fields'' associated to the operators $\ddiv$ and $\Curl$, which is spanned by the constant
field $(1,0,0)$, so when $\cQ[W]=0$ we will simply say that the 
harmonic part of $\cQ[v]$ is zero. All integrals are taken over $\es\times\DD$ unless
otherwise stated. To control the behavior of the vector fields, in
addition to the usual $H^k$ norms we will consider the norms
\begin{equation}\label{defHkep}
\|W\|_{H^k_\ep}^2:=\|W_\al\|^2_{H^k}+\ep^2\|W_y\|^2_{H^k}\,.
\end{equation}
These norms should not be confused with the norm $\|\cdot\|_{\dot
  H^1_\ep}$ that we considered for scalar functions in
Section~\ref{S.Laplacian}. We will simply write $\|\cdot\|_\ep$ for
the corresponding $L^2_\ep$~norm ($k=0$), and reserve the notation
$\|\cdot\|$ for the usual $L^2$ norm, where factors of $\ep$ do not appear.

In the following proposition we compute the lowest eigenvalue of a
self-adjoint operator associated with $\Curl$ to show how the
$L^2_\ep$ norm of a vector field $W$ can be controlled using that of
$\Curl W$:

\begin{proposition}\label{P.eigenW}
Let $W$ be an $H^1$ vector field on $\es\times\DD$ with $\ddiv W=0$ that is tangent to
the boundary and has zero harmonic part. Then
\[
\|\Curl W\|_\ep\geq \frac {C\ms \|W\|_\ep}\ep\,.
\]
\end{proposition}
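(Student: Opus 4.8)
The plan is to reduce the estimate to two $\ep$-independent inequalities on the unit disk, by first establishing a Gaffney-type identity in which every factor of $\ep$ is made explicit. I would begin by observing that it suffices to prove the bound for smooth $W$, since smooth divergence-free fields tangent to $\pd(\es\times\DD)$ are dense in their $H^1$-closure and both sides of the identity below depend continuously on $W$ in the $H^1$ norm. For smooth $W$, expanding $\ep^2\|(\Curl W)_y\|^2$ produces the two diagonal terms $\ep^{-2}\|D_y W_\al\|^2$ and $\ep^2\|\pd_\al W_y\|^2$, together with a cross term $-2\int(\pd_1 W_\al\,\pd_\al W_1+\pd_2 W_\al\,\pd_\al W_2)\,d\al\,dy$. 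Integrating this cross term by parts, first in $\al$ (there is no boundary term because $\es$ is a circle) and then in $y$, the boundary contribution is $\int_{\es\times\pd\DD}\pd_\al W_\al\,(W_y\cdot\nu)\,d\si$, which vanishes by the tangency condition, while the interior term becomes $\int\pd_\al W_\al\,(\pd_1 W_1+\pd_2 W_2)\,d\al\,dy=-\|\pd_\al W_\al\|^2$ by $\ddiv W=0$. Combined with $\|(\Curl W)_\al\|^2=\|\pd_1 W_2-\pd_2 W_1\|^2$, this yields
\[
\|\Curl W\|_\ep^2=\|\pd_1 W_2-\pd_2 W_1\|^2+\frac1{\ep^2}\|D_y W_\al\|^2+\ep^2\|\pd_\al W_y\|^2+2\|\pd_\al W_\al\|^2,
\]
a sum of nonnegative terms.

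Next I would use the constraints to estimate $W_\al$ and $W_y$ separately. The key observation is that $\ddiv W=0$, tangency and $\cQ[W]=0$ force $W_\al(\al,\cdot)$ to have zero mean over $\DD$ for every $\al$: setting $m(\al):=\int_\DD W_\al(\al,y)\,dy$, the divergence-free condition and the divergence theorem on $\DD$ give $m'(\al)=-\int_{\pd\DD}W_y\cdot\nu\,ds=0$ by tangency, so $m$ is constant, and then $\int_\es m\,d\al=\cQ[W]=0$ forces $m\equiv0$. Hence Poincar\'e's inequality on the unit disk gives $\|W_\al\|^2\leq c_1\|D_y W_\al\|^2$ with $c_1$ depending only on $\DD$, and the identity yields $\|\Curl W\|_\ep^2\geq c_1^{-1}\ep^{-2}\|W_\al\|^2$. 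For the transverse part, note that $\pd_1 W_1+\pd_2 W_2=-\pd_\al W_\al$ by $\ddiv W=0$, so on each slice $W_y(\al,\cdot)$ is an $H^1$ vector field on $\DD$ tangent to $\pd\DD$ with prescribed two-dimensional divergence $-\pd_\al W_\al(\al,\cdot)$ and curl $(\pd_1 W_2-\pd_2 W_1)(\al,\cdot)$; since $\DD$ is simply connected it admits no nonzero harmonic field, so the classical div--curl estimate gives $\|W_y(\al,\cdot)\|_{L^2(\DD)}^2\leq c_2\big(\|\pd_\al W_\al(\al,\cdot)\|_{L^2(\DD)}^2+\|(\pd_1 W_2-\pd_2 W_1)(\al,\cdot)\|_{L^2(\DD)}^2\big)$ with $c_2$ depending only on $\DD$. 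Integrating in $\al$ and using the identity once more yields $\|\Curl W\|_\ep^2\geq c_2^{-1}\|W_y\|^2$.

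Finally I would combine the two bounds by writing $\|\Curl W\|_\ep^2=\tfrac12\|\Curl W\|_\ep^2+\tfrac12\|\Curl W\|_\ep^2\geq\tfrac1{2c_1}\ep^{-2}\|W_\al\|^2+\tfrac1{2c_2}\|W_y\|^2\geq\kappa\,\ep^{-2}\big(\|W_\al\|^2+\ep^2\|W_y\|^2\big)=\kappa\,\ep^{-2}\|W\|_\ep^2$, with $\kappa:=\min\{1/(2c_1),\,1/(2c_2)\}$; taking square roots gives the claimed inequality, with a constant uniform in $\ep$. I expect the only genuinely delicate step to be the first one: one must check that the boundary term generated by the integration by parts is exactly the one annihilated by the tangency condition, and that the cross term recombines, via $\ddiv W=0$, into a term of favorable sign. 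Once the identity is in hand, the rest is a routine assembly of the Poincar\'e and div--curl inequalities on a fixed domain.
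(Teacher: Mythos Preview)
Your proof is correct and takes a genuinely different route from the paper. The paper proceeds spectrally: it observes that $\Curl$ is self-adjoint on the space $\cH$ of divergence-free, tangent, zero-harmonic-part fields with the $\|\cdot\|_\ep$ inner product, and then computes the smallest eigenvalue explicitly by separation of variables in $(\al,r,\te)$, obtaining the sharp constant $C=j_1^{(1)}$ (the first zero of the Bessel function $J_1$). Your argument instead derives an explicit Gaffney-type identity adapted to the $\|\cdot\|_\ep$ norm and the divergence-free condition, and then reduces everything to two $\ep$-independent slice-wise estimates on $\DD$: Poincar\'e for $W_\al$ (using the nice observation that $\ddiv W=0$, tangency, and $\cQ[W]=0$ force $\int_\DD W_\al(\al,\cdot)\,dy\equiv 0$) and the div--curl inequality for $W_y$ (using simple connectedness of $\DD$). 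The identity and the slice-wise argument are both correctly carried out; in particular, the boundary term from the double integration by parts is exactly $2\int_{\es\times\pd\DD}\pd_\al W_\al\,(W_y\cdot\nu)$, killed by tangency, and the cross term does recombine via $\ddiv W=0$ into $+2\|\pd_\al W_\al\|^2$, as you claim.

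The trade-off is clear: the paper's route gives the optimal constant and ties the estimate to the concrete spectrum of $\Curl$, at the cost of invoking self-adjointness and a separation-of-variables computation. Your route is more elementary and self-contained (no spectral theorem, no Bessel functions) and would transfer with essentially no change to tubes with any simply connected cross-section, but yields a non-explicit constant. Either is perfectly adequate for the use made of the proposition later in the paper, where only the scaling $C/\ep$ matters.
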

\begin{proof}
It is an easy consequence of~\cite{Giga} that $\Curl$ defines an unbounded
self-adjoint operator on the Hilbert space 
\[
\cH:=\big\{ W\in L^2(\es\times\DD,\RR^3): \ddiv W=0,\; \cQ[W]=0,\;
W \text{ tangent to the boundary}\big\}\,,
\]
endowed with the scalar product associated with the norm $\|\cdot\|_\ep$. The domain of this operator
consists of the $H^1$ vector fields in $\cH$ such that $\Curl W$ is
also in $\cH$. Hence, to prove the proposition it is enough to see that the
eigenvalues of this operator, in absolute value,
satisfy $|\mu|> C/\ep$.

Exploiting the symmetry of the equations (that is, rotation of the
angles~$\al$ and~$\te$), it is not hard to see that
the eigenvalue equation
\[
\Curl W =\mu W\,,
\]
with $W$ divergence-free, tangent to the boundary and with zero
harmonic part, can be solved in closed form. Indeed, the symmetry
ensures that the eigenfunctions can be chosen of the form
\[
W=\e^{\I n\al+\I m\te}\big( v_1(r)\,e_\al +v_2(r)\, e_r+ v_3(r)\,e_\te\big)\,,
\]
where $n,m$ are integers, $v_j(r)$ are functions of  the radial
variable and the unit vectors $e_\al,e_r,e_\te$ are defined in the
obvious way:
\[
e_\al:=(1,0,0)\,,\quad e_r:=(0,\cos\te,\sin\te)\,,\quad e_\te:=(0,-\sin\te,\cos\te)\,.
\]
Using this expression, a tedious but straightforward computation shows that the
eigenvalues with the smallest absolute value are $\pm{j^{(1)}_1}/\ep$, 
where $j^{(1)}_1$ denotes the first positive zero of the Bessel
function $J_1$.
\end{proof}

In the proof of the main result of this subsection we will need the
following identity: 

\begin{lemma}\label{L.identity}
Let $W$ be a vector field tangent to the boundary of
$\es\times\DD$. Then
\begin{multline*}
\ep^2\|\pd_\al W_y\|^2+\|D_yW_y\|^2+ \|\pd_\al W_\al\|^2+\ep^{-2}\|D_y
W_\al\|^2=
\|\ddiv W\|^2\\+\|\Curl W\|_\ep^2-\int_{\es\times\pd\DD} |W_y|^2\,
d\al\, d\te\,,
\end{multline*}
where we are writing $d\al\, d\te$
for the induced surface measure on $\es\times\pd\DD$.
\end{lemma}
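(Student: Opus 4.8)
The identity is a divergence/integration-by-parts computation for vector fields on $\es\times\DD$ with respect to the Euclidean metric on that product (the rescaled metric associated with $\ddiv$ and $\Curl$), so the natural strategy is to expand $\|\ddiv W\|^2 + \|\Curl W\|_\ep^2$ componentwise and collect terms. First I would write out $\|\Curl W\|_\ep^2$ using the definition~\eqref{defHkep}: the $\al$-component of $\Curl W$ contributes (with no weight) $\|\pd_1 W_2 - \pd_2 W_1\|^2$, while the two $y$-components contribute $\ep^2$ times $\|\ep^{-2}\pd_2 W_\al - \pd_\al W_2\|^2 + \|\pd_\al W_1 - \ep^{-2}\pd_1 W_\al\|^2$, i.e.\ $\ep^{-2}\|D_y W_\al\|^2 + \ep^2\|\pd_\al W_y\|^2$ plus the cross terms $-2\int(\pd_2 W_\al\,\pd_\al W_2 - \pd_1 W_\al\,\pd_\al W_1)\,d\al\,dy$, reorganized as $+2\int \pd_\al W_\al(\pd_1 W_1 + \pd_2 W_2)$ after integrating by parts in $\al$ (a closed variable, so no boundary term) and in $y_1,y_2$. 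Similarly $\|\ddiv W\|^2 = \|\pd_\al W_\al\|^2 + \|\pd_1 W_1 + \pd_2 W_2\|^2 + 2\int \pd_\al W_\al(\pd_1 W_1 + \pd_2 W_2)$. The diagonal pieces $\|\pd_\al W_\al\|^2$, $\ep^{-2}\|D_y W_\al\|^2$, $\ep^2\|\pd_\al W_y\|^2$ already match three of the four terms on the left-hand side, and the remaining bookkeeping is to show that the leftover $y$-derivative-of-$W_y$ terms assemble into $\|D_y W_y\|^2$ minus the boundary contribution.

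The heart of the matter is therefore the planar identity for $W_y = (W_1,W_2)$ on the disk $\DD$: one must show that
\begin{equation*}
\int_\DD \big(|D_y W_y|^2\big)\,dy = \int_\DD \big((\pd_1 W_1 + \pd_2 W_2)^2 + (\pd_1 W_2 - \pd_2 W_1)^2\big)\,dy + \int_{\pd\DD} |W_y|^2\,d\te\,.
\end{equation*}
This is the standard ``div-curl'' identity $|\nabla f|^2 = (\Div f)^2 + |\curl f|^2 + \text{(lower-order boundary term)}$ in two dimensions; integrating it over $\es$ in the $\al$ variable and combining with the previous paragraph yields the claim. To prove it, I would expand the right-hand side, so that the cross terms are $2\int_\DD(\pd_1 W_1\,\pd_2 W_2 - \pd_1 W_2\,\pd_2 W_1)\,dy$; integrating by parts (moving $\pd_2$ off the first factor and $\pd_1$ off the first factor respectively) converts this into a pure boundary integral over $\pd\DD$, which after using the outward normal $\nu = y$ on $|y|=1$ and the tangency hypothesis $y\cdot W_y = 0$ simplifies to exactly $\int_{\pd\DD}|W_y|^2\,d\te$. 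The tangency condition is precisely what makes the boundary term clean: without it one would pick up an extra term involving the normal component of $W_y$.

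The one genuine subtlety — and the step I expect to be the main obstacle — is handling the boundary terms correctly, in particular verifying that all integrations by parts involving $W_\al$ against $W_y$ produce boundary contributions that either vanish or cancel. The $\al$-integration-by-parts is harmless since $\es$ is a circle. The $y$-integrations-by-parts that mix $W_\al$ with $\pd_\al W_y$ (coming from the $\Curl$ cross terms) produce boundary integrals of the form $\int_{\es\times\pd\DD} W_\al\,(\nu\cdot \pd_\al W_y)\,d\al\,d\te$; one must argue this vanishes, which follows by differentiating the tangency relation $y\cdot W_y = 0$ along $\al$ on the torus $|y|=1$ (since $y$ does not depend on $\al$), giving $y\cdot \pd_\al W_y = 0$ there. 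Once this is checked, the only surviving boundary term is the $\int_{\es\times\pd\DD}|W_y|^2$ from the planar div-curl identity, and the computation closes. I would present the proof as: (i) expand $\|\ddiv W\|^2 + \|\Curl W\|_\ep^2$ and cancel the common cross term $2\int \pd_\al W_\al\,\ddiv_y W_y$; (ii) reduce to the 2D div-curl identity on each slice; (iii) carry out the boundary computation using $y\cdot W_y = 0$ and $y\cdot\pd_\al W_y = 0$ on $|y|=1$.
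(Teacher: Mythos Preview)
Your approach is correct and essentially identical to the paper's: both expand $\|\ddiv W\|^2+\|\Curl W\|_\ep^2$, identify the diagonal squared-derivative terms with the left-hand side, and reduce the cross terms $\sum_{i,j}\int(\pd_iW_i\,\pd_jW_j-\pd_jW_i\,\pd_iW_j)$ to a boundary integral via integration by parts, using $y\cdot W_y=0$ (and hence $y\cdot\pd_\al W_y=0$) on $|y|=1$. Your separation into ``$\al$--$y$ mixed cross terms'' plus ``planar div--curl identity on each slice'' is simply a more explicit organization of the single index-sum computation the paper carries out.

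Two sign slips in your outline are worth correcting before you write it up. First, the curl cross terms are $-2\int(\pd_2W_\al\,\pd_\al W_2+\pd_1W_\al\,\pd_\al W_1)$ (plus, not minus, inside), and after integrating by parts in $\al$ and then in $y$ they become $-2\int\pd_\al W_\al\,(\pd_1W_1+\pd_2W_2)$, which indeed cancels the $+2\int$ coming from $\|\ddiv W\|^2$; the boundary contribution $2\int_{\pd}\pd_\al W_\al\,(y\cdot W_y)$ vanishes by tangency. Second, the planar identity for a tangential field $W_y$ actually reads
\[
\int_\DD|D_yW_y|^2\,dy=\int_\DD\big[(\pd_1W_1+\pd_2W_2)^2+(\pd_1W_2-\pd_2W_1)^2\big]\,dy-\int_{\pd\DD}|W_y|^2\,d\te\,,
\]
since $2\int_\DD(\pd_1W_1\,\pd_2W_2-\pd_1W_2\,\pd_2W_1)=+\int_{\pd\DD}|W_y|^2$ (check it on $W_y=(-y_2,y_1)$). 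With these signs your plan proves exactly what the paper's proof proves, namely $\|\ddiv W\|^2+\|\Curl W\|_\ep^2=\text{LHS}+\int_{\pd}|W_y|^2$.
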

\begin{proof}
One can easily check that
\begin{multline*}
\|\ddiv W\|^2+\|\Curl W\|^2_\ep=
\|\pd_\al W_\al\|^2 \\
+ \sum_{i=1}^2\bigg(\ep^2\|\pd_\al W_i\|^2+ \frac{\|\pd_i W_\al\|^2}{\ep^{2}}\bigg)+\sum_{i,j=1}^2\|\pd_i W_j\|^2+S\,,
\end{multline*}
where
\[
S:=\sum_{i,j\in\{\al,1,2\}}\int\big(\pd_i
W_i\,\pd_j W_j- \pd_jW_i\, \pd_iW_j\big)\, d\al\, dy\,.
\]
We can now integrate by parts to write
\begin{align*}
S&=-\sum_{i,j=1}^2\int_{\es\times\pd\DD}(y_j W_i\,\pd_i W_j+y_jW_\al\,
\pd_\al W_j)\, d\al\, d\te\\
&=\sum_{i,j=1}^2\int_{\es\times\pd\DD}W_j\,\pd_i(y_j W_i)\,d\al\,
d\te\\
&=\sum_{i=1}^2\int_{\es\times\pd\DD}W_i^2\,d\al\, d\te\,.
\end{align*}
To pass to the second identity we have used that $y\cdot W_y$ is zero
on the boundary, so the second summand in the first integrand
vanishes, and taken advantage of the fact that $W$ is tangent to the
boundary to integrate by parts a second time.
\end{proof}

In the following proposition, which is the main result in this subsection, we prove an estimate for the
operators $\ddiv$ and $\Curl$ that is ``optimal'' with respect to the
small parameter $\ep$. In the proof of this version of the inequalities
we will also derive another in which the RHS has one derivative less
than the LHS, as is customary. However, the estimate we will need
later on is the former, which is why it is the one that appears in
the statement. It is not hard to check that the dependence on~$\ep$ of both estimates is sharp.

\begin{proposition}\label{P.Hklocal}
Let $W$ be any vector field in $\es\times\DD$ that is tangent to the
boundary and satisfies the equation
\[
\Curl W=F\,,\qquad \ddiv W=\rho
\]
for a scalar function $\rho$ and a vector field $F$. Then 
\begin{align*}
\|W_\al\|_{H^k}+ \|D_y W_\al\|_{H^k}&\leq C_k\big(\ep\|F\|_{H^{k}_\ep}
+ \|\rho\|_{H^{k}}+\big|\cQ[W]\big|\big)\,,\\
\|\pd_\al W_\al\|_{H^k} &\leq C_k\big(\|F\|_{H^k_\ep}+\|\rho\|_{H^k}\big)\,,\\
\|W_y\|_{H^k}+ \ep\|\pd_\al W_y\|_{H^k} + \|D_y W_y\|_{H^k}&\leq C_k\big(\|F\|_{H^{k}_\ep} + \|\rho\|_{H^{k}}+\big|\cQ[W]\big|\big)
\end{align*}
for constants that depend on $k$ but not on $\ep$.
\end{proposition}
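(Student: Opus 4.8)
The strategy is to combine the algebraic identity of Lemma~\ref{L.identity} with the spectral lower bound of Proposition~\ref{P.eigenW}, after first reducing to the case of zero harmonic part. I would proceed as follows. First, write $W=W^0+\cQ[W]\,e_\al$, where $e_\al=(1,0,0)$ is the constant ``harmonic'' field; then $\cQ[W^0]=0$, $\ddiv W^0=\rho$, $\Curl W^0=F$ (since $e_\al$ is killed by both operators), and $W^0$ is still tangent to the boundary. Thus it suffices to prove the estimates for $W^0$ with the $|\cQ[W]|$ term dropped, and then add back the trivial contribution of $\cQ[W]\,e_\al$ to $\|W_\al\|_{H^k}$; note that the second estimate (for $\|\pd_\al W_\al\|_{H^k}$) does not see this correction, consistent with the absence of $\cQ[W]$ there. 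Next, to turn the equation $\Curl W^0=F$, $\ddiv W^0=\rho$ into bounds on $W^0$ itself (not just its image under $\Curl$), I would use Proposition~\ref{P.eigenW}: since $\Curl$ is self-adjoint with spectrum bounded away from $0$ by $C/\ep$ on the relevant Hilbert space, one can write $W^0=\Curl(\Curl+P)^{-1}(\cdots)$-type manipulations — more concretely, decompose $W^0$ into its curl-eigenspace components and its divergence part, or simply invoke that on $\{\ddiv W=0,\ \cQ[W]=0\}$ one has $\|W\|_\ep\le C\ep\|\Curl W\|_\ep$, and handle the gradient part $\nabla\psi$ of the Hodge decomposition of $W^0$ (with $\De\psi=\rho$, Neumann data) using the scalar estimates of Theorem~\ref{C.Hk}/Remark~\ref{R.modelLaplacian}.

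Concretely, the $k=0$ case runs like this. Hodge-decompose $W^0=\nabla\psi+Z$ with $\De\psi=\rho$, $\pd_\nu\psi=0$, $\int\psi=0$, so that $\ddiv Z=0$, $\Curl Z=F$ (since $\Curl\nabla=0$ up to the $\ep$-rescaling — one should check the rescaled operators still satisfy $\Curl\grad=0$, which holds because $\Curl$ is a constant-coefficient version of the usual curl), $Z$ tangent to the boundary and with zero harmonic part. By Proposition~\ref{P.eigenW}, $\|Z\|_\ep\le C\ep\|\Curl Z\|_\ep=C\ep\|F\|_\ep$. For $\nabla\psi$, the components are $\pd_\al\psi$ in the $\al$-direction and $\ep^{-2}D_y\psi$ in the $y$-directions (up to bounded factors), so in the $H^1_\ep$-type bookkeeping $\|\grad\psi\|_\ep^2 \sim \|\pd_\al\psi\|^2+\ep^2\cdot\ep^{-4}\|D_y\psi\|^2$, and the estimates $\|\psi\|_{H^{k+2}}\le C\|\rho\|_{H^k}$ together with $\|D_y\psi\|_{H^k}\le C\ep^2\|\rho\|_{H^k}$ from Theorem~\ref{C.Hk} give exactly the $\|\rho\|$-dependence claimed. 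This yields $\|W^0\|_\ep\le C(\ep\|F\|_\ep+\|\rho\|)$, i.e. $\|W_\al\|\le C(\ep\|F\|_\ep+\|\rho\|+|\cQ[W]|)$ and $\|W_y\|\le C(\|F\|_\ep+\|\rho\|+|\cQ[W]|)$ after dividing by $\ep$. Then plug $\ddiv W^0=\rho$, $\Curl W^0=F$, and these $L^2_\ep$ bounds into the identity of Lemma~\ref{L.identity}: the LHS controls $\|D_yW_\al\|/\ep$, $\|\pd_\al W_\al\|$, $\|\pd_\al W_y\|\cdot\ep$ and $\|D_yW_y\|$, while the RHS is $\|\rho\|^2+\|F\|_\ep^2+\int_{\es\times\pd\DD}|W_y|^2$; the boundary term is estimated by a trace inequality $\int_{\pd\DD}|W_y|^2 \le C(\|W_y\|^2+\|W_y\|\,\|D_yW_y\|)$ on each $\al$-slice, absorbed (for small $\ep$, or by Young) into the LHS using the bounds just obtained. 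Here one must be slightly careful: the trace term involves $\|D_yW_y\|$ which also appears on the LHS, so one absorbs it with a factor $\tfrac12$, which is where the sharp powers of $\ep$ matter — the $\ep^2\|\pd_\al W_y\|^2$ term is genuinely weighted, so only the $\|D_yW_y\|^2$ and boundary term compete at the same order, and Young's inequality closes it. This gives the stated $k=0$ estimates (the middle one without $|\cQ[W]|$, as the constant field has $\pd_\al$-derivative zero).

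For general $k$, I would differentiate the equations $\Curl W=F$, $\ddiv W=\rho$ with respect to $\al$ and $\te$, which commute with $\Curl$ and $\ddiv$ in these coordinates (these operators have constant coefficients, so \emph{all} the $\pd_\al,\pd_1,\pd_2$ derivatives commute with them), apply the $k=0$ result to $D^\beta W$ for each multi-index $\beta$ of length $k$, and note $\cQ[D^\beta W]=0$ unless $\beta=0$ (since $\cQ$ of a pure derivative vanishes by integration by parts / periodicity — for $y$-derivatives one uses tangency to kill the boundary term, and the $W_\al$-component has no boundary constraint, but $\int\pd_aW_\al\,d\al\,dy=\int_{\es\times\pd\DD}y_aW_\al$, which need not vanish; so one should instead route the $y$-derivative estimates through the $H^k$ version of the scalar bounds and the higher-$k$ form of Proposition~\ref{P.eigenW}, rather than naively commuting). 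The cleanest route is in fact the second one hinted at in the statement's preamble: prove the ``one-derivative-loss'' version first — $\|W\|_{H^{k+1}_\ep}$-type norms bounded by $\|F\|_{H^k}+\|\rho\|_{H^k}+|\cQ[W]|$ via straightforward elliptic regularity for the $\Curl$-$\ddiv$ system (Proposition~\ref{P.eigenW} plus interior/boundary regularity as in \cite{Giga}) — and then upgrade to the no-loss version by re-running the Lemma~\ref{L.identity} argument on derivatives, using the scalar estimates of Theorem~\ref{C.Hk} for the gradient part at every order. \textbf{The main obstacle} I anticipate is the bookkeeping of the sharp $\ep$-powers when absorbing the boundary term and the gradient part at higher $k$: one must consistently track that a $y$-derivative on $W_\al$ costs a factor $\ep$ in the favorable direction (via the $\ep^{-2}D_y$ in $\grad\psi$ meeting $\|D_y\psi\|\le C\ep^2\|\rho\|$) while a $y$-derivative on $W_y$ is neutral, and that the $\es\times\pd\DD$ trace term always reappears at exactly the order one can absorb — this is precisely the point where the ``optimal in $\ep$'' claim is earned, and where a careless application of a generic elliptic estimate would lose powers of $\ep$.
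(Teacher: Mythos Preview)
Your approach for $k=0$ is essentially identical to the paper's: the decomposition $W=\text{(harmonic)}+\text{(gradient)}+V$, the spectral bound of Proposition~\ref{P.eigenW} on $V$, the scalar estimates of Remark~\ref{R.modelLaplacian} on the gradient part, and then Lemma~\ref{L.identity} with the trace term absorbed via $\|W_y\|_{L^2(\pd\DD)}^2\le C\|W_y\|(\|W_y\|+\|D_yW_y\|)$ and Young. One small correction: the scalar function $\psi$ solves the \emph{model} problem $\pd_\al^2\psi+\ep^{-2}\De_y\psi=\rho$, so you must invoke Remark~\ref{R.modelLaplacian} rather than Theorem~\ref{C.Hk} (which is for the full curved Laplacian).

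Your higher-$k$ discussion has a gap. You correctly sense that applying the $k=0$ result to $D^\beta W$ fails for $y$-derivatives, but you misdiagnose the obstruction as being about $\cQ[D^\beta W]$. The real problem is that $\pd_1 W$ and $\pd_2 W$ are \emph{not tangent to the boundary} (tangency $y\cdot W_y=0$ on $|y|=1$ is not preserved by $\pd_1,\pd_2$), so neither Proposition~\ref{P.eigenW} nor Lemma~\ref{L.identity} applies to them. The fact that $\Curl,\ddiv$ have constant coefficients and hence commute with all $\pd_\al,\pd_1,\pd_2$ is true but beside the point. The paper's route is the one you half-started: derivatives in $\al$ preserve tangency trivially; the angular combination $\pd_\te V+(0,V_2,-V_1)$ is again tangent, divergence-free, has zero harmonic part and satisfies $\Curl$ of it $=\pd_\te F+(0,F_2,-F_1)$, so the $k=0$ machinery runs on it; finally the radial derivative is recovered by writing the system in polar coordinates, differentiating in $r$, and isolating $\pd_r^2$-terms (exactly as in the proof of Proposition~\ref{L.Bpsiy}). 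Interior estimates are obtained separately by applying Lemma~\ref{L.identity} to $\pd_i(\chi V)$ with a cutoff $\chi$, where tangency is vacuous. Your fallback suggestion (prove a one-derivative-loss version via generic elliptic regularity and then upgrade) would work but is circuitous and risks losing the sharp $\ep$-dependence you rightly flag as the main point.
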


\begin{proof}
We can write the field $W$ as the sum of three fields:
\begin{equation}\label{confuso}
W=V+\bigg(\pd_\al\psi,\frac{\pd_1\psi}{\ep^2},
\frac{\pd_2\psi}{\ep^2}\bigg)+\bigg(\frac{\cQ[W]}{|\es\times\DD|},0,0\bigg)\,.
\end{equation}
The scalar function $\psi$ that appears in the second vector field is defined as
the only solution to the Neumann boundary value problem
\begin{equation}\label{decompositionW}
\pd_\al^2\psi+\frac{\De_y\psi}{\ep^2}=\rho\quad\text{in
}\es\times\DD\,,\qquad \pd_\nu\psi|_{\es\times\pd\DD}=0\,,\qquad \int_{\es\times\DD}\psi\,d\al\, dy=0\,,
\end{equation}
and it should be noticed that the third vector field (which corresponds to
the harmonic part of $W$) is constant. As a consequence of these
definitions and the properties of $W$, the field $V$ is
tangent to the boundary, has zero harmonic part ($\cQ[V]=0$) and
satisfies the equation
\begin{equation}\label{eqV}
\Curl V=F\,,\qquad \ddiv V=0\,.
\end{equation}

The $H^k$ estimates stated in Remark~\ref{R.modelLaplacian} (Eq.~\eqref{remarkeqm}), applied to
the boundary problem~\eqref{decompositionW}, provide suitable control
of the second field that appears in Eq.~\eqref{confuso} (that
is, the ``gradient'' part) and its derivatives, as they show that
\begin{align*}
\|\pd_\al\psi\|_{H^k}+ \|\pd_\al^2 \psi\|_{H^k} + \frac{\|D_y \pd_\al\psi\|_{H^k}}\ep&\leq C \|\rho\|_{H^{k}}\,,\\
\frac{\|D_y\psi\|_{H^k}}{\ep^2}+ \frac{\|D_y\pd_\al\psi\|_{H^k}}\ep + \frac{\|D_y^2 \psi\|_{H^k}}{\ep^2}&\leq C \|\rho\|_{H^{k}}\,.
\end{align*}
The third field in Eq.~\eqref{confuso} is trivial to control as
it is constant. Therefore, to prove the proposition it is enough to
derive suitable estimates for the field $V$.

Hence, our goal is to show that the vector field $V$ satisfies
\begin{subequations}\label{HkV}
\begin{align}
\|V_\al\|_{H^k}+ \|D_y V_\al\|_{H^k} &\leq C\ep\|F\|_{H^k_\ep}\,,\\
\|\pd_\al V_\al\|_{H^k} &\leq C\|F\|_{H^k_\ep}\,,\\
 \|V_y\|_{H^k}+ \ep\|\pd_\al V_y\|_{H^k} + \|D_y V_y\|_{H^k} &\leq C\|F\|_{H^k_\ep}\,.
\end{align}
\end{subequations}
For this we start by noticing that, when applied to Eq.~\eqref{eqV}, the $L^2$~estimate proved in
Proposition~\ref{P.eigenW} yields $\|V\|_\ep\leq C\ep\|F\|_\ep$, or equivalently
\begin{equation}\label{L2V}
\|V_\al\|\leq C\ep\|F\|_\ep\,,\qquad \|V_y\|\leq C\|F\|_\ep\,.
\end{equation}
To prove the inequalities~\eqref{HkV}, we start by using estimates for
boundary traces and interpolation to control the term
\begin{align*}
S&:=\int_{\es\times\pd\DD}
|V_y|^2=\|V_y\|_{L^2(\es\times\pd\DD)}^2 \\
&\leq
C\|V_y\|^2_{L^2(\es)\times H^{1/2}(\DD)}\leq C\|V_y\|\big(\|V_y\|+\|D_yV_y\|\big)
\end{align*}
that appears in Lemma~\ref{L.identity}. Together with the $L^2$~estimate~\eqref{L2V}, we can then apply Lemma~\ref{L.identity} to the
field $V$ to infer that
\begin{align*}
&\|\pd_\al V_\al\|\leq C\|F\|_\ep\,,\qquad &\|D_y V_\al\|\leq C\ep\|F\|_\ep\,,\\
&\|\pd_\al V_y\|\leq \frac C\ep\|F\|_\ep\,,\qquad &\|D_yV_y\|\leq
C\|F\|_\ep\,.
\end{align*}
This proves the estimate~\eqref{HkV} for $k=0$.

The derivation of higher-order estimates from these inequalities is
standard. To show the basic ideas, let us sketch the proof of the
$k=1$ estimates. Interior estimates are obtained by considering the
identity shown in Lemma~\ref{L.identity} for the vector 
field $\pd_i(\chi V)$, where $i=1,2$ and $\chi$ is a smooth
function, compactly supported in $\es\times\DD$ and equal to one in
the region $|y|<1/2$. Besides, it is easy to
estimate derivatives of $V$ with respect to $\al$ because the field
$\pd_\al^jV$ is still tangent to the boundary, divergence-free, has
zero harmonic part and satisfies the equation
\[
\Curl(\pd_\al^j V)= \pd_\al^j F\,.
\]
Analogous properties can be shown too for the
globally defined field
\[
\pd_\te V+(0,V_2,-V_1)\,,
\]
with $\pd_\te:=y_1\,\pd_2-y_2\,\pd_1$ the angular derivative, so the
same argument can be applied for this field. Since the second summand is obviously
controlled by the previous estimates, this yields good $H^1$~estimates
for $\pd_\te V$. To get estimates for the field~$V$ up to the boundary, it now
suffices to control the radial derivative $\pd_rV$ in the region
$|y|>1/2$, and this can be readily done by writing the
equation~\eqref{eqV} in polar coordinates, taking its derivative with
respect to $r$ and isolating the terms having two radial
derivatives. This process can be readily iterated to get the desired $H^k$
estimates. The details, which can be easily filled using these comments, are omitted.
\end{proof}

\subsection{Estimates for Beltrami fields in thin tubes}
\label{SS.Beltrami}

In this subsection we establish some estimates for Beltrami fields
defined on the tube $\cT_\ep$. The point of these estimates will be to
control the difference between a Beltrami field $v$ and its harmonic
part $h$ in terms of the Beltrami parameter $\la$, while ensuring that
the dependence on~$\ep$ of these estimates is optimal. As we did in Section~\ref{S.Laplacian}
(but not in Subsection~\ref{SS.existence}), we will identify $\cT_\ep$
with $\es\times\DD$  through the coordinates $(\al,y)$. 

Given a vector field $v$ in the tube, we will consider its components $(v_\al,v_1,v_2)$,
defined through the relation
\[
v=v_\al\, \pd_\al+v_1\, \pd_1+ v_2\,\pd_2\,.
\] 
We will use these coordinates to define the norms $H^k_\ep$,
just as in Eq.~\eqref{defHkep}, of vector fields in $\cT_\ep$,
using the obvious formula:
\[
\|v\|_{H^k_\ep}^2:=\|v_\al\|_{H^k}^2+\ep^2\|v_y\|_{H^k}^2\,.
\]
Here the Sobolev norms in the RHS denote the usual scalar norms and we
are using the notation $v_y:=(v_1,v_2)$. It should be noticed that
these norms reflect to some extent the effect of the metric on the way
vectors are measured; in particular, the
$L^2$ norm defined by the above formula is obviously equivalent
through constants that do not depend on~$\ep$ to the
perhaps more appealing expression
\[
\int g(v,v)\, dV\,,
\]
where $g$ denotes the expression of the Euclidean metric in these
coordinates and $dV$ is the normalized volume (cf.\ Eq.~\eqref{ds} and
Eq.~\eqref{dV}). 

We will begin with the following result, whose proof hinges on the
analogous estimates proved for an equation with constant coefficients
that we established in Proposition~\ref{P.Hklocal}:

\begin{proposition}\label{P.estimv}
Suppose that the vector field $w$ is 
tangent to the boundary, has zero harmonic part ($\cP w=0$) and satisfies the equation
\begin{equation}\label{Eqwf}
\curl w=f\,,\qquad \Div w=0
\end{equation}
in $\cT_\ep$. Then 
\[
\|w\|_{H^k_\ep}\leq C_k\ep\|f\|_{H^k_\ep}\,.
\]
\end{proposition}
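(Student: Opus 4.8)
The plan is to transport the problem to the flat model cylinder $\es\times\DD$ via the coordinates $(\al,y)$ of Section~\ref{S.tubes}, where the constant-coefficient estimates of Proposition~\ref{P.Hklocal} are available, and to treat the curvature and torsion of the core curve as a perturbation that is small precisely because the tube is thin.

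First I would express the Euclidean operators $\curl$ and $\Div$ in the coordinates $(\al,y)$ by means of the metric~\eqref{ds}, exactly as formula~\eqref{De} does for the Laplacian. Identifying $\cT_\ep\equiv\es\times\DD$ and writing a vector field $w$ in the coordinate basis as $(w_\al,w_1,w_2)$, a direct computation gives $\curl w=\Curl w+\cE_1[w]$ and $\Div w=\ddiv w+\cE_2[w]$, where $\cE_1,\cE_2$ are first-order differential operators whose coefficients are smooth functions of $(\al,y)$ built from $\ka$, $\tau$ and their $\al$-derivatives, bounded in every $C^k(\es\times\DD)$ norm uniformly in~$\ep$. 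The structural point, which comes from $B=1+\cO(\ep)$ and from the off-diagonal components of~\eqref{ds} being $\cO(\ep^2)$, is that once the anisotropic weights of the norms $H^k_\ep$ are taken into account, the remainders $\cE_1[w]$, $\cE_2[w]$ that feed into the right-hand side of Proposition~\ref{P.Hklocal} always carry a spare power of~$\ep$ relative to the quantities that proposition controls; the only $\cO(1)$-coefficient terms that occur involve just $w_\al$ and its derivatives (e.g.\ $\tau(\al)\,y_j\,\pd_\al w_\al$ or $\tau(\al)\,w_\al$), with coefficients of zero mean over the disk. Thus $\curl w=f$ becomes $\Curl w=F$, $\ddiv w=\rho$ with $F=f+\cE_1[w]$ and $\rho=\cE_2[w]$.

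Next I would note that the hypothesis $\cP w=0$ controls the quantity $\cQ[w]=\int w_\al\,d\al\,dy$ appearing in Proposition~\ref{P.Hklocal}. Indeed, by~\eqref{h} and Theorem~\ref{T.psi0} the harmonic field is $h=B^{-2}(\pd_\al+\tau\,\pd_\te)+\cO(\ep)$ in the relevant norms, so expanding $0=\int_{\cT_\ep}h\cdot w\,dx$ in the coordinates $(\al,y)$ and using $g_{\al\al}=A=1+\cO(\ep)$ together with the expression~\eqref{dV} for the volume yields $|\cQ[w]|\leq C\ep\,(\|w_\al\|+\|w_y\|)$.

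Finally I would close the estimate. The domain $\cT_\ep$ being a fixed smooth tube and the system $\curl w=f$, $\Div w=0$ with tangency boundary conditions being elliptic once its one-dimensional kernel is removed by $\cP w=0$, the norms $\|w\|_{H^k_\ep}$ are a priori finite. Applying the first and third estimates of Proposition~\ref{P.Hklocal} (the ones giving, respectively, $\|w_\al\|_{H^k}+\|D_yw_\al\|_{H^k}$ with a factor $\ep$ in front of $\|F\|_{H^k_\ep}$, and $\|w_y\|_{H^k}$ without such a factor) to $\Curl w=F$, $\ddiv w=\rho$, and inserting the bounds above, every occurrence of $\cE_1[w]$, $\cE_2[w]$ and $\cQ[w]$ on the right-hand side appears multiplied by a positive power of~$\ep$ times quantities that either coincide with terms of the left-hand side or are already known to be $\cO(\ep\|f\|_{H^k_\ep})$; hence for $\ep$ small these terms can be absorbed, which gives $\|w_\al\|_{H^k}+\|D_yw_\al\|_{H^k}\leq C_k\ep\|f\|_{H^k_\ep}$ and $\ep\|w_y\|_{H^k}\leq C_k\ep\|f\|_{H^k_\ep}$, i.e.\ $\|w\|_{H^k_\ep}\leq C_k\ep\|f\|_{H^k_\ep}$. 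One proves the case $k=0$ first and propagates it to all $k$ by induction; the induction is what absorbs the derivative that the first-order corrections $\cE_1,\cE_2$ cost on the highest-order term, and here one uses that the $\cO(1)$-coefficient correction terms have zero disk-mean, in the spirit of the auxiliary function $\Bpsi$ of Section~\ref{S.Laplacian}.

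The main obstacle is precisely this bookkeeping: one must verify, by a careful accounting of how the inverse metric and the volume density enter the Euclidean $\curl$ and $\Div$, that in the weighted norms $H^k_\ep$ every geometric correction, once placed in the appropriate estimate of Proposition~\ref{P.Hklocal}, carries a genuine extra power of~$\ep$, and that the $\cO(1)$-coefficient first-order terms (those with an $\al$-derivative of $w_\al$ and a coefficient of zero disk-mean) do not spoil the absorption argument — both facts hinging on the thinness of the tube, encoded in $B-1=\cO(\ep)$ and in the off-diagonal metric coefficients being $\cO(\ep^2)$.
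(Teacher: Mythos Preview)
Your overall strategy---transport to the model cylinder, write the curved operators as $\Curl$, $\ddiv$ plus geometric corrections, then absorb the corrections using Proposition~\ref{P.Hklocal} and the $\ep$-weighting of the $H^k_\ep$ norms---is sound and does close. It is, however, a genuinely different route from the paper's, and a couple of your intermediate structural claims are not quite right (though the argument survives them).

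The paper does \emph{not} work with the coordinate components $(w_\al,w_1,w_2)$ directly. Instead it passes to the (rescaled) \emph{one-form} components
\[
W=\big(Aw_\al+\ep^2\tau(y_2w_1-y_1w_2),\; w_1+\tau y_2 w_\al,\; w_2-\tau y_1 w_\al\big),
\]
for which the identity $\curl w=f$ becomes $\Curl W=F$ \emph{exactly}, with $F$ an explicit pointwise function of $f$ only (no $w$-dependent corrections in the curl equation at all). All the geometry then sits in two places: the divergence becomes $\ddiv W=\rho_W$ with $\rho_W$ manifestly $\cO(\ep)$ in first derivatives of~$W$, and the harmonic-part hypothesis is used via the equivalent statement that the flux $\int_\DD Bw_\al\,dy$ vanishes on every section, which gives $|\cQ[W]|\leq C\ep\|W_\al\|+C\ep^2\|W_y\|$ directly. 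Feeding this into Proposition~\ref{P.Hklocal} yields the result with essentially no further bookkeeping.

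By contrast, staying with the vector components as you do, the correction $\cE_1$ has $\cO(1)$-coefficient pieces not only in $w_\al$ but also in $w_y$ and $D_yw_y$ (for instance the $y$-component of $\cE_1$ contains $\tau(w_1+y_2\pd_2w_1-y_1\pd_2w_2)$), and the term $-2\tau w_\al$ in the $\al$-component does \emph{not} have zero disk-mean. So the specific description you give of the $\cO(1)$ terms is inaccurate, and the ``zero disk-mean, \`a la $\Bpsi$'' mechanism you invoke is a red herring here. What actually makes the absorption work is purely the anisotropy of $\|\cdot\|_{H^k_\ep}$: setting $X=\|w_\al\|_{H^k}+\|D_yw_\al\|_{H^k}$, $Y=\|w_y\|_{H^k}+\|D_yw_y\|_{H^k}$, $Z=\|\pd_\al w_\al\|_{H^k}+\ep\|\pd_\al w_y\|_{H^k}$, the three inequalities of Proposition~\ref{P.Hklocal} give a closed linear system $X\leq C(\ep\|f\|_{H^k_\ep}+\ep X+\ep Y+\ep^2 Z)$, $Y+Z\leq C(\|f\|_{H^k_\ep}+X+\ep Y+\ep Z)$, which for small $\ep$ yields $X\leq C\ep\|f\|_{H^k_\ep}$ and $\ep Y\leq C\ep\|f\|_{H^k_\ep}$. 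No induction on $k$ and no mean-zero trick are needed; the first-order nature of $\cE_1$ is already absorbed because Proposition~\ref{P.Hklocal} controls $D_yW$ and $\pd_\al W_\al$ on the left at the same Sobolev level as $F$ on the right. The paper's change to one-form components simply short-circuits this whole $X$--$Y$--$Z$ dance.
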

\begin{proof}
In the notation of Subsection~\ref{SS.constant}, let us consider the
vector field $W:\es\times\DD\to\RR^3$ given by
\[
W:=\big(Aw_\al+\ep^2\tau(y_2w_1-y_1w_2), w_1+\tau y_2w_\al, w_2-\tau y_1w_\al\big)\,,
\]
where we recall that $A$ is the function defined in Eq.~\eqref{A1}. It
can be checked that~$W$ is given by the components (in the coordinates
$(\al,y)$) of the $1$-form
dual to $w$ up to a rescaling of the $y$~components by a factor of
$\ep^{-2}$. It is worth noticing that, as the field $w$ is tangent to
the boundary of the tube $\cT_\ep$, a simple computation using the
definition of $W$ shows that the field $W$ is tangent to the boundary
$\es\times\pd\DD$, that is, $y\cdot W_y=0$ on $|y|=1$. Moreover, the components of the field $w$
on the tube can be recovered from those of $W$ through the relations
\begin{subequations}\label{Wtow}
\begin{align}
w_\al&=\frac{W_\al+\ep^2\tau(y_1W_2-y_2W_1)}{B^2}\,,\\
w_1&=W_1-\frac{\tau y_2[W_\al+\ep^2\tau(y_1W_2-y_2W_1)]}{B^2}\,,\\
w_2&=W_2+\frac{\tau y_1[W_\al+\ep^2\tau(y_1W_2-y_2W_1)]}{B^2}\,,
\end{align}
\end{subequations}
where the function $B$ was defined in Eq.~\eqref{B}.

Using the notation
\[
B_1:=\frac B{B^2+\ep^2\tau^2y_2^2}\,,\qquad B_2:=\frac B{B^2+\ep^2\tau^2y_1^2}\,,
\]
we will also consider the field $F:\es\times\DD\to\RR^3$ and the
function $\rho_W:\es\times\DD\to\RR$ defined by
\begin{align*}
F&:=\bigg(\frac{Af_\al+\ep^2\tau(y_2f_1-y_1f_2)}B, \frac{f_1+\tau
  y_2f_\al}{B_1}, \frac{f_2-\tau y_1f_\al}{B_2}\bigg)\,,\\
\rho_W&:= \pd_\al\bigg[W_\al\bigg(1-\frac{1}B\bigg)\bigg]+\pd_1\bigg[W_1\bigg(1-\frac{1}{B_1}\bigg)\bigg]+ \pd_2\bigg[W_2\bigg(1-\frac{1}{B_2}\bigg)\bigg]\,.
\end{align*}
Notice that, by the definition of the functions $A$ and $B$, we
trivially have
\begin{align}\label{rhoF}
\|F\|_{H^k_\ep}&\leq C\|f\|_{H^k_\ep}\,,\\
\|\rho_W\|_{H^k}&\leq C\ep\big(\|W\|_{H^k}+\|\pd_\al W_\al\|_{H^k}
+\|D_y W_y\|_{H^k}\big)\,.\label{rhoW}
\end{align}

A straightforward computation shows that Eq.~\eqref{Eqwf} can
be written in terms of the field $W$ as
\begin{equation}\label{WFrhoW}
\Curl W=F\,,\qquad \ddiv W=\rho_W\,.
\end{equation}
To derive estimates for $W$, we start by analyzing $\cQ[W]$. To this
end, we recall (cf.~e.g.~\cite{Giga}) that if $d\si$ denotes the
induced surface measure on
the disk 
\[
\{\al=\al_0\}\subset\cT_\ep\,,
\]
the fact that the field $w$ has zero
harmonic part ($\cP w=0$) is equivalent to the assertion that
\[
0=\int_{\{\al=\al_0\}} g(w,\nu)\, d\si=\int_{\DD} B
w_\al\big|_{\al=\al_0}\, dy 
\]
for each angle $\al_0$. Since $A-B$ is of order $\ep$, using the above equality one can estimate
\begin{align}
|\cQ[W]|&= \bigg|\int W_\al\, d\al\, dy\bigg|=\bigg|\int
\big(Aw_\al+\ep^2\tau (y_2w_1-y_1w_2)\big)\, d\al\, dy\bigg|\notag\\
&=\bigg|\int
\big((A-B)w_\al+\ep^2\tau (y_2w_1-y_1w_2)\big)\, d\al\, dy\bigg|\notag\\
&\leq C\ep\|w_\al\|+C\ep^2\|w_y\|\notag\\
&\leq C\ep\|W_\al\|+C\ep^2\|W_y\|\,,\label{QW}
\end{align}
where to derive the last inequality we have used the expression of the
components of $w$ in terms of those of  $W$ given in Eq.~\eqref{Wtow}.

We are now ready to derive some estimates for the vector field
$W$. Since $W$ is tangent to $\es\times\pd\DD$, we can apply Proposition~\ref{P.Hklocal} to Eq.~\eqref{WFrhoW} to
obtain
\begin{subequations}\label{rollofin}
\begin{align}
\|W_\al\|_{H^k}&\leq C\big(\ep\|F\|_{H^{k}_\ep}
+ \|\rho_W\|_{H^{k}}+\big|\cQ[W]\big|\big)\,,\\
\|\pd_\al W_\al\|_{H^k} &\leq C\big(\|F\|_{H^k_\ep}+\|\rho_W\|_{H^k}\big)\,,\\
\|W_y\|_{H^k}+ \|D_y W_y\|_{H^k}&\leq C\big(\|F\|_{H^{k}_\ep} + \|\rho_W\|_{H^{k}}+\big|\cQ[W]\big|\big)\,.
\end{align}
\end{subequations}
Going back to Eq.~\eqref{rhoW}, this yields
\[
\|\rho_W\|_{H^k}\leq C\ep\big(\|F\|_{H^{k}_\ep}+ \|W_\al\|_{H^k}+\|W_y\|_{H^k}\big)\,.
\]
Plugging this inequality and the bound~\eqref{QW} for $\cQ[W]$ in Eq.~\eqref{rollofin}, we immediately get
\begin{align*}
\|W_\al\|_{H^k}&\leq C\ep\big(\|F\|_{H^{k}_\ep} +\phantom{\ep}\|W_\al\|_{H^k}+\phantom{\ep}\|W_y\|_{H^k}\big)\,,\\
\|W_y\|_{H^k}&\leq C \phantom{\ep}\big(\|F\|_{H^{k}_\ep} +\ep \|W_\al\|_{H^k}+\ep\|W_y\|_{H^k}\big)\,.
\end{align*}
From these estimates one readily infers that, for small $\ep$,
\[
\|W_\al\|_{H^k}+\ep\|W_y\|_{H^k}\leq C\ep\|F\|_{H^{k}_\ep}\,.
\]
In view of the formulas~\eqref{Wtow}, $\|w\|_{H^k_\ep}\leq
C\|W\|_{H^k_\ep}$, so the proposition follows from the
bound~\eqref{rhoF} for the $H^k_\ep$ norm of $F$.
\end{proof}


We are now ready to prove the main result in this section, which
estimates the difference between a Beltrami field in the tube and its
harmonic part in terms of the Beltrami parameter $\la$:

\begin{theorem}\label{T.estimBeltrami}
Let $\la$ be any nonzero real constant that is smaller in absolute value than
some fixed positive constant $\La$. For small enough $\ep$, the
problem
\[
\curl v=\la v
\]
has a unique solution in the tube $\cT_\ep$ that is tangent to the
boundary and whose harmonic part $\cP v$ is the harmonic
field $h$. Moreover, the difference between the field $v$ and the
harmonic field is bounded pointwise by
\[
\|v_\al-h_\al\|_{C^k(\es\times\DD)}+ \ep\|v_y-h_y\|_{C^k(\es\times\DD)}\leq C_{k,\La}\ep\,|\la|\,,
\]
where the constant only depends on $k$ and $\La$.
\end{theorem}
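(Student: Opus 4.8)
The plan is to reduce everything to the auxiliary field $w:=v-h$ and solve for it by a fixed-point argument built on the estimate of Proposition~\ref{P.estimv}, converting the resulting $H^k_\ep$ bound into the desired $C^k$ bound by Sobolev embedding at the very end. Since $h$ is harmonic, hence irrotational and divergence-free, a field $v$ that is tangent to $\pd\cT_\ep$ with $\cP v=h$ satisfies $\curl v=\la v$ if and only if $w$ is tangent to the boundary, divergence-free, has zero harmonic part ($\cP w=0$), and solves
\[
\curl w-\la w=\la h\,,\qquad \Div w=0\,.
\]
Thus it suffices to produce a unique such $w$ and to show $\|w_\al\|_{C^k(\es\times\DD)}+\ep\|w_y\|_{C^k(\es\times\DD)}\le C_{k,\La}\,\ep\,|\la|$.

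For existence and uniqueness, uniformly over $0<|\la|<\La$, I would use the solution operator $K$ constructed in the proof of Proposition~\ref{P.existence}, which sends a divergence-free $L^2$ field $f$ on $\cT_\ep$ to the unique $w_f\in\cW$ with $\curl w_f=f$, $\Div w_f=0$, tangent to the boundary and $\cP w_f=0$. Finding $w$ then amounts to solving the fixed-point equation $w=\la K(h+w)$ in $\cF$ (the source $h+w$ being divergence-free since $\Div h=\Div w=0$). Applying Proposition~\ref{P.estimv} with $k=0$ to $Kg$ for divergence-free $g$ gives $\|Kg\|_{L^2_\ep}\le C_0\ep\|g\|_{L^2_\ep}$, so $w\mapsto\la K(h+w)$ is a contraction of $\cF$ in the $L^2_\ep$ norm as soon as $C_0\ep\La<1$, i.e.\ for $\ep$ smaller than an $\La$-dependent constant. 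This yields a unique $w$ for every admissible $\la$, bypassing the $\ep$-dependent exceptional set of Corollary~\ref{C.existence}; uniqueness of $v$ in the statement follows because the difference of two solutions has zero harmonic part and solves $\curl(\cdot)=\la(\cdot)$, so Proposition~\ref{P.estimv} forces it to vanish for small $\ep$. Standard div--curl elliptic regularity, bootstrapping from $w\in H^1$ and $h\in C^\infty$, shows $w$ is smooth.

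To obtain the quantitative bound, I would feed the identity $w=\la K(h+w)$ back into Proposition~\ref{P.estimv} at level $k$:
\[
\|w\|_{H^k_\ep}=|\la|\,\|K(h+w)\|_{H^k_\ep}\le C_k\ep|\la|\big(\|h\|_{H^k_\ep}+\|w\|_{H^k_\ep}\big)\,,
\]
so that for $\ep$ small (depending on $k$ and $\La$) one gets $\|w\|_{H^k_\ep}\le 2C_k\ep|\la|\,\|h\|_{H^k_\ep}$. Here one uses that $\|h\|_{H^k_\ep}\le C_k$ uniformly in $\ep$: by Eq.~\eqref{perturbh} (or Theorem~\ref{T.psi0}) the components $h_\al$, $h_y$ of $h$ in the coordinates $(\al,y)$ are bounded in $C^k(\es\times\DD)$ by an $\ep$-independent constant, and since $\es\times\DD$ is bounded this gives $\|h_\al\|_{H^k}+\|h_y\|_{H^k}\le C_k$, hence $\|h\|_{H^k_\ep}^2=\|h_\al\|_{H^k}^2+\ep^2\|h_y\|_{H^k}^2\le C_k$. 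Therefore $\|w\|_{H^k_\ep}\le C_k\ep|\la|$ for every $k$, and the Sobolev embedding $\|\vp\|_{C^k(\es\times\DD)}\le C\|\vp\|_{H^{k+2}(\es\times\DD)}$ applied componentwise yields
\[
\|w_\al\|_{C^k}+\ep\|w_y\|_{C^k}\le C\big(\|w_\al\|_{H^{k+2}}+\ep\|w_y\|_{H^{k+2}}\big)\le C\|w\|_{H^{k+2}_\ep}\le C_{k,\La}\,\ep\,|\la|\,,
\]
which is exactly the claim since $v_\al-h_\al=w_\al$ and $v_y-h_y=w_y$.

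The only genuinely delicate point is the existence/uniqueness step: Corollary~\ref{C.existence} by itself only gives solvability outside a countable, possibly $\ep$-dependent, set of values of $\la$, whereas the theorem asserts solvability for \emph{every} nonzero $\la$ with $|\la|<\La$. The fix is to upgrade the $\ep$-smallness in Proposition~\ref{P.estimv} into a contraction (equivalently, Neumann-series) argument that is uniform over $|\la|\le\La$; once that is in place, the estimate is a direct consequence of Proposition~\ref{P.estimv} together with the uniform-in-$\ep$ control of the harmonic field $h$ established in Section~\ref{S.harmonic}.
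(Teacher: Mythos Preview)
Your proof is correct and follows essentially the same route as the paper: reduce to $w=v-h$, apply Proposition~\ref{P.estimv} to get $\|w\|_{H^k_\ep}\le C_k\ep|\la|\,\|h\|_{H^k_\ep}$, use the uniform bound $\|h\|_{H^k_\ep}\le C_k$ from Theorem~\ref{T.psi0}, and finish with Sobolev embedding. The only difference is in how you rule out the exceptional $\la$'s from Corollary~\ref{C.existence}: you run a contraction argument on $w\mapsto\la K(h+w)$ in the $L^2_\ep$ norm, whereas the paper instead shows that any eigenvalue of $K$ (in the sense of Remark~\ref{R.Fredholm}) must satisfy $|\la|>C/\ep$, by applying Proposition~\ref{P.estimv} to a putative eigenfunction. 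Both arguments are driven by the same inequality $\|Kg\|_{L^2_\ep}\le C\ep\|g\|_{L^2_\ep}$, so this is a cosmetic rather than a substantive difference.
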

\begin{proof}
Let us write $w:=v-h$, so that the field $w$ is tangent to the
boundary, has zero harmonic part and satisfies the equation
\begin{equation}\label{eqwh}
\curl w=\la w+\la h\,,\qquad \Div w=0\,.
\end{equation}
We proved in Proposition~\ref{P.existence} and Remark~\ref{R.Fredholm}
that this equation has a unique solution if and only if $1/\la$ is not
an eigenvalue of the operator $K^*$ introduced in the proof of the
aforementioned proposition. As $\la$ is real, this is equivalent to $1/\la$
being eigenvalue of the operator $K$, which means that there is a
divergence-free $L^2$~field $u$ such that
\[
Ku=\frac u\la\,.
\]

Suppose that $1/\la$ is an eigenvalue of $K$. By the properties of $K$ proved in Proposition~\ref{P.existence}, this
means that the nonzero vector field $Ku$ is tangent to the
boundary, has zero harmonic part and satisfies
\[
\curl(Ku)=\la\ms {Ku}\,,\qquad \Div (Ku)=0\,.
\]
Applying the estimate proved in Proposition~\ref{P.estimv} to this
equation, we get
\[
\|Ku\|_{L^2_\ep}\leq C\ep|\la|\|Ku\|_{L^2_\ep}\,,
\]
which means that 
\[
|\la|> C/\ep
\]
for some positive constant that does
not depend on $\ep$.

Therefore the problem~\eqref{eqwh} has a unique solution for all
$|\la|\leq \La$, provided $\ep$ is small enough. Moreover, when
applied to Eq.~\eqref{eqwh}, Proposition~\ref{P.estimv} ensures that
\[
\|w\|_{H^k_\ep}\leq C_k\ep|\la|\big(\|w\|_{H^k_\ep}+\|h\|_{H^k_\ep}\big)\,,
\]
which yields
\[
\|w\|_{H^k_\ep}\leq C_k\ep|\la|\|h\|_{H^k_\ep}
\]
provided that $\ep$ is smaller than some constant of the form $C/\La$. Since the $H^k_\ep$ norm of the
harmonic field is bounded uniformly in $\ep$ as
\[
\|h\|_{H^k_\ep}\leq C_k
\] 
due to Eq.~\eqref{h} and the estimates in Theorem~\ref{T.psi0},
we obtain
\[
\|w\|_{H^k_\ep}\leq C_k\ep|\la|\,.
\]
$C^s$ pointwise estimates for $w$ are immediately obtained
from this bound by taking $k=s+2$ and using Sobolev embeddings.
\end{proof}

\section{A KAM theorem for Beltrami fields with small $\la$}
\label{S.KAM}

Let us consider the harmonic field $h$ in the tube $\cT_\ep$, as
introduced in Eq.~\eqref{h}. As before, we will assume that the
thickness $\ep$ is small. By Theorem~\ref{T.estimBeltrami}, for any
$\la$ smaller in absolute value than some fixed, $\ep$-independent
constant, there is a unique solution $v$ to the Beltrami equation
\[
\curl v=\la v
\]
in $\cT_\ep$ that is tangent to the boundary and whose harmonic part
is $h$. We are interested in the case where the Beltrami constant
$\la$ is suitably small.  {\em For simplicity of notation, throughout this section we
will take $\la:=\ep^3$ (although we could have taken any nonzero constant
$\la=\cO(\ep^3)$) and refer to the vector field $v$ corresponding to this choice of $\la$ as the local Beltrami field}.

Our objective in this section is to study some fine dynamical
properties of the local Beltrami field $v$. More precisely, we will
show that for small values of $\ep$ and ``most'' core curves $\ga$,
the boundary of the tube $\cT_\ep$ is an invariant torus of the field
$v$ that is preserved (i.e., there is a small perturbation of
$\pd\cT_\ep$ that is still invariant) under suitably small
perturbations of $v$. For this, we will see that the key point is the
analysis of the harmonic field $h$, which is close to the local
Beltrami field $v$ as a consequence of the estimates proved in
Theorem~\ref{T.estimBeltrami} and the fact that the Beltrami parameter
$\la=\ep^3$ is small. At this point, it is worth emphasizing that, to some extent, the
proofs of the dynamical properties that we study in this section ultimately
depend on the estimates derived in Section~\ref{S.Laplacian}. In
particular, if the dependence on $\ep$ of these estimates were worse,
we would not be able to check the nondegeneracy conditions of the KAM
theorem we prove in this section.

This section is divided into four parts. In
Subsection~\ref{SS.trajectories} we consider the trajectories of the
local Beltrami field, after a rescaling of the field (which
does not alter their geometric structure), and calculate these
trajectories perturbatively in a suitable range of time
(Proposition~\ref{L.trajectories}). In Subsections~\ref{SS.flow}
and~\ref{SS.nondegeneracy} we use these perturbative expressions to
compute the rotation number and normal torsion of the Poincaré map of
the local Beltrami field (Theorems~\ref{T.rotation}
and~\ref{T.Herman}), which are the quantities that control the
stability of the torus in the KAM theorem that we establish in
Subsection~\ref{SS.KAM} (Theorem~\ref{T.KAM}).

\subsection{Trajectories of the local Beltrami field}
\label{SS.trajectories}

In this subsection we aim to compute the trajectories of the local Beltrami
field $v$
perturbatively in the small parameter $\ep$. From
Theorem~\ref{T.estimBeltrami} and the fact that the Beltrami parameter
is $\la=\ep^3$, it follows that the local Beltrami field
$v$ is close to the harmonic field in the sense that
\begin{equation}\label{vhep3}
\|v_\al-h_\al\|_{C^k(\es\times\DD)}<C_k\ep^4\,,\qquad \|v_y-h_y\|_{C^k(\es\times\DD)}<C_k\ep^3\,.
\end{equation}
Since the harmonic field can be written as $h=h_0+\nabla\psi$
(with $h_0$ and $\nabla\psi$ respectively given by~\eqref{h0}
and~\eqref{nabla}), from the estimates for $\psi$ proved in
Theorem~\ref{T.psi0} we infer that
\begin{align*}
v_\al&=B^{-2}(1+\psi_\al+\tau\psi_\te)+\cO(\ep^4)\\
&=1+\cO(\ep)\,.
\end{align*}
In particular, as $v_\al$ does not vanish for small $\ep$, we can consider the analytic vector field
\begin{equation}\label{defX}
X:=\frac v{v_\al}\,.
\end{equation}
We will use this vector field to study the geometric structure of the
trajectories of the local Beltrami field, since both fields have the
same unparametrized trajectories and the vector field $X$ presents certain computational
advantages, as we shall see in the next
subsection. Before we go on, and identifying the tube
$\cT_\ep$ with $\es\times\DD$ through the coordinates $(\al,y)$, let
us note that the field $X$ is well defined in a small neighborhood of
$\es\times\overline{\DD}$ because so is the local Beltrami field $v$.

The trajectories of $X$ are given by the parametrization
$(\al(s),r(s),\te(s))$, where these functions satisfy the system of
ODEs
\begin{subequations}\label{ODEs}
\begin{align}
\dot\al&= 1\,,\label{ODEal}\\
\dot r & =\frac{B^2\psi_r}{\ep^2(1+\psi_\al+\tau \psi_\te)}+\cO(\ep^3)\,,\label{ODEr}\\
\dot \te&= \frac{\tau+(\ep r)^{-2}A\psi_\te + \tau\psi_\al}{1+\psi_\al
  + \tau \psi_\te}+\cO(\ep^3)\,.\label{ODEte}
\end{align}
\end{subequations}
These equations can be read off from the definition of the field $X$ and its connection with the harmonic field (Eq.~\eqref{vhep3})
and the formulas~\eqref{h} and~\eqref{nabla} for the harmonic field
and the gradient of $\psi$. The point of the trajectory not only
depends on the ``flow parameter'' $s$, but also on the initial
conditions $(\al_0,r_0,\te_0)$ at $s=0$. Without loss of generality, in this section we will always take
$\al_0=0$, and make the dependence of the trajectory on $(r_0,\te_0)$
explicit by writing
\[
\big(\al(s;r_0,\te_0),r(s;r_0,\te_0),\te(s;r_0,\te_0)\big)
\]  
when appropriate.

Throughout, it will be convenient to denote by $\te$ not only the
angular coordinate in $\SS^1$, but also its lift to the real
line. It should be noticed that the formulas we will give below are actually valid for the
lifted coordinate too, which will be of use in the following subsection.

In the following lemma we will compute the trajectory of the field $X$
at time $s\in[0,\ell]$ up to a controllable error. We will assume that
$r_0$ is bounded away from~$0$ so that the trajectory cannot reach the
coordinate singularity $\{r=0\}$ at any time $s\in[0,\ell]$. This is
convenient in view of the terms $1/r^2$ that appear in the equations
and is not a restriction for the applications that we have in mind, as
we will be only concerned with initial conditions near the invariant
torus $\{r=1\}$. For simplicity, in this lemma we will
abuse the notation and denote by $\cO(\ep^j)$ a quantity $Q(r_0,\te_0,s)$ that is uniformly
bounded as
\[
\big|\pd_{r_0}^k\pd_{\te_0}^l Q(r_0,\te_0,s)\big|< C_{kl}\ep^j
\]
for $r_0$ in any fixed compact set of the interval $(0,1]$
(which is the domain where polar coordinates define a diffeomorphism),
$\te_0\in\SS^1$ and $s\in[0,\ell]$, provided $\ep$ is small enough.

\begin{proposition}\label{L.trajectories}
Consider the solution to the system~\eqref{ODEs} with initial condition
$(0,r_0,\te_0)$ and $r_0>0$. At time $s\in[0,\ell]$, this solution is given by
\begin{align*}
\al(s;r_0,\te_0)&=s\,,\\
r(s;r_0,\te_0)&= r_0+ \cO(\ep)\,,\\
\te(s;r_0,\te_0)&= \teo(s)+ \ep\ms \te^{(1)}(s) + \ep^2\ms \te^{(2)}(s)+\cO(\ep^3)\,,
\end{align*}
where each quantity $\te^{(j)}(s)\equiv \te^{(j)}(s;r_0,\te_0)$ is of order $\cO(1)$ and given by
\begin{align*}
\teo(s)&:=\te_0+\int_0^s \tau(\bs)\,d\bs\,,\\
\te^{(1)}(s)&:=\frac{r_0^2-3}{8r_0}\big[\ka(s)\sin\teo(s)-\ka(0)\sin\te_0\big]\,,\\
\te^{(2)}(s)&:=\frac{12-5r_0^2}{32}\int_0^s\ka(\bs)^2\,\tau(\bs)\,
d\bs +\frac{3(r_0^4+2r_0^2-3)\ka(s)\ka(0)}{64r_0^2}\cos\te_0\,\sin\teo(s)\\
&\;-\frac{(3-r_0^2)^2\ka(s)\ka(0)}{64r_0^2}\sin\te_0\cos\teo(s) +\frac{(27-50r_0^2+25r_0^4)\ka(s)^2}{384r_0^2}\sin2\teo(s)\\
&\; +\frac{(27+14r_0^2-31r_0^4)\ka(0)^2}{384r_0^2}\sin2\te_0\,.
\end{align*}
\end{proposition}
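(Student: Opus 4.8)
The plan is to peel off the $\al$- and $r$-components, rewrite the angular equation as a \emph{slow} system, and then run a regular perturbation (Picard-type) expansion in $\ep$. The equation $\dot\al=1$ with $\al_0=0$ gives $\al(s)=s$ immediately. For the radial component, Theorem~\ref{T.psi0} gives $\psi_r=\pd_r\vp_0+\cO(\ep^4)$, and since $\vp_0$ in~\eqref{vp0} is $\cO(\ep^3)$ so is $\pd_r\vp_0$; dividing by $\ep^2$ in~\eqref{ODEr} and using $B^2=1+\cO(\ep)$ and $1+\psi_\al+\tau\psi_\te=1+\cO(\ep)$, we get $\dot r=\cO(\ep)$, hence $r(s;r_0,\te_0)=r_0+\cO(\ep)$ after integrating over the bounded interval $[0,\ell]$. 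Inserting the explicit leading term of $\pd_r\vp_0$ and using $\tfrac{d}{d\bs}[\ka(\bs)\cos\teo(\bs)]=\ka'(\bs)\cos\teo(\bs)-\ka(\bs)\tau(\bs)\sin\teo(\bs)$, the quadrature telescopes to the sharper expansion $r(s)=r_0-\tfrac{3(r_0^2-1)}{8}\,\ep\,[\ka(s)\cos\teo(s)-\ka(0)\cos\te_0]+\cO(\ep^2)$, which will be needed to feed the second-order angular computation.

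The key reduction is to pass to the variable $\phi:=\te-\int_0^\al\tau(\bs)\,d\bs$, so that $\phi(0)=\te_0$ and, by~\eqref{ODEte}, $\dot\phi=\dot\te-\tau(s)=\cO(\ep)$: indeed $(\ep r)^{-2}A\psi_\te$ is $\cO(\ep)$ because $\psi_\te=\pd_\te\vp_0+\pd_\te\vp_1+\cO(\ep^5)=\cO(\ep^3)$ (Theorem~\ref{T.psi0}) and $r$ is bounded away from $0$, while $\tau\psi_\al$ and $\psi_\al$ are $\cO(\ep^2)$. A useful simplification is that, after clearing the denominator in~\eqref{ODEte} and expanding, the two $\tau\psi_\al$ contributions cancel, leaving $\dot\te=\tau(s)+(\ep r)^{-2}A\psi_\te+\cO(\ep^3)$; in particular the expansion of $\te(s)$ to the orders in the statement never requires the (unknown) $\cO(\ep^2)$-term of $\psi$. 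Thus in the variables $(r,\phi)$ the system takes the form $\dot r=\ep\,G_r(s,r,\phi;\ep)$, $\dot\phi=\ep\,G_\phi(s,r,\phi;\ep)$ with $G_r,G_\phi$ analytic and, together with all their derivatives, uniformly bounded for $(s,r,\phi)$ in the relevant compact region and $\ep$ small — the derivative bounds coming from the $C^k$ estimates in Theorems~\ref{T.psi0} and~\ref{T.estimBeltrami} — and admitting explicit asymptotic expansions in powers of $\ep$ whose coefficients are read off from~\eqref{vp0}--\eqref{vp1} and the expansions of $A$, $B$ from~\eqref{A2}--\eqref{B}. The standard iterative solution of such a slow system over $[0,\ell]$ then yields $\phi(s)=\phi_0+\ep\,\phi^{(1)}(s)+\ep^2\,\phi^{(2)}(s)+\cO(\ep^3)$ with $\phi^{(j)}=\cO(1)$, where $\phi^{(1)}$ is the integral over $[0,s]$ of the $\cO(\ep)$-coefficient of $\dot\phi$ evaluated along the frozen trajectory $(r_0,\phi_0)$, and $\phi^{(2)}$ is the integral of its $\cO(\ep^2)$-coefficient, into which one must also substitute the first-order corrections $r^{(1)},\phi^{(1)}$ computed above. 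Smooth dependence of ODE solutions on initial data and parameters, applied on $\{r_0\in K,\ \te_0\in\SS^1,\ s\in[0,\ell]\}$ with $K$ a compact subset of $(0,1]$, simultaneously gives the claimed uniform control of $\pd_{r_0}^k\pd_{\te_0}^l$ of all remainders.

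It remains to carry out the two quadratures, with $\te^{(1)}=\phi^{(1)}$ and $\te^{(2)}=\phi^{(2)}$. For $\te^{(1)}$: expanding $A=1-2\ep\ka r\cos\te+\cO(\ep^2)$ and using $\pd_\te\vp_0$, the $\cO(\ep)$-part of $\dot\te-\tau$ equals $\tfrac{r^2-3}{8r}\,\ep\,[\tau\ka\cos\te+\ka'\sin\te]+\cO(\ep^2)$; freezing $r=r_0$, $\te=\teo(\bs)$ and noting $\tfrac{d}{d\bs}[\ka(\bs)\sin\teo(\bs)]=\ka'(\bs)\sin\teo(\bs)+\ka(\bs)\tau(\bs)\cos\teo(\bs)$, the integral telescopes to $\te^{(1)}(s)=\tfrac{r_0^2-3}{8r_0}[\ka(s)\sin\teo(s)-\ka(0)\sin\te_0]$, as claimed. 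The computation of $\te^{(2)}$ is the main obstacle: one must collect the $\cO(\ep^2)$ contributions coming from (a) the $\pd_\te\vp_1$ term in $\psi_\te$; (b) the $-2\ep\ka r\cos\te$ term of $A$ multiplying $\pd_\te\vp_0$; and (c) the feedback of $r^{(1)}$ (through the $r$-dependent prefactor and its $r$-derivative) and of $\te^{(1)}$ (through $\cos(\teo+\ep\te^{(1)})$ and $\sin(\teo+\ep\te^{(1)})$) into the $\cO(\ep)$-coefficient of $\dot\te$, the last item being exactly what produces the $\ka(s)\ka(0)$ cross terms in the statement. Reducing all trigonometric products by product-to-sum identities and recognizing total $\bs$-derivatives with the help of $\dot\teo=\tau(s)$ — e.g.\ $\ka^2\tau\cos2\teo=\tfrac12\tfrac{d}{d\bs}[\ka^2\sin2\teo]-\ka\ka'\sin2\teo$ and $\cos^2\teo=\tfrac12+\tfrac12\cos2\teo$ — the integral collapses to the stated closed form for $\te^{(2)}(s)$. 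This last step is lengthy but entirely mechanical; the only genuine subtlety is keeping track of the frozen value $r_0$ versus the true $r(s)$ inside the $r$-dependent prefactors, which is precisely why the first-order expansion of $r(s)$ was recorded above.
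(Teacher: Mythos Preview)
Your proof is correct and follows essentially the same route as the paper's own argument: both reduce the angular equation to $\dot\te=\tau+(\ep r)^{-2}A\psi_\te+\cO(\ep^3)$ via the $\tau\psi_\al$ cancellation, feed in the expansion of $\psi_\te$ from Theorem~\ref{T.psi0}, obtain $r^{(1)}$ and $\te^{(1)}$ by recognizing total $\bs$-derivatives, and then compute $\te^{(2)}$ by Taylor-expanding the $\cO(\ep)$ integrand around $(r_0,\teo)$ and collecting the $\vp_1$, $A$-correction, and feedback contributions. Your introduction of the shifted variable $\phi=\te-\int_0^\al\tau$ is a cosmetic repackaging that makes the slow-system structure explicit, but the actual quadratures and the mechanism (product-to-sum identities plus $\dot\teo=\tau$ to telescope) are identical to what the paper does with its integrals $I_1$, $I_2$, $J_1$, $J_2$.
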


\begin{proof}
Converting the ODEs~\eqref{ODEs} into integral equations, it is
clear that, for $s\in[0,\ell]$,
\begin{subequations}\label{ODEs2}
\begin{align}
\al(s;r_0,\te_0)&=s\,,\\
r(s;r_0,\te_0)&=r_0+\int_0^{s} \frac{B^2\psi_r}{\ep^2(1+\psi_\al+\tau
  \psi_\te)} d\bs\label{ODEr2}+\cO(\ep^3)\,,\\
\te(s;r_0,\te_0)&= \te_0+ \int_0^s \frac{\tau+(\ep r)^{-2}A\psi_\te + \tau\psi_\al}{1+\psi_\al
  + \tau \psi_\te} d\bs +\cO(\ep^3)\,.\label{ODEte2}
\end{align}
\end{subequations}
In these equations all the functions under the integral signs are
evaluated along the trajectories, i.e., at the
point
\begin{equation}\label{traj-rollo}
\al=\bs\,,\quad r=r(\bs;r_0,\te_0)\,,\quad \te=\te(\bs;r_0,\te_0)\,.
\end{equation}

Let us solve the equations perturbatively. We start by noticing
that, as a consequence of the bounds for $\psi$ derived in
Theorem~\ref{T.psi0} (and its connection with the functions $\vp_0,\vp_1$
introduced in this theorem), the integrands can be expanded in $\ep$ as
\begin{align}\notag
\frac{\tau+(\ep r)^{-2}A\psi_\te + \tau\psi_\al}{1+\psi_\al
  + \tau \psi_\te}&=\bigg(\tau+\frac{A\psi_\te}{\ep^2 r^{2}} + \tau\psi_\al\bigg)\big(1-\psi_\al
  +\cO(\ep^3)\big)\\
&= \tau+\frac{A\psi_\te}{\ep^2 r^{2}} +\cO(\ep^3)\,.\label{fracte}\\
\frac{B^2\psi_r}{\ep^2(1+\psi_\al+\tau
  \psi_\te)}&= \frac{\pd_r\vp_0}{\ep^2}+\cO(\ep^2)\,,\label{fracr}
\end{align}
Since $r_0>0$ and $s\in[0,\ell]$ (which allows us to control the
effect of the denominator $1/r^2$ for small enough $\ep$), we immediately infer from
Eqs.~\eqref{ODEr2} and~\eqref{ODEte2} that
\begin{equation}\label{orden0}
r(s;r_0,\te_0)=r_0+\cO(\ep)\,,\qquad \te(s;r_0,\te_0)=\teo(s)+\cO(\ep)\,.
\end{equation}
Of course, we define each function $\te^{(j)}(s)$ as in
the statement of the proposition.

Now that we have the zeroth-order expression of the trajectories, we
will next compute them up to second-order corrections. For
convenience we will use the notation
\[
\cR_0(r):=\frac{r^3-3r}8\,,\qquad \cR_1(r):=\frac{13(r^4-2r^2)}{96}
\]
for the dependence of $\vp_0$ and $\vp_1$ on $r$, respectively.
We start with the analysis of 
the radial coordinate of the trajectories. Using again Eq.~\eqref{fracr} and the
zeroth-order estimates for the trajectories~\eqref{orden0}, we derive that
\begin{align}
r(s;r_0,\te_0)&=r_0+\int_0^{s}
\bigg[\frac{\pd_r\vp_0(\bs,r_0+
  \cO(\ep),\teo(\bs)+ \cO(\ep))}{\ep^2}+\cO(\ep^2)\bigg] d\bs +\cO(\ep^3) \notag\\
&=r_0+\int_0^s \frac{\pd_r\vp_0(\bs,r_0,\teo(\bs))}{\ep^2}d\bs+
\cO(\ep^2) \notag\\
&=
r_0+\ep\ms\cR_0'(r_0)\int_0^s\big[\tau(\bs)\ka(\bs)\sin\teo(\bs)-\ka'(\bs)\cos\teo(\bs)\big]d\bs+
\cO(\ep^2) \notag\\
&=r_0-\ep\ms\cR_0'(r_0)\int_0^s\frac
d{d\bs}\Big(\ka(\bs)\cos\teo(\bs)\Big)\, d\bs+
\cO(\ep^2) \notag\\
&=r_0+\ep\ms r^{(1)}(s)+\cO(\ep^2)\,,\label{orden1r}
\end{align}
where
\[
r^{(1)}(s):=\frac{3(1-r_0^2)}8\big[ \ka(s)\cos\teo(s)-\ka(0)\cos\te_0\big]\,.
\]
 To pass to the second line we have used the mean value theorem and the
obvious $C^k$ bound $\vp_0=\cO(\ep^3)$, and to complete the
calculation we have just plugged in the formulas for $\vp_0$
(Eq.~\eqref{vp0}) and the definition of $\teo(s)$. In a totally
analogous manner we can compute $\te(s;r_0,\te_0)$ up to $\cO(\ep^2)$:
\begin{align}
\te(s;r_0,\te_0)&=\teo(s)+\int_0^s \bigg[\frac{A\psi_\te}{(\ep
  r)^2}+\cO(\ep^3)\bigg]\bigg|_{(s,r_0+\cO(\ep),\teo(s)+\cO(\ep))}
d\bs +\cO(\ep^3)\notag\\
&=\teo(s)+\int_0^s \frac{\pd_\te\vp_0(\bs,r_0,\teo(s))}{(\ep
  r_0)^2}d\bs+\cO(\ep^2)\notag\\
&=\teo(s)+\frac{\ep\ms\cR_0(r_0)}{r_0^2}\int_0^s\big[\tau(\bs)\ka(\bs)\cos\teo(\bs)+\ka'(\bs)\sin\teo(\bs)\big]d\bs\notag\\
&=\teo(s)+\ep\ms\te^{(1)}(s)+\cO(\ep^2)\,.\label{orden1te}
\end{align}

To complete the proof we need to calculate $\te(s;r_0,\te_0)$ up to
$\cO(\ep^3)$. The procedure is as above but the computations are more
tedious. We start by noticing that the term $A\psi_\te$ that appears
in the integrand~\eqref{fracte} can be written as
\begin{align}\label{fracte2}
A\psi_\te=\pd_\te\vp_0+\big(\pd_\te\vp_1-2\ep\ka r\cos\te\,\pd_\te\vp_0\big)+\cO(\ep^5)
\end{align}
where we have used the estimates
we established in Theorem~\ref{T.psi0} (recall that $\vp_1$ was
introduced in Eq.~\eqref{vp1}). Notice that the first
summand is of order $\cO(\ep^3)$, while the term in brackets is of
order $\cO(\ep^4)$. 

We can now use the integral equation~\eqref{ODEte2} for the
trajectories, the expansion~\eqref{fracte} for the integrand (together
with~\eqref{fracte2}), and the expression for the trajectories up to
second order terms (Eqs.~\eqref{orden1r} and~\eqref{orden1te}) to get
\begin{align*}
\te(s;& \,r_0,\te_0)=\teo(s)+\int_0^s\frac{\pd_\te\vp_0}{\ep^2r^2}\bigg|_{(\bs,r_0+\ep
  r^{(1)}+\cO(\ep^2),\teo+\ep\te^{(1)}+\cO(\ep^2))} d\bs\\
&  +\int_0^s\frac{\pd_\te\vp_1-2\ep\ka
  r\cos\te\,\pd_\te\vp_0}{\ep^2r^2}\bigg|_{(\bs,r_0+\ep r^{(1)}+\cO(\ep^2),\teo+\ep\te^{(1)}+\cO(\ep^2))}d\bs+\cO(\ep^3)\,.
\end{align*}
For simplicity of notation, we omit the argument $\bs$ when there is no
risk of confusion. An elementary Taylor expansion and the mean value theorem show the
first integral (let us call it $I_1$) is given by
\begin{align*}
I_1&= \int_0^s\bigg[\frac{\pd_\te\vp_0}{\ep^2r^2}+ \pd_r\bigg(\frac{\pd_\te\vp_0}{\ep
  r^2}\bigg) r^{(1)}(\bs)+  \frac{\pd_\te^2\vp_0}{\ep
  r^2}\te^{(1)}(\bs)\bigg]\bigg|_{(\bs,r_0,\teo)} d\bs+\cO(\ep^3)\notag\\
&=\ep\te^{(1)}(s)+\int_0^s\bigg[\pd_r\bigg(\frac{\pd_\te\vp_0}{\ep
  r^2}\bigg) r^{(1)}(\bs)+  \frac{\pd_\te^2\vp_0}{\ep
  r^2}\te^{(1)}(\bs)\bigg]\bigg|_{(\bs,r_0,\teo)} d\bs+\cO(\ep^3)\,.
\end{align*}
The second integral, which we call $I_2$, can be immediately simplified using the
mean value theorem, finding that
\begin{align*}
I_2=\int_0^s\frac{\pd_\te\vp_1-2\ep\ka
  r\cos\te\,\pd_\te\vp_0}{\ep^2r^2}\bigg|_{(\bs,r_0,\teo)}d\bs+\cO(\ep^3)\,.
\end{align*}

The integrals $I_1$ and $I_2$ can be computed in closed form after replacing the functions
$\vp_0,\vp_1$ by their expressions, given in
Eqs.~\eqref{vp0} and \eqref{vp1}. For example, 
\[
I_2=J_1+J_2+\cO(\ep^3)\,,
\]
where
\begin{align*}
J_1&:=\int_0^s\frac{\pd_\te\vp_1}{\ep^2r^2}\bigg|_{(\bs,r_0,\teo)}d\bs\\
&=\frac{\ep^2\cR_1(r_0)}{r_0^2}\int_0^s\big[\tau(\bs)\ka(\bs)^2\cos
2\teo(s)+\ka(\bs)\ka'(\bs)\sin2\teo(\bs)\big]\,d\bs\\
&=\frac{\ep^2\cR_1(r_0)}{2r_0^2}\int_0^s\frac
d{d\bs}\Big(\ka(\bs)^2\sin 2\teo(\bs)\Big)\, d\bs\\
&=\frac{13\ep^2(r_0^2-2)}{96}\big[\ka(s)^2\sin 2\teo(s)-\ka(0)^2\sin 2\te_0\big]
\end{align*}
and
\begin{align*}
J_2&:=-2\int_0^s\frac{\ka
  \cos\te\,\pd_\te\vp_0}{\ep r}\bigg|_{(\bs,r_0,\teo)}d\bs\\
&=-\frac{2\ep^2\cR_0(r_0)}{r_0}\int_0^s \ka\cos\teo \big[\tau\ka
\cos\teo+\ka'\sin\teo\big]d\bs\\
&=-\frac{\ep^2\cR_0(r_0)}{r_0}\int_0^s\bigg[\tau\ka^2+\frac12\frac
d{d\bs}\Big(\ka^2\sin2\teo\Big)\bigg]d\bs\\
&=\frac{\ep^2(3-r_0^2)}{16}\bigg[2\int_0^s\tau\ka^2\,
d\bs+\ka(s)^2\sin 2\teo(s)-\ka(0)^2\sin 2\te_0\bigg]\,.
\end{align*}
The other terms can be dealt with using analogous arguments,
arriving at the formula for $\te^{(2)}(s)$ that appears in the statement.
\end{proof}

To conclude this subsection, we will show that the trajectories of the
field $X$ on the invariant torus $r=1$ satisfy certain functional
equation up to some controllable errors. The reason why we need to
consider this way of describing trajectories on the invariant torus is
that, in order to compute the rotation number later on, we will need
to understand the trajectories on the torus for arbitrarily large
times. The expression for the trajectories we obtained in
Proposition~\ref{L.trajectories} is not well suited for this purpose,
while the functional equation below turns out to be much more
convenient. In order to describe the errors that appear in the
functional equation, in the following proposition we will use the
notation $s\,\cO(\ep^n)$ for any quantity $Q(\te_0,s)$ that is bounded as
\[
\big|\pd_{\te_0}^jQ(\te_0,s)\big|\leq C_j(1+|s|)\ep^n\quad
\text{and}\quad 
\big|\pd_s\pd_{\te_0}^jQ(\te_0,s)\big|\leq C_j\ep^n
\]
for nonnegative integer $j$ (we could have considered higher
derivatives with respect to $s$ too, but we will not need this feature).

\begin{proposition}\label{L.trajectories2}
Consider the trajectories of the system of ODEs~\eqref{ODEs} with
initial condition $(\al_0,r_0,\te_0)=(0,1,\te_0)$. The function $\te(s)\equiv \te(s;1,\te_0)$ satisfies the approximate
functional equation
\begin{equation*}
\te(s)= \te_0+\int_0^s\tau(\bs)\,d\bs
-\frac\ep4\Big[\ka(s)\sin\te(s)-\ka(0)\sin\te_0\big] +s\, \cO(\ep^2)\,.
\end{equation*}
\end{proposition}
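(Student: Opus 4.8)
The plan is to start from the exact integral equation for $\te(s)$ on the invariant torus $r=1$ and massage the right-hand side into a closed (functional) form, keeping track of the errors in the $s\,\cO(\ep^n)$ sense introduced above. On the torus $r=1$ the radial equation~\eqref{ODEr} is trivially satisfied (since $\{r=1\}$ is invariant), so I only need the angular equation~\eqref{ODEte2}, which reads
\[
\te(s)=\te_0+\int_0^s\frac{\tau+(\ep r)^{-2}A\psi_\te+\tau\psi_\al}{1+\psi_\al+\tau\psi_\te}\bigg|_{r=1}d\bs+\cO(\ep^3)\,.
\]
Using the expansion~\eqref{fracte} and the fact (from Theorem~\ref{T.psi0}) that $A\psi_\te=\pd_\te\vp_0+\cO(\ep^4)$ evaluated at $r=1$ — where $\cR_0(1)=(1-3)/8=-\tfrac14$ — the integrand becomes $\tau(\bs)-\tfrac{\ep}{4}[\tau(\bs)\ka(\bs)\cos\te+\ka'(\bs)\sin\te]+\cO(\ep^3)$, with everything evaluated at $\te=\te(\bs)$. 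So far this is the same computation as in Proposition~\ref{L.trajectories}, only without expanding $\te(\bs)$ in powers of $\ep$.

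The key move is to integrate the $\cO(\ep)$ term \emph{by parts} in a way that produces the desired functional form. Write $\tau\ka\cos\te+\ka'\sin\te$; I would like this to be $\frac{d}{d\bs}\big(\ka(\bs)\sin\te(\bs)\big)$. Since $\frac{d}{d\bs}\big(\ka(\bs)\sin\te(\bs)\big)=\ka'(\bs)\sin\te(\bs)+\ka(\bs)\cos\te(\bs)\,\dot\te(\bs)$ and $\dot\te(\bs)=\tau(\bs)+\cO(\ep)$ by the previous display, the difference $\ka(\bs)\cos\te(\bs)\big(\dot\te(\bs)-\tau(\bs)\big)$ is itself $\cO(\ep)$ in the pointwise sense, hence contributes $s\,\cO(\ep^2)$ after multiplication by the prefactor $\ep$ and integration over $[0,s]$ (the $s\,\cO$ bookkeeping is exactly designed to absorb such a term, including one $\te_0$-derivative, which falls on the flow and is bounded uniformly on $[0,\ell]$-length pieces and grows at most linearly in $s$). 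Therefore
\[
\int_0^s\frac{\ep}{4}\big[\tau\ka\cos\te+\ka'\sin\te\big]d\bs
=\frac\ep4\int_0^s\frac{d}{d\bs}\big(\ka(\bs)\sin\te(\bs)\big)\,d\bs+s\,\cO(\ep^2)
=\frac\ep4\big[\ka(s)\sin\te(s)-\ka(0)\sin\te_0\big]+s\,\cO(\ep^2)\,,
\]
which yields exactly the claimed identity once the leading $\int_0^s\tau$ term is carried along and the various $\cO(\ep^3)$ (hence $s\,\cO(\ep^2)$) remainders are collected.

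The remaining issue, and the one I expect to be the main obstacle, is a careful justification of the error bookkeeping: one must verify that each discarded term genuinely satisfies both $|\pd_{\te_0}^jQ|\le C_j(1+|s|)\ep^n$ and $|\pd_s\pd_{\te_0}^jQ|\le C_j\ep^n$, uniformly for $s$ in arbitrarily large intervals, not just $s\in[0,\ell]$. This requires controlling how $\pd_{\te_0}\te(\bs)$ and $\pd_{\te_0}\dot\te(\bs)$ grow: differentiating the ODE in $\te_0$ gives a linear variational equation whose coefficients are $\cO(1)$ (indeed $\tau(\bs)+\cO(\ep)$ in the relevant derivative), so $\pd_{\te_0}\te(\bs)=1+\cO(\ep)+$ bounded oscillation and in particular stays bounded on compact $s$-intervals, which is what the $(1+|s|)$ allowance accommodates; the $s$-derivative bounds follow because $\pd_s Q$ removes one integral. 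I would also note that the denominator $1+\psi_\al+\tau\psi_\te=1+\cO(\ep)$ is bounded away from zero for small $\ep$ (as already used in Stage~1), so all the expansions above are legitimate, and that on $r=1$ one never approaches the coordinate singularity, so the $1/r^2$ factors cause no trouble. Assembling these observations gives the proposition.
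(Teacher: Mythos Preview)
Your proposal is correct and follows essentially the same route as the paper: expand $\dot\te$ on the invariant torus $r=1$ using Theorem~\ref{T.psi0}, then recognize the $\cO(\ep)$ term as $\tfrac{d}{d\bs}\big(\ka\sin\te\big)$ via the substitution $\tau=\dot\te+\cO(\ep)$. One minor slip: the integrand error is $\cO(\ep^2)$ rather than $\cO(\ep^3)$ (both the denominator $1+\psi_\al+\tau\psi_\te=1+\cO(\ep^2)$ and the expansion $\ep^{-2}A\psi_\te=\ep^{-2}\pd_\te\vp_0+\cO(\ep^2)$ contribute at that order), but this is harmless for the final $s\,\cO(\ep^2)$ bound.
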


\begin{proof}
The starting point is the differential equations for the
trajectories~\eqref{ODEs} with initial radius $r_0=1$. It is obvious that
the radial component of the trajectory is 
\[
r(s;1,\te_0)=1\,,
\]
which simply shows that the set $\{r=1\}$ is an invariant
torus. Therefore, Eq.~\eqref{ODEte} for $\te(s)$ becomes
\begin{equation}\label{ODEte3}
\dot\te(s)=\frac{\tau+\ep ^{-2}A\psi_\te + \tau\psi_\al}{1+\psi_\al
  + \tau \psi_\te}+\cO(\ep^3)=\tau + \frac{\ep ^{-2}A\psi_\te - \tau^2\psi_\te}{1+\psi_\al
  + \tau \psi_\te}+\cO(\ep^3)\,,
\end{equation}
where the functions in the RHS are evaluated along the trajectories
$(s,1,\te(s))$. Expanding the fraction
that appears in the second identity using the estimates
for~$\psi$ proved in Theorem~\ref{T.psi0} we therefore arrive at
\begin{equation}\label{dotte}
\dot\te(s)=\tau(s)+ \frac{\pd_\te\vp_0(s,1,\te(s))}{\ep^2}+\cO(\ep^2)\,.
\end{equation}
Notice that the fraction is of order $\cO(\ep)$. Converting~\eqref{ODEte3} into an integral
equation, an immediate consequence of this estimate is that
\begin{equation*}
\te(s)=\teo(s)+\int_0^s\frac{\pd_\te\vp_0(\bs,1,\te(\bs))}{\ep^2}d\bs+s\,\cO(\ep^2)\,.
\end{equation*}

This integral can be evaluated, modulo $s\,\cO(\ep^2)$,  using the formula~\eqref{dotte}
for the derivative $\dot\te$, the expression for $\vp_0$,  and integration by parts: 
\begin{align*}
\int_0^s\frac{\pd_\te\vp_0}{\ep^2}d\bs&=
-\frac\ep4\int_0^s
\big[\ka\tau\cos\te+\ka'\sin\te\big]\, d\bs\\
&=-\frac\ep4\int_0^s\bigg[\ka(\dot\te+\cO(\ep))\cos\te
+\ka'\sin\te\bigg]d\bs\\
&=-\frac\ep4\int_0^s \frac
d{d\bs}\Big(\ka\sin\te\Big)d\bs
+s\,\cO(\ep^2)\\
&=-\frac\ep4\big[\ka(s)\sin\te(s)-\ka(0)\sin\te_0\big]+s\,\cO(\ep^2)\,.
\end{align*}
Here, of course, all the integrands are evaluated at the point
$(\bs,1,\te(\bs))$.
\end{proof}

\subsection{Rotation number of the Poincaré map of the local Beltrami field}
\label{SS.flow}

We will denote by $\phi_{s}$ the time-$s$ flow of the field $X$,
which maps each point $(\al_0,r_0,\te_0)$ to the trajectory of the ODEs~\eqref{ODEs}
at time $s$ that has the latter values as initial
conditions. Since the field $X$, introduced in
Eq.~\eqref{defX}, is tangent to the boundary of the
domain~$\es\times\DD$, it is standard that the flow $\phi_s$ is a well
defined diffeomorphism of $\es\times\DD$ for all values of $s$.

Let us now consider the Poincar\'e map of the field $X$, which is the
tool we will use to analyze the dynamical properties of the flow (and
which coincides with that of the local Beltrami field $v$). For this, we
start by considering the section $\{\al=\al_0\}$, which is
clearly transverse to the vector field $X$. The Poincar\'e map of this section,
$\Pi_{\al_0}:\overline{\DD}\to \overline{\DD}$, sends each point $(r_0,\te_0)\in \overline{\DD}$ to the first
point at which the trajectory $\phi_s(\al_0,r_0,\te_0)$ intersects the
section $\{\al=\al_0\}$
(with $s>0$). The reason why we are considering the field $X$ is that
it is {\em isochronous}\/ in the sense that this
first return point is given by the time-$\ell$ flow of $X$,
that is,
\begin{equation}\label{Poincare}
\Pi_{\al_0}(r_0,\te_0)=\phi_{\ell}(\al_0,r_0,\te_0)\,.
\end{equation}
We will omit the subscript when $\al_0=0$, and use Cartesian
coordinates $y$ in the disk when convenient. 

One should notice that, since we are assuming that
the curve $\ga$ is analytic, the boundary of the tube $\cT_\ep$ is
also an analytic surface, so it is standard~\cite{Mo58} that the field
$v$ is analytic in a neighborhood of the closure
$\overline{\cT_\ep}$. This ensures that the Poincaré map is also a well-defined
analytic map in a neighborhood of the closed disk $\overline{\DD}$.

In the following proposition we will show that the
Poincar\'e map of the Beltrami field preserves a measure on the
disk. For later convenience, we will state this result in terms of the
associated $2$-form rather than the measure:

\begin{proposition}\label{P.area}
The Poincaré map $\Pi$ preserves the positive measure on the disk corresponding
to the $2$-form
\[
\La:=G_\La(r,\te)\,r\, dr\wedge d\te
\]
on the disk $\DD$, with
\begin{equation}\label{defF}
G_\La(r,\te):=B v_\al\big|_{\al=0}=1+\ep\ms \ka(0)\, r\cos\te+\cO(\ep^2)\,.
\end{equation}
\end{proposition}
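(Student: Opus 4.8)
The plan is to exploit the fact that the local Beltrami field $v$ is divergence-free, which makes its flow volume-preserving, and then to reduce the three-dimensional volume form to a two-dimensional form on the transverse section by using the isochronicity of $X$. First I would recall that since $\Div v = 0$ with respect to the Euclidean metric, the flow of $v$ preserves the Euclidean volume $dV$, which in the coordinates $(\al,r,\te)$ reads (up to the harmless overall factor $\ep^2$) as $dV = B\, r\, d\al\wedge dr\wedge d\te$, cf.\ Eq.~\eqref{dV}. The vector field $X = v/v_\al$ has the same unparametrized trajectories as $v$, and since $X_\al = \dot\al = 1$ identically, the flow $\phi_s$ of $X$ advances the $\al$-coordinate by exactly $s$. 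The key point is that $X$ is isochronous: by Eq.~\eqref{Poincare} the Poincaré map is $\Pi = \phi_\ell|_{\{\al=0\}}$, so statements about $\Pi$ can be deduced from the corresponding statements about the flow $\phi_s$ restricted to a moving section $\{\al = s\}$.

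Next I would make the standard change of parametrization precise. The field $X$ satisfies $\dd\iota_X(B\,r\,d\al\wedge dr\wedge d\te) = \mathcal L_X(B\,r\,d\al\wedge dr\wedge d\te) - \iota_X \dd(B\,r\,d\al\wedge dr\wedge d\te)$; the three-form is closed (it is a top form on a $3$-manifold), so $\mathcal L_X(B\,r\,d\al\wedge dr\wedge d\te) = \dd\iota_X(B\,r\,d\al\wedge dr\wedge d\te)$. Because $v$ is divergence-free we have $\mathcal L_v(B\,r\,d\al\wedge dr\wedge d\te)=0$, and since $v = v_\al X$ with $v_\al>0$ a smooth function, a short computation gives $\mathcal L_X(B v_\al\, r\,d\al\wedge dr\wedge d\te) = 0$; that is, the \emph{modified} volume form $\mu := B v_\al\, r\, d\al\wedge dr\wedge d\te$ is invariant under the flow of $X$. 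Contracting $\mu$ with $X$ (using $\iota_X\, d\al = 1$) yields the two-form $\iota_X \mu = B v_\al\, r\, dr\wedge d\te + (\text{terms containing } d\al)$; restricting to the section $\{\al = \text{const}\}$ kills the $d\al$ terms and leaves precisely $\Lambda = B v_\al\big|_{\al} \, r\, dr\wedge d\te = G_\Lambda(r,\te)\, r\, dr\wedge d\te$. Since $X$ is isochronous and $\mu$ is $\phi_s$-invariant, $\Pi=\phi_\ell$ carries the section $\{\al=0\}$ to itself preserving the form obtained by contracting the invariant $\mu$ with the flow direction, which is exactly $\Lambda$. This is the content of the $2$-form $\Lambda$ being $\Pi$-invariant.

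For the asymptotic expansion of $G_\Lambda$, I would simply combine $G_\Lambda = B v_\al|_{\al=0}$ with the two inputs already available: the formula $B = 1 - \ep\ka r\cos\te$ from Eq.~\eqref{B}, and the expansion of $v_\al$ obtained at the start of Subsection~\ref{SS.trajectories} from Theorem~\ref{T.estimBeltrami} and Theorem~\ref{T.psi0}, namely $v_\al = B^{-2}(1+\psi_\al+\tau\psi_\te)+\cO(\ep^4) = B^{-2}(1+\cO(\ep^2))$, using $\psi_\al,\psi_\te=\cO(\ep^2)$ from Theorem~\ref{T.psi0}. Hence $G_\Lambda = B\cdot B^{-2}(1+\cO(\ep^2)) = B^{-1}(1+\cO(\ep^2)) = 1 + \ep\ka(0)r\cos\te + \cO(\ep^2)$, evaluated at $\al=0$. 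The positivity of the measure is immediate since $G_\Lambda>0$ for small $\ep$. The one point that requires a little care — and which I expect to be the main (though mild) obstacle — is justifying cleanly that the Poincaré-map invariance follows from the flow invariance of $\mu$ despite $\mu$ not being a closed two-form but a contraction of a three-form; the isochronicity Eq.~\eqref{Poincare} is exactly what makes this work, since it lets one identify $\Pi^*\Lambda$ with the restriction to $\{\al=0\}$ of $(\phi_\ell)^*(\iota_X\mu)$, and $\mathcal L_X\mu = 0$ together with $\mathcal L_X(d\al)=0$ gives $\mathcal L_X(\iota_X\mu)=\iota_X\mathcal L_X\mu=0$, so $\iota_X\mu$ is itself flow-invariant; restricting to the transverse disk then yields $\Pi^*\Lambda=\Lambda$.
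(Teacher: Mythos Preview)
Your argument is correct, and it rests on the same essential observation as the paper's proof: since $\Div v=0$, the rescaled field $X=v/v_\al$ preserves the volume form $d\tilde V:=v_\al\,dV=B v_\al\,r\,d\al\wedge dr\wedge d\te$. Your verification of this via $\mathcal L_X(v_\al\,dV)=d(\iota_X(v_\al\,dV))=d(\iota_v dV)=\mathcal L_v dV=0$ is clean, and the expansion of $G_\La$ is handled exactly as in the paper.

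Where you differ is in the reduction from three-dimensional volume preservation to two-dimensional area preservation. The paper proceeds measure-theoretically: it thickens a Borel set $\cB\subset\overline{\DD}$ to $\cB_\de=(-\de,\de)\times\cB$, computes $\tilde V(\cB_\de)=2\de\,\mu(\cB)+\cO(\de^2)$ and $\tilde V(\phi_\ell(\cB_\de))=2\de\,\mu(\Pi(\cB))+\cO(\de^2)$ using the decomposition $\phi_\ell(\cB_\de)=\bigcup_{|\al|<\de}\{\al\}\times\Pi_\al(\cB)$, equates the two, and lets $\de\to0$. You instead argue intrinsically with forms: from $\mathcal L_X\mu=0$ you deduce $\mathcal L_X(\iota_X\mu)=\iota_X\mathcal L_X\mu=0$, so the $2$-form $\iota_X\mu$ is itself flow-invariant; isochronicity gives $\phi_\ell\circ i=i\circ\Pi$ for the inclusion $i:\{\al=0\}\hookrightarrow\es\times\DD$, whence $\Pi^*\La=\Pi^*i^*(\iota_X\mu)=i^*\phi_\ell^*(\iota_X\mu)=i^*(\iota_X\mu)=\La$. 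Your route is more conceptual and avoids the limiting argument; the paper's route is more elementary and makes the role of the continuity of $\al\mapsto\Pi_\al$ explicit. Both are valid and short.
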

\begin{proof}
That the function $G_\La(y):=Bv_\al|_{\al=0}$  has
indeed the form given by the RHS of~\eqref{defF} is an immediate
consequence of the estimates for the function $\psi$ proved in
Theorem~\ref{T.psi0} and Eq.~\eqref{vhep3}.
Given a Borel set $\cB\subset\overline{\DD}$ and a small positive
$\de$, let us denote by 
\[
\mu(\cB):=\int_{\cB} \La
\]
its area and let
\[
\cB_\de:=(-\de,\de)\times\cB
\]
be a small thickening of the set $\{0\}\times\cB$ in the closed domain $\es\times\overline{\DD}$.

Since the divergence of $v$ is zero, from the definition of $X$ it
stems that its flow preserves the volume
\[
d\tilde V:=v_\al\, dV\,.
\]
Clearly the $\tilde V$-volume of the set $\cB_\de$ is 
\begin{align}
\tilde V(\cB_\de)&:=\int_{\cB_\de}\,d\tilde V=\int_{-\de}^\de \int_{\cB} B v_\al
\,d y \, d\al\notag\\
&=2\de\ms\big(1+\cO(\de)\big)\int_{\cB}B v_\al\big|_{\al=0}\,
dy=2\de\ms\mu(\cB)+\cO(\de^2)\,. \label{Vol1}
\end{align}
Let us now observe that the image of the set $\cB_\de$ under the
time-$\ell$ flow $\phi_\ell$ is given by
\begin{equation}\label{phiell}
\phi_\ell(\cB_\de)=\bigcup_{-\de<\al<\de} \{\al\}\times \Pi_{\al}(\cB)\,.
\end{equation}
By the continuous dependence of the flow on the initial conditions and
Eq.~\eqref{Poincare}, the Poincaré maps corresponding to different
values of the angle $\al$ satisfy 
\[
\|\Pi_{\al}-\Pi\|_{C^0(\DD)}\leq C\de
\]
for 
$|\al|\leq \de$, so we can use the decomposition~\eqref{phiell} to
show that the $\tilde V$-volume of $\phi_\ell(\cB_\de)$ is
\begin{align}
\tilde V(\phi_\ell(\cB_\de))&=\int_{-\de}^\de \int_{\Pi_{\al}(\cB)} B v_\al
\,dy \, d\al\notag\\
&=\int_{-\de}^\de \bigg(\int_{\Pi(\cB)} B v_\al\big|_{\al=0}
\,dy+\cO(\de)\bigg) \, d\al\notag\\
&=2\de\ms\mu(\Pi(\cB))+\cO(\de^2)\,.\label{Vol2}
\end{align}
Equating the $\tilde V$-volumes of $\cB_\de$ and $\phi_\ell(\cB_\de)$,
given by Eqs.~\eqref{Vol1} and~\eqref{Vol2}, and considering
small values of $\de$ we then obtain that 
\[
\mu(\cB)=\mu(\Pi(\cB))\,,
\]
as claimed.
\end{proof}

Since the local Beltrami field $v$ is tangent to the boundary  of the domain $\es\times
\DD$, the image of $\pd\DD$ under the Poincaré map $\Pi$ is also contained
in $\pd\DD$. Hence, the restriction of $\Pi$ to $\pd\DD$ defines an
analytic diffeomorphism of the circle, which will be denoted by
\[
\Pi|_{\pd\DD}:\pd\DD\to\pd\DD\,.
\]
Using the coordinate $\te$ to identify
the circle $\pd\DD$ with $\RR/2\pi\ZZ$, the latter circle diffeomorphism can be
naturally lifted to a diffeomorphism of the real line that we will
denote by $\bar\Pi:\RR\to\RR$. As is well known, a basic tool in the
study of circle diffeomorphisms is the {\em rotation number} (or {\em
  frequency}) of the
map, which is defined as
\begin{equation}\label{def-rot}
\om_\Pi:=\lim_{n\to\infty}\frac{\bar\Pi^n(\te_0)-\te_0}{ n}\,.
\end{equation}
Here $\bar\Pi^n$ denotes the $n$\th\ iterate of $\bar\Pi$ and $\te_0$ is
any real number. Since the Poincaré map of a flow is homotopic to the
identity, it is standard that the above limit exists and is independent of
the choice of $\te_0$~\cite{Yoccoz}. 

We shall next compute the rotation number of the circle diffeomorphism
$\Pi|_{\pd\DD}$ using the functional equation satisfied (up to
controllable errors) by the trajectories of the field $X$ on the
invariant torus. The reason is that, in order to compute the rotation
number to order $\cO(\ep^2)$, we need to iterate the Poincar\'e map an arbitrarily large
number of times, which requires fine control of the growth of the
errors for large times. 

The following theorem asserts that
the rotation number is given by the total torsion not only modulo
$\cO(\ep)$, as can be shown without relying on the functional equation, but also modulo~$\cO(\ep^2)$. The fact that the $\cO(\ep)$
correction is zero will be important later on.

\begin{theorem}\label{T.rotation}
The rotation number of the circle diffeomorphism $\Pi|_{\pd\DD}$ is
\[
\om_\Pi=\int_0^\ell \tau(\al)\,d\al+\cO(\ep^2)\,. 
\]
\end{theorem}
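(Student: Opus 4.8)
The plan is to read off the rotation number from the approximate functional equation for $\te(s)$ on the invariant torus $\{r=1\}$ obtained in Proposition~\ref{L.trajectories2}. Recall that the Poincar\'e map $\Pi|_{\pd\DD}$ is the time-$\ell$ map of the field $X$ restricted to $\{r=1\}$, so its lift $\bar\Pi$ satisfies $\bar\Pi(\te_0)=\te(\ell;1,\te_0)$ and, by isochrony, $\bar\Pi^n(\te_0)=\te(n\ell;1,\te_0)$. Proposition~\ref{L.trajectories2} gives
\[
\te(s)=\te_0+\int_0^s\tau(\bs)\,d\bs-\frac\ep4\big[\ka(s)\sin\te(s)-\ka(0)\sin\te_0\big]+s\,\cO(\ep^2)\,,
\]
where here $\ka$ and $\tau$ are extended $\ell$-periodically. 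Evaluating at $s=n\ell$ and using periodicity of $\ka$ gives $\ka(n\ell)=\ka(0)$, so the bracket becomes $\ka(0)\big(\sin\te(n\ell)-\sin\te_0\big)$; crucially this quantity is bounded uniformly in $n$ by $2\,|\ka(0)|\,\ep$. Hence
\[
\bar\Pi^n(\te_0)-\te_0=n\int_0^\ell\tau(\bs)\,d\bs+O(\ep)+n\ell\,\cO(\ep^2)\,.
\]
Dividing by $n$ and letting $n\to\infty$ makes the $O(\ep)$ term disappear and yields $\om_\Pi=\int_0^\ell\tau(\al)\,d\al+\cO(\ep^2)$.

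The one point that requires care is the $n\ell\,\cO(\ep^2)$ error: a priori it is linear in $n$ with coefficient of size $\ep^2$, so after dividing by $n$ it leaves an $\cO(\ep^2)$ contribution, which is exactly the precision claimed in the statement---but one must check it is genuinely uniform in $n$, not merely in $s\in[0,\ell]$. This is where the estimate built into the notation $s\,\cO(\ep^n)$ of Proposition~\ref{L.trajectories2} does the job: it controls $|\pd_{\te_0}^jQ(\te_0,s)|\leq C_j(1+|s|)\ep^n$, i.e., the error grows at most linearly in $s$ with an $\ep$-independent slope. Thus the bracketed (torsion) term is genuinely periodic-plus-bounded, and the only genuinely $s$-linear contribution to $\te(s)-\te_0$ besides $\int_0^s\tau$ is the $s\,\cO(\ep^2)$ piece. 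Feeding $s=n\ell$ and dividing by $n$ gives a remainder that is $\cO(\ep^2)$ uniformly in $n$, so the limit defining $\om_\Pi$ inherits this bound.

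I would organise the write-up as follows. First, recall from Proposition~\ref{L.trajectories2} the functional equation and note that $X$ is isochronous, so $\bar\Pi^n(\te_0)=\te(n\ell;1,\te_0)$. Second, specialise the functional equation to $s=n\ell$, invoke $\ell$-periodicity of $\ka$ to rewrite the boundary-layer term as $-\tfrac\ep4\ka(0)(\sin\te(n\ell)-\sin\te_0)$, and observe that $|\sin\te(n\ell)-\sin\te_0|\le 2$ gives a bound $O(\ep)$ independent of $n$. Third, split off the $s$-linear error $n\ell\,\cO(\ep^2)$ using the uniform-in-$s$ growth bound, obtaining $\bar\Pi^n(\te_0)-\te_0=n\int_0^\ell\tau+O(\ep)+n\,\cO(\ep^2)$. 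Fourth, divide by $n$, pass to the limit, and conclude. The main (and only) obstacle is the bookkeeping of the error terms---verifying that the perturbative errors from Proposition~\ref{L.trajectories2} are controlled linearly in time rather than, say, exponentially; but this is precisely the content of the functional-equation formulation, which was introduced in the preceding subsection for exactly this reason, so no new analytic input is needed.
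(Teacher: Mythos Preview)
Your proposal is correct and follows essentially the same approach as the paper's proof: both use isochrony to write $\bar\Pi^n(\te_0)=\te(n\ell;1,\te_0)$, invoke the functional equation of Proposition~\ref{L.trajectories2} at $s=n\ell$, exploit $\ell$-periodicity of $\ka$ to see that the $\ep$-term is bounded uniformly in $n$, and then divide by $n$ and pass to the limit. Your discussion of the uniform-in-$s$ linear growth of the $s\,\cO(\ep^2)$ remainder is exactly the point encoded in the paper's notation, and is the only subtlety in the argument.
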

\begin{proof}
By the definition of the flow, Eq.~\eqref{def-rot} simply asserts that 
\[
\om_\Pi:=\lim_{n\to\infty}\frac{\te(n\ell)-\te_0}{ n}\,,
\]
where $\te(n\ell)\equiv \te(n\ell;1,\te_0)$ denotes the angular component of the
trajectory solving the system~\eqref{ODEs} with initial condition
$(0,1,\te_0)$, evaluated at time $n\ell$. Since the curvature $\ka(\al)$
and torsion $\tau(\al)$ are $\ell$-periodic, Proposition~\ref{L.trajectories2}
then ensures that
\begin{equation*}
\om_\Pi=\lim_{n\to\infty}\frac{\int_0^{n\ell}
  \tau\,ds -\frac{\ep\ka(0)}4[\sin\te(n\ell)-\sin\te_0\big]+n\ell\,
\cO(\ep^2)}{n} =\int_0^{\ell}
  \tau\,ds+\cO(\ep^2)\,.
\end{equation*}
\end{proof}

\subsection{The nondegeneracy condition for the Poincaré map}
\label{SS.nondegeneracy}

In this subsection we will compute a quantity associated with the
Poincaré map $\Pi$ (sometimes called the normal torsion of the
map) that was introduced to analyze the stability of individual invariant tori of
symplectic diffeomorphisms~\cite{Herman,Llave}. As we shall see, the assumption that
the normal torsion is nonzero plays a role that is analogous to the twist
condition in the classical theorem by Arnold and Moser on
perturbations of integrable symplectic maps. As the name can be
misleading, it is worth emphasizing that, in principle, the normal
torsion has nothing to do with the torsion of a curve.

Let us begin by introducing some notation. We will consider a domain
$\cD$ in the plane that contains the closed unit disk $\overline{\DD}$
and a map $\hPi: \cD\to\RR^2$. (Eventually, we will be interested in
taking as $\hPi$ the Poincar\'e map $\Pi$ introduced in the previous
subsection.) A closed curve
$\Ga\subset\cD$ is {\em invariant}\/ if its image $\hPi(\Ga)$ is
contained in $\Ga$. If $\Ga$ is an invariant curve of $\hPi$, one says
that $\hPi|_{\Ga}$ is {\em conjugate to a rotation} of frequency $\om$
through the diffeomorphism $\Theta:\SS^1\to \Ga$ if
\[
\Theta^{-1}\circ \hPi|_{\Ga}\circ\Theta(\vartheta)=\vartheta+\om\,,
\]
for all $\vartheta$ in $\SS^1$. When $\Ga=\pd\DD$, we will
abuse the notation and also denote by $\Theta$ the diffeomorphism
$\SS^1\to \SS^1$ corresponding to the angular component
of the above diffeomorphism $\SS^1\to\pd\DD$. (Therefore, in
the case of $\pd\DD$ the
above diffeomorphism will read as
$y=(\cos\Theta(\vartheta),\sin\Theta(\vartheta))$ in Cartesian
coordinates and $(r,\te)=(1,\Theta(\vartheta))$ in polar coordinates.)
From the context it will be clear which interpretation of $\Theta$
must be considered in each case.

\begin{definition}\label{D.cN}
  Let $\hPi:\overline{\DD}\to \overline{\DD}$ be a diffeomorphism of
  the disk that preserves the measure defined by the $2$-form
  $G(r,\te)\, r\, dr\wedge d\te$. We will denote the radial and
  angular components of $\hPi$ by $(\hPi_r,\hPi_\te)$,
  respectively. Assume that $\hPi|_{\pd\DD}$ is a diffeomorphism of the
  circle $\pd\DD\to\pd\DD$ that is conjugate to a rotation of
  frequency $\om$ through a diffeomorphism $\Theta$, which we regard
  here as a map $\SS^1\to\SS^1$. The {\em normal torsion}\/ of the map $\hPi$ on the
  invariant circle $\pd\DD$ is the real number
\[
\cN_{\hPi}:=\int_0^{2\pi}\frac1{\Theta'(\vartheta+\om)\ms \Theta'(\vartheta)}\frac{\pd_r\hPi_\te(1,\Theta(\vartheta))}{G(1,\Theta(\vartheta))}\,d\vartheta\,.
\]
\end{definition}

The reason why we consider the above quantity is that it appears in a
nondegeneracy condition of a theorem by de la Llave et
al.~\cite{Llave}. In fact, the result~\cite[Theorem 1]{Llave} is much
more general, and we will only need a concrete application that we
state next in a form that is particularly well suited for our
purposes. The normal torsion was also considered for the same purpose
by Herman in~\cite{Herman} when $G(r,\te)=1$ and
$\Theta(\vartheta)=\vartheta$. Before stating the theorem, let us
recall that a number $\om$ is {\em Diophantine}\/
if there exist a positive constant $C$ and $\nu>1$ such that
\begin{equation}\label{Dioph}
\Big|\frac\om{2\pi}-\frac pk\Big|\geq \frac C{k^{1+\nu}}
\end{equation}
for any integers $p,k$ with $k\geq1$.

\begin{theorem}[De la Llave et al.~\cite{Llave}]\label{T.Llave}
Consider a small neighborhood $\cD$ of the closed unit disk
$\overline{\DD}$ in $\RR^2$. Take an analytic map $\hPi: \cD\to\RR^2$
that is a diffeomorphism onto its image preserving the measure
$G(r,\te)\,r\, dr\, d\te$, with $G$ analytic in ${\cD}$. Suppose that the following two conditions hold:
\begin{enumerate}
\item The circle $\pd\DD$ is invariant, and $\hPi|_{\pd\DD}$ is
  conjugate through an analytic diffeomorphism $\Theta:
\SS^1\to \pd\DD$ to a rotation whose frequency $\om$ satisfies a Diophantine condition.

\item The normal torsion $\cN_{\hPi}$ of the map $\hPi$ on the invariant
  circle $\pd\DD$ is nonzero.
\end{enumerate}
Then for each $\de>0$ and positive integer $m$ there are $\de'>0$ and an
integer $k$ such that, if an analytic map $\tilde\Pi:\cD\to\RR^2$
preserving the same measure $G(r,\te)\,r\, dr\, d\te$  satisfies
\[
\|\hPi-\tilde\Pi\|_{C^k(\cD)}<\de'\,,
\]
one can transform the circle
$\pd\DD$ by a diffeomorphism $\Psi$ of $\RR^2$ so that $\Psi(\pd\DD)$
is an invariant curve for the map $\tilde\Pi$. Moreover, the map
$\tilde\Pi|_{\Psi(\pd\DD)}$ is also conjugate to a rotation of frequency
$\om$ and the difference $\Psi-\id$ can be assumed to be supported in
a small neighborhood of $\pd\DD$ and such that 
\[
\|\Psi-\id\|_{C^m}<\de\,.
\]
\end{theorem}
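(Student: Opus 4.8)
The plan is to obtain Theorem~\ref{T.Llave} as a specialization of the general KAM theorem for exact symplectic maps of de la Llave et al.~\cite{Llave} to the two-dimensional, area-preserving setting of a single Diophantine invariant circle. The argument has two parts: normalizing $\hPi$ near $\pd\DD$ so that the abstract twist (Kolmogorov non-degeneracy) condition of~\cite{Llave} reduces to the hypothesis $\cN_{\hPi}\neq0$, and then translating the conclusion of~\cite{Llave} into the stated assertion about a $C^m$-small diffeomorphism $\Psi$ supported near $\pd\DD$. (The classical Moser twist theorem is the prototype of this result, but the formulation in~\cite{Llave} is what delivers the features we need: persistence of an individual circle, non-constant frequency and twist, sharp finite differentiability, and a localized conjugacy.)

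First I would introduce coordinates adapted to the invariant circle and to the invariant measure. Since $G(r,\te)\,r\,dr\wedge d\te$ is an analytic area form in a neighborhood of $\pd\DD$, and $\hPi|_{\pd\DD}$ is analytically conjugate through $\Theta$ to the rotation by $\om$, a standard construction — flattening the density by an integrating factor and using $\Theta$ to straighten the dynamics on the circle — produces analytic coordinates $(\vartheta,I)\in\SS^1\times(-\de_0,\de_0)$ near $\pd\DD$ in which $\pd\DD=\{I=0\}$, the invariant measure is $d\vartheta\wedge dI$, and
\[
\hPi(\vartheta,I)=\bigl(\vartheta+\om+a(\vartheta)\,I+O(I^2),\;I+O(I^2)\bigr)\,,
\]
the coefficient of $I$ in the second component being forced to equal $1$ by area preservation together with the invariance of $\{I=0\}$. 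Tracking the chain of changes of variables, the coefficient $a(\vartheta)$ comes out to be exactly the integrand of Definition~\ref{D.cN}, the weights $1/\bigl(\Theta'(\vartheta+\om)\,\Theta'(\vartheta)\bigr)$ and $1/G(1,\Theta(\vartheta))$ being precisely the Jacobian factors of the angular conjugacy and of the transverse ``action'' coordinate; hence
\[
\int_0^{2\pi}a(\vartheta)\,d\vartheta=\cN_{\hPi}\,.
\]
Thus hypothesis~(ii) is the non-degeneracy (twist) condition of~\cite{Llave} on the circle $\{I=0\}$, and hypothesis~(i) supplies the Diophantine condition on $\om$ needed to solve the cohomological equations.

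With this normal form in place I would invoke \cite[Theorem~1]{Llave} in its finite-differentiability (``a posteriori'') version: there are an integer $k$ and $\de'>0$, depending on $\om$, on $\hPi$ and on the prescribed accuracy, such that any measure-preserving map $\tilde\Pi$ with $\|\hPi-\tilde\Pi\|_{C^k(\cD)}<\de'$ has an invariant circle carrying the rotation by $\om$, realized by an embedding $K:\SS^1\to\cD$ that is $C^m$-close to a parametrization of $\pd\DD$. Transferring back to the original coordinates gives a smooth embedding whose image is invariant under $\tilde\Pi$ and conjugate to the rotation of frequency $\om$, and which is arbitrarily $C^m$-close to the inclusion $\pd\DD\hookrightarrow\cD$. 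Finally, to build $\Psi$ one composes the natural diffeomorphism carrying $\pd\DD$ onto $K(\SS^1)$ inside a thin tubular neighborhood of $\pd\DD$ with a cutoff interpolating to the identity outside that neighborhood; since $K$ is $C^m$-close to the inclusion, $\Psi$ can be taken $C^m$-close to $\id$ and supported in an arbitrarily small neighborhood of $\pd\DD$, which is exactly the conclusion of the theorem.

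The main obstacle is the normalization step: one has to verify that the intrinsic quantity $\cN_{\hPi}$ of Definition~\ref{D.cN} really coincides with the averaged twist entering the non-degeneracy hypothesis of~\cite{Llave}. This requires keeping careful track of the successive changes of variables (flattening the density $G\,r\,dr\wedge d\te$, the angular conjugacy $\Theta$, and the passage to the transverse action coordinate) and checking that their Jacobian weights combine into precisely the integrand displayed above — which is where the particular form of the factors $1/(\Theta'\Theta')$ and $1/G$ in the definition of the normal torsion is used. The remaining steps — the application of~\cite{Llave} and the localization of $\Psi$ — are routine.
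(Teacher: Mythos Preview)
Your proposal is correct and arrives at the same identification of the nondegeneracy hypothesis in~\cite{Llave} with the condition $\cN_{\hPi}\neq0$, but the route differs from the paper's. You first pass to adapted coordinates $(\vartheta,I)$ near $\pd\DD$ in which the measure is standard and $\hPi$ takes a twist normal form, and then track the Jacobian factors through the successive coordinate changes to see that the averaged twist $\int a(\vartheta)\,d\vartheta$ equals $\cN_{\hPi}$. The paper instead works directly in the original polar coordinates and computes the nondegeneracy function $S(\vartheta)=P(\vartheta+\om)\cdot\bigl[D\hPi(K(\vartheta))\,J(\vartheta)^{-1}P(\vartheta)\bigr]$ exactly as it appears in~\cite[Theorem~1]{Llave}, with $K(\vartheta)=(\cos\Theta(\vartheta),\sin\Theta(\vartheta))$, $P(\vartheta)=\Theta'(\vartheta)^{-1}(-\sin\Theta(\vartheta),\cos\Theta(\vartheta))$, and $J$ the matrix of the symplectic form; a short computation in polar coordinates then shows $S(\vartheta)$ coincides pointwise with the integrand in Definition~\ref{D.cN}. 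The paper also checks explicitly that the $2$-form $G\,r\,dr\wedge d\te$ is exact and that $\hPi^*\be-\be$ is exact, so that the exact-symplectic hypotheses of~\cite{Llave} are met. Your approach is perhaps more conceptual (it explains \emph{why} the factors $1/(\Theta'\Theta')$ and $1/G$ arise), but it is also longer and somewhat at odds with the spirit of~\cite{Llave}, whose whole point is to avoid action--angle coordinates; the paper's direct computation is shorter and stays within the framework of the cited result.
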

\begin{proof}
The statement is simply a rewording of~\cite[Theorem~47]{Llave}, in the
particular case of planar maps and omitting some quantitative
estimates that will not be needed in the rest of the paper. The only point
that requires more elaboration is to check what the degeneracy
condition looks like in the situation we are considering in this
section.

For the benefit of the reader, let us give some details about how the
statement is derived from~\cite[Theorem~47]{Llave}, borrowing some
notation from this reference without further mention. The map $\hPi$
is obviously symplectic with respect to the analytic $2$-form $\La:=
G(r,\te)\,r\, dr\wedge d\te$. This $2$-form is obviously exact, as $\La= d\be$ with
  the smooth $1$-form on $\cD$
\[
\be:=\bigg(\int_0^r G_\La(\bar r,\te)\, \bar r\, d\bar r\bigg) \, d\te\,.
\]
Moreover, the $1$-forms $\hPi^*\be-\be$ and $\tilde\Pi^*\be-\be$ are
exact because
\[
\int_{\pd\DD}(\hPi^*\be-\be)=\int_{\DD}(d\ms \hPi^*\be-d\be)=\int(\hPi^*\La-\La)=0\,,
\]
and analogously for $\tilde\Pi$.

As before, let us now regard $\Theta$ as a diffeomorphism of $\SS^1$.
Consider the embedding $K:\SS^1\to\cD$ given by 
\[
K(\vartheta):=\big(\!\cos\Theta(\vartheta),\sin\Theta(\vartheta)\big)\,,
\]
which is analytic by hypothesis.

Therefore, the only hypothesis of~\cite[Theorem~47]{Llave} that is not
immediate is the non-degeneracy condition. Let us take Cartesian components $(y_1,y_2)$ in $\cD$ and call
$(\hPi_1,\hPi_2)$ the Cartesian components of the map $\hPi$. In the aforementioned
reference, the condition is that the average of the function
\[
S(\vartheta):= P(\vartheta+\om)\cdot \big[ D\hPi(\vartheta)\ms J(\vartheta)^{-1}\, P(\vartheta)\big]
\]
be nonzero, where the dot denotes the Euclidean
scalar product. Here $P(\vartheta):=\Theta'(\vartheta)^{-1} (-\sin\Theta(\vartheta),\cos\Theta(\vartheta))$,
\[
(D\hPi)_{ij}(\vartheta):=\frac{\pd\hPi_i}{\pd y_j}(K(\vartheta))
\]
is the Jacobian matrix of $\hPi$ and
\[
J(\vartheta):=G(K(\vartheta)) \left(\begin{matrix}
  0 &-1\\
  1 &0
\end{matrix}\right)\,.
\]
Expressing the Cartesian components in polar coordinates,
\[
\hPi_1=\hPi_r\cos\hPi_\te\,,\qquad \hPi_2=\hPi_r\sin\hPi_\te\,,
\]
we immediately obtain that
\[
S(\vartheta)=\frac{\pd_r\hPi_r\sin(\hPi_\te-\Theta(\vartheta+\om))+
  \hPi_r\pd_r\hPi_\te\cos(\hPi_\te-\Theta(\vartheta+\om))}{ G\,\Theta'(\vartheta)\Theta'(\vartheta+\om)}\,,
\]
where the functions whose argument has not been specified are evaluated at
$K(\vartheta)$. Since $\hPi_r(K(\vartheta))=1$ because the
circle $\pd\DD$ is invariant and, by hypothesis, $\hPi|_{\pd\DD}$ is
conjugate to the rotation of frequency $\om$ through the
diffeomorphism $\Theta$ (i.e.,
\[
\hPi_\te(K(\vartheta))=\Theta(\vartheta+\om)
\]
for all $\vartheta$), we get
\[
S(\vartheta)=\frac1{\Theta'(\vartheta+\om)\ms \Theta'(\vartheta)}\frac{\pd_r\hPi_\te(K(\vartheta))}{G(K(\vartheta))}\,.
\]
In view of the way we defined the normal torsion (cf.\ Definition~\ref{D.cN}), the statement then
follows immediately from~\cite[Theorem~47]{Llave} after realizing that 
\[
\hPi(K(\vartheta))=K(\vartheta+\om)\,.
\]
\end{proof}

In order to calculate the normal torsion of the Poincar\'e map $\Pi$, let us begin by computing the diffeomorphism that
conjugates $\Pi|_{\pd\DD}$ to a rotation when its rotation number satisfies a
Diophantine condition:

\begin{proposition}
\label{P.Theta}
Suppose that the rotation number $\om_\Pi$ of the Poincar\'e map of
the local Beltrami field is Diophantine. Then the
circle diffeomorphism 
$\Pi|_{\pd\DD}$ is conjugate to a rotation of frequency $\om_\Pi$
through an analytic diffeomorphism $\Theta:\SS^1\to\pd\DD$ that
satisfies
\[
\Theta(\vartheta)=\vartheta-\frac{\ep\ka(0)}{4}\sin\vartheta+\cO(\ep^2)\,.
\]
\end{proposition}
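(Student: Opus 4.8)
The plan is to read off a functional equation for the circle diffeomorphism $\Pi|_{\pd\DD}$ from the functional equation for the trajectories on the invariant torus $\{r=1\}$ established in Proposition~\ref{L.trajectories2}, and then to combine it with Herman's linearization theorem for circle diffeomorphisms with Diophantine rotation number. Since the core curve $\ga$ is analytic, the local Beltrami field is analytic near $\overline{\cT_\ep}$, so (as noted in Subsection~\ref{SS.flow}) $\Pi|_{\pd\DD}$ is an analytic diffeomorphism of $\SS^1$, and the hypothesis that its rotation number $\om_\Pi$ is Diophantine ensures, by Herman's theorem~\cite{Herman,Yoccoz}, that there exists an analytic $\Theta:\SS^1\to\SS^1$ with $\Theta^{-1}\circ\Pi|_{\pd\DD}\circ\Theta(\vartheta)=\vartheta+\om_\Pi$. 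What remains is to pin down the $\cO(\ep)$ term of such a conjugacy; this requires a little more than the mere existence of $\Theta$.

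\emph{Step 1: the functional equation for $\Pi|_{\pd\DD}$.} By Eq.~\eqref{Poincare}, the angular component of $\Pi|_{\pd\DD}$ at $\te_0$ equals $\te(\ell;1,\te_0)$, so evaluating the functional equation of Proposition~\ref{L.trajectories2} at $s=\ell$ and using the $\ell$-periodicity of $\ka$ and $\tau$ (hence $\ka(\ell)=\ka(0)$ and $\int_0^\ell\tau\,d\bs=:T$) gives
\[
\Pi(\te_0)=\te_0+T-\frac{\ep\ka(0)}4\big[\sin\Pi(\te_0)-\sin\te_0\big]+\cO(\ep^2)\,,
\]
with $\cO(\ep^2)$ understood in every $C^k$ norm in $\te_0$; by Theorem~\ref{T.rotation} one may replace $T$ by $\om_\Pi$ at the cost of another $\cO(\ep^2)$ error.

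\emph{Step 2: an explicit approximate linearization.} Set $\Theta_1(\vartheta):=\vartheta-\tfrac{\ep\ka(0)}4\sin\vartheta$, an analytic near-identity diffeomorphism of $\SS^1$ with $\Theta_1^{-1}(\te)=\te+\tfrac{\ep\ka(0)}4\sin\te+\cO(\ep^2)$. Substituting $\te_0=\Theta_1(\vartheta)$ into the functional equation of Step~1, expanding $\sin\Pi(\te_0)$ and $\sin\te_0$ around the zeroth-order value $\vartheta+\om_\Pi$, and then applying $\Theta_1^{-1}$, one checks by a short computation that the two $\cO(\ep)$ contributions produced by $\Theta_1$ and by $\Theta_1^{-1}$ exactly cancel the $\cO(\ep)$ term of the functional equation, so that
\[
\hat\Pi:=\Theta_1^{-1}\circ\Pi|_{\pd\DD}\circ\Theta_1\quad\text{satisfies}\quad\hat\Pi(\vartheta)=\vartheta+\om_\Pi+\cO(\ep^2)
\]
in every $C^k$ norm. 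In other words, $\Theta_1$ already conjugates $\Pi|_{\pd\DD}$ to the rigid rotation $R_{\om_\Pi}:\vartheta\mapsto\vartheta+\om_\Pi$ up to an $\cO(\ep^2)$ error, which is precisely what motivates this particular choice of $\Theta_1$.

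\emph{Step 3: removing the residual error and the main obstacle.} The map $\hat\Pi$ is analytic, shares the Diophantine rotation number $\om_\Pi$ with $\Pi|_{\pd\DD}$, and is $\cO(\ep^2)$-close to $R_{\om_\Pi}$; a quantitative version of Herman's linearization theorem then provides an analytic conjugacy $\Theta_2$ of $\hat\Pi$ to $R_{\om_\Pi}$ with $\Theta_2-\id=\cO(\ep^2)$, and $\Theta:=\Theta_1\circ\Theta_2$ is an analytic conjugacy of $\Pi|_{\pd\DD}$ to $R_{\om_\Pi}$ with $\Theta(\vartheta)=\Theta_1(\vartheta)+\cO(\ep^2)=\vartheta-\tfrac{\ep\ka(0)}4\sin\vartheta+\cO(\ep^2)$, as claimed (recall we identify $\pd\DD$ with $\SS^1$ via the coordinate $\te$, as in Definition~\ref{D.cN}). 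The delicate point of the whole argument is exactly this quantitative dependence: Herman's theorem as usually stated yields only existence and analyticity of the linearizing map, whereas here one needs it to lie within $\cO(\ep^2)$ of the identity once the diffeomorphism lies within $\cO(\ep^2)$ of the rotation. This can be obtained either by invoking an effective version of the linearization theorem near a Diophantine rotation — of the kind implicit in the KAM estimates behind Theorem~\ref{T.Llave} — or by solving the conjugacy equation $\Theta_2(\vartheta+\om_\Pi)=\hat\Pi(\Theta_2(\vartheta))$ directly for the zero-mean correction $\Theta_2-\id$ with a bootstrap (first $\Theta_2-\id=\cO(\ep)$, then $\cO(\ep^2)$ after reinsertion), keeping explicit track of the fixed finite loss of derivatives dictated by the Diophantine exponent, which is harmless because all objects are analytic. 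One must also check that the $\cO(\ep^2)$ errors inherited from Proposition~\ref{L.trajectories2} and Theorem~\ref{T.rotation} are controlled in a norm strong enough for this step, which holds since those estimates are valid in every $C^k$ norm.
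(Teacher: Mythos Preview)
Your argument is correct, but the paper organizes the proof more directly and avoids the ``delicate point'' you flag in Step~3. Instead of first constructing an explicit approximate conjugacy $\Theta_1$ and then invoking a quantitative linearization result to produce the correction $\Theta_2$, the paper takes the analytic conjugacy $\Theta$ whose existence is guaranteed by Herman--Yoccoz, writes the conjugacy relation using the functional equation of Proposition~\ref{L.trajectories2} exactly as you do in Step~1, and observes that the function
\[
F(\vartheta):=\Theta(\vartheta)-\vartheta+\frac{\ep\ka(0)}4\sin\Theta(\vartheta)
\]
satisfies the cohomological equation $F(\vartheta+\om_\Pi)-F(\vartheta)=R(\vartheta)$ with $R=\cO(\ep^2)$. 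One elementary Fourier estimate using the Diophantine condition (the standard small-divisor bound $|\e^{\I k\om_\Pi}-1|>C|k|^{-\nu}$) then gives $\|F\|_{H^m}\leq C\|R\|_{H^{m+\nu}}=\cO(\ep^2)$ directly, hence $\Theta(\vartheta)=\vartheta-\tfrac{\ep\ka(0)}4\sin\Theta(\vartheta)+\cO(\ep^2)$, which yields the claim. This is precisely the ``bootstrap'' you mention as your second alternative in Step~3, but applied to $\Theta$ itself rather than to a residual $\Theta_2$; it makes Step~2 unnecessary and replaces the appeal to an effective Herman theorem by a single explicit small-divisor estimate. Your route has the advantage of being more constructive (one sees why $\Theta_1$ is the right first guess), while the paper's route is shorter and entirely self-contained.
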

\begin{proof}
As the rotation number $\om_\Pi$ satisfies a
  Diophantine condition, the map $\Pi|_{\pd\DD}$ is conjugate to
  a rotation of frequency $\om_\Pi$ through an analytic diffeomorphism
  $\Theta$~\cite[Theorem 1.3]{Yoccoz}. This diffeomorphism can be
  understood as a change of coordinates $\te_0=\Theta(\vartheta)$.

Let us now compute the diffeomorphism~$\Theta$. Writing $\te_0=\Theta(\vartheta)$, the fact
that $\hPi|_{\pd\DD}$ is conjugate to a rotation of frequency $\om_\Pi$
through $\Theta$ means that the trajectory
$\te(\ell;1,\Theta(\vartheta))$ at time $\ell$ corresponds to
$\Theta(\vartheta+\om_\Pi)$, for any choice of $\vartheta$. Using the
equation for the trajectory proved in Proposition~\ref{L.trajectories2} at
time $\ell$,
this means that $\Theta$ must satisfy the equation
\begin{equation}\label{eqTheta}
\Theta(\vartheta+\om_\Pi)=\Theta(\vartheta)+\om_\Pi+\frac{\ep\ka(0)}4\big[\sin\Theta(\vartheta)-\sin\Theta(\vartheta+\om_\Pi)\big]+\cO(\ep^2)\,.
\end{equation}
Here we have used the expression for $\om_\Pi$ proved in Theorem~\ref{T.rotation}
(which allows us to replace $\int_0^\ell\tau\,
d\bs=\om_\Pi+\cO(\ep^2)$) and that the function $\ka$ is
$\ell$-periodic.

To analyze this equation, let us write the $\cO(\ep^2)$ term in
the RHS of~\eqref{eqTheta} as~$R(\vartheta)$. Let us identify $\pd\DD$ with $\SS^1=\RR/2\pi\ZZ$
through the angular coordinate $\te_0$, so that $\Theta$ is regarded
as a diffeomorphism $\SS^1\to\SS^1$. With a slight abuse
of notation, let us still denote by $\Theta$ its lift $\RR\to\RR$. It is standard
that this lift can be written as
\begin{equation}\label{eqTe1}
\Theta(\vartheta)=\vartheta+H(\vartheta)\,,
\end{equation}
where $H(\vartheta)$ is an analytic $2\pi$-periodic function.

Let  us consider the $2\pi$-periodic function
\begin{equation}\label{eqTe2}
F(\vartheta):=H(\vartheta)+\frac{\ep\ka(0)}4 \sin\Theta(\vartheta)\,.
\end{equation}
Eq.~\eqref{eqTheta} then reads as
\begin{equation}\label{eqF}
F(\vartheta+\om_\Pi)= F(\vartheta)+ R(\vartheta)\,,
\end{equation}
with $R=\cO(\ep^2)$. Consider the Fourier series of the functions $ F$ and $R$:
\[
F(\vartheta)=\sum_{k=-\infty}^\infty \hat F_k\,\e^{\I
  k\vartheta}\,,\qquad R(\vartheta)=\sum_{k=-\infty}^\infty \hat R_k\,
\e^{\I k\vartheta}\,.
\]
Eq.~\eqref{eqF} then asserts that for any nonzero integer $k$ the
Fourier coefficients of $F$ and $R$ are related through the identity
\begin{equation}\label{hatF}
\hat F_k=\frac{\hat R_k}{\e^{\I k\om_\Pi}-1}\,.
\end{equation}
We can obviously take $\hat F_0=0$; moreover, $\hat R_0=0$ because it
is a necessary condition for the existence of the diffeomorphism
$\Theta$.

Since $\om_\Pi$ satisfies the Diophantine condition~\eqref{Dioph}, for large integer
values of $k$ we have the elementary inequality
\[
|\e^{\I k\om_\Pi}-1|>C|k|^{-\nu}\,,
\]
so that from Eq.~\eqref{hatF} the $H^m$ norm of $F$ can be estimated by
\[
\|F\|_{H^m}\leq C\bigg(\sum_{k=-\infty}^\infty(1+k^{2})^m |k|^{2\nu}|\hat
R_k|^2\bigg)^{\frac12} \leq C\|R\|_{H^{m+\nu}}\leq C_m\ep^2
\]
for any nonnegative  integer $m$. To derive the last inequality, which shows that $F=\cO(\ep^2)$, we
have used that $R=\cO(\ep^2)$. In view of Eqs.~\eqref{eqTe1}
and~\eqref{eqTe2}, this ensures that
\[
\Theta(\vartheta)=\vartheta-\frac{\ep\ka(0)}4\sin\Theta(\vartheta)+\cO(\ep^2)\,.
\]
In turn, this readily leads to the expression for the diffeomorphism $\Theta$ provided in the statement.
\end{proof}

We are ready to provide a closed formula for the normal torsion of the
Poincar\'e map, up to terms of order $\cO(\ep^3)$. The leading term only depends on the geometry of the curve (through
its curvature and torsion) and, as is to be expected, not on the section of $\es\times\DD$ we used to
define the Poincar\'e map:

\begin{theorem}\label{T.Herman}
Suppose that the rotation number $\om_\Pi$ satisfies a Diophantine
condition. Then the normal torsion of the Poincar\'e map of the local
Beltrami field $v$ on
the invariant circle $\pd\DD$  is
\[
\cN_\Pi=-\frac{5\pi\ep^2}8\int_0^\ell\ka(\al)^2\,\tau(\al)\, d\al +\cO(\ep^3)\,.
\]
\end{theorem}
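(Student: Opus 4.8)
The plan is to evaluate the integral in Definition~\ref{D.cN} for $\hPi=\Pi$, $\om=\om_\Pi$ and $G=G_\La$ (which applies since $\Pi$ preserves $\La=G_\La\,r\,dr\wedge d\te$ by Proposition~\ref{P.area}, and $\Pi|_{\pd\DD}$ is conjugate to the rotation of frequency $\om_\Pi$ through the analytic $\Theta$ of Proposition~\ref{P.Theta}, the Diophantine hypothesis being used only to guarantee that such a $\Theta$ exists). First I would identify the main ingredient: by Eq.~\eqref{Poincare} the angular component of the Poincar\'e map is $\Pi_\te(r_0,\te_0)=\te(\ell;r_0,\te_0)$, so $\pd_r\Pi_\te(1,\te_0)=\pd_{r_0}\te(\ell;r_0,\te_0)\big|_{r_0=1}$; since the $\cO(\ep^j)$ errors in Proposition~\ref{L.trajectories} also control $\pd_{r_0}$-derivatives, I may differentiate that expansion in~$r_0$ term by term. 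Because $\teo(\ell)=\te_0+\int_0^\ell\tau$ is independent of $r_0$, this gives $\pd_r\Pi_\te(1,\te_0)=\ep\,a_1(\te_0)+\ep^2 a_2(\te_0)+\cO(\ep^3)$ with $a_j(\te_0):=\pd_{r_0}\te^{(j)}(\ell)\big|_{r_0=1}$; the absence of an $\cO(1)$ term is the structural reason behind $\cN_\Pi=\cO(\ep^2)$. Differentiating the formulas for $\te^{(1)},\te^{(2)}$ in $r_0$, using $\ka(\ell)=\ka(0)$ and $\int_0^\ell\tau=\om_\Pi+\cO(\ep^2)$ (Theorem~\ref{T.rotation}), yields $a_1(\te_0)=\tfrac{\ka(0)}2\big(\sin(\te_0+\om_\Pi)-\sin\te_0\big)$ — a pure first harmonic in $\te_0$ — and an explicit trigonometric polynomial $a_2(\te_0)$, the coefficient $\pd_{r_0}\big(\tfrac{12-5r_0^2}{32}\big)\big|_{r_0=1}=-\tfrac5{16}$ of $\int_0^\ell\ka^2\tau$ being the source of the final answer.

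Next I would assemble the remaining factors to order $\ep$: from Proposition~\ref{P.Theta}, $\Theta'(\vartheta)=1-\tfrac{\ep\ka(0)}4\cos\vartheta+\cO(\ep^2)$; from Eq.~\eqref{defF}, $1/G_\La(1,\Theta(\vartheta))=1-\ep\ka(0)\cos\vartheta+\cO(\ep^2)$; and $a_1(\Theta(\vartheta))=a_1(\vartheta)-\tfrac{\ep\ka(0)}4\sin\vartheta\,a_1'(\vartheta)+\cO(\ep^2)$ while $a_2(\Theta(\vartheta))=a_2(\vartheta)+\cO(\ep)$. Multiplying everything, the integrand of $\cN_\Pi$ becomes
\[
\ep\,a_1(\vartheta)+\Big[a_2(\vartheta)-\tfrac{\ka(0)}4\sin\vartheta\,a_1'(\vartheta)+\ka(0)a_1(\vartheta)\big(\tfrac14\cos(\vartheta+\om_\Pi)-\tfrac34\cos\vartheta\big)\Big]\ep^2+\cO(\ep^3).
\]
Since $a_1$ is a combination of $\sin\vartheta$ and $\cos\vartheta$, its integral over $[0,2\pi]$ vanishes, killing the order-$\ep$ term. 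For the order-$\ep^2$ term I would integrate each piece: the $\te_0$-independent part of $a_2$ coming from the coefficient $-\tfrac5{16}$ contributes $2\pi\big(-\tfrac5{16}\int_0^\ell\ka^2\tau\big)=-\tfrac{5\pi}8\int_0^\ell\ka^2\tau$, whereas all the remaining contributions — the oscillatory parts of $a_2$, the term $\sin\vartheta\,a_1'(\vartheta)$, and the product $a_1(\vartheta)\big(\tfrac14\cos(\vartheta+\om_\Pi)-\tfrac34\cos\vartheta\big)$ — are after an elementary Fourier computation each proportional to $\ka(0)^2\sin\big(\int_0^\ell\tau\big)$, with coefficients $\tfrac18,\tfrac18,-\tfrac14$ (up to the common $\pi$) summing to zero. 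Multiplying by $\ep^2$ gives $\cN_\Pi=-\tfrac{5\pi\ep^2}8\int_0^\ell\ka(\al)^2\tau(\al)\,d\al+\cO(\ep^3)$.

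The main obstacle is the bookkeeping. One must carry along \emph{all} the $\cO(\ep)$ corrections — from the two Jacobian factors $\Theta'(\vartheta)$, $\Theta'(\vartheta+\om_\Pi)$, from $1/G_\La$, and from composing $a_1$ with $\Theta$ — since each feeds into the order-$\ep^2$ coefficient of the integrand, and then verify the (somewhat delicate) three-way cancellation of the $\sin\big(\int_0^\ell\tau\big)$-proportional terms, without which a spurious curvature-only term would survive. Equally, differentiating the long expression for $\te^{(2)}$ in $r_0$ at $r_0=1$ is routine but must be executed with care, as it is precisely the radial derivative of the coefficient $\tfrac{12-5r_0^2}{32}$ that yields the factor $-\tfrac5{16}$, hence the $-\tfrac58$ in the statement; one also checks the terms whose radial factor vanishes at $r_0=1$ (such as $\tfrac{3(r_0^4+2r_0^2-3)}{64r_0^2}$) still contribute through their derivative.
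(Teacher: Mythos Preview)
Your proposal is correct and follows essentially the same approach as the paper: assemble the expansions of $\pd_r\Pi_\te(1,\Theta(\vartheta))$, $\Theta'$, and $G_\La$ from Propositions~\ref{L.trajectories}, \ref{P.area}, \ref{P.Theta} and Theorem~\ref{T.rotation}, multiply them out to order~$\ep^2$, and integrate. In fact you give more detail than the paper does on the crucial cancellation---the paper simply states that ``using trigonometric identities'' the integrand reduces to $\tfrac{\ep\ka(0)}2(\sin(\vartheta+T)-\sin\vartheta)-\tfrac{5\ep^2}{16}\int_0^\ell\ka^2\tau+\tfrac{5\ka(0)^2\ep^2}{48}\sin T\cos(2\vartheta+T)+\cO(\ep^3)$, whereas your explicit $\tfrac18+\tfrac18-\tfrac14=0$ bookkeeping for the $\ka(0)^2\sin T$ contributions makes transparent why no spurious first-harmonic term survives.
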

\begin{proof}
By definition, the normal torsion is
\[
\cN_{\Pi}:=\int_0^{2\pi}\frac1{\Theta'(\vartheta+\om_\Pi)\ms \Theta'(\vartheta)}\frac{\pd_r\Pi_\te(1,\Theta(\vartheta))}{G_\La(1,\Theta(\vartheta))}\,d\vartheta\,,
\]
where $\Pi_\te(r,\te)$ is the angular component of the Poincar\'e map,
$\Theta(\vartheta)$ is the diffeomorphism defined in
Proposition~\ref{P.Theta} (considered as a map $\SS^1\to \SS^1$)
and the function $G_\La(r,\te)$ is given by Eq.~\eqref{defF}. 

We have already computed all the terms we need to evaluate the
integrand up to an $\cO(\ep^3)$ error. Indeed, from
Propositions~\ref{L.trajectories}, \ref{P.area}
and~\ref{P.Theta} and Theorem~\ref{T.rotation} it stems that, setting
$T:=\int_0^\ell\tau(\al)\, d\al$, 
\begin{align*}
\om_\Pi&=T+\cO(\ep^2)\,,\\
\Theta'(\vartheta)&=1-\frac{\ep \ka(0)}4\cos\vartheta+\cO(\ep^2)\,,\\
\Theta'(\vartheta+\om_\Pi)&=1-\frac{\ep \ka(0)}4\cos(\vartheta+T)+\cO(\ep^2)\,,\\
G_\La(1,\Theta(\vartheta))&= 1+\ep\ka(0)\cos\vartheta+\cO(\ep^2)\\
\frac{\pd\Pi^\te}{\pd r}(1,\Theta(\vartheta))&= \ep\frac{\pd
  \te^{(1)}}{\pd r_0}\Big(\ell;1,\vartheta-\frac{\ep \ka(0)}4\sin\vartheta\Big)+\ep^2\frac{\pd\te^{(2)}}{\pd r_0}(\ell;1,\vartheta)+\cO(\ep^3)\,,
\end{align*}
the functions $\te^{(j)}(s;r_0,\te_0)$ being in
Proposition~\ref{L.trajectories}. Plugging these expressions in the integral
for the normal torsion and using trigonometric identities, one arrives
at the expression
\begin{align*}
\cN_\Pi&=\int_0^{2\pi}\bigg[\frac{\ep\ka(0)}2\big(\sin(\vartheta+T)-\sin\vartheta\big)-\frac{5\ep^2}{16}\int_0^{2\pi}\ka(\al)^2\tau(\al)\,
d\al\\
&\qquad \qquad \qquad \qquad 
\qquad\qquad+\frac{5\ka^2\ep^2}{48}\sin
T\cos(2\vartheta+T)+\cO(\ep^3)\bigg]d\vartheta\\
&=-\frac{5\pi\ep^2}8 \int_0^{2\pi}\ka^2\tau\, d\al+\cO(\ep^3)\,,
\end{align*}
as claimed.
\end{proof}

\subsection{A KAM theorem for generic tubes}
\label{SS.KAM}

In this subsection we will use the previous results to prove a theorem
on the preservation of invariant tori for divergence-free vector
fields that are close to the local Beltrami field $v$ for small enough
$\ep$. As we shall see, the hypotheses of the KAM theorem will hold
true as long as the core curve $\ga$ of the tube satisfies certain
generic geometric conditions. Details on the validity
of these conditions are given below.

Since the local Beltrami field $v$ is analytic in a neighborhood of the
closure of the tube, which we identify with $\es\times\DD$ via the coordinates~$(\al,y)$, we can assume that $v$
is defined in some domain $\es\times\cD$, with $\cD$ a neighborhood of
the closed unit disk in the plane. To measure the smallness of a field, we will use the
norm $\|u\|_{C^k(\es\times\cD)}$, which we define in terms of its
components in the coordinates $(\al,y)$ as
\[
\|u\|_{C^k(\es\times\cD)}:=\|u_\al\|_{C^k(\es\times\cD)}+\|u_y\|_{C^k(\es\times\cD)}\,.
\]

We will sometimes find it convenient to refer to a
domain bounded by an invariant torus
as an {\em invariant tube} of the field. We say that a field $u$ is {\em orbitally
  conjugate to a rotation}\/ of frequency $\om$ on an invariant torus $\Si$ if there are
global coordinates $(\tilde\al,\tilde\te):\Si \to \SS^1\times\SS^1$ in
which the vector field is linear with frequency $\om$ up to a multiplicative factor, that is,
\[
u|_{\Si}=F(\tilde\al,\tilde\te)\,\big(\pd_{\tilde\al}+\om\,\pd_{\tilde\te}\big)\,,
\]
where $F:\SS^1\times\SS^1\to\RR$ is a nonvanishing
function.

In the following lemma we will show that, for a generic core curve
$\ga$, the rotation number of the Poincar\'e map of the local Beltrami
field is Diophantine and its normal torsion is nonzero. To make
precise what we understand by ``generic'', we will say that certain property holds for a {\em $C^m$-dense set of
  closed analytic curves} if, given any closed analytic curve $\ga$ in
$\RR^3$, one can deform it by a diffeomorphism $\Phi$ of $\RR^3$, with
$\|\Phi-\id\|_{C^m(\RR^3)}$ as small as one wishes, so that the curve
$\Phi(\ga)$ has the desired property. Notice that this does not imply
that the property holds for an open set of curves.

\begin{lemma}\label{L.generic}
Let $m$ be any positive integer.  The set of closed analytic curves for which the Poincar\'e map of
  the local Beltrami field $v$ has a Diophantine rotation number $\om_\Pi$ and nonzero normal torsion
  $\cN_\Pi$ on the invariant circle $\pd\DD$ is $C^m$-dense.
\end{lemma}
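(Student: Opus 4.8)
The plan is to deduce both genericity conditions from the asymptotic formulas $\om_\Pi=\int_0^\ell\tau\,d\al+\cO(\ep^2)$ and $\cN_\Pi=-\frac{5\pi\ep^2}8\int_0^\ell\ka^2\tau\,d\al+\cO(\ep^3)$ established in Theorems~\ref{T.rotation} and~\ref{T.Herman}, arranging the two conditions one after the other by explicit $C^m$-small deformations of the core curve. One may assume $m$ is as large as needed, since a $C^m$-small deformation is in particular $C^{m'}$-small for every $m'\le m$; this is convenient because the constants implicit in the $\cO(\ep^k)$ terms depend on the curve only through finitely many geometric quantities (a positive lower bound for the curvature, the length $\ell$, and the $C^N$ norms of $\ka$ and $\tau$ for a fixed $N$), so the threshold on $\ep$ below which the conclusions hold can be taken uniform on a fixed $C^m$-neighborhood of the curve.

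The first step is to make the normal torsion nonzero. It suffices to show that the set of closed analytic curves with nowhere vanishing curvature and $\int_0^\ell\ka^2\tau\,d\al\ne0$ is $C^m$-dense (in fact open and dense), since then Theorem~\ref{T.Herman} yields $\cN_\Pi\ne0$ for all $\ep$ below the uniform threshold above. Given any analytic curve $\ga_0$ for which this integral vanishes, I would perturb it by adding a small localized oscillation in the binormal direction near a point where $\ka\ne0$: to leading order this introduces a controlled amount of torsion of a prescribed sign on a short arc while altering the curvature only by a lower-order term, so the induced contribution to $\int_0^\ell\ka^2\tau\,d\al$ is nonzero. Equivalently, one chooses a finite-dimensional real-analytic family $\ga_t$ through $\ga_0$ on which the functional $\ga\mapsto\int_0^\ell\ka^2\tau\,d\al$ is non-constant, so that its zero set in $t$ is discrete and $t$ can be taken arbitrarily small and nonzero; nonvanishing of the curvature is preserved for $t$ small.

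The second step is to make the rotation number Diophantine while retaining $\cN_\Pi\ne0$, which is now an open condition and hence stable under further small perturbations. Fixing an admissible small $\ep$, I would take a one-parameter real-analytic family $\ga_t$, with $\ga_0$ the curve produced in the first step, arranged so that the total torsion $t\mapsto\int_0^\ell\tau_t\,d\al$ has nonzero derivative at $t=0$ (a small out-of-plane bend near a point changes the total torsion transversally); such a family can be chosen $C^m$-small. Since $\om_\Pi(\ga_t)=\int_0^\ell\tau_t\,d\al+\cO(\ep^2)$, with the error term smooth in $t$ and uniformly $\cO(\ep^2)$, the map $t\mapsto\om_\Pi(\ga_t)$ is real-analytic with derivative bounded away from zero on a small interval, hence a diffeomorphism onto an interval $I$. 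As the Diophantine numbers have full Lebesgue measure in $I$, their preimage under this map has full measure and is therefore dense, so I would select $t$ arbitrarily close to $0$ with $\om_\Pi(\ga_t)$ Diophantine, shrinking $t$ further if necessary so that $\cN_\Pi\ne0$ persists. Composing the diffeomorphisms from the two steps gives the desired $C^m$-small perturbation, which proves the lemma.

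The main obstacle is the explicit construction of these two deformations together with the verification that they move $\int_0^\ell\ka^2\tau\,d\al$ and the total torsion $\int_0^\ell\tau\,d\al$ in the transversal manner claimed, all while staying $C^m$-small, analytic, and curvature-nonvanishing. Once this is in hand, the remaining ingredients --- the openness of the condition $\cN_\Pi\ne0$ and the fact that a set of full Lebesgue measure (the Diophantine numbers) pulls back under a local diffeomorphism to a dense set --- are entirely standard.
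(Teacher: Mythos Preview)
Your approach is essentially the same as the paper's: perturb the core curve and read off the effect on $\om_\Pi$ and $\cN_\Pi$ through the asymptotic formulas of Theorems~\ref{T.rotation} and~\ref{T.Herman}. The paper does this with a single explicit one-parameter binormal family $\Ga(\al,\de)=\ga(\al)+\de\,F(\al)B(\al)$, computes $\ka_\de,\tau_\de$ to first order in~$\de$, and obtains closed expressions for $\frac{d}{d\de}\om_{\Pi_\de}$ and $\frac{d}{d\de}\cN_{\Pi_\de}$ at $\de=0$ as integrals of $F$ against explicit functions of $\ka,\tau$; it then chooses $F$ so that both are nonzero and picks a small~$\de$ in one shot. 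Your two-step organization (first force $\int_0^\ell\ka^2\tau\,d\al\ne0$, then adjust $\om_\Pi$) is a harmless reordering, and your heuristic binormal-oscillation argument is exactly what the paper's explicit computation makes precise.

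There is one genuine slip. You assert that $t\mapsto\om_\Pi(\ga_t)$ is real-analytic with nonvanishing derivative; rotation numbers of circle diffeomorphisms are in general only continuous in the map and can exhibit mode-locking, so this is not justified. What you actually need (and what the paper uses, also somewhat informally) is only that the leading term $t\mapsto\int_0^\ell\tau_t\,d\al$ is smooth with nonzero derivative and that the remainder is uniformly $\cO(\ep^2)$ on a $t$-interval: by continuity and the intermediate value theorem the image of $t\mapsto\om_\Pi(\ga_t)$ over a $t$-range of size $\gg\ep^2$ is a nondegenerate interval, and density of Diophantine numbers finishes the job. A related wording issue: $\cN_\Pi$ is only defined once $\om_\Pi$ is Diophantine (it involves the conjugacy $\Theta$), so the open condition you are really preserving in step two is $\int_0^\ell\ka^2\tau\,d\al\ne0$, and one should also fix Diophantine constants in advance so that the $\cO(\ep^3)$ remainder in Theorem~\ref{T.Herman} is controlled uniformly.
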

\begin{proof}
The result is not hard to prove using the expressions for the
rotation number and the normal torsion derived in
Theorems~\ref{T.rotation} and~\ref{T.Herman}:
\begin{align*}
\om_\Pi&=\int_0^\ell \tau\,d\al+\cO(\ep^2)\,,  \\
\cN_\Pi&=-\frac{5\pi\ep^2}8\int_0^\ell\ka^2\tau\, d\al +\cO(\ep^3)\,.
\end{align*}
A way of making things precise is the following. We will consider
deformations of the curve $\ga$, labeled by a parameter $\de$. More
concretely, let us denote by $(T(\al),N(\al),B(\al))$ the Frenet
trihedron of the curve $\ga$ at the point $\ga(\al)$ (one should not mistake the
binormal vector $B(\al)$ for the function $B$ that we introduced in
Eq.~\eqref{B}, which will not be used in this proof). Let $F(\al)$
be an analytic $\ell$-periodic function and consider the family of curves
$\Ga(\al,\de)$ in $\RR^3$ given by
\[
\Ga(\al,\de):=\ga(\al)+\de\, F(\al) B(\al)\,.
\]
If $\de$ is close to zero, $\ga_\de\equiv \Ga(\cdot,\de)$ is a closed analytic
curve. Notice that, for $\de\neq0$, $\al$ is no longer an arc-length
parametrization of $\ga_\de$ but, due to the properties of the
binormal field,
\[
|\Ga'|=1+\cO(\de^2)\,.
\]
Here and in what follows, we denote by a prime the derivatives with
respect to $\al$. In particular, the length of $\ga_\de$ is
$\ell+\cO(\de^2)$. We will label the geometric quantities associated
with the curve $\ga_\de$ with a subscript $\de$ (e.g.,
$\ka_\de$ and $\tau_\de$ for its curvature and torsion).

The results we have presented in this section carry over immediately
when one does not only consider the tube $\tube$ associated with the
curve $\ga$, but the family of tubes $\cT_\ep(\ga_\de)$. The
dependence of the various quantities on the small parameter $\de$ is smooth
and can be controlled easily. A tedious but
straightforward computation using the well-known formulas for the
curvature and torsion of a parametrized curve shows that
\begin{align*}
\ka_\de^2&=\ka^2+2\de\ka(2 F'\tau+F\tau')+\cO(\de^2)\,,\\
\tau_\de&=\tau-\de\bigg[\ka F'+\frac{d}{d\al}\bigg(\frac{
  F''-\tau^2 F}{\ka}\bigg)\bigg]+\cO(\de^2)\,.
\end{align*}
Since the length of $\ga_\de$ differs from $\ell$ by an $\cO(\de^2)$
term, one readily finds that the rotation number of the Poincar\'e map~$\Pi_\de$
associated with the harmonic field of the tube $\cT_\ep(\ga_\de)$ is
\begin{align*}
\om_{\Pi_\de}&=\int_0^\ell \tau_\de(\al)\,d\al+\cO(\ep^2+\de^2)\\
&=
\om_\Pi+\de\int_0^\ell  \ka'(\al)\, F(\al)\, d\al+\cO(\ep^2+\de^2)\,.
\end{align*}
Similarly, the dependence of the normal torsion on $\de$
can be shown to be
\begin{equation*}
\cN_{\Pi_\de}= \cN_\Pi-\frac{5\pi\de\ep^2}8\int_0^\ell
(2\ka'''+3\ka^2\ka'-6\ka\tau\tau'-6\tau^2\ka')F\, d\al+\cO(\de^2+\ep^3)\,.
\end{equation*}
We recall that, although the Poincar\'e map $\Pi_\de$ depends smoothly
on the parameter~$\de$, the terms $\cO(\ep^2+\de^2)$ and $\cO(\de^2+\ep^3)$ are
continuous, but possibly not differentiable, functions of~$\de$.

Perturbing the curve $\ga$ a little if necessary to ensure that the
functions $\ka'$ and 
\[
2\ka'''+3\ka^2\ka'-6\ka\tau\tau'-6\tau^2\ka'
\]
are not
identically zero, we deduce that the function $F$ can be chosen so
that the integrals
\[
\int_0^\ell  \ka'\, F\, d\al \qquad\text{and}\qquad 
\int_0^\ell
(2\ka'''+3\ka^2\ka'-6\ka\tau\tau'-6\tau^2\ka')\,F\, d\al
\]
are
nonzero. For small enough $\ep$ this ensures that, as $\de$ takes
values in a small enough interval $(-\de_0,\de_0)$, the values taken
by the continuous function $\om_{\Pi_\de}$ (resp.\ $\cN_{\Pi_\de}$)
cover an interval centered at $\om_\Pi$ (resp.\ $\cN_\Pi$) of radius
$C\de_0$ (resp.\ $C\ep^2\de_0$). Since Diophantine numbers have full Lebesgue measure, this
immediately implies that one can choose an arbitrarily small $\de$
such that the rotation number $\om_{\Pi_\de}$ satisfies a Diophantine
condition and the normal torsion $\cN_{\Pi_\de}$ is nonzero.
\end{proof}

We can now show that, for a generic core curve $\ga$ and small enough $\ep$, a suitably small
perturbation of the local Beltrami field still has an invariant torus
that is close to the original one:

\begin{theorem}\label{T.KAM}
  For any positive integer $m$ there is a $C^m$-dense set of closed
  analytic curves $\ga$ with the following KAM-type property: for any
  $\de>0$, there is another positive integer $k$ and some $\de'>0$
  such that any analytic divergence-free vector field $u$ whose difference with
  the local Beltrami field $v$ of the tube $\cT_\ep\equiv\cT_\ep(\ga)$
  is bounded by
\begin{equation}\label{hwdep}
\|u-v\|_{C^k({\es\times\cD})}<\de'
\end{equation}
possesses an invariant tube. Furthermore,
one can find a
diffeomorphism $\Psi$ of~$\RR^3$, with $\|\Psi-\id\|_{C^m(\RR^3)}<
\de$ and $\Psi-\id$ supported in a small neighborhood of the torus~$\pd\cT_\ep$, such that
$\Psi(\cT_\ep)$ is an invariant tube of $u$ and $u$ is orbitally
conjugate on the invariant torus $\pd\Psi(\cT_\ep)$  to a Diophantine rotation.
\end{theorem}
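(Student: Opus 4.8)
The plan is to reduce the statement to the planar KAM theorem of Theorem~\ref{T.Llave}, applied to the Poincar\'e map of the flow on a transverse section. First fix $m$ and, by Lemma~\ref{L.generic}, work with the $C^m$-dense set of analytic curves $\ga$ for which, for small $\ep$, the Poincar\'e map $\Pi$ of the local Beltrami field $v$ on the section $\{\al=0\}\equiv\overline{\DD}$ has a Diophantine rotation number $\om_\Pi$ on the invariant circle $\pd\DD$ and nonzero normal torsion $\cN_\Pi$; since $v$ is analytic near $\overline{\cT_\ep}$ and, by Proposition~\ref{P.Theta}, the conjugacy $\Theta$ is analytic, the map $\Pi$ satisfies hypotheses~(i) and~(ii) of Theorem~\ref{T.Llave}. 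Given an analytic divergence-free field $u$ with $\|u-v\|_{C^k(\es\times\cD)}<\de'$ and $\de'$ small, the component $u_\al$ stays bounded away from zero (recall $v_\al=1+\cO(\ep)$), so $Y:=u/u_\al$ is a well-defined analytic field near $\es\times\overline{\DD}$ with $\dot\al\equiv1$; it is therefore isochronous, its flow stays in the domain of definition for times in $[0,\ell]$ provided $\de'$ is small, and its time-$\ell$ flow restricts to an analytic Poincar\'e map $\tilde\Pi$ on a neighborhood of $\overline{\DD}$ with $\|\tilde\Pi-\Pi\|_{C^{k'}}\le C\de'$, where $k'=k$ up to the loss of finitely many derivatives, by the smooth dependence of flows on the field. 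Moreover, exactly as in the proof of Proposition~\ref{P.area}, which used only $\Div u=0$, the map $\tilde\Pi$ preserves the measure associated with the $2$-form $G_u\,r\,dr\wedge d\te$, where $G_u:=B\ms u_\al|_{\al=0}$.

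The crux is that $\tilde\Pi$ preserves $G_u\,r\,dr\wedge d\te$, whereas $\Pi$ preserves $\La=G_\La\,r\,dr\wedge d\te$, and Theorem~\ref{T.Llave} requires the perturbed map to preserve \emph{exactly} the same measure $\La$; here $G_u-G_\La=B(u_\al-v_\al)|_{\al=0}$ is of order $\de'$. I would remove this mismatch by a Moser-type deformation on the disk. Since any constant multiple of an invariant measure is invariant, we may first rescale $G_u$ to $c_uG_u$ with $c_u:=\big(\int_{\DD}G_\La\,dy\big)/\big(\int_{\DD}G_u\,dy\big)=1+\cO(\de')$, so that $c_uG_u\,r\,dr\wedge d\te$ and $\La$ have the same total mass. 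The analytic Moser trick on the disk with boundary then produces an analytic diffeomorphism $g$ of a neighborhood of $\overline{\DD}$, with $g(\pd\DD)=\pd\DD$ and $\|g-\id\|_{C^{k'}}\le C\de'$, such that $g_*\big(c_uG_u\,r\,dr\wedge d\te\big)=\La$. The conjugated map $\tilde\Pi':=g\circ\tilde\Pi\circ g^{-1}$ is then analytic, preserves $\La$ exactly, and obeys $\|\tilde\Pi'-\Pi\|_{C^{k'}}\le C\de'$, because $\Pi$ itself preserves $\La$ and $g$ is close to the identity.

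Now apply Theorem~\ref{T.Llave} with reference map $\hPi=\Pi$ and perturbation $\tilde\Pi'$: for a smallness parameter $\de_0$ and an integer $m_0$ to be fixed first, it yields, provided $\de'$ is below the resulting threshold, an analytic diffeomorphism $\hat\Psi$ of $\RR^2$ with $\hat\Psi-\id$ supported near $\pd\DD$ and $\|\hat\Psi-\id\|_{C^{m_0}}<\de_0$ such that $\hat\Psi(\pd\DD)$ is invariant under $\tilde\Pi'$ and $\tilde\Pi'$ restricted to it is conjugate to the rotation of frequency $\om_\Pi$. Conjugating back, $\Ga_u:=g^{-1}(\hat\Psi(\pd\DD))$ is an invariant circle of $\tilde\Pi$ on which $\tilde\Pi$ is conjugate to the same Diophantine rotation, and $\Ga_u$ is $C^{m_0}$-close to $\pd\DD$. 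Sweeping $\Ga_u$ with the time-$\al$ maps of $Y$, $\al\in\es$, gives an analytic torus $\Si_u$ invariant under $u$; since $Y_\al\equiv1$ and $\tilde\Pi$ advances the conjugacy coordinate on $\Ga_u$ by $\om_\Pi$, in suitable coordinates on $\Si_u$ transported by the flow the field $Y$ is linear with frequency $\om_\Pi/\ell$, hence $u=u_\al Y$ is linear there up to the nonvanishing factor $u_\al$; that is, $u$ is orbitally conjugate on $\Si_u$ to a Diophantine rotation. Finally, because $Y$ is $C^{m_0}$-close to $v/v_\al$ (whose flow sweeps $\pd\DD$ into $\pd\cT_\ep$) and $\Ga_u$ is $C^{m_0}$-close to $\pd\DD$, the torus $\Si_u$ is a graph of $C^{m_0}$-small normal height over $\pd\cT_\ep$; a standard tubular-neighborhood construction then furnishes a diffeomorphism $\Psi$ of $\RR^3$, equal to the identity outside an arbitrarily small neighborhood of $\pd\cT_\ep$, with $\|\Psi-\id\|_{C^m(\RR^3)}<\de$ and $\Psi(\pd\cT_\ep)=\Si_u$. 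The solid region $\Psi(\cT_\ep)$ bounded by $\Si_u$ is then a flow-invariant tube of $u$ with the stated properties.

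I expect the main obstacle to be exactly this interplay between correcting the invariant measure and controlling the derivative losses: one must take $k$ large enough that, after the finitely many derivatives lost in forming $\tilde\Pi$, in the Moser deformation $g$, in Theorem~\ref{T.Llave} (whose required loss $m_0\mapsto k_{\mathrm{Llave}}$, and the threshold on $\de'$, are dictated by that theorem and by $\de_0$), and in reconstructing $\Si_u$ and $\Psi$, the final diffeomorphism is still $C^m$-close to the identity. This fixes the order of the choices --- first $m$, then $m_0$, $\de_0$ and the resulting $k_{\mathrm{Llave}}$, and only then $\de'$ and $k$ --- but brings in no new idea beyond the ingredients assembled in Sections~\ref{S.Laplacian}--\ref{S.KAM}.
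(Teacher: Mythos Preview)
Your proposal is correct and follows essentially the same route as the paper: reduce to Poincar\'e maps on the section $\{\al=0\}$, use a Moser deformation to match the invariant $2$-forms of the two maps, apply Theorem~\ref{T.Llave}, and then sweep the resulting invariant circle by the flow to reconstruct the invariant torus and the diffeomorphism~$\Psi$. The only inessential differences are that the paper conjugates by $\Phi^{-1}$ (with $\Phi^*\tilde\La=\La$) rather than your $g$, and it neither rescales the measure nor requires $g(\pd\DD)=\pd\DD$---the latter claim is not needed for your argument and is not automatic from the simplest radial Moser construction, but dropping it changes nothing.
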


\begin{proof}
By Lemma~\ref{L.generic}, we can deform the curve that lies at the
core of the tube $\cT_\ep$ by a
diffeomorphism of $\RR^3$, arbitrarily close to the identity in the
$C^m$ norm, so that, if we consider the local Beltrami field (which we still denote by $v$) associated
to the deformed tube, its rotation number $\om_\Pi$ satisfies a Diophantine
  condition and its normal torsion $\cN_\Pi$ is nonzero. As before, we
  will identify this deformed tube with the domain $\es\times\DD$
  through adapted coordinates $(\al,y)$.

  Consider the Poincar\'e map $\Pi$ of the local Beltrami field $v$,
  which can be safely considered as a diffeomorphism $\Pi$ from $\cD$
  onto its image, with $\cD$ a neighborhood of the closed unit disk $\overline{\DD}$.  The Poincar\'e map of the field $u$,
  also defined on the section $\{\al=0\}$, is another diffeomorphism
  of $\cD$ onto its image that we will denote by $\tilde\Pi$. The
  vector fields $u$ and $v$ being close by~\eqref{hwdep}, it is apparent that
\begin{equation}\label{PiPdep}
\|\Pi-\tilde\Pi\|_{C^k(\cD)}<C\ms \de'
\end{equation}
as the Poincar\'e map is simply obtained by integrating the associated
vector field along a trajectory between two consecutive intersections
with the section $\{\al=0\}$. To avoid cumbersome notations related to the intersection of the
domains of auxiliary maps, wherever appropriate we will assume that the
Poincar\'e maps are defined in a domain slightly larger than $\cD$
without further mention. Obviously there is no loss of generality in
this assumption.

We have seen in Proposition~\ref{P.area} that the Poincar\'e map $\Pi$
preserves the  $2$-form $\La=G_\La(r,\te)\, r\,dr\wedge d\te$, where
\[
G_\La(r,\te):=Bv_\al\big|_{\al=0}=1+\cO(\ep)\,,
\]
and $v_\al$ denotes the $\al$-component of the local Beltrami field $v$.
Mimicking the proof of this proposition, we immediately obtain that the
Poincar\'e map $\tilde\Pi$ of the divergence-free field $u$ preserves  the
$2$-form $\tilde\La:=\tG(r,\te)\, r\, dr\wedge d\te$, with
\[
\tG(r,\te):=Bu_\al\big|_{\al=0}\,.
\]
Notice that $u_\al$ does not vanish in $\{0\}\times \cD$ because the difference $|u_\al-v_\al|$
is small and the $\al$-component of the local Beltrami field is close to $1$.

Our next goal is to relate the above invariant $2$-forms to apply Theorem~\ref{T.Llave}. More concretely, we will show that there is a $C^m$-small diffeomorphism
$\Phi$ such that
\begin{equation}\label{LahLa}
\La=\Phi^*\ms \tilde\La\,,
\end{equation}
where $\Phi^*$ is the pullback of the diffeomorphism.
This will be done using Moser's trick. We start by noticing that the difference between these
$2$-forms is obviously exact, as
\[
\La-\tilde\La=d\ms \Ga
\]
with the $1$-form $\Ga$ given by
\[
\Ga:=\bigg(\int_0^r \big[G_\La(\bar r,\te)-\tG(\bar r,\te)\big]\, \bar r\, d \bar r\bigg)\, d\te\,.
\]
Although we are making computations in polar coordinates, it is
readily seen that all the objects we are considering are well-defined
also at the origin, and therefore determine smooth forms in the whole
disk $\cD$.

Consider the non-autonomous vector field of class $C^\infty$
\[
Z_s:=\frac{\int_0^r [G_\La(\bar r,\te)-\tG(\bar r,\te)]\, \bar r\, d \bar r}{r[(1-s)\ms
  G_\La(r,\te)+s\ms \tG(r,\te)]}\, \pd_r\,,
\]
where $s$ will be the flow parameter. It should be noticed that, by
the assumptions on the vector fields, the denominator behaves as
$r+\cO(r^2)$ while the numerator is of order $\cO(r^2)$. The field $Z_s$ satisfies the $C^m$
bound
\begin{equation}\label{rollo2}
\|Z_s\|_{C^m(\cD)}< C\de'
\end{equation}
for all $s\in[0,1]$ as a consequence of the estimate~\eqref{hwdep}.

The time-$s$ flow of the non-autonomous field $Z_s$, which will be
denoted by $\phi_{s}$, is given by the solution to the initial value
problem
\[
\frac{\pd}{\pd s}\phi_{s}x=Z_s(\phi_{s}x)\,,\qquad \phi_{0}x=x\,.
\]
Consider the $s$-dependent $2$-form
\[
\La_s:=(1-s)\ms \La+s\ms\tilde\La\,.
\]
A simple computation shows that
\begin{equation}\label{rollo1}
\frac{\pd}{\pd
  s}\big(\phi_{s}^*\La_s\big)=\phi_{s}^*\bigg(\frac\pd{\pd
  s}\La_s+L_{Z_s}\La_s\bigg)=\phi_{s}^*\big( \tilde\La-\La+d\ms i_{Z_s}\La_s\big)=0\,,
\end{equation}
where $L_{Z_s}$ denotes the Lie derivative along $Z_s$ and the last equality follows immediately from the definition of
$\La_s$ and the fact that the interior product of $Z_s$ with $\La_s$
is
\[
i_{Z_s}\La_s=\Ga\,.
\]
Therefore, if we set $\Phi:=\phi_{1}$, we obtain Eq.~\eqref{LahLa}
from~\eqref{rollo1} and the definition of $\La_s$. Moreover,
\begin{equation}\label{CmTe}
\|\Phi-\id\|_{C^m(\cD)}<C\ms \de'
\end{equation}
because $\Phi$ is the time-$1$
flow of the vector field $Z_s$, whose $C^m$ norm is controlled by Eq.~\eqref{rollo2}.

Let us now consider the map
\[
\hat \Pi:=\Phi^{-1}\circ \tilde\Pi\circ \Phi\,,
\]
which is a diffeomorphism from a neighborhood of $\overline{\DD}$
(which we still take as $\cD$) onto its image. We shall next relate the
new map $\hat \Pi$ to the Poincar\'e map $\Pi$. By the definition of $\Phi$
and the relation between the invariant $2$-forms~\eqref{LahLa}, the
map $\hat \Pi$ preserves the same $2$-form $\La$ as the Poincar\'e
map~$\Pi$, and $\hat \Pi$ is close to $\Pi$ by~\eqref{CmTe}:
\begin{equation}\label{CmPi}
\big\|\Pi-\hat \Pi\big\|_{C^m(\cD)}< C\de'\,.
\end{equation}

We are now ready to apply Theorem~\ref{T.Llave} with the maps $\Pi$ and $\hat
\Pi$. Indeed, the previous arguments and the way we have deformed the
curve $\ga$ ensure that the following statements hold true:
\begin{enumerate}
\item The rotation number $\om_\Pi$ satisfies a
  Diophantine condition, so the map $\Pi|_{\pd\DD}$ is analytically conjugate to
  a rotation of frequency $\om_\Pi$~(cf.\ Proposition~\ref{P.Theta}).

\item The normal torsion $\cN_\Pi$ is nonzero.

\item The $C^k$ norm of $\Pi-\hat \Pi$ is at most $C\de'$ by
  Eq.~\eqref{CmPi}, with $\de'$ arbitrarily small.
\end{enumerate}
Hence Theorem~\ref{T.Llave} ensures that, for any given integer $m$, if the integer $k$ is large
enough and $\de'$ is sufficiently small  there is a diffeomorphism
$\hat\Psi$ of $\cD$ with $\hat\Psi-\id$ arbitrarily
small in the ${C^m(\cD)}$ norm and supported in a neighborhood of
$\pd\DD$ such that the curve $\hat\Psi(\pd\DD)$ is invariant under the
map $\hat \Pi$. Furthermore,
the restriction of $\hat \Pi$ to this invariant curve, $\hat
\Pi|_{\hat\Psi(\pd\DD)}$, also has rotation number $\om_\Pi$. 

The definition of $\hat \Pi$ then ensures that the curve
$\Phi\circ\hat\Psi(\pd\DD)$ is invariant under the Poincar\'e map
$\tilde\Pi$ of the field $u$. (As an aside, notice that this invariant curve
is close to $\pd\DD$ but, in principle, is not contained in the
closure of $\DD$, which is the reason why we are considering a
slightly larger disk $\cD$ throughout the proof.) 
It is standard that this is equivalent to saying that there
is an invariant torus of~$u$ whose intersection with the disk~$\{\al=0\}$ is precisely the aforementioned curve. Since the $C^m(\cD)$ norm of
the diffeomorphism $\Phi\circ\hat\Psi$ is arbitrarily small by
the properties of $\hat\Psi$ and Eq.~\eqref{CmTe}, one can take a diffeomorphism~$\bar\Psi$ of $\es\times\cD$
such that $\bar\Psi(\es\times\pd\DD)$ is an invariant torus of the field $u$ and
$\bar\Psi-\id$ is small in the $C^m$~norm and is supported in a
neighborhood of $\es\times\DD$. Indeed, this diffeomorphism can be
defined as follows. Take the solution to the system of ODEs
\begin{align*}
\frac{d r}{d\al}&= \frac{u_r(\al,r,\te)}{u_\al(\al,r,\te)}\,,\\
\frac{d\te}{d\al}&= \frac{u_\te(\al,r,\te)}{u_\al(\al,r,\te)}\,,
\end{align*}
with the trajectory $(r,\te)$ parametrized by the angle $\al$ and
depending on the initial conditions $(r_0,\te_0)$. Consider the
function $\vp_\al$ mapping the initial conditions
$(r_0,\te_0)$ to its time-$\al$ flow
$(r(\al;r_0,\te_0),\te(\al;r_0,\te_0))$. Then it is easy to
check that the diffeomorphism $\bar\Psi$ is given, in polar coordinates, by
\[
\bar\Psi(\al,r,\te):=\big(\al,\vp_{\al}\circ\Phi\circ\hat\Psi(r,\te)\big)\,.
\]
Actually, this formula simply asserts that the intersection of the
invariant torus of the field $u$ with each section $\{\al=\al_0\}$,
understood as a curve in the disk $\cD$, is the image under
$\vp_{\al_0}$ of the invariant curve $\Phi\circ\hat\Psi(\pd\DD)$ at $\{\al=0\}$.
Clearly the formula for $\bar\Psi$ and the estimates for the maps $\Phi$
and $\hat\Psi$ imply that
\[
\|\bar\Psi-\id\|_{C^m(\es\times\cD)}\leq C\|\Phi\circ\hat\Psi-\id\|_{C^m(\cD)}
\]
can be made arbitrarily small.

To complete the proof of the theorem, it suffices to recall that the
map $\hat\Pi|_{\hat\Psi(\pd\DD)}$ has rotation number $\om_\Pi$. This
implies that the Poincar\'e map $\tilde\Pi$ of the vector field~$u$, restricted to the invariant curve
$\Phi\circ\hat\Psi(\pd\DD)$, also has rotation number~$\om_\Pi$,
which trivially implies that the vector field $u$ itself is
orbitally conjugate to a rotation of frequency $\om_\Pi$ on the
invariant torus $\bar\Psi(\es\times\pd\DD)$. The existence of the diffeomorphism~$\Psi$
of~$\RR^3$ that appears in the statement of the theorem is then immediate.
\end{proof}

\begin{remark}
Ultimately, Theorem~\ref{T.KAM} is a
result on the preservation of invariant tori for small perturbations
of the harmonic field $h$, rather than of the local Beltrami field
$v$. Indeed, if we had considered a Beltrami field with small but
otherwise arbitrary parameter $\la$ (possibly $0$), we would have
found the same expressions for the rotation number and normal torsion
of its Poincar\'e map, the only change being that the error terms
would be $\cO(\ep^2+|\la|)$ and $\cO(\ep^3+|\la|)$, respectively.
\end{remark}

We will conclude this section with a result on the
persistence of another invariant set: we will show that, for an
open and dense set of core curves $\ga$ and small enough $\ep$, any
divergence-free vector field that is a small perturbation of the local
Beltrami field $v$ has an elliptic periodic trajectory close to the
core curve $\ga$. We recall that a periodic trajectory is {\em
  elliptic}\/ if the nontrivial eigenvalues of the associated
monodromy matrix have all unit modulus but are different from $1$.

\begin{proposition}\label{P.harmelliptic}
Suppose that the total torsion of the curve $\ga$ satisfies
\[
\int_0^\ell \tau(\al)\, d\al\neq n\pi
\]
for all integers $n$. Then for any $\de>0$,
there exists some $\de'>0$ such that any divergence-free vector field
$u$ in the tube $\cT_\ep$ which is close to the local Beltrami field~$v$ in the sense that
\[
\|u-v\|_{C^k({\es\times\DD})}<\de'\,.
\]
also has an elliptic periodic trajectory diffeomorphic to the curve
$\{y=0\}$. Moreover, the corresponding diffeomorphism $\Psi$ is bounded by
\[
\|\Psi-\id\|_{C^k(\es\times\DD)}<\de
\]
and is different from the identity only in a small neighborhood of the
curve $\{y=0\}$.
\end{proposition}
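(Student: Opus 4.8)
The plan is to produce an elliptic periodic orbit of the local Beltrami field $v$ itself inside a thin neighbourhood of the core $\{y=0\}$, and then to show, by an implicit-function argument, that such an orbit — and the diffeomorphism mapping $\{y=0\}$ onto it — persists for every nearby divergence-free field. First I would work in Cartesian coordinates $y$ near the core, where the polar coordinates used in Proposition~\ref{L.trajectories} degenerate, so that its conclusions cannot be quoted directly. Since $v_\al=1+\cO(\ep)$ does not vanish there, I pass to the isochronous field $X=v/v_\al$ of Eq.~\eqref{defX}, which has the same orbits as $v$ and whose period-$\ell$ Poincar\'e map $\Pi$ on $\{\al=0\}$ is the time-$\ell$ flow (Eq.~\eqref{Poincare}); this map is analytic near $y=0$. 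Writing $X=\pd_\al+Y(\al,y)$ and reading off $Y$ from $h=h_0+\nabla\psi$ together with Theorems~\ref{T.psi0} and~\ref{T.estimBeltrami}, one finds $Y(\al,0)=\cO(\ep)$ and that the linearization $D_yY(\al,0)$ equals $\tau(\al)$ times the matrix of the rotation by $\pi/2$, up to $\cO(\ep)$. Integrating the field and its variational equation along the orbit through the origin then gives $\Pi(0)=\cO(\ep)$ and $D\Pi(0)=M_0+\cO(\ep)$, where $M_0$ is the planar rotation by the total torsion $T:=\int_0^\ell\tau(\al)\,d\al$.

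The hypothesis $T\neq n\pi$ is precisely the statement that $\e^{\pm\I T}$ are neither equal to $1$ nor to each other, so $I-M_0$ is invertible with a uniformly bounded inverse; a contraction-mapping argument then produces a unique fixed point $y_*=\cO(\ep)$ of $\Pi$, and the corresponding periodic trajectory $\Ga$ of $v$ lies within $\cO(\ep)$ of $\{y=0\}$, hence is of the same knot type. For the ellipticity of $\Ga$ I would invoke Proposition~\ref{P.area}: $\Pi$ preserves a smooth positive density near the origin, so evaluating at the fixed point gives $\det D\Pi(y_*)=1$, i.e.\ $D\Pi(y_*)\in\SL(2,\RR)$; together with $D\Pi(y_*)=M_0+\cO(\ep)$ and $\abs{\tr M_0}=2\abs{\cos T}<2$, this forces $\abs{\tr D\Pi(y_*)}<2$ for small $\ep$, so the nontrivial Floquet multipliers of $\Ga$ are $\e^{\pm\I\mu}$ with $\mu\in(0,\pi)$. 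In particular $D\Pi(y_*)-I$ is invertible, which is the nondegeneracy needed for the persistence step.

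Given a divergence-free $u$ with $\|u-v\|_{C^k(\es\times\DD)}<\de'$, the component $u_\al$ is still nonvanishing near the core, so the isochronous field $\tilde X=u/u_\al$ and its period-$\ell$ Poincar\'e map $\tilde\Pi$ are defined there, with $\|\tilde\Pi-\Pi\|_{C^{k-1}}\le C\de'$ because the Poincar\'e map depends continuously on the vector field. Using the invertibility of $D\Pi(y_*)-I$, the implicit function theorem yields a fixed point $\tilde y_*$ of $\tilde\Pi$ with $\abs{\tilde y_*-y_*}\le C\de'$ and $D\tilde\Pi(\tilde y_*)=D\Pi(y_*)+\cO(\de')$; rerunning the argument of Proposition~\ref{P.area} for $u$ (which also has zero divergence) shows $\det D\tilde\Pi(\tilde y_*)=1$, whence $\abs{\tr D\tilde\Pi(\tilde y_*)}<2$ for $\de'$ small and $\tilde y_*$ is an elliptic fixed point. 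It corresponds to a periodic trajectory of $u$; since $\dot\al>0$ along $\tilde X$, this orbit is transverse to every section $\{\al=\al_0\}$ and meets $\{\al=0\}$ only at $\tilde y_*$, so it is a graph $\{(\al,\eta(\al)):\al\in\es\}$ over the core with $\eta$ analytic, $\ell$-periodic, and of $C^k$ size $\cO(\ep+\de')$. Finally, choosing a radial cutoff $\chi$ that equals $1$ near $0$ and vanishes near $\pd\DD$, the map
\[
\Psi(\al,y):=\bigl(\al,\; y+\chi(|y|)\,\eta(\al)\bigr)
\]
is a diffeomorphism of $\es\times\DD$, equal to the identity outside a neighbourhood of the core, carrying $\{y=0\}$ onto the periodic trajectory of $u$, and with $\|\Psi-\id\|_{C^k}\le C\|\eta\|_{C^k}$ as small as required once $\ep$ and $\de'$ are taken small.

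The main obstacle is the first step: the estimates of the paper are organised for the region away from the core, and Propositions~\ref{L.trajectories}--\ref{L.trajectories2} explicitly exclude $r_0$ near $0$, so one has to re-examine the local Beltrami field near $\{y=0\}$ in Cartesian coordinates and check that the linearised return map there is genuinely an $\cO(\ep)$ perturbation of the rigid rotation $M_0$ by the total torsion. Granting that, the remaining ingredients — persistence via the implicit function theorem, ellipticity via the $\SL(2,\RR)$ trace dichotomy, and the construction of $\Psi$ by writing the orbit as a graph and truncating — are all routine.
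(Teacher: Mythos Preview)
Your argument is correct, but the paper takes a shorter path that sidesteps the obstacle you flag. Rather than first locating a periodic orbit of the local Beltrami field $v$ and then perturbing from it, the paper compares both $v$ and $u$ directly to the explicit model field
\[
\tilde h_0:=\pd_\al+\tau(\al)\,(y_1\,\pd_2-y_2\,\pd_1)\,,
\]
for which $\{y=0\}$ is an \emph{exact} periodic orbit with monodromy the rotation by $T=\int_0^\ell\tau\,d\al$. The estimates of Theorem~\ref{T.psi0} (stated in Cartesian $C^k(\es\times\DD)$ norms, hence valid through the core) together with Theorem~\ref{T.estimBeltrami} give $\|v-\tilde h_0\|_{C^k}<C_k\ep$ globally, so $\|u-\tilde h_0\|_{C^k}<\de'+C_k\ep$. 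One then invokes, once, the standard persistence of an elliptic periodic orbit of a divergence-free field under small perturbations. This collapses your two-step implicit-function argument into a single step and avoids having to re-derive the linearised return map near $r=0$: the relevant information is already contained in $\tilde h_0$.

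Your route has the minor advantage of making the $\SL(2,\RR)$ mechanism for ellipticity explicit (via the preserved density of Proposition~\ref{P.area}) and of giving a concrete formula for $\Psi$, but the paper's approach is cleaner and shows that the ``obstacle'' you identify is not genuine: the trajectory estimates in polar coordinates (Propositions~\ref{L.trajectories}--\ref{L.trajectories2}) are not needed here at all, only the $C^k$ estimates on $\psi$ and $v-h$, which hold uniformly on $\es\times\DD$.
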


\begin{proof}
From the expressions~\eqref{h0}--\eqref{nabla} for the harmonic field~$h$, the estimates
for the function~$\psi$ proved in Theorem~\ref{T.psi0} and the
connection between the local Beltrami field and $h$
(see Eq.~\eqref{vhep3}), we infer
that the difference $v-\tilde h_0$ can be estimated as
\begin{equation*}
\|v-\tilde h_0\|_{C^k(\es\times\DD)}< C_k\ep\,,
\end{equation*}
with the vector field $\tilde h_0$ defined as
\[
\tilde h_0:=\pd_\al+\tau(\al)\,(y_1\,\pd_2-y_2\,\pd_1)\,.
\]
It is clear that $(\al(s),y(s))=(s,0)$ is an $\ell$-periodic trajectory of the
field $\tilde h_0$. Setting 
\[
T:=\int_0^\ell\tau(\al)\, d\al\,,
\]
an easy computation shows that the monodromy
matrix of this trajectory for the field $\tilde h_0$ is
\[
\left(\begin{matrix}
  1 & 0 &0\\
  0 & \cos T &-\sin T\\
  \phantom{-} 0 & \sin T & \phantom{-}\cos T
\end{matrix}\right)\,.
\]
The nontrivial eigenvalues of this matrix are $\e^{\pm\I T}$, and hence
different from $\pm1$ by the hypotheses of the theorem, thus showing
that $\{y=0\}$ is an elliptic trajectory of $\tilde
h_0$. 

It is then standard that any divergence-free field $u$ that is close enough to
$\tilde h_0$ (say, $\|u-\tilde h_0\|_{C^k(\es\times\DD)}<\de_1)$) has
an elliptic periodic trajectory given by the image of $\{y=0\}$ under
a diffeomorphism $\Psi$ with
$\|\Psi-\id\|_{C^k(\es\times\DD)}<\de$. There is no loss of generality
in assuming that $\Psi-\id$ is supported in a small neighborhood of
$\{y=0\}$. Since
\[
\|u-\tilde h_0\|_{C^k(\es\times\DD)}< \|u- v\|_{C^k(\es\times\DD)}+ \|
v-\tilde h_0\|_{C^k(\es\times\DD)}<\de'+ C_k\ep\,,
\]
the theorem then follows by
taking $\ep$ and $\de'$ small enough.
\end{proof}

\section{Approximation by Beltrami fields with decay}
\label{S.approx}

In this section we prove a result that allows us to approximate a
field $v$ that satisfies the Beltrami equation 
\[
\curl v=\la v
\]
on a neighborhood of a compact set
$S$, by a global Beltrami field $u$, which satisfies
\[
\curl u=\la u
\] 
in the whole space $\RR^3$ and falls off at infinity as
$1/|x|$. Throughout we will assume that the complement $\RR^3\minus S$
is a connected set. It is not hard to see that this condition is necessary.

It will be more convenient for us to work with an auxiliary elliptic equation
instead of considering the Beltrami equation directly. To this end, let us denote by
\[
G(x):=\frac{\cos\la |x|}{4\pi|x|}
\]
the Green's function of the operator $\De+\la^2$ in $\RR^3$, which
satisfies the distributional equation
\[
\De G+\la^2 G=-\de_0
\]
with $\de_0$ the Dirac measure supported at $0$. We will use the
notation $\BB_R$ for the ball in $\RR^3$ centered at the origin and of
radius $R$.

The following lemma, which shows how to ``sweep'' the singularities of
the Green's function, will be used in the demonstration of the global
approximation theorem. Its proof is based on a duality argument and the
Hahn--Banach theorem.

\begin{lemma}\label{L.approx}
Take $R>0$ and consider a domain $U\subset
\RR^3\minus \BB_{2R}$ and a compact set $S\subset \BB_R$ whose
complement $\RR^3\minus S$ is connected. Let us consider the vector
field
\[
v(x):=\sum_{m=1}^M\rho_m\, G(x-x_m)\,,
\]
where $\{x_m\}_{m=1}^M$ is a finite set of points in $\BB_R\minus S$ and $\rho_m\in\RR^3$
are constant vectors. Then, for any $\de>0$, there is a finite set of points
$\{z_j\}_{j=1}^J$ in the domain $U$ and constant vectors $c_j\in\RR^3$ such that the
finite linear combination
\begin{equation}\label{eqmwx}
w(x):=\sum_{j=1}^J c_j\, G(x-z_j)
\end{equation}
approximates the field $v$ uniformly in $S$ as
\begin{equation*}
\|v-w\|_{C^0(S)}<\de\,.
\end{equation*}
\end{lemma}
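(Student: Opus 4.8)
The statement is a classical Runge-type approximation result for the operator $\De+\la^2$, and the natural route is a duality argument: I would show that any continuous linear functional on $C^0(S,\RR^3)$ that annihilates all fields of the form $w=\sum_j c_j\,G(\cdot-z_j)$ with $z_j\in U$ must also annihilate $v$, and then invoke the Hahn--Banach theorem to conclude that $v$ lies in the closed subspace spanned by such $w$ in the uniform norm on $S$. By the Riesz representation theorem, a continuous functional on $C^0(S,\RR^3)$ is given by integration against an $\RR^3$-valued finite Borel measure $\mu$ supported on $S$. So the plan is: assume $\int_S G(x-z)\cdot d\mu(x)=0$ (as a vector identity, i.e.\ for each component) for all $z\in U$, and deduce $\int_S v\cdot d\mu=0$.

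The key step is to introduce the Newtonian-type potential $u_\mu(z):=\int_S G(z-x)\cdot d\mu(x)$, which (since $G$ is the fundamental solution) satisfies $(\De+\la^2)u_\mu=-\mu$ in the distributional sense on $\RR^3$; in particular $u_\mu$ is a (real-analytic, hence smooth) solution of $(\De+\la^2)u_\mu=0$ on the open set $\RR^3\minus S$, which by hypothesis is \emph{connected}. The vanishing assumption says $u_\mu\equiv 0$ on the open subset $U\subset\RR^3\minus S$. By the unique continuation property for the elliptic operator $\De+\la^2$ (equivalently, real-analyticity of solutions plus connectedness), $u_\mu$ vanishes on all of $\RR^3\minus S$, and in particular on $\RR^3\minus\BB_R\supset\{x_1,\dots,x_M\}$ since the points $x_m$ lie in $\BB_R\minus S$ — wait, more carefully: the $x_m$ need not lie outside $\BB_R$, but they do lie in the connected component of $\RR^3\minus S$, so $u_\mu(x_m)=0$ for each $m$. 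Then, using the symmetry $G(x-x_m)=G(x_m-x)$,
\[
\int_S v\cdot d\mu=\sum_{m=1}^M \rho_m\cdot\int_S G(x-x_m)\, d\mu(x)=\sum_{m=1}^M \rho_m\cdot u_\mu(x_m)=0,
\]
which is exactly what we need. This proves that $v$ is a uniform limit on $S$ of finite combinations $w$ of the desired form, i.e.\ for any $\de>0$ such a $w$ exists.

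The main obstacle — and the point that requires care — is the unique continuation / analytic continuation argument, namely justifying that $u_\mu\equiv 0$ on $U$ forces $u_\mu\equiv 0$ on the whole connected open set $\RR^3\minus S$. This follows because $u_\mu$ solves the elliptic equation $(\De+\la^2)u_\mu=0$ off $\supp\mu\subset S$ and hence is real-analytic there (elliptic regularity), so it is determined on a connected open set by its germ at any point; vanishing on the open set $U$ gives a vanishing germ. One should also note that $G(x-z)$ is a genuine continuous (indeed bounded away from its singularity) kernel on $S$ for $z\in U\subset\RR^3\minus\BB_{2R}$, with $\dist(U,S)>0$, so the functional $z\mapsto\int_S G(\cdot-z)\cdot d\mu$ is well-defined and the integrals converge; and that the singularities of $G(\cdot-x_m)$ lie outside $S$ since $x_m\in\BB_R\minus S$, so $v\in C^0(S,\RR^3)$ as required. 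A minor technical point is that one must apply Hahn--Banach to the \emph{closed} linear span of the family $\{G(\cdot-z):z\in U\}$ inside $C^0(S,\RR^3)$ and use that a functional vanishing on the span vanishes on its closure; the computation above handles precisely an arbitrary such functional. No compactness or quantitative estimate on the number $J$ of points or the size of $c_j$ is claimed, which keeps the argument purely soft.
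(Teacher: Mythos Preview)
Your proposal is correct and follows essentially the same approach as the paper: a Hahn--Banach duality argument in $C^0(S,\RR^3)$, identification of functionals with $\RR^3$-valued measures via Riesz--Markov, the potential $F(x)=\int G(x-\tilde x)\,d\mu(\tilde x)$ satisfying $(\De+\la^2)F=-\mu$, and unique continuation on the connected set $\RR^3\minus S$ from the vanishing on $U$ to conclude $F(x_m)=0$ for each $m$. The only cosmetic difference is that the paper works with the vector-valued potential $F$ directly rather than naming it $u_\mu$, but the logic is identical.
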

\begin{proof}
Consider the space $\cU$ of all vector fields that are linear
combinations of the form~\eqref{eqmwx}, where the points $z_j$ belong
to the set $U$ and the coefficients $c_j\in\RR^3$ are constant
vectors. Restricting these fields to the set $S$, $\cU$ can
be regarded as a subspace of the Banach space $C^0(S,\RR^3)$ of continuous
vector fields on $S$.

By the Riesz--Markov theorem, the dual of $C^0(S,\RR^3)$ is  the space
$\cM(S,\RR^3)$ of the finite vector-valued Borel measures on $\RR^3$ whose support
is contained in the set~$S$. Let us take any measure
$\mu\in\cM(S,\RR^3)$ such that $\int_{\RR^3} f\cdot d\mu=0$ for all
$f\in \cU$. Let us now define a field $F\in L^1\loc(\RR^3,\RR^3)$ as
\[
F(x):=\int_{\RR^3} G(x-\tilde x)\,d\mu(\tilde x)\,,
\]
so that $F$ satisfies the equation 
\[
\De F+\la^2F=-\mu\,.
\]
Notice that $F$ is identically zero on the open set $U$ by the definition of the
measure~$\mu$, that $\RR^3\minus S$ is connected and that $F$
satisfies the elliptic
equation
\[
\De F+\la^2 F=0
\]
in $\RR^3\minus S$. Hence the unique continuation
theorem ensures that the field $F$ vanishes on the complement of $S$. It
then follows that the measure $\mu$ also annihilates any field of the
form $\rho_m\,G(x-x_m)$
because, as the points $x_m$ do not belong to $S$,
\[
0=F(x_m)\cdot \rho_m=\int_{\RR^3} G(x-x_m)\,\rho_m\cdot d\mu(x)\,.
\]
Therefore 
\[
\int_{\RR^3}v\cdot d\mu=0\,,
\]
which implies that $v$ can be
uniformly approximated on~$S$ by elements of the subspace $\cU$ as a
consequence of the Hahn--Banach theorem. The lemma then follows.
\end{proof}

As an intermediate step before proving the global approximation result
for the Beltrami equation, we will establish the following proposition on the
approximation of solutions to the elliptic equation $\De v=-\la^2 v$
by solutions defined in a large ball. Throughout, we will say that a
differential equation holds in a closed set if it holds in a
neighborhood of this set.

\begin{proposition}\label{P.approx}
Let $v$ be a vector field which satisfies the equation
\begin{equation}\label{eqmv}
\De v=-\la^2 v
\end{equation}
in a compact subset $S$ of $\RR^3$. Assume that its complement
$\RR^3\minus S$ is connected and that $S$ is contained in the ball
$\BB_R$. Then for any $\de>0$ and any positive integer $k$ there is a vector field $w$ satisfying the equation
\[
\De w=-\la^2 w
\]
in  $\BB_R$ that approximates the field $v$ in $S$ as
\begin{equation}\label{eqmvw}
\|v-w\|_{C^k(S)}<\de\,.
\end{equation}
Here $\de$ is any fixed positive constant.
\end{proposition}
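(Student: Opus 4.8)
The plan is to deduce this approximation statement from Lemma~\ref{L.approx} by a layered exhaustion argument, using interior elliptic estimates to upgrade the $C^0$ approximation there to the $C^k$ approximation required here. First I would fix a slightly larger ball $\BB_{R'}$ with $S\subset\BB_{R'}\Subset\BB_R$ (so that $v$ solves $\De v=-\la^2 v$ in a neighborhood of $\overline{\BB_{R'}}$, after shrinking $R'$ towards $R$ if necessary) and work with the vector Green's function: each Cartesian component of $v$ solves the scalar Helmholtz equation $\De v_i=-\la^2 v_i$. Since $\RR^3\minus S$ is connected and $S$ is compact, by a standard covering/Runge-type argument one can write $v$, uniformly on $S$ up to error $\de/2$, as a finite linear combination $\sum_m \rho_m\, G(\cdot-x_m)$ of translates of the Green's function with poles $x_m$ lying in a prescribed compact connected ``path'' set $K\subset\RR^3\minus S$ joining $S$ to the exterior region $\RR^3\minus\BB_{2R}$; this is the usual pole-pushing lemma for elliptic equations with the unique continuation property, and its proof is the same Hahn--Banach/unique-continuation duality already used for Lemma~\ref{L.approx}. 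Alternatively, and more cleanly, one invokes the classical fact that solutions of $\De u=-\la^2 u$ on a compact set with connected complement can be approximated in $C^0$ by solutions on all of $\RR^3$ that are finite combinations of such Green's functions with arbitrarily placed poles.

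Next I would apply Lemma~\ref{L.approx} with the domain $U$ taken inside the shell $\BB_R\minus\overline{\BB_{2R}}$—wait, rather $U\subset\RR^3\minus\BB_{2R}$ as in the lemma, after first rescaling so that $S\subset\BB_{R}$ with $R$ replaced by some $R_0<R/2$; this is no loss since $S$ is a fixed compact set and one is free to take the ambient ball in Lemma~\ref{L.approx} as small as one likes around $S$. The lemma then produces $w(x)=\sum_j c_j\, G(x-z_j)$ with all poles $z_j$ in $U$, hence \emph{outside} $\BB_R$, such that $\|v-w\|_{C^0(S)}<\de/2$; combined with the previous step, $\|v-w\|_{C^0(S)}<\de$ and, crucially, $w$ is a genuine solution of $\De w=-\la^2 w$ in the whole ball $\BB_R$ (indeed in all of $\RR^3\minus U$), because the only singularities of $w$ sit at the $z_j\in U\subset\RR^3\minus\BB_R$.

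Finally I would promote the $C^0$ bound to the $C^k$ bound on $S$. Set $e:=v-w$; both $v$ and $w$ solve $\De e=-\la^2 e$ on the neighborhood of $S$ where $v$ is defined, so $e$ is a solution of a fixed second-order elliptic equation there. By interior elliptic (Schauder or $L^p$ and then Sobolev) estimates, for any compact $S\subset S''\subset S'$ with $S'$ inside the domain of the equation one has $\|e\|_{C^k(S)}\leq C_{k,S,S'}\,\|e\|_{C^0(S')}$; since $S'$ can be taken to be another compact set with connected complement on which $v$ still solves the equation, one simply runs the two preceding steps with $S'$ in place of $S$, obtaining $\|v-w\|_{C^0(S')}<\de/C_{k,S,S'}$, and hence $\|v-w\|_{C^k(S)}<\de$. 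Re-labelling, this is exactly \eqref{eqmvw}. The main obstacle is the first step—the pole-pushing across the connected complement—since one must be careful that the intermediate Green's-function poles can be moved entirely out of $\BB_R$ while keeping the approximation valid; but this is precisely where the hypothesis that $\RR^3\minus S$ is connected enters, via the unique continuation theorem exactly as in the proof of Lemma~\ref{L.approx}, so no genuinely new idea is needed beyond bookkeeping with the nested compact sets.
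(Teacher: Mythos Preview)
Your three-step architecture---approximate $v$ on $S$ by a finite combination $\sum_m \rho_m G(\cdot-x_m)$ with poles outside $S$, then invoke Lemma~\ref{L.approx} to sweep the poles out past $\BB_R$, then upgrade $C^0$ to $C^k$ by interior elliptic estimates on a slightly larger compact $S'$---is exactly the paper's approach, and your handling of steps two and three is correct (including the device of running the approximation on $S'\supset S$ so that the elliptic estimate buys the $C^k$ bound on $S$).

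The one place where your write-up is muddled is step one. You describe it as ``pole-pushing'' or appeal to a ``classical fact'' that is essentially the proposition itself; neither is quite right. The paper's way of producing the initial finite Green's-function approximation is concrete and does not use Hahn--Banach or unique continuation at all: take a cutoff $\chi$ equal to $1$ on $S'$ and supported in the neighborhood $\Om\subset\BB_R$ where $v$ solves the equation, set $\tilde v:=\chi v$, and write
\[
\tilde v(x)=\int_{\RR^3} G(x-\tilde x)\,\rho(\tilde x)\,d\tilde x\,,\qquad \rho:=-\De\tilde v-\la^2\tilde v\,,
\]
noting that $\rho$ is supported in $\Om\setminus S'$. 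A Riemann-sum approximation of this integral then gives the finite combination $\hat v=\sum_m \rho_m G(\cdot-x_m)$ with $x_m\in\Om\setminus S'\subset\BB_R\setminus S$, exactly the input required by Lemma~\ref{L.approx}. With this in place, your rescaling digression in step two is unnecessary: the $x_m$ already lie in $\BB_R\setminus S$, so Lemma~\ref{L.approx} applies with the given $R$ and any $U\subset\RR^3\setminus\BB_{2R}$, yielding $w$ with poles outside $\BB_{2R}$ and hence a genuine solution on $\BB_R$.
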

\begin{proof}
  By hypothesis, there is an open subset $\Om\supset S$ such that the
  field $v$ satisfies the equation~\eqref{eqmv} in $\Om$. We can
  assume that $\Om$ is contained in the ball $\BB_R$. Let us take a
  smooth function $\chi:\RR^3\to\RR$ equal to $1$ in a closed set $S'\subset\Om$
  whose interior contains $S$ and identically zero outside $\Om$. Defining
 a smooth extension $\tilde v$ of the field $v$ to $\RR^n$ by
  setting $\tilde v:=\chi v$, we obviously have
\begin{equation}\label{intbv}
\tilde v(x)=\int_{\RR^3} G(x-\tilde x)\, \rho(\tilde x)\, d\tilde x
\end{equation}
with $\rho:=-\De\tilde v-\la^2\tilde v$. 

The vector field $\rho$ is necessarily supported in
$\Om\minus S'$. Therefore, an easy continuity argument ensures that one
can approximate the integral~\eqref{intbv} in the compact set~$S'$ by a finite
Riemann sum of the form
\[
\hv(x):=\sum_{m=1}^M \rho_m\, G(x-x_m)
\]
so that, for any constant $\de'>0$,
\[
\|\tilde v-\hv\|_{C^0(S')}<\de'\,.
\]
Here $\rho_m$ are constant vectors in $\RR^3$ and $x_m$ are points that lie in
$\Om\minus S'$. 

Let us take a domain $U\subset \RR^3\minus
\BB_{2R}$. Lemma~\ref{L.approx} asserts that there is a vector field
of the form
\[
w(x):=\sum_{j=1}^J c_j\, G(x-z_j)
\]
such that
\[
\|\hat v-w\|_{C^0(S')}<\de'\,,
\]
where $\{z_j\}_{j=1}^J$ is a finite set of points in $U$ and $c_j\in\RR^3$ are
constant vectors. Therefore, 
\begin{equation}\label{eqmvwde}
\|v-w\|_{C^0(S')}<2\de'\,.
\end{equation}

To complete the proof of the proposition, notice that the field $v$ satisfies 
\[
\De v+\la^2 v=0
\]
in the set $S'$ (whose interior contains $S$) and $w$ satisfies the same equation in
the ball $\BB_{2R}$. By standard elliptic estimates, it follows that
the $C^0$ approximation~\eqref{eqmvwde} can be promoted to the $C^k$
bound
\[
\|v-w\|_{C^k(S)}<C_k\de'\,.
\]
Choosing $\de'$ small enough, the result follows.
\end{proof}

We are now ready to prove the global approximation theorem for the
Beltrami equation with solutions that decay at infinity. To construct
these solutions, we will truncate a suitable series representation for the
fields in a large ball obtained using Proposition~\ref{P.approx} and
act on them using a convenient differential operator.

\begin{theorem}\label{T.approx}
Let $v$ be a vector field that satisfies the Beltrami equation
\[
\curl v=\la v
\]
in a compact set $S\subset\RR^3$, where $\la$ is a nonzero constant and
the complement $\RR^3\minus S$ is connected. Then
there is a global Beltrami field $u$, satisfying the equation
\[
\curl u=\la u
\]
in $\RR^3$, which falls off at infinity as $|D^j u(x)|<C_j/|x|$ and
approximates the field $v$ in the $C^k$ norm as
\[
\|u-v\|_{C^k(S)}<\de\,.
\]
Here $\de$ is any positive constant.
\end{theorem}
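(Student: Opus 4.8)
The plan is to reduce the Beltrami equation to the scalar Helmholtz equation, for which an explicit series of solutions regular at the origin is available, and then to recover a globally decaying Beltrami field by truncating that series and applying a suitable differential operator built from the curl. First I would observe that, since $v$ is a Beltrami field, it also satisfies $\De v=-\la^2 v$ (and $\Div v=0$) on a neighborhood of $S$. Choosing $R$ large enough that $S\subset\BB_{R_0}$ with $R_0<R$, Proposition~\ref{P.approx} provides, for any prescribed $\de_1>0$, a vector field $w$ with $\De w=-\la^2 w$ in $\BB_R$ and $\|v-w\|_{C^{k+2}(S)}<\de_1$; by elliptic regularity $w$ is real-analytic there.

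Next I would expand $w$ in spherical waves. In Cartesian coordinates each component of $w$ solves the scalar equation $\De w^{(a)}+\la^2 w^{(a)}=0$ in $\BB_R$, so separation of variables gives a representation
\[
w^{(a)}=\sum_{l\geq0}\sum_{|m|\leq l}c^{(a)}_{lm}\,j_l(\la|x|)\,Y_{lm}(x/|x|)
\]
converging in $C^\infty_{\mathrm{loc}}(\BB_R)$, where $j_l$ denotes the spherical Bessel function of the first kind and $Y_{lm}$ the spherical harmonics — the regular spherical waves $j_l(\la|x|)Y_{lm}(x/|x|)$ playing the role, for the Helmholtz equation, of the homogeneous harmonic polynomials. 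Truncating each sum at $l\leq L$ yields a vector field $w_L$, each of whose Cartesian components is an \emph{entire} solution of the scalar Helmholtz equation, and for $L$ large enough $\|w-w_L\|_{C^{k+2}(\overline{\BB_{R_0}})}<\de_1$, hence $\|v-w_L\|_{C^{k+2}(S)}<2\de_1$.

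Finally I would apply the differential operator $\mathcal L:=\tfrac1{2\la^2}\curl(\curl+\la)=\tfrac1{2\la^2}(\curl\curl+\la\curl)$ and set $u:=\mathcal L w_L$. A short vector-calculus computation using $\De(\curl W)=\curl(\De W)$ shows that $\curl(\mathcal L W)=\la\,\mathcal L W$ whenever $\De W=-\la^2 W$, and that $\mathcal L W=W$ whenever $W$ is itself Beltrami (then $\curl\curl W=\la^2 W$ and $\curl W=\la W$); note that the inner $\curl$ automatically disposes of any divergence, so no divergence-free hypothesis on $w_L$ is needed. Since each component of $w_L$ is an entire Helmholtz solution, $u$ is therefore a genuine global Beltrami field, $\curl u=\la u$ in $\RR^3$. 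Because $j_l(\la t)$ and all its $t$-derivatives are $\cO(1/t)$ as $t\to\infty$ while Cartesian derivatives of $Y_{lm}(x/|x|)$ produce extra factors of $|x|^{-1}$, each term of $w_L$ and all its derivatives decay like $1/|x|$; as $w_L$ is a finite sum and $\mathcal L$ involves at most two derivatives, this gives $|D^j u(x)|<C_j/|x|$ for every $j$. The approximation is then immediate: $\|u-v\|_{C^k(S)}=\|\mathcal L(w_L-v)\|_{C^k(S)}\leq C\|w_L-v\|_{C^{k+2}(S)}<2C\de_1$, which is less than $\de$ once $\de_1$ is chosen small enough.

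The genuinely new content is already encapsulated in Proposition~\ref{P.approx}, i.e.\ in the Green's-function ``sweeping'' of Lemma~\ref{L.approx}, which replaces the Lax--Malgrange iteration of~\cite{Annals}; granting that, the only delicate points left are to check that the truncated series, after acting with $\mathcal L$, is exactly (not merely approximately) a Beltrami field on all of $\RR^3$, and that it together with all its derivatives falls off like $1/|x|$. The latter is where the Euclidean structure of $\RR^3$ enters essentially, through the precise large-argument asymptotics of the spherical Bessel functions $j_l$ and their derivatives; I expect this decay bookkeeping, rather than any conceptual difficulty, to be the main thing that needs care.
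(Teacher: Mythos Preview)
Your proposal is correct and follows essentially the same route as the paper's proof: reduce to the Helmholtz equation, invoke Proposition~\ref{P.approx}, expand in spherical Bessel functions times spherical harmonics, truncate, and apply the operator $\tfrac{1}{2\la^2}(\curl\curl+\la\curl)$ to obtain a global Beltrami field with the required decay. The only cosmetic difference is that the paper truncates in $L^2(\BB_R)$ and then upgrades to $C^{k+2}$ via elliptic estimates, whereas you appeal directly to the $C^\infty_{\mathrm{loc}}$ convergence of the series; both are fine.
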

\begin{proof}
Let us assume that the compact set $S$ is contained in the ball $\BB_{R/2}$. As the Beltrami field $v$ satisfies the equation
\[
\De v+\la^2 v=0
\]
in $S$, by Proposition~\ref{P.approx} there is a field $w$ satisfying 
\begin{equation}\label{eqwmi}
\De w+\la^2 w=0
\end{equation}
in the ball $\BB_R$ and such that
\begin{equation}\label{mivwC}
\|v-w\|_{C^{k+2}(S)}<\de'\,.
\end{equation}

Let us take spherical coordinates $(r,\te,\vp)$ in the ball
$\BB_R$. Writing the field $w$ as a series of spherical harmonics and
using the equation~\eqref{eqwmi} we immediately obtain that $w$ can be
written as a series
\[
w=\sum_{l=0}^\infty\sum_{m=-l}^l c_{lm}\, j_l(\la r)\, Y_{lm}(\te,\vp)\,.
\]
Here $j_l$ is the spherical Bessel function, $Y_{lm}$ are the
spherical harmonics and $c_{lm}\in\RR^3$ are constant
vectors. Therefore, given any $\de'>0$ there is an integer $L$
such that the finite sum
\[
\hu:=\sum_{l=0}^L\sum_{m=-l}^l  c_{lm}\, j_l(\la r)\, Y_{lm}(\te,\vp)
\]
approximates the field $w$ in $L^2$ sense:
\begin{equation}\label{huwL2}
\|\hu-w\|_{L^2(\BB_R)}<\de'\,.
\end{equation}
By the properties of spherical Bessel functions, the vector field $\hu$
satisfies the equation 
\begin{equation}\label{eqmhu}
\De\hu+\la^2\hu=0
\end{equation}
in $\RR^3$ and falls off at infinity as $|D^j\hu(x)|<C_j/|x|$.

In view of Eqs.~\eqref{eqwmi} and~\eqref{eqmhu}, standard elliptic
estimates allow us to pass from the $L^2$ bound~\eqref{huwL2} to
$C^{k+2}$ estimate
\[
\|\hu-w\|_{C^{k+2}(\BB_{R/2})}<C_k\de'\,.
\]
From this inequality and the bound~\eqref{mivwC} we infer
\begin{equation}\label{boundmi}
\|\hu-v\|_{C^{k+2}(S)}<C\de'\,.
\end{equation}

Let us now set
\[
u:=\frac{\curl\curl \hu+\la\curl \hu}{2\la^2}\,.
\]
A simple computation shows that the vector field thus defined
satisfies the Beltrami equation
\[
\curl u=\la u
\]
in $\RR^3$ and falls off as $|D^ju(x)|<C_j/|x|$ by the properties of $\hu$. Moreover,
\begin{align*}
\|u-v\|_{C^k(S)}&=\bigg\| \frac{\curl\curl \hu+\la\curl \hu}{2\la^2}-
v\bigg\|_{C^k(S)}\\
&=\bigg\| \frac{(\curl+\la)\curl
  (\hu-v)}{2\la^2}\bigg\|_{C^k(S)} \\
&\leq C\|\hu-v\|_{C^{k+2}(S)}\\
&< C\de'\,,
\end{align*}
as we wanted to prove.
\end{proof}

\begin{remark}
  The fall-off at infinity of the global Beltrami field $u$ is
  obtained from the truncation of the explicit series representation for the auxiliary field
  $w$. This is the reason why Theorem~\ref{T.approx} does not work in
  arbitrary open Riemannian $3$-manifolds, unlike the approximation
  theorem we used in~\cite{Annals}. Notice that the latter theorem
  does not yield any control at infinity whatsoever for the global
  Beltrami fields.
\end{remark}

\section{Proof of the main theorem}
\label{S.main}

We are now ready to give the proof of Theorem~\ref{T.main}. Let us
begin by considering one of the curves (say, $\ga_1$) in the statement of the
theorem. By perturbing this curve with a
$C^m$-small diffeomorphism if necessary, we can assume that
$\ga_1$ is an analytic curve whose curvature $\ka$ does not vanish
anywhere~\cite[p.\ 184]{Bruce}, and that its total torsion satisfies
\[
\int_0^\ell \tau(\al)\, d\al\neq \pi k
\]
for all integers $k$. As before, $\ell$ denotes the length of the
curve $\ga_1$. It is worth emphasizing that, as these conditions are
open, they are obviously preserved if we deform the curve $\ga_1$
with a diffeomorphism that is close enough to the identity in the
$C^m$ norm ($m\geq3$).

Let us consider the tube $\cT_\ep(\ga_1)$ of core curve $\ga_1$ and thickness~$\ep$, and the corresponding harmonic field $h$, given by the
expression~\eqref{h} in the coordinates~$(\al,y)$ adapted to the
tube. Throughout we will assume that $\ep$ is small enough. By
Theorem~\ref{T.estimBeltrami}, we can consider the associated local Beltrami field $v_1$, which is
given by the only solution to the Beltrami equation with parameter $\la=\ep^3$,
\[
\curl v_1=\ep^3v_1\,,
\]
in the tube $\cT_\ep(\ga_1)$ that is tangent to the boundary and whose
harmonic part is the field $h$. (Notice that in this section $v_1$ will
{\em not}\/ stand for the component of a vector field in the direction
of the coordinate $y_1$.) Since the boundary of the tube is analytic, it is well known~\cite{Mo58}
that the local Beltrami field $v_1$ is analytic in the closure of a
small neighborhood $\Om_1$ of~$\overline{\cT_\ep(\ga_1)}$. 

Since the total torsion of the curve is not an integral multiple of
$\pi$, Theorem~\ref{T.KAM} and Proposition~\ref{P.harmelliptic} ensure that, given any $\de>0$, we can
deform the curve $\ga_1$ by a diffeomorphism of $\RR^3$ arbitrarily close to the
identity in the $C^m$ norm so that any divergence-free vector field
$u$ in $\Om_1$ with 
\[
\|u-v_1\|_{C^k(\Om_1)}<\de'
\]
has: 
\begin{itemize}
\item An invariant tube given by $\Psi_1[\cT_\ep(\ga_1)]$.
\item An elliptic periodic trajectory given by $\tilde\Psi_1(\ga_1)$.
\end{itemize}
Moreover, on the invariant torus $\pd\Psi_1[\cT_\ep(\ga_1)]$, the field
$u$ is orbitally conjugate to a Diophantine rotation, and therefore ergodic. Here $\Psi_1$ and $\tilde\Psi_1$ are diffeomorphisms of $\RR^3$ with
\[
\|\Psi_1-\id\|_{C^m(\RR^3)} + \|\tilde \Psi_1-\id\|_{C^m(\RR^3)}<\de
\]
and such that the differences $\Psi_1-\id$ and $\tilde \Psi_1-\id$ are
supported on small neighborhoods $U_1,\tilde U_1$ of $\pd\cT_\ep(\ga_1)$ and
$\ga_1$, respectively. The constants $k$ and $\de'$ depend on $m$, $\de$
and on the geometry of the curve $\ga_1$.

We can apply the same argument for each curve $\ga_i$ ($1\leq i\leq
N$), thereby obtaining (for small enough $\ep$) a collection of local
Beltrami fields $v_i$, satisfying the equation
\[
\curl v_i=\ep^3 v_i
\]
in the closure of a neighborhood $\Om_i$ of the closed tube
$\overline{\cT_\ep(\ga_i)}$ and such that any  divergence-free vector field
$u$ in $\Om_i$ with $\|u-v_i\|_{C^k(\Om_i)}<\de'$ has an
invariant torus, where the field $u$ is ergodic, and an elliptic periodic trajectory. Furthermore, they are
respectively given by $\pd\Psi_i[\cT_\ep(\ga_i)]$ and
$\tilde\Psi_i(\ga_i)$, where $\Psi_i,\tilde\Psi_i$ are $C^m$-small
diffeomorphisms of $\RR^3$ with $\Psi_i-\id$ and $\tilde \Psi_i-\id$
supported in small neighborhoods $U_i,\tilde U_i$ of $\pd\cT_\ep(\ga_i)$ and $\ga_i$, in
each case. We can assume that the complement 
\[
\RR^3\minus(\Om_1\cup \cdots\cup
\Om_N)
\]
is connected and that the sets $\overline{\Om_i}$ are pairwise disjoint.

Let us define a vector field $v$ in $\overline{\Om_1}\cup \cdots\cup 
\overline{\Om_N}$ by setting
it equal to the local Beltrami field $v_i$ in each set
$\overline{\Om_i}$. By Theorem~\ref{T.approx}, there is a Beltrami
field $u$, which satisfies the equation
\[
\curl u=\ep^3u
\]
in $\RR^3$, that falls off at infinity as $|D^ju(x)|<C_j/|x|$  and approximates the field $v$ as
\[
\|u-v\|_{C^k(\Om_1\cup \cdots\cup \Om_N)}<\de'\,.
\]
Therefore, if we
define the diffeomorphism $\Phi$ of $\RR^3$ as
\[
\Phi(x):=\begin{cases}
\Psi_i(x)& \text{if } x\in U_i\,,\\
\tilde\Psi_i(x)& \text{if } x\in \tilde U_i\,,\\
x & \text{otherwise}\,,
\end{cases}
\]
it follows that, for each $i$, $\Phi[\cT_\ep(\ga_i)]$ is a vortex tube
of the Beltrami field $u$, and that $\Phi(\ga_i)$ is an elliptic
periodic trajectory. Besides, the Beltrami field $u$ is ergodic (and
orbitally conjugate to a Diophantine rotation) on each invariant torus
$\pd \Phi[\cT_\ep(\ga_i)]$.

The field $u$ being orbitally conjugate to a Diophantine rotation on
each invariant torus $\pd \Phi[\cT_\ep(\ga_i)]$, it
follows~\cite{FK09} that this invariant torus is accumulated by a
Cantor-like set of invariant tori with positive Lebesgue measure. On
these invariant tori, the field is also orbitally conjugate to
Diophantine rotations. The corresponding set of Diophantine
frequencies is Cantor-like because the normal torsion $\cN_\Pi$ is
nonzero. Therefore, if we consider the trajectories of the field $u$
between two of these invariant tori (lying on the same vortex tube
$\Phi[\cT_\ep(\ga_i)]$), Angenent's dichotomy~\cite{Angenent} asserts that either there is a horseshoe-type
invariant set between them or there are invariant tori where the field
is conjugate to a rotation of rational frequency. In both cases, there
are infinitely many periodic trajectories between these tori. The
theorem then follows.

\begin{remark}
It is worth giving some additional
details about what we understand by a thin tube. What we have proved is that for any set of
smooth periodic curves $\ga_1,\dots,\ga_N$, $\de>0$ and $m\in\NN$, there is a
positive constant $\ep_0$, depending on $\de$, $m$ and the geometry of
the curves, such that the statement of Theorem~\ref{T.main} holds true
for any thickness $\ep<\ep_0$, the diffeomorphism $\Phi$ that maps the tubes
$\cT_\ep(\ga_i)$ into vortex tubes being bounded by
$\|\Phi-\id\|_{C^m(\RR^3)}<\de$. It should be noticed that both
Theorem~\ref{T.main} and its proof hold verbatim if one takes tubes
$\cT_{\ep_1}(\ga_1),\dots, \cT_{\ep_N}(\ga_N)$ of different thickness,
as long as $\ep_1,\dots,\ep_N$ are small enough (that is, smaller than
the above constant $\ep_0$).
\end{remark}

\begin{remark}\label{R.locallyfinite}
  Contrary to what happened in the main theorem of~\cite{Annals}, the
  proof of Theorem~\ref{T.main} does not work in general (even if we drop the
  requirement that the field~$u$ decays at infinity, replacing
  Theorem~\ref{T.approx} by~\cite[Theorem 3.6]{Annals}) if we
  substitute the finite set of curves $\{\ga_i\}_{i=1}^N$ by an
  infinite set $\{\ga_i\}_{i=1}^\infty$ that is locally finite. The
  reason is that the ``maximal thickness'' $\ep_0(\ga_i)$
  associated to each curve~$\ga_i$ individually does not need to be
  bounded away from zero: since all the vector fields~$v_i$, defined in a neighborhood of
  the tube $\overline{\cT_{\ep_i}(\ga_i)}$ (whose thickness can vary
  from tube to tube), must satisfy the Beltrami
  equation
\[
\curl v_i=\la v_i
\]
with the {\em same}\/ constant $\la$ for all $i$. Since this $\la$ must be of order $\cO[\ep_0(\ga_i)^3]$
for all $i$, it is clear that the construction breaks down if the
infimum of the positive quantities $\{\ep_0(\ga_i)\}_{i=1}^\infty$ is $0$. On the other hand,
if this infimum is positive, we can apply the approximation theorem
in~\cite[Theorem 3.6]{Annals} to construct a global Beltrami field
with all the properties listed in the statement of
Theorem~\ref{T.main} with the exception that its growth at infinity is
not controlled. 
\end{remark}

\section{A comment about the Navier--Stokes equation}
\label{S.remarks}

To conclude, we will present an easy application of
Theorem~\ref{T.main} to the existence of (time-dependent) solutions to
the Navier--Stokes equation that have a prescribed set of stationary (possibly
knotted and linked) vortex tubes.

For this, let us take the global Beltrami field $u$ that we considered
in Section~\ref{S.main}. That is, $u$ satisfies the equation
\[
\curl u=\ep^3u
\]
in $\RR^3$, falls off at infinity as $C/|x|$ and has a set of thin
invariant tubes given by $\{\Phi[\cT_\ep(\ga_i)]\}_{i=1}^N$, where the
diffeomorphism $\Phi$ is close to the identity and $\ep$ is a small
constant. Then the analytic time-dependent field
\[
w(x,t):=u(x)\,\e^{-\nu\ep^6t}
\]
is a solution of the Navier--Stokes equation
\[
\pd_t w+(w\cdot\nabla)w =\nu\De w-\nabla P\,,\qquad \Div w=0
\]
in $\RR^3$ with pressure $P(x,t)=c-\frac12|w(x,t)|^2$. As the vortex
lines of $w$ (which are the trajectories of the vorticity $\curl
w(x,t)$ for fixed $t$) coincide with those of $u$ at all times, up to
a reparametrization, it follows that $w$ is a solution to the
Navier--Stokes equation with the desired properties.

\section*{Acknowledgments}

This work is supported in part by the Spanish MINECO under
grants~FIS2011-22566 (A.E.) and the ICMAT Severo
Ochoa grant SEV-2011-0087 (A.E.\ and D.P.S.). The authors are
respectively supported by the Ram\'on y Cajal program (A.E.) and an
ERC starting grant 335079 (D.P.S.).

\end{document}